\documentclass[11pt]{article}

\usepackage[T1]{fontenc}
\usepackage{lmodern}
\usepackage{amsfonts}
\usepackage{amssymb}
\usepackage{amsthm}
\usepackage{amsmath}
\usepackage{bm}
\usepackage{graphicx}
\usepackage{etoolbox,xstring,mfirstuc,textcase}
\usepackage{empheq}
\usepackage{indentfirst}
\usepackage{cite} 
\usepackage{mathrsfs}
\usepackage{graphics}
\usepackage{xcolor}  
\usepackage{cases}
\usepackage{changes} 
\usepackage{lineno} 
\usepackage{soul} 
\sethlcolor{yellow} 
\usepackage[colorlinks=true,linkcolor=black,anchorcolor=blue,citecolor=blue]{hyperref} 
\newtheorem{theorem}{\textbf{Theorem}}[section]
\newtheorem{lemma}{\textbf{Lemma}}[section]
\newtheorem{proposition}{\textbf{Proposition}}[section]
\newtheorem{corollary}{\textbf{Corollary}}[section]
\newtheorem{remark}{\textbf{Remark}}[section]
\newtheorem{definition}{\textbf{Definition}}[section]
\newtheorem{assumption}{\textbf{Assumption}}[section]

\allowdisplaybreaks[4]

\def\be{\begin{equation}}
\def\ee{\end{equation}}
\def\bea{\begin{eqnarray}}
\def\eea{\end{eqnarray}}
\def\bt{\begin{theorem}}
\def\et{\end{theorem}}
\def\bl{\begin{lemma}}
\def\el{\end{lemma}}
\def\bnum{\begin{numcases}{}}
\def\enum{\end{numcases}}
\def\br{\begin{remark}}
\def\er{\end{remark}}
\def\bp{\begin{proposition}}
\def\ep{\end{proposition}}
\def\bc{\begin{corollary}}
\def\ec{\end{corollary}}
\def\bd{\begin{definition}}
\def\ed{\end{definition}}
\def\ba{\begin{assumption}}
\def\ea{\end{assumption}}

\newcommand{\NormNormal}[1]{\| #1 \|}

\newcommand{\AbsNormal}[1]{| #1 |}

\def\r{\mathbb{R}}

\def\sp{\bm{u}}

\newcommand{\refform}[1]{\hyperref[#1]{\ref{#1}}}

\def\ul{_{\mathrm{uloc}}}

\def\define {\stackrel{\mathrm{def}}{=}}

\begin{document}
	

\title{Well-posedness of Strong Solutions for a Thermodynamically Consistent Diffuse-interface Model for Incompressible \\ Two-phase Flows with a Soluble Surfactant}

\author{
Maurizio Grasselli,\!\!
\footnote{Dipartimento di Matematica,
Politecnico di Milano, 20133 Milano, Italy.
Email: \texttt{maurizio.grasselli@polimi.it}}
\quad
Bohan Ouyang,\!\!
\footnote{School of Mathematical Sciences,
Fudan University, Handan Road 220, 200433 Shanghai, P. R. China.
Email: \texttt{22110180033@m.fudan.edu.cn}}
\quad
Hao Wu\!\!
\footnote{Corresponding author. School of Mathematical Sciences, Fudan University, 200433 Shanghai, P. R. China.
Email: \texttt{haowufd@fudan.edu.cn}}
}

\date{\today}

\maketitle

\begin{abstract}
\noindent We consider a thermodynamically consistent diffuse-interface model for binary mixtures of incompressible viscous Newtonian fluids with a soluble surfactant.
This hydrodynamic system generalizes the Abels--Garcke--Gr\"un model for two-phase flows with unmatched densities by incorporating an additional Cahn--Hilliard equation for the surfactant concentration. We study the strong well-posedness of the initial-boundary value problem with physically relevant singular potentials, subject to a no-slip boundary condition for the fluid velocity and homogeneous Neumann boundary conditions for the phase-field variables and the corresponding chemical potentials.
First, we establish the existence of local-in-time strong solutions in three dimensions and global-in-time strong solutions in two dimensions.
In the two-dimensional case, we further show that, under natural growth assumptions on the singular potentials, strong solutions remain strictly separated from the pure phases, thereby guaranteeing uniqueness.
In the three-dimensional case, we establish local-in-time uniqueness of strong solutions, provided that the initial data are strictly separated from the pure phases. Our results extend previous work on strong solutions to the Abels--Garcke--Gr\"un model from the special case of constant mobility to the more general scenario with concentration-dependent mobilities and interactions with a surfactant.
\smallskip \\
\noindent\textit{Keywords:} Two-phase flow with surfactant; Cahn--Hilliard equation; Navier--Stokes equations; unmatched densities; singular potentials; strong solutions; well-posedness.
\smallskip \\
\noindent
\textit{MSC 2020:} 35A01, 35A02, 35K52, 35Q35, 76D45, 76T06
\end{abstract}

\section{Introduction} \label{intro}
\setcounter{equation}{0}

Surfactants have wide applications in both nature and industry. As surface-active substances, surfactants typically adsorb from bulk fluids to free interfaces, where they reduce interfacial tension and influence the dynamics of fluid mixtures \cite{MS2020}.
Incorporating surfactant effects into multiphase flows leads to complex interfacial dynamics, which has attracted considerable attention in modeling, analysis, and numerical simulations. Phase-field models are of particular interest because of their advantage in handling topological changes of free interfaces; see, e.g.,  \cite{DG2019,EDAT2013,GLS2014,KK1997,La1992,LK2012,TSLV2011,vander2006,ZKYWYS2019} and the references therein.
Within this framework, an order parameter (phase-field variable) is introduced to model immiscible fluid mixtures, in which the two components are separated by a thin and smooth transition layer of width $O(\varepsilon)$, i.e., the diffuse interface. The free interface can then be effectively captured by the zero level set of the phase-field variable, which satisfies a suitable evolution equation in Eulerian coordinates.
In this work, our aim is to study the initial-boundary value problem for a thermodynamically consistent diffuse-interface model that describes a two-phase flow of two viscous incompressible Newtonian fluids with a soluble surfactant in the general scenario of unmatched densities and variable mobilities, whose derivation can be found in \cite{EDAT2013,ZKYWYS2019}.

\subsection{Model description}
Let $\Omega \subset \mathbb{R}^d$ ($d\in\{2,3\}$) be a bounded domain with a smooth boundary $\partial \Omega$, and let $T>0$ be the final time. We consider the following coupled system (cf. \cite{OGW2026}):
\begin{numcases}{}
    \partial_{t} \big(\rho (\phi) \bm{u}\big) + \mathrm{div}\,\big( \bm{u} \otimes (\rho (\phi) \bm{u} + \mathbf{J})  \big)
    - \mathrm{div}\, \big(\nu (\phi) D\sp\big)
    + \nabla\pi = \mu_\phi \nabla \phi + \mu_\psi \nabla \psi,
    \label{eq:nsch1} \\
    \mathrm{div}\, \bm{u} =0,
    \label{eq:nsch2}  \\
    \partial_{t} \phi + \bm{u} \cdot \nabla \phi + \sigma_1 \big( \overline{\phi} -c \big) = \mathrm{div}\, \big(m_\phi (\phi) \nabla \mu_\phi\big),
    \label{eq:nsch3} \\
    \mu_\phi = -\Delta \phi + \sigma_2 \mathcal{N} \big( \phi - \overline{\phi} \big) + F_\phi'(\phi) + \partial_\phi G(\phi,\psi),
    \label{eq:nsch4} \\
    \partial_{t} \psi + \bm{u} \cdot \nabla \psi = \mathrm{div}\, \big(m_\psi (\psi) \nabla \mu_\psi\big),  \label{eq:nsch5} \\
    \mu_\psi = -\beta \Delta \psi + F_\psi'(\psi) + \partial_\psi G(\phi,\psi), \label{eq:nsch6}
\end{numcases}
in $\Omega \times (0,T)$, subject to the boundary and initial conditions:
\begin{numcases}{}
    \bm{u} = \mathbf{0}, \ \partial_{\mathbf{n}} \phi = \partial_{\mathbf{n}} \mu_\phi = \partial_{\mathbf{n}} \psi = \partial_{\mathbf{n}} \mu_\psi = 0, \label{eq:nschb} & on $\partial \Omega \times (0,T)$,\\
    \bm{u}|_{t=0} = \bm{u}_0(x), \ \phi|_{t=0} = \phi_0(x), \ \psi|_{t=0} = \psi_0(x), \label{eq:nschi} & in $\Omega$.
\end{numcases}
Here, $\mathbf{n}$ is the unit outer normal vector of $\partial\Omega$ and $\partial_\mathbf{n}$ is the outward normal derivative on the boundary.

The hydrodynamic system \eqref{eq:nsch1}--\eqref{eq:nsch6} is a natural generalization of the thermodynamically consistent diffuse-interface model for incompressible two-phase flows with unmatched densities proposed in Abels, Garcke and Gr\"{u}n \cite{AGG2012} (AGG model for short). In particular, it incorporates interactions with a soluble surfactant and possible mass transfer/reaction. The state variables of the system include the volume-averaged velocity $\bm{u}:\Omega \times [0,T) \to \mathbb{R}^{d}$, the pressure of the fluid mixture $\pi:\Omega \times [0,T) \to \mathbb{R}$, the phase-field variable $\phi:\Omega\times[0,T)\to [-1,1]$ that denotes the difference in the volume fraction between two fluid components and the phase-field variable $\psi:\Omega\times[0,T)\to [0,1]$ that denotes the volume fraction of the surfactant.
In the momentum equation \eqref{eq:nsch1}, $\rho(\phi)$ denotes the density of the fluid mixture given by
\[
\rho( \phi ) = \frac{\rho_1 - \rho_2}{2} \phi + \frac{\rho_1 + \rho_2}{2},
\]
where the positive constants $\rho_1$ and $\rho_2$ are the specific mass densities of the unmixed fluid components. Furthermore, $\nu ( \phi )$ is the viscosity coefficient, typically taking the form
\[
\nu( \phi ) = \frac{\nu_1 - \nu_2}{2} \phi + \frac{\nu_1 + \nu_2}{2},
\]
where $\nu_1$ and $\nu_2$ are viscosity coefficients of the unmixed fluid components. The matrix-valued function
$D \bm{u} = \frac{1}{2} \big( \nabla \bm{u} + (\nabla \bm{u})^T \big)$ represents the symmetrized gradient of $\bm{u}$, and the relative (mass) flux
$$
\mathbf{J} = - \frac{\rho_1 - \rho_2}{2} m_\phi(\phi) \nabla \mu_\phi
$$
accounts for diffusion driven by density contrast.
In \eqref{eq:nsch3}--\eqref{eq:nsch6}, the strictly positive continuous functions $m_\phi:[-1,1] \to \mathbb{R}_+$ and $m_\psi:[0,1] \to \mathbb{R}_+$ are mobility coefficients for the binary fluids and the surfactant, respectively.
Besides, $\sigma_1 \in \mathbb{R}_+$, $\sigma_2 \in \mathbb{R}$, $c \in (-1,1)$, and $\beta \in \mathbb{R}_+$ are given constants. The operator $\mathcal{N}$ denotes the inverse of the minus Laplacian with a homogeneous Neumann boundary condition (see Section \ref{mr}) and $\overline{f}$ denotes the spatial average of a function $f$ over $\Omega$, i.e., $\overline{f} = | \Omega |^{-1} \int_{\Omega} f \, \mathrm{d}x$.
The chemical potentials associated with the phase-field variable and the surfactant concentration are given by $\mu_\phi:\Omega \times [0,T) \to \mathbb{R}$ and $\mu_\psi:\Omega \times [0,T) \to \mathbb{R}$, respectively.
They are defined as the (partial) variational derivatives of the free energy functional
\[
E_{\mathrm{free}} \left(\phi,\psi\right) = E_\phi(\phi) + E_\psi(\psi) + \int_{\Omega}G\left( \phi,\psi \right) \, \mathrm{d}x.
\]
The first two terms in $E_{\mathrm{free}}$ are Ginzburg--Landau type free energies associated with $\phi$ and $\psi$, respectively.
Setting some constants equal to unity for simplicity (e.g., the interfacial thickness), we take
 \begin{align}
 \label{enphi}
    & E_\phi( \phi ) =  \int_{\Omega} \left(\frac{1}{2}\vert\nabla \phi\vert^2 + F_\phi(\phi) + \frac{\sigma_2}{2} \vert \nabla \mathcal{N}\big( \phi - \overline{\phi} \big)\vert^2\right) \, \mathrm{d}x,
    \\
 \label{enpsi}
    & E_\psi( \psi ) =  \int_{\Omega} \left(\frac{\beta}{2}\vert\nabla \psi\vert^2 + F_\psi(\psi)\right) \, \mathrm{d}x.
\end{align}
The functions $F_\phi$ and $F_\psi$ are the bulk energy densities corresponding to the binary fluid mixture and the surfactant, respectively. Throughout this paper, we assume that they are singular, with the Flory--Huggins potential as a prototypical example. These singular potentials force the phase-field variables to take values in the corresponding physical domains, that is, $\phi\in [-1,1]$ and $\psi\in [0,1]$. When $\sigma_2\neq 0$, $E_\phi$ yields an Ohta--Kawasaki-type functional that accounts for possible nonlocal interactions between the two fluid components (see \cite{NO1995}). The bivariate function $G$ denotes the density of the interaction energy. A typical form is given by
\begin{equation*}
    G(\phi,\psi) = \frac{\gamma_1}{2} \psi \phi^2 - \frac{\gamma_2}{4} \psi ( \phi^2 -1 )^2,
\end{equation*}
where $\gamma_1, \gamma_2 \in \mathbb{R}_+$ are given constants (see \cite{EDAT2013}). The first term in $G$ penalizes the free surfactant in the bulk phases, while the second term contributes to the high concentration of surfactant at the interface. For the total energy of the system \eqref{eq:nsch1}--\eqref{eq:nschi}, which is defined as
\begin{equation*}
    E_{\mathrm{tot}} (\bm{u},\phi,\psi) =  \int_{\Omega} \frac{\rho(\phi)}{2} | \bm{u} |^2 \, \mathrm{d}x
    + E_{\mathrm{free}}(\phi,\psi),
\end{equation*}
we can formally derive the following energy identity
\begin{align}
    & \frac{\mathrm{d}}{\mathrm{d}t} E_{\mathrm{tot}} (\bm{u},\phi,\psi) + \int_{\Omega}  \big(  \nu(\phi) | D \bm{u} |^2 + m_\phi(\phi) | \nabla \mu_\phi |^2 + m_\psi(\psi) | \nabla \mu_\psi |^2  \big)  \, \mathrm{d}x \nonumber \\
    & \quad + \sigma_1 \big( \overline{\phi} -c \big) \Big( \int_\Omega \mu_\phi\, \mathrm{d}x - \frac{\rho_1 - \rho_2}{4} \| \bm{u} \|_{L^2(\Omega)}^2  \Big) = 0, \quad \forall\, t\in (0,T). \label{basic_energy_identity}
\end{align}
In addition, a straightforward calculation shows that any (sufficiently smooth) solution to problem \eqref{eq:nsch1}--\eqref{eq:nschi} satisfies the following mass balance relations
\begin{align}
& \frac{\mathrm{d}}{\mathrm{d}t} \overline{\phi} +  \sigma_1 \big(\overline{\phi} -c\big)=0,
\quad \frac{\mathrm{d}}{\mathrm{d}t}  \overline{\psi}=0,
\quad \forall\, t\in (0,T).
\label{mass-relation}
\end{align}
For more detailed background information on the model \eqref{eq:nsch1}--\eqref{eq:nschi}, one may refer to \cite{OGW2026,EDAT2013,ZKYWYS2019} and the references therein.

\subsection{Related literature}
The mathematical analysis of the coupled system \eqref{eq:nsch1}--\eqref{eq:nsch6} remains largely open. In this work, we consider the standard boundary conditions in \eqref{eq:nschb}.
This setting maintains the mass and energy balance relations \eqref{basic_energy_identity}--\eqref{mass-relation}, which serve as a starting point for the theoretical analysis of the initial boundary value problem \eqref{eq:nsch1}--\eqref{eq:nschi}. Recently, in \cite{OGW2026}, we established the existence of global weak solutions to problem \eqref{eq:nsch1}--\eqref{eq:nschi} in both cases of non-degenerate mobilities and degenerate mobilities, under rather general structural assumptions.

There are other physically relevant choices of boundary conditions for the system \eqref{eq:nsch1}--\eqref{eq:nschi}, such as the dynamic boundary condition for $\phi$ and the generalized Navier boundary condition for $\bm{u}$, which were introduced in \cite{WLZ2024,ZKYWYS2019} to describe possible contact line dynamics at the solid boundary. These generalized boundary conditions are of great importance in practical applications. However, they lead to more complicated bulk-surface coupling structures, and thus further theoretical obstacles in the corresponding analysis. For related Navier--Stokes--Cahn--Hilliard systems without surfactant interaction, see \cite{GGM2016,GGW2019,GLW2024,GK2023} and the references therein.

When the interaction with the surfactant is neglected, numerous papers have been devoted to the mathematical analysis of the initial boundary value problem subject to the standard no-slip and homogeneous Neumann boundary conditions for the AGG model (see, for instance, \cite{ADG2013ndm,ADG2013dm,AGG2024, AGP2024,GLW2026,G2021,G2022,GP2025b} and the references therein).
Regarding strong well-posedness, which is the main focus of this study, the existence of global-in-time strong solutions in the two-dimensional case was proved in \cite{G2021} under periodic boundary conditions and later in \cite{AGG2024} for the standard no-slip/Neumann boundary conditions. In the three-dimensional case, the existence of local-in-time strong solutions was established in \cite{G2022}. It is worth noting that although singular potentials were treated in these works, the mobilities were assumed to be constant. We also mention \cite{AW2021}, where the authors applied the monotone operator theory and the semigroup approach to establish the local well-posedness of strong solutions for polynomial-like potentials and non-constant, non-degenerate mobility.

For the Cahn--Hilliard equation without fluid interaction, concentration-dependent mobility combined with singular potentials yields several difficulties in the analysis, due to the resulting highly nonlinear structure.
When the mobility is non-degenerate, the existence of global weak solutions was established in \cite{BB1999}. In addition, the existence and uniqueness of local strong solutions were obtained in three dimensions.
In the two-dimensional case, the authors of \cite{CGGG2025} proved the uniqueness of weak solutions, the regularity propagation of weak solutions at positive times, and that every global weak solution converges towards a single steady state as $t\to \infty$.
We refer to \cite{GP2025b} for a different approach that yields convergence for any global weak solution only satisfying an energy inequality, under the assumption that the mobility is a non-degenerate continuous function.
If regular potentials are considered instead, the two-dimensional initial-boundary value problem admits a unique global-in-time classical solution with proper higher-order Schauder-type estimates (see \cite{LQY2006}).
Finally, we note that the argument in \cite{BB1999} for the existence of local-in-time strong solutions can be applied to more complicated coupled systems, for example, the diffuse-interface model describing a ternary system constituted by a conductive diblock copolymer and a homopolymer acting as solvent, under the effect of an electric field \cite{ADG2024}.

\subsection{Summary of results and strategy of the proofs}

In this study, we investigate the strong well-posedness of problem \eqref{eq:nsch1}--\eqref{eq:nschi}. First, we establish the existence of local-in-time strong solutions in three dimensions and global-in-time strong solutions in two dimensions. When the spatial dimension is two, we further show that, under natural growth assumptions on the singular potentials, strong solutions remain strictly separated from the pure phases and a weak-strong uniqueness result holds.
In the three-dimensional case, we demonstrate the local-in-time uniqueness of strong solutions, provided that the initial data are strictly separated from the pure phases. These results are summarized in Theorems \ref{nschsse3d}, \ref{nschsse2d}, see Section \ref{sec:main results} for details.
To the best of our knowledge, no analogous results are yet available for Navier--Stokes--Cahn--Hilliard (NSCH) type systems for incompressible two-phase flows with unmatched densities and concentration-dependent mobilities, even in the absence of surfactant interaction.

We now highlight some novelties of this work, along with the associated difficulties.
First, we extend the previous result in the three-dimensional case on the local-in-time strong well-posedness of the NSCH system with unmatched densities and constant mobilities to the concentration-dependent mobility case.
In \cite{G2022}, an appropriate semi-Galerkin scheme was used to construct approximate strong solutions, where a Galerkin approximation was applied to the Navier--Stokes equations and a viscous regularization was introduced in the convective Cahn--Hilliard equation.
We adopt a similar approximation strategy here in order to preserve the positivity of the averaged density. Indeed, in the unmatched-density setting, the density profile $\rho(\phi)$ depends linearly on the concentration difference of the two fluids.
Therefore, the positivity of the density, which is needed to avoid the possible degeneracy of the momentum equation \eqref{eq:nsch1} and its Galerkin approximation (especially in the term $\partial_t (\rho \bm{u})$), may be lost if one cannot ensure that $\phi$ takes its values in the corresponding physical range $[-1,1]$. One possible approach to overcome this difficulty is to extend the density function nonlinearly to $\mathbb{R}$, while an additional higher-order correction term must be introduced in \eqref{eq:nsch1} to maintain an analog of the energy identity \eqref{basic_energy_identity}, thereby making the analysis more complicated (see, e.g.,  \cite{Fri16,GGW2019}). Alternatively, one can retain the singular potentials and directly restrict the phase-field variables to their physically relevant intervals. However, this immediately raises another difficulty, as pointed out in \cite{G2022}: in three dimensions, singular potentials prevent one from obtaining the time regularity of the chemical potentials required for higher-order estimates. The aforementioned difficulty can be overcome by introducing a viscous regularization in the convective Cahn--Hilliard subsystem. The resulting strict separation property keeps the phase-field variables away from the singular endpoints, so that the desired time regularity of the chemical potentials can be recovered.
However, it is exactly the analysis of the viscous counterpart of the convective Cahn--Hilliard subsystem \eqref{eq:nsch3}--\eqref{eq:nsch6} that constitutes the main new obstacle in the present case of concentration-dependent mobility. In the constant mobility case, the analysis of such a viscous convective Cahn--Hilliard system is standard, but in the concentration-dependent mobility case, even the uniqueness of strong solutions to this system is not straightforward (cf. \cite[Appendix]{G2022}). This issue is crucial in defining the mapping for solving the approximate system using a suitable fixed-point argument. We circumvent it by introducing two weighted Laplace operators (see \eqref{ahi_definition} and \eqref{asi_definition}) as in \cite{BB1999,CGGG2025}.
This allows us to construct test functions that are tailored to concentration-dependent mobilities and singular potentials. The convexity of the potential functions then enables us to obtain the continuous dependence of solutions on the initial data, which yields the uniqueness. Furthermore, we establish the desired regularity of strong solutions to the convective viscous Cahn--Hilliard subsystem, which in turn allows us to construct suitable approximate solutions to \eqref{eq:nsch1}--\eqref{eq:nschi}.

The global well-posedness of strong solutions to problem \eqref{eq:nsch1}--\eqref{eq:nschi} in dimension two is more delicate, since the nonlinear structure impedes direct higher-order estimates in general.
Here we can extend the results on the existence of global-in-time strong solutions and weak-strong uniqueness to the case of concentration-dependent mobility (cf. \cite{AGG2024} for the AGG model with constant mobility and without surfactant). Instead of constructing strong solutions via standard compactness arguments, we start with the global weak solutions whose existence was obtained in \cite{OGW2026} and prove that they are automatically strong for sufficiently regular initial data.
Applying the strategy of \cite[Theorem 2.4]{AGG2024}, we first show that with the given solenoidal velocity field of Leray--Hopf type, if the initial data are smooth enough, the weak solution to the convective Cahn--Hilliard subsystem \eqref{eq:nsch3}--\eqref{eq:nsch6} is indeed a strong one.
To achieve this goal, one has to control the additional nonlinear terms generated by the concentration-dependent mobilities together with the convective term, which requires a nontrivial generalization of the arguments in \cite{CGGG2025,AGG2024}.
The key ingredients are the existence of global-in-time strong solutions to this subsystem and uniqueness of weak solutions (or at least a weak-strong uniqueness).
Although the uniqueness of weak solutions is standard for constant mobility, it remained open even for the classical Cahn--Hilliard equation with concentration-dependent mobility and singular potential until the recent work \cite{CGGG2025}. Building on the approach therein, we exploit the properties of weighted Laplace operators and utilize the $L^4_tH^2_x$-boundedness of the phase-field variables. This allows us to derive a continuous dependence estimate, which immediately yields the uniqueness. However, proving the existence of global-in-time strong solutions requires a more delicate analysis. Indeed, the introduction of viscous regularization to enhance the time regularity of the chemical potentials gives rise to an additional difficulty: the $L^4_tH^2_x$ estimate of phase-field variables depends on the $L^2_tH^1_x$-norm of their time derivatives. To obtain the required uniform estimates, we exploit the dissipative structure of the system, which enables us to choose the viscous regularization coefficient sufficiently small so that these troublesome terms can be absorbed into the higher-order dissipative terms. Once the desired regularity of the phase-field variables and the chemical potentials have been established, the improved regularity of the velocity field follows from a linearization argument (cf. \cite{AGG2024}) applied to the momentum equation \eqref{eq:nsch1}.

\textit{Plan of the paper.} In Section \ref{mr}, we first introduce the functional settings, the notation, and the basic assumptions. After that, we state our main results.
Section \ref{3d_strong_proof} is devoted to the local-in-time well-posedness of problem \eqref{eq:nsch1}--\eqref{eq:nschi} in three dimensions.
In Section \ref{2d_strong_proof}, we establish the global-in-time well-posedness of problem \eqref{eq:nsch1}--\eqref{eq:nschi} in two dimensions.
Finally, appendices \ref{appendix} and \ref{appendix-2} contain some auxiliary results on the weighted Laplace operator and a convective viscous Cahn--Hilliard system.

\section{Main Results}\label{mr}
\setcounter{equation}{0}

\subsection{Preliminaries and notation}
\label{pre}
We first introduce some notation and conventions used in this paper. Let $\mathcal{X}$ be a (real) Banach space with the norm $\left\|\cdot\right\|_{\mathcal{X}}$. We denote by $\mathcal{X}^*$ its dual space and by $\langle \cdot,\cdot \rangle_{\mathcal{X}^*,\mathcal{X}}$ the associated duality pairing. Given a (real) Hilbert space $\mathcal{H}$, its inner product will be denoted by $(\cdot,\cdot)_{\mathcal{H}}$.
Throughout this paper, we assume that $\Omega \subset \mathbb{R}^d$ ($d\in\{2,3\}$) is a bounded domain with a sufficiently smooth boundary $\partial \Omega$.
For the standard Lebesgue and Sobolev spaces in $\Omega$, we use the notation $L^{p}(\Omega)$, $W^{k,p}(\Omega)$ for any $p \in [1,\infty]$ and $k\in \mathbb{N}$,
equipped with the corresponding norms
$\|\cdot\|_{L^{p}(\Omega)}$, $\|\cdot\|_{W^{k,p}(\Omega)}$, respectively. The space $W^{k,p}_0(\Omega)$ denotes the closure of $C^{\infty}_0 ( \Omega )$ in $W^{k,p}(\Omega)$ and its dual space is denoted by $ W^{-k,p'}(\Omega) \define  \big(  W^{k,p}_0(\Omega)\big)^*$,
where $(1/p)+(1/p')=1$, $p\in (1,\infty)$.
For $p = 2$, we use the convention $H^{k}(\Omega) \define  W^{k,2}(\Omega)$.
The $L^2$-Bessel potential spaces are denoted by $H^{s}(\Omega)$, $s\in\r$, which are defined by the restriction of distributions in $H^s( \mathbb{R}^{d} )$ to $\Omega$. For $s >0$, $H^{s}_0 (\Omega)$ denotes the closure of $C^{\infty}_0( \Omega )$ in $H^{s}(\Omega)$.
For simplicity, the norm and the inner product of $L^2(\Omega)$ will be indicated by $\|\cdot\|$ and $(\cdot,\cdot)$, respectively, while the duality pairing between $H^1(\Omega)$ and $(H^1(\Omega))^*$ will be indicated by $\langle \cdot,\cdot \rangle$.
Bold letters will be used for vector-valued spaces, for example, $\bm{L}^p(\Omega)= L^p(\Omega;\mathbb{R}^d)$, $p \in [1,\infty]$. When there is no ambiguity, for a vector-valued function $\bm{f}:\Omega\to \mathbb{R}^{d}$, the expression $\bm{f} \in L^{p}(\Omega)$ should be understood as $\bm{f} \in \bm{L}^p(\Omega) $.

Given a measurable set $I$ of $\mathbb{R}$, we introduce the function space $L^p(I;\mathcal{X})$ with $p\in [1,\infty]$, which consists of Bochner measurable $p$-integrable functions (if $p \in [1,\infty)$) or essentially bounded functions (if $p =\infty$) with values in a given Banach space $\mathcal{X}$. If $I=(a,b)$, we simply write $L^p(a,b;\mathcal{X})$. In addition, $f \in L^{p}_{\mathrm{loc}} ( [0,\infty);\mathcal{X} )$ if and only if $f \in L^{p} \big( 0,T;\mathcal{X} \big)$ for every $T\in (0,\infty)$. The space $L^p\ul([0,\infty);\mathcal{X})$ denotes the uniformly local variant of $L^p(0,\infty;\mathcal{X})$ consisting of all strongly measurable $f:[0,\infty)\to \mathcal{X}$ such that
\begin{equation*}
    \|f\|_{L^p\ul([0,\infty);\mathcal{X})}
    \define  \mathop\mathrm{sup}\limits_{t\ge0} \|f\|_{L^p(t,t+1;\mathcal{X})} < \infty.
\end{equation*}
If $T\in(0,\infty)$, we set $L^p\ul([0,T);\mathcal{X})= L^p(0,T;\mathcal{X})$.
For every $k\in \mathbb{N}_+$, $f\in W^{k,p} ( 0,T;\mathcal{X} )$ if and only if $ \{ (\frac{\mathrm{d}}{\mathrm{d}t})^jf \}_{j=0}^k \subset L^{p} ( 0,T;\mathcal{X} )$, where $\frac{\mathrm{d}}{\mathrm{d} t} f$ denotes the vector-valued distributional derivative of $f$ and $(\frac{\mathrm{d}}{\mathrm{d}t})^0 f \define f$.
Furthermore, $W^{k,p}_{\mathrm{uloc}} ( [0,\infty);\mathcal{X})$ is defined by replacing $L^{p} ( 0,T;\mathcal{X} )$ with $L^{p}_{\mathrm{uloc}} \big( [0,\infty);\mathcal{X} \big)$, and we set
$$
H^{k} ( 0,T;\mathcal{X} ) =W^{k,2} ( 0,T;\mathcal{X} ),
\quad
H^{k}_{\mathrm{uloc}} ( [0,\infty);\mathcal{X} ) = W^{k,2}_{\mathrm{uloc}} ( [0,\infty);\mathcal{X} ).
$$
Let $I=[0,T]$ if $T \in (0,\infty)$ or $I=[0,\infty)$ if $T=\infty$. Then $BC( I;\mathcal{X} )$ denotes the Banach space of all bounded and continuous functions $f:I \to \mathcal{X}$ equipped with the supremum norm, and $BUC( I;\mathcal{X} )$ is the subspace of all bounded and uniformly continuous functions.
Furthermore, for every $k \in \mathbb{N}_+$, $BC^k( I;\mathcal{X} )$ (resp. $BUC^k( I;\mathcal{X} )$) denotes all functions $f \in BC( I;\mathcal{X} )$ (resp. $f \in BUC( I;\mathcal{X} )$), whose Fr\'echet derivatives of order no larger than $k$ exist and belong to $BC( I;\mathcal{X} )$ (resp. $BUC( I;\mathcal{X} )$).
Finally, we denote by $BC_{w}( I;\mathcal{X} )$ the topological vector space of all bounded and weakly continuous functions $f:I \to \mathcal{X}$.

For convenience, we introduce the following shorthand notation:
\begin{align*}
    & H \define  L^2\left(\Omega\right),
    \quad V \define  H^1\left(\Omega\right),
    \quad  W \define \big\{u\in H^2(\Omega)\ \big|\ \partial_\mathbf{n}u=0\ \text{a.e. on}\ \partial\Omega\big\}.
\end{align*}
$H$ is identified with its dual, and we have the following continuous, dense, and compact embeddings:
$W \hookrightarrow V  \hookrightarrow H \hookrightarrow V^*$. In addition, we find $V=D \big((I-\Delta_N)^\frac{1}{2}\big)$, $W=D (I-\Delta_N)$, where $I$ is the identity operator on $H$ and $-\Delta_N: W \subset H \to H$ is the minus Laplace operator on $\Omega$ subject to the homogeneous Neumann boundary condition. In general, for all $k \in \mathbb{N}_+$, we define $H_N^k(\Omega)$ as $ D \big((I-\Delta_N)^\frac{k}{2}\big)$.
For every $f\in V^*$, $\overline{f}$ denotes its generalized mean value over $\Omega$, given by
$\overline{f}=|\Omega|^{-1}\langle f,1\rangle$. If $f\in L^1(\Omega)$, its mean value is given by $\overline{f}=|\Omega|^{-1}\int_\Omega f \, \mathrm{d}x$.
Then we recall the well-known Poincar\'{e}--Wirtinger inequality:
\begin{equation*}
\big\|f-\overline{f}\big\|\leq C_P \|\nabla f\|,\quad \forall\,
f\in V,
\end{equation*}
where the positive constant $C_P$ depends only on $\Omega$.
Let $A_N: V \to V^*$ denote the extension of $-\Delta_N$ on $V$, that is,
\begin{equation*}
    \langle A_N f,g \rangle
    = \int_{\Omega} \nabla f \cdot \nabla g \,\ \mathrm{d}x, \quad \forall\,f,g \in V.
\end{equation*}
Setting
\begin{align*}
    & L^p_{(0)}(\Omega) \define \big\{f\in L^{p}(\Omega) \ \big|\ \overline{f} =0\big\},
    \quad H_{(0)} \define  L^2_{(0)}(\Omega),
    \\
    & V_{(0)} \define  V\cap H_{(0)},
    \quad V^*_{(0)} \define  \big\{ L \in V^* \ \big|\   \langle L,1 \rangle = 0 \big\},
\end{align*}
it is easy to verify that the restriction of $A_N$ to $V_{(0)}$ is a linear isomorphism between $V_{(0)}$ and $V^*_{(0)}$.
Thus, we can define the inverse operator $\mathcal{N} \define  \big( A_N|_{V_{(0)}} \big)^{-1}:V_{(0)}^* \to V_{(0)}$, and obtain the following identities
\begin{align*}
&\left\langle A_N u,\mathcal{N}L \right\rangle
= \left\langle L,u \right\rangle,\quad \forall\, u \in V_{(0)},\ L \in V_{(0)}^*,
\\
&\left\langle L_1,\mathcal{N}L_2 \right\rangle = ( \nabla \left(\mathcal{N}L_1\right), \nabla \left(\mathcal{N}L_2\right) ),
\quad \forall\, L_1,\,L_2\ \in V_{(0)}^*.
\end{align*}
Define
\begin{align*}
    &\|L\|_* \define  \| \nabla \left(\mathcal{N}L\right)\|
    = \sqrt{\langle L,\mathcal{N}L \rangle},\quad \forall\, L\in V^*_{(0)}, \quad \|L\|_{-1}^2 \define  \|L-\overline{L}\|_*^2 + |\overline{L}|^2,
    \quad \forall\, L\in V^*.
\end{align*}
We observe that $\left\|\cdot\right\|_*$ and $\left\|\cdot\right\|_{-1}$ are equivalent to the standard dual norms on $V^*_{(0)}$ and $V^*$, respectively (cf. \cite{MZ2004}). A key feature of this study is that we need to handle nonconstant mobilities.
For this purpose, we define the following weighted Laplace operators and their inverses:
\begin{align}
    & \mathcal{A}_{\phi, h} : V \to V^*, \ \langle \mathcal{A}_{\phi, h} (f),g \rangle = \int_{\Omega} m_\phi( h ) \nabla f \cdot \nabla g  \, \mathrm{d}x, \ \mathcal{N}_{\phi, h} \define  \big(  \mathcal{A}_{\phi, h} |_{V_{(0)}}  \big)^{-1} : V_{(0)}^* \to V_{(0)},
    \label{ahi_definition} \\
    & \mathcal{A}_{\psi, h} : V \to V^*, \ \langle \mathcal{A}_{\psi, h} (f),g \rangle = \int_{\Omega} m_\psi( h ) \nabla f \cdot \nabla g  \, \mathrm{d}x, \ \mathcal{N}_{\psi, h} \define  \big(  \mathcal{A}_{\psi, h} |_{V_{(0)}}  \big)^{-1} : V_{(0)}^* \to V_{(0)},
    \label{asi_definition}
\end{align}
where $h$ is a Lebesgue measurable function defined in $\Omega$. The non-degeneracy of $m_\phi$ and $m_\psi$ listed in (\textbf{H1}) justifies the definition of $\mathcal{N}_{\phi, h}$ and $\mathcal{N}_{\psi, h}$ under suitable regularity assumptions in $h$. For basic properties of these operators, we refer to Appendix \ref{appendix} (see also \cite{BB1999,CGGG2025}).

The following interpolation inequality will be useful in the subsequent analysis:
\begin{equation*}
    \|f\|^2 \le \xi \|\nabla f\|^2 + C_\xi \|f\|^2_{V^*}, \quad \forall f \in V,
\end{equation*}
where $\xi \in (0,1)$ is an arbitrary constant and $C_\xi>0$ depends only on $\xi$ and $\Omega$.
In addition, we recall the classical Gagliardo--Nirenberg (GN for short) inequality:
\begin{equation}\label{gns_inequality}
    \| D^j f \|_{L^{p}(\Omega)} \le C_1 \| D^m f \|^a_{L^{r}(\Omega)} \| f \|^{1-a}_{L^{q}(\Omega)}
    + C_2 \| f \|_{L^{q}(\Omega)},
    \ \ \forall\, f \in W^{m,r}(\Omega) \cap L^{q}(\Omega),
\end{equation}
where $a\in [0,1]$, $p,q,r \in [1,\infty]$, and $j,m \in \mathbb{N}$ satisfy
\begin{align*}
    & 0 \le j < m, \quad  \frac{j}{m} \le a \le 1, \quad  \frac{1}{p} - \frac{j}{d} = a \Big( \frac{1}{r}-\frac{m}{d} \Big) + ( 1-a ) \frac{1}{q},
\end{align*}
and when $r>1$ as well as $m-j-(d/r)\in \mathbb{N}$, the constant $a$ is additionally required to be in $[j/m,1)$. In \eqref{gns_inequality}, $C_1$ and $C_2$ are positive constants only depending on $a, \ q, \ r, \ j, \ m, \ d$ and $\Omega$.
Agmon's inequality will also be frequently used, that is, for all $f \in H^{2}(\Omega)$,
\begin{equation}\label{Agmon_inequality}
    \| f \|_{L^{\infty}(\Omega)}^2 \le \left\{
    \begin{aligned}
     & C \| f \| \| f \|_{H^{2}(\Omega)},
    && \mathrm{if} \ d=2,
    \\
    & C \| f \|_{H^{1}(\Omega)} \| f \|_{H^{2}(\Omega)}, && \mathrm{if} \ d=3.
    \end{aligned}
    \right.
\end{equation}

For the velocity field, we define $\bm{C}^{\infty}_{0,\sigma}(\Omega) \define  \{ \bm{f} \in C^{\infty}_0(\Omega)^d \ | \ \mathrm{div}\, \bm{f} =0 \}$
and
\begin{align*}
    &\bm{L}^{p}_\sigma(\Omega) \define  \overline{\bm{C}^{\infty}_{0,\sigma}(\Omega)}^{\bm{L}^p(\Omega)},
    \quad
    \bm{H}^{s}_{0,\sigma}(\Omega) \define  \overline{\bm{C}^{\infty}_{0,\sigma}(\Omega)}^{\bm{H}^s(\Omega)},
    \quad
    \bm{W}^{k,p}_{0,\sigma}(\Omega) \define  \overline{\bm{C}^{\infty}_{0,\sigma}(\Omega)}^{\bm{W}^{k,p}(\Omega)},
\end{align*}
for all $p \in (1,\infty)$, $k \in \mathbb{N}_+$ and $s > 0$.
The Helmholtz projection corresponding to $\bm{L}^{2}_\sigma(\Omega)$ is denoted by $\mathbb{P}_{\sigma}$ (see, for example, \cite{S2001}). Then $\mathbb{P}_{\sigma} \bm{f} = \bm{f} - \nabla p$, where $p \in V_{(0)}$ is the solution to the (weak) Neumann problem
$$
( \nabla p, \nabla \phi )
= ( \bm{f}, \nabla \phi ),
\quad  \forall\, \phi \in C^{\infty} \big( \overline{\Omega} \big).
$$
In addition, if $\partial \Omega$ is sufficiently smooth (for example, of class $C^2$), we can also define the Helmholtz--Weyl decomposition of $\bm{L}^{p}(\Omega)$ for all $p \in (1,\infty)$
(see \cite[Section III.1]{G2011}).
When there is no confusion, $\mathbb{P}_{\sigma}$ also denotes the projection from $\bm{L}^{p}(\Omega)$ to $\bm{L}^{p}_\sigma(\Omega)$, whose null space is
\[
\big\{
\bm{w} \in L^{p}(\Omega):\bm{w} = \nabla p, \ \mathrm{for} \ \mathrm{some} \ p \in W^{1,p}(\Omega)   \big\}.
\]

In the subsequent analysis, we use the notation
\[
\rho_* \define  \min \{ \rho_1,\rho_2 \}, \quad  \rho^* \define  \max \{ \rho_1,\rho_2 \},
\quad \gamma=\frac{\rho_1 - \rho_2}{2},
\]
where $\rho_1$, $\rho_2$ denote positive homogeneous densities of the two components of the fluid mixture.
The capital letter $C$ denotes a generic positive constant that depends on the problem's structural data. Its meaning may change from line to line and even within the same chain of computations. Specific dependence will be pointed out if necessary.
Finally, for simplicity, we set $Q_{(s,t)}=\Omega \times (s,t), \ Q_{t}=Q_{(0,t)}$, $Q=Q_{(0,\infty)}$,
and analogously $S_{(s,t)} = \partial \Omega \times (s,t)$, $S_t = S_{(0,t)}$, and $S=S_{(0,\infty)}$.

\subsection{Statement of results}
\label{sec:main results}

Let us introduce the basic assumptions (cf. \cite{G2022,AGG2012,GP2025}).
\rm
\begin{itemize}
    \item[$(\mathbf{H0})$] The averaged density is defined as  $$
    \rho(\phi) = \frac{\rho_1 - \rho_2}{2}\phi + \frac{\rho_1 + \rho_2}{2}.
    $$
    \item[$(\mathbf{H1})$] We assume that $m_\phi(\cdot),m_\psi(\cdot) \in BUC^2(\mathbb{R})$.
    They are bounded from below by
\[
m_{\phi,*} \define  \inf\limits_{s \in \mathbb{R}} \{ m_\phi(s) \} >0 \quad  \mathrm{and} \quad m_{\psi,*} \define  \inf\limits_{s \in \mathbb{R}} \{ m_\psi(s) \} >0,
\]
    while their upper bounds are denoted by $m_\phi^* \define  \sup\limits_{s \in \mathbb{R}} \{ m_\phi(s) \}$ and $m_\psi^* \define  \sup\limits_{s \in \mathbb{R}} \{ m_\psi(s) \}$.

    Moreover, the viscosity coefficient satisfies $\nu \in C^1( [-1,1] )$ with
\[
\nu_* \define  \inf\limits_{s \in [-1,1]} \{ \nu(s) \} >0.
\]
    Its upper bound is denoted by $\nu^* \define  \sup\limits_{s \in [-1,1]} \{ \nu(s) \}$.

    \item[$(\mathbf{H2})$] The potential function $F_\phi\in C\big([-1,1]\big) \cap C^2\big((-1,1)\big)$ satisfies
    \begin{equation*}
        \mathop{\mathrm{lim}}\limits_{s \to -1^+} F_\phi'(s) = -\infty,\quad \mathop{\mathrm{lim}}\limits_{s \to 1^-} F_\phi'(s) = \infty, \quad F_\phi''(s) \ge \theta_\phi > 0, \quad \forall s \in (-1,1),
    \end{equation*}
    where $\theta_\phi$ is a given constant.
    Similarly, $F_\psi\in C\big([0,1]\big) \cap C^2\big((0,1)\big)$ satisfies
    \begin{equation*}
        \mathop{\mathrm{lim}}\limits_{s \to 0^+} F_\psi'(s) = -\infty,\quad \mathop{\mathrm{lim}}\limits_{s \to 1^-} F_\psi'(s) = \infty, \quad F_\psi''(s) \ge \theta_\psi > 0, \quad \forall s \in (0,1).
    \end{equation*}
    We set $F_\phi(s) = +\infty $ whenever $|s| > 1$ and $F_\psi(s) = +\infty $ whenever $s \in (-\infty,0) \cup (1,\infty)$. Without loss of generality, we take
    \begin{equation*}
        F_\phi(0)= F_\phi'(0) = F_\psi\left(\frac{1}{2}\right) = F_\psi'\left(\frac{1}{2}\right) = 0.
    \end{equation*}
    The bi-variate function $G$ satisfies $G(\cdot,\cdot)\in BUC^2(\mathbb{R}^{2})$.
\end{itemize}

Next, we introduce the notion of strong solutions to problem \eqref{eq:nsch1}--\eqref{eq:nschi}.
\begin{definition} \label{nschssd} \rm
Let $T \in (0,\infty]$ and set $I=[0,\infty)$ if $T= \infty$ or $I= [0,T] $ if $T<\infty$.
Suppose that $\Omega\subset \mathbb{R}^{3}$ is a bounded domain of class $C^3$ and Assumptions $(\mathbf{H0})$--$(\mathbf{H2})$ are satisfied. For any initial data
$\bm{u}_0 \in \bm{H}^{1}_{0,\sigma}(\Omega)$, $\phi_0, \psi_0 \in W$ such that  $\overline{\phi_0} \in (-1,1)$, $\overline{\psi_0} \in (0,1)$ and
$\mu_{\phi,0} \define - \Delta \phi_0 + F_\phi' (\phi_0)  \in V$, $\mu_{\psi,0} \define  - \beta \Delta \psi_0 + F_\psi' (\psi_0) \in V$,
we call $( \bm{u},\pi,\phi,\psi,\mu_\phi,\mu_\psi )$ that satisfies the regularity properties
\begin{align*}
    &\bm{u} \in BC (  I;\bm{H}^{1}_{0,\sigma}(\Omega)  )
    \cap L^{2}_{\mathrm{uloc}} ( I;\bm{H}^2(\Omega)  )
    \cap H^{1}_{\mathrm{uloc}} ( I;\bm{L}^{2}_\sigma(\Omega)),
    \\
    &\pi \in L^{2}_{\mathrm{uloc}} ( I;V ),
    \\
    &\phi,\psi \in L^{\infty} ( I;W^{2,p}(\Omega) ), \quad  \partial_{t} \phi, \partial_{t} \psi \in L^{\infty} ( I;V^* ) \cap L^{2}_{\mathrm{uloc}} ( I;V ),
    \\
    &\phi,\psi \in L^{\infty} ( Q_T ) \ \text{with} \ ( \phi,\psi ) \in (-1,1) \times (0,1) \ \mathrm{a.e.} \ \mathrm{in} \ Q_T,
    \\
    &\mu_\phi,\mu_\psi \in L^{\infty} ( I;V ) \cap L^{2}_{\mathrm{uloc}} ( I;H^3( \Omega ) ),
    \\
    &F_\phi'( \phi ),F_\psi'( \psi ) \in L^{\infty} ( I;L^{p}(\Omega) ),
\end{align*}
where $p = 6 $ if $d=3$ and $p \in [2,\infty)$ if $d=2$,
a strong solution to problem \eqref{eq:nsch1}--\eqref{eq:nschi} on $I$, if the system \eqref{eq:nsch1}--\eqref{eq:nsch6} holds almost everywhere in $Q_T$, and the boundary conditions \eqref{eq:nschb} (resp. initial conditions \eqref{eq:nschi}) hold almost everywhere on $S_T$ (resp. in $\Omega$).
\end{definition}

We now state our first result on the existence and uniqueness of local-in-time strong solutions in three dimensions:

\begin{theorem} \label{nschsse3d}
Let $\Omega\subset \mathbb{R}^{3}$ be a bounded domain of class $C^3$. Suppose that Assumptions $(\mathbf{H0})$--$(\mathbf{H2})$ are satisfied.

(1) For any initial data $\bm{u}_0 \in \bm{H}^{1}_{0,\sigma}(\Omega)$, $\phi_0,\psi_0 \in W$ satisfying $\overline{\phi_0} \in (-1,1)$, $\overline{\psi_0} \in (0,1)$ and $-\Delta \phi_0 + F_\phi'(\phi_0), -\beta \Delta \psi_0 + F_\psi'(\psi_0) \in V$, there exists a positive time $T_0$ depending on $\| \bm{u}_0 \|_{\bm{H}_{0,\sigma}^1(\Omega)}$, $\| \phi_0 \|_V$, $\| \psi_0 \|_V$, $\| -\Delta \phi_0 + F_\phi'(\phi_0) \|_V$, $\| -\beta \Delta \psi_0 + F_\psi'(\psi_0) \|_V$, $\overline{\phi_0}$ and $\overline{\psi_0}$ such that problem \eqref{eq:nsch1}--\eqref{eq:nschi} admits a strong solution on $[0,T_0]$ in the sense of Definition \ref{nschssd}.

(2) Assume in addition that $\Omega\subset \mathbb{R}^{3}$ is a bounded domain of class $C^4$, $F_\phi \in C^4\big( (-1,1) \big)$, $F_\psi \in C^4\big( (0,1) \big)$, and $\| \phi_0 \|_{L^{\infty}(\Omega)} < 1$, $\| \psi_0 - 1/2 \|_{L^{\infty}(\Omega)} < 1/2$. Then the strong solution is \emph{locally unique}: there exists a time $T_1 \in (0, T_0]$, such that the solution is unique on $[0, T_1]$.
\end{theorem}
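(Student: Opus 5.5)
For part (1), the plan is to follow the semi-Galerkin strategy of \cite{G2022}, adapted to the weighted Laplace operators \eqref{ahi_definition}--\eqref{asi_definition}. We fix a Galerkin basis of eigenfunctions of the Stokes operator and a small viscosity parameter $\alpha\in(0,1)$. For a given approximate velocity $\bm{v}$ in the $n$-dimensional Galerkin space, we solve the convective viscous Cahn--Hilliard subsystem
\[
\partial_t\phi+\bm{v}\cdot\nabla\phi+\sigma_1(\overline{\phi}-c)=\mathrm{div}(m_\phi(\phi)\nabla\mu_\phi),\qquad \mu_\phi=\alpha\partial_t\phi-\Delta\phi+\dots,
\]
and the analogous equations for $\psi$. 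Existence, uniqueness and regularity of its strong solutions are provided by the auxiliary results of Appendix \ref{appendix-2}; in particular, the viscous term yields strict separation of $\phi^\alpha,\psi^\alpha$ from the pure phases, hence $\partial_t\mu\in L^2(0,T;V^*)$. Uniqueness is obtained by testing the difference of two solutions with $\mathcal{N}_{\phi,\phi_1}(\phi_1-\phi_2)$ and using the convexity in (\textbf{H2}). Inserting the resulting $(\phi,\psi,\mu_\phi,\mu_\psi)$ into the Galerkin momentum equation, with $\rho(\phi)\ge\rho_*>0$ guaranteed since $\phi\in(-1,1)$, we obtain a map $\bm{v}\mapsto\bm{u}$. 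A Schauder (or Leray--Schauder) fixed point on a short interval, justified by the continuous dependence estimate above, then produces approximate solutions $(\bm{u}_{n,\alpha},\phi_{n,\alpha},\psi_{n,\alpha})$.

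The core of the proof is a set of higher-order a priori estimates uniform in $n$ and $\alpha$. Besides the energy identity \eqref{basic_energy_identity}, we derive a differential inequality for the quantity
\[
\mathcal{Y}(t)=\tfrac12\int_\Omega\rho|\nabla\bm{u}|^2\,\mathrm{d}x\ \text{(or } \|\bm{u}\|_{V}^2\text{ via }\partial_t\bm{u}\text{)}+\tfrac12\|\nabla\mu_\phi\|^2+\tfrac12\|\nabla\mu_\psi\|^2.
\]
To do so, we test the momentum equation by $\partial_t\bm{u}$ and by the Stokes operator applied to $\bm{u}$, and we differentiate the chemical potentials in time, testing with $\partial_t\phi$ and $\partial_t\psi$. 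In the latter step the mobility terms produce $\int m'_\phi(\phi)\partial_t\phi\,\nabla\mu_\phi\cdot\nabla\partial_t\mu_\phi$-type contributions; these are handled by rewriting $\partial_t\phi$ through $\mathcal{N}_{\phi,\phi}$ and using $H^2$ elliptic regularity for $\mathrm{div}(m_\phi(\phi)\nabla\cdot)$. The monotonicity $F''\ge\theta>0$ gives the good sign $\int F''_\phi(\phi)|\partial_t\phi|^2\ge0$. The $W^{2,6}$ estimates for $\phi,\psi$ and the $L^6$ bounds for $F'$ come from the standard singular elliptic problem $-\Delta\phi+F'_\phi(\phi)=f\in V$. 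The $H^2$ estimate for $\bm{u}$ follows from the Stokes-type problem with variable viscosity. Altogether this should give $\mathcal{Y}'\le C(1+\mathcal{Y})^{k}$, and therefore a uniform $T_0$ depending only on the listed norms of the data. We then pass to the limit $n\to\infty$, $\alpha\to0$ by Aubin--Lions compactness, recovering $\mu\in L^2H^3$ from elliptic regularity of the Neumann problem for $m_\phi(\phi)\nabla\mu_\phi$, using that $\partial_t\phi+\dots\in L^2V$. I expect this step to be the \textbf{main obstacle}: the time derivative estimate of the chemical potentials with non-constant mobility has to be uniform in $\alpha$, even though the separation constants themselves degenerate as $\alpha\to0$.

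For part (2), we use the separation of the initial data. Since $\phi,\psi\in C([0,T_0];C(\overline\Omega))$ (from $W^{2,6}$ bounds plus time continuity in $V$), there exists $T_1$ such that $|\phi|\le1-\delta$ and $|\psi-1/2|\le1/2-\delta$ on $[0,T_1]$. On this interval $F'$, $F''$ and $F'''$ (thanks to $F\in C^4$) are bounded and Lipschitz. Taking two strong solutions, we estimate their difference in the norm
\[
\|\bm{u}_1-\bm{u}_2\|^2+\|\phi_1-\phi_2\|_V^2+\|\psi_1-\psi_2\|_V^2.
\]
For the velocity we test with $\bm{u}_1-\bm{u}_2$, handling the density difference $\rho(\phi_1)-\rho(\phi_2)$ via $\partial_t\bm{u}_2\in L^2L^2$. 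For the phase fields we test the difference equations with $\mu$-differences and with $\partial_t$-differences; the mobility differences are Lipschitz in $\phi_1-\phi_2$ and are controlled by the $L^2H^3$ bound on $\mu_2$. The $C^4$ domain is used for the elliptic regularity involved. A Gronwall argument then yields uniqueness on $[0,T_1]$.
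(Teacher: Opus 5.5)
\textbf{Part (2) does not close as proposed.} A Gronwall estimate for $\|\widetilde{\bm{u}}\|^2+\|\widetilde\phi\|_V^2+\|\widetilde\psi\|_V^2$ fails in three dimensions. Because of the relative flux $\mathbf{J}$, the equation for $\widetilde{\bm{u}}$ contains the term
$-\gamma\int_\Omega m_\phi(\phi_2)(\nabla\widetilde{\mu_\phi}\cdot\nabla)\bm{u}_2\cdot\widetilde{\bm{u}}\,\mathrm{d}x$.
At the $V$ level, $\|\nabla\widetilde{\mu_\phi}\|$ (essentially $\|\widetilde\phi\|_{H^3}$) is only a dissipative quantity, and $\bm{u}_2$ is only in $L^\infty(0,T;\bm{H}^1(\Omega))\cap L^2(0,T;\bm{H}^2(\Omega))$. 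Bound the term by $\|\nabla\widetilde{\mu_\phi}\|\,\|\nabla\bm{u}_2\|_{L^p}\|\widetilde{\bm{u}}\|_{L^q}$ with $1/p+1/q=1/2$, and absorb $\|\nabla\widetilde{\mu_\phi}\|$ and $\|\nabla\widetilde{\bm{u}}\|$. What remains is the coefficient $\|\nabla\bm{u}_2\|_{L^p}^{2p/(p-3)}\lesssim\|\bm{u}_2\|_{H^2}^{3(p-2)/(p-3)}$, which is never integrable in time. The same computation closes only in 2D.

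The paper, following \cite{G2022}, instead estimates the phase-field differences at the $H^2$ level by testing the Cahn--Hilliard difference equations with $\Delta^2\widetilde\phi$ and $\Delta^2\widetilde\psi$. Then $\|\nabla\widetilde{\mu_\phi}\|\lesssim\|\Delta\widetilde\phi\|^{1/2}\|\Delta^2\widetilde\phi\|^{1/2}+\dots$, and the critical term is bounded by $\epsilon(\|\nabla\widetilde{\bm{u}}\|^2+\|\Delta^2\widetilde\phi\|^2)+C\|\nabla\bm{u}_2\|_{L^3}^4\|\Delta\widetilde\phi\|^2$, where $\|\nabla\bm{u}_2\|_{L^3}^4\in L^1(0,T)$. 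The new mobility terms $\int_\Omega\nabla(m_\phi(\phi_1))\cdot\nabla\widetilde{\mu_\phi}\,\Delta^2\widetilde\phi\,\mathrm{d}x$ and $\int_\Omega\mathrm{div}\big((m_\phi(\phi_1)-m_\phi(\phi_2))\nabla\mu_{\phi,2}\big)\Delta^2\widetilde\phi\,\mathrm{d}x$ are then controlled using separation on $[0,T_1]$.

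This is exactly where the $C^4$ boundary and $F_\phi,F_\psi\in C^4$ are used: one needs $\phi_i,\psi_i\in L^2(0,T_1;H^4(\Omega))$, $\partial_{\mathbf{n}}\Delta\phi_i=0$, and Lipschitz bounds for $F''$ and $F'''$ on the separated range. In your scheme these hypotheses play no real role, which is a symptom of the problem.

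\textbf{Part (1) follows the paper's overall route but is missing two ingredients.} The route matches: semi-Galerkin in $\bm{u}$, viscous regularization, Schauder, higher-order estimates from testing with $\partial_t\bm{u}$, $\mathbf{A}\bm{u}$ and $\partial_t\mu$, then limits.

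First, you cannot apply the viscous-subsystem results to the given data. In 3D, $\mu_{\phi,0}\in V$ yields neither $\|\phi_0\|_{L^\infty(\Omega)}<1$ nor $\phi_0\in H^3_N(\Omega)$. Yet the separation for $\alpha>0$ comes from an ODE comparison started at $\pm\|\phi_0\|_{L^\infty(\Omega)}$, and that separation is what makes $\partial_t\mu_\phi\in L^2(0,T;V)$ an admissible test function. The paper adds a third parameter: $\phi_{0,k}$ solving $-\Delta\phi_{0,k}+F_\phi'(\phi_{0,k})=h_k\circ\mu_{\phi,0}$ (similarly for $\psi$). These data are separated, lie in $H^3$, and have $W^{2,6}$ bounds uniform in $k$; the limit $k\to\infty$ is taken last.

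Second, the obstacle you flag is resolved by working with the weighted quantity $\frac12\int_\Omega m_\phi(\phi)|\nabla\mu_\phi|^2\,\mathrm{d}x$ rather than $\|\nabla\mu_\phi\|^2$. Testing the Cahn--Hilliard equation by $\partial_t\mu_\phi$ then produces only $\frac12\int_\Omega m_\phi'(\phi)\partial_t\phi|\nabla\mu_\phi|^2\,\mathrm{d}x$. This is bounded through $\|\nabla\partial_t\phi\|$ and $\|\nabla\mu_\phi\|_{H^2}$, the latter obtained from the gradient of the equation, so the separation constants enter only qualitatively. By contrast, a term containing $\nabla\partial_t\mu_\phi$, as in your plan, has no $\alpha$-uniform control. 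You also need an $\alpha$-independent bound for $\alpha\|\partial_t\phi(0)\|^2+\|\nabla\mu_\phi(0)\|^2$, which the paper obtains by testing the equations at $t=0$.

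Finally, continuity of the fixed-point map requires control of $\nabla\widetilde{\mu_\phi}$, since it appears in the Galerkin momentum equation. The weak $\mathcal{N}_{\phi,\phi_1}$-estimate does not provide this, so the paper tests the difference with $\widetilde{\mu_\phi}$.
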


Our second result concerns the existence of global-in-time strong solutions and a weak-strong uniqueness result for problem \eqref{eq:nsch1}--\eqref{eq:nschi} in two dimensions. In view of \cite{GP2025}, we impose the following structural assumptions on the singular potential functions $F_\phi$ and $F_\psi$.
\begin{itemize}
    \item[$(\mathbf{H2*})$] There exists some constant $\kappa> 1/2$ such that, as $\delta \to 0^+$, it holds
    \begin{equation}\notag
        \frac{1}{F_\phi'(1-2\delta)} = O\left(\frac{1}{| \ln \delta|^{\kappa}}\right),
        \quad \frac{1}{F_\phi'(-1+2\delta)} = O\left(\frac{1}{| \ln \delta|^{\kappa}}\right),
    \end{equation}
    and
    \begin{equation}\notag
        \frac{1}{F_\psi'(2\delta)} = O\left(\frac{1}{| \ln \delta|^{\kappa}}\right),
        \quad \frac{1}{F_\psi'(-1+2\delta)} = O\left(\frac{1}{| \ln \delta|^{\kappa}}\right).
    \end{equation}
\end{itemize}
On the other hand, Proposition \ref{cvcho_2d_global_sse} implies that any weak solution to problem \eqref{eq:nsch1}--\eqref{eq:nschi} in the sense of \cite[Definition 2.1]{OGW2026} with sufficiently regular initial data is indeed strong in the phase-field variables and chemical potentials.
Therefore, inspired by \cite[Theorem 4.1]{AGG2024}, we introduce the following notion of weak-strong uniqueness.
\begin{definition} \label{nsch_2d_weak_strong_unique} \rm
Let $\Omega \subset \mathbb{R}^2$ be a bounded domain of class $C^3$.
Suppose that Assumptions $(\mathbf{H0})$--$(\mathbf{H2})$ and $(\mathbf{H2}*)$ are satisfied.
In addition, $\bm{u}_0 \in \bm{H}^1_{0,\sigma}(\Omega)$, $\phi_0,\psi_0 \in W$ such that $\overline{\phi_0} \in (-1,1)$, $\overline{\psi_0} \in (0,1)$ and $-\Delta \phi_0 + F_\phi'(\phi_0), -\beta \Delta \psi_0 + F_\psi'(\psi_0) \in V$.
We say that the \emph{weak-strong uniqueness} holds for problem \eqref{eq:nsch1}--\eqref{eq:nschi} on $[0,T]$ if any weak solution $(\bm{u},\phi,\psi,\mu_\phi,\mu_\psi)$ and any strong solution $(\bm{U},\Pi,\Phi,\Psi,M_\phi,M_\psi)$ emanating from the same initial data $(\bm{u}_0,\phi_0,\psi_0)$ and satisfying the following properties coincide, that is,
\[
\bm{u} = \bm{U}, \quad \phi = \Phi, \quad \psi = \Psi, \quad \mathrm{a.e.} \ \text{in } Q_T.
\]
Specifically, the two solutions are, respectively, required to satisfy:
\begin{itemize}
    \item $(\bm{u},\phi,\psi,\mu_\phi,\mu_\psi)$ is a weak solution on $[0,T]$ exhibiting strong regularity in the phase-field variables and chemical potentials, such that
    \begin{align*}
        & \bm{u} \in BC_w ([0,T];\bm{L}^2_\sigma(\Omega)) \cap L^2 (0,T;\bm{H}^1_{0,\sigma}(\Omega)), \\
        & \phi,\psi \in L^\infty (0,T;  H_N^3(\Omega)) \cap H^1 (0,T;V) \cap C(\overline{\Omega} \times [0,T]),
        \\
        & \mu_\phi, \mu_\psi \in L^2 (0,T; H_N^3(\Omega) ) \cap L^\infty (0,T;V),
    \end{align*}
    with the uniform strict separation properties:
    \[
        \| \phi \|_{C(\overline{Q_T})} < 1 \quad \text{and} \quad \Big\|\psi - \frac12\Big\|_{C(\overline{Q_T})} < \frac12.
    \]
    It satisfies the weak formulation:
    \begin{align*}
        & \int_\Omega \rho(\phi(t)) \bm{u}(t) \cdot \bm{w}(t) \ \mathrm{d} x
        - \int_\Omega \rho(\phi_0) \bm{u}_0 \cdot \bm{w}(0) \, \mathrm{d} x
        - \int_0^t \big(  \rho(\phi) \bm{u} , \partial_t \bm{w} \big) \, \mathrm{d} \tau
        \\
        & \qquad - \int_0^t \big(  \rho(\phi) \bm{u} \otimes \bm{u} , \nabla \bm{w} \big) \, \mathrm{d} \tau
        + \int_0^t \big( \nu(\phi) D \bm{u}, D \bm{w} \big) \, \mathrm{d} \tau
        - \int_0^t \big( \bm{u} \otimes \mathbf{J}, \nabla \bm{w} \big) \, \mathrm{d} \tau
        \\
        & \quad = \int_0^t \big(  \mu_\phi \nabla \phi + \mu_\psi \nabla \psi , \bm{w} \big) \, \mathrm{d} \tau,
        \quad \forall\, t\in (0,T),
    \end{align*}
    for all $\bm{w} \in H^1(0,T;\bm{L}_\sigma^2(\Omega)) \cap L^4 (0,T;\bm{H}^1_{0,\sigma}(\Omega))$,
    where $\mathbf{J} = - \gamma m_\phi(\phi) \nabla \mu_\phi$ almost everywhere in $Q_T$, and equations \eqref{eq:nsch3}--\eqref{eq:nsch6} almost everywhere in $Q_T$, with
    \[
    (\bm{u},\phi,\psi)|_{t=0} = (\bm{u}_0,\phi_0,\psi_0),
    \quad \text{a.e. in}\ \Omega.
    \]
    Moreover, the energy inequality holds
    \begin{align*}
        & E_{\mathrm{tot}}(\bm{u}(t),\phi(t),\psi(t)) + \int_{Q_t} \big( \nu(\phi) | D \bm{u} |^2 + m_\phi(\phi) | \nabla \mu_\phi |^2 + m_\psi(\psi) | \nabla \mu_\psi |^2 \big) \, \mathrm{d} (x,\tau)
        \\
        & \quad + \int_{Q_t}  \sigma_1 \big(  \overline{\phi} - c  \big) \Big(  \mu_\phi - \frac12 \gamma | \bm{u} |^2  \Big)  \, \mathrm{d} (x,\tau) \le E_{\mathrm{tot}}(\bm{u}_0,\phi_0,\psi_0),\quad \forall\, t \in [0,T].
    \end{align*}
    \item $(\bm{U},\Pi,\Phi,\Psi,M_\phi,M_\psi)$ is a strong solution on $[0,T]$ such that
    \begin{align*}
        & \bm{U} \in BC ([0,T];\bm{H}^1_{0,\sigma}(\Omega)) \cap L^2 (0,T;\bm{H}^2 (\Omega))
        \cap H^1 (0,T;\bm{L}^2_\sigma(\Omega)),
        \\
        & \Phi,\Psi \in L^\infty (0,T; H_N^3(\Omega)) \cap H^1 (0,T;V) \cap C(\overline{\Omega} \times [0,T]),
        \\
        & M_\phi, M_\psi \in L^2 (0,T; H_N^3(\Omega) ) \cap L^\infty (0,T;V),
    \end{align*}
    along with the strict separation properties $\| \Phi \|_{C(\overline{Q_T})} < 1$ and $\| \Psi - 1/2 \|_{C(\overline{Q_T})} < 1/2$. It satisfies the system \eqref{eq:nsch1}--\eqref{eq:nsch6} almost everywhere in $Q_T$, and the initial condition
    \[
    (\bm{U},\Phi,\Psi)|_{t=0} = (\bm{u}_0,\phi_0,\psi_0),\quad \text{a.e. in}\ \Omega.
    \]
\end{itemize}
\end{definition}

Our second result reads as follows:
\begin{theorem} \label{nschsse2d}
Let $\Omega\subset \mathbb{R}^{2}$ be a bounded domain of class $C^3$. Suppose that Assumptions $(\mathbf{H0})$--$(\mathbf{H2})$ hold.

(1) For any initial data $\bm{u}_0 \in \bm{H}^{1}_{0,\sigma}(\Omega)$, $\phi_0,\psi_0 \in W$ with $\overline{\phi_0} \in (-1,1)$, $\overline{\psi_0} \in (0,1)$ and $-\Delta \phi_0 + F_\phi'(\phi_0),\, -\beta \Delta \psi_0 + F_\psi'(\psi_0) \in V$, problem \eqref{eq:nsch1}--\eqref{eq:nschi} admits a global strong solution on $[0,\infty)$ in the sense of Definition \ref{nschssd}.

(2) In addition, if the assumption $(\mathbf{H2}*)$ holds, then $(\phi, \psi, \mu_\phi, \mu_\psi)$ satisfies the following regularity properties:
\begin{align*}
    & \phi,\psi \in L^{\infty} ( 0,\infty;H^{3}_N(\Omega) ) \quad \text{and}\quad \phi \in [-1+\delta_\phi,1-\delta_\phi], \ \psi \in [\delta_\psi,1-\delta_\psi] \ \mathrm{in} \ Q,
    \\
    & \mu_\phi, \mu_\psi \in BC ( [0,\infty);V ), \quad \partial_{t} \mu_\phi, \partial_{t} \mu_\psi \in L^{2} ( 0,\infty;V^* ),
\end{align*}
where the positive constant $\delta_\phi\in (0,1)$ depends on $\Omega$, $F_\phi$, $\| G \|_{C^2( [-1,1]\times[0,1] )}$, and $\| \mu_\phi \|_{L^{\infty} ( 0,\infty;V )}$, while $\delta_\psi\in (0,1/2)$ depends on $\Omega$, $\beta$, $F_\psi$, $\| G \|_{C^2( [-1,1]\times[0,1] )}$, and $\| \mu_\psi \|_{L^{\infty} ( 0,\infty;V )}$. If we further assume that $F_\phi \in C^3\big( (-1,1) \big)$ and $F_\psi \in C^3\big( (0,1) \big)$, then we can conclude the weak-strong uniqueness as described in Definition \ref{nsch_2d_weak_strong_unique}.
\end{theorem}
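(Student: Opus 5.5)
We prove Theorem \ref{nschsse2d} by following the strategy outlined in the introduction: we do not build strong solutions by compactness from scratch, but start from a global weak solution and upgrade its regularity step by step. Given the initial data, the existence result of \cite{OGW2026} provides a global weak solution $(\bm{u},\phi,\psi,\mu_\phi,\mu_\psi)$ satisfying the energy inequality. This yields $\bm{u}\in L^\infty(0,\infty;\bm{L}^2_\sigma)\cap L^2_{\mathrm{uloc}}(\bm{H}^1_{0,\sigma})$ and $\phi,\psi\in L^\infty(V)$, together with uniform-local bounds on $\nabla\mu_\phi$ and $\nabla\mu_\psi$ in $L^2$. The constant $\sigma_1$-term is controlled through the explicit mass relation \eqref{mass-relation}.

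\textbf{Step 1: upgrade the Cahn--Hilliard subsystem with frozen velocity.} Freeze $\bm{u}$ as a given Leray--Hopf field and regard \eqref{eq:nsch3}--\eqref{eq:nsch6} as a convective Cahn--Hilliard system with concentration-dependent mobility. We then do two things.
\begin{itemize}
\item \emph{Uniqueness of weak solutions.} For two weak solutions, test the difference equations with $\mathcal{N}_{\phi,\phi_1}(\phi_1-\phi_2-\overline{\cdots})$, and similarly for $\psi$, using the weighted operators \eqref{ahi_definition}--\eqref{asi_definition} as in \cite{CGGG2025}. Convexity of $F_\phi,F_\psi$ from (\textbf{H2}), the $BUC^2$ bounds on $m_\phi,m_\psi,G$, and the $L^4_tH^2_x$ bound available in 2D give a Gronwall estimate in the $V^*$ norm.
\item \emph{A global strong solution of the subsystem.} Construct it via a viscous regularization $\varepsilon\partial_t\phi$, using the auxiliary results of Appendix \ref{appendix-2}. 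The higher-order estimate comes from differentiating in time and testing with $\mathcal{N}_{\phi,\phi}\partial_t\phi$ (respectively $\partial_t\psi$). This controls $\|\partial_t\phi\|_*$ and hence $\|\mu_\phi\|_V$. The troublesome terms involve $\partial_t m_\phi(\phi)=m_\phi'(\phi)\partial_t\phi$ multiplied by $\nabla\mu_\phi$, together with the convective term $\bm{u}\cdot\nabla\phi$. We close them with Ladyzhenskaya's inequality and the 2D elliptic estimate $\|\phi\|_{H^2}^2\lesssim (1+\|\mu_\phi\|)\ldots$. The $L^4_tH^2_x$ bound depends on $\|\partial_t\phi\|_{L^2V}$, and we absorb that dependence by taking the viscosity $\varepsilon$ small relative to the dissipation.
\end{itemize}
By weak--strong uniqueness the weak solution coincides with this strong one. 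Consequently $\mu_\phi,\mu_\psi\in L^\infty(V)\cap L^2_{\mathrm{uloc}}(H^3)$, $\phi,\psi\in L^\infty(W^{2,p})$, and $F'_\phi(\phi),F'_\psi(\psi)\in L^\infty(L^p)$ for every $p<\infty$. The last bound follows by testing \eqref{eq:nsch4} with $|F'_\phi(\phi)|^{p-2}F'_\phi(\phi)$, using the 2D embedding $V\hookrightarrow L^p$.

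\textbf{Step 2: velocity regularity.} Once $\phi$ and $\mu$ are regular, \eqref{eq:nsch1} becomes a linear Stokes-type problem with variable density and viscosity and right-hand side in $L^2_{\mathrm{uloc}}(L^2)$. We follow the linearization argument of \cite{AGG2024}. Solve the linear problem with coefficients $\rho(\phi)$, $\nu(\phi)$ and $\mathbf{J}$ to obtain an $\bm{H}^1$-valued strong solution; the relevant coefficient bounds are $\partial_t\rho(\phi)\in L^2V$ and $\mathbf{J}\in L^2_{\mathrm{uloc}}(H^2)$. Then identify it with $\bm{u}$ by uniqueness for the linear weak problem. The $\bm{H}^2$ estimate follows from Stokes regularity with variable viscosity, and the pressure is recovered by de Rham's theorem. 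This proves part (1).

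\textbf{Step 3: part (2).} Under (\textbf{H2*}), the uniform bound $\mu_\phi\in L^\infty(0,\infty;V)$ allows the De Giorgi / Trudinger--Moser argument of \cite{GP2025} in 2D to give strict separation with $\delta_\phi,\delta_\psi$ depending only on the stated quantities. The coupling $\partial_\phi G$ is bounded and is treated as a perturbation. Once the solution is separated, $F'$ and $F''$ are bounded along it. Differentiating the chemical potential equations in time then gives $\partial_t\mu\in L^2V^*$, and elliptic regularity applied to \eqref{eq:nsch4}, \eqref{eq:nsch6} gives $H^3_N$ bounds. Finally, weak--strong uniqueness is proved by a relative energy/difference estimate on the system for $(\bm{u}-\bm{U},\phi-\Phi,\psi-\Psi)$. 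Test the momentum difference with $\bm{u}-\bm{U}$, using the strong solution as test function admitted by the weak formulation, and test the phase equations as in Step 1. The assumption $F\in C^3$ makes $F''$ Lipschitz on the separated range.

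The main obstacle is expected to be the uniform-in-$\varepsilon$ higher-order estimate in Step 1, specifically the cross terms created by the variable mobility. We plan to close it with a carefully weighted combination of the time-differentiated energy and the $\mathcal{N}_{\phi,\phi}$-dual norm.
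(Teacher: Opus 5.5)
\textbf{Gap: Step 1 depends on $\partial_t\bm{u}$.} Your overall architecture matches the paper: the weak solution from \cite{OGW2026}, uniqueness of weak solutions to the frozen-velocity subsystem via $\mathcal{N}_{\phi,\phi_1}$ and the $L^4_tH^2_x$ bound, identification with the strong solution, the linearized momentum equation as in \cite{AGG2024}, and separation via \cite{GP2025}. The problem is the key higher-order estimate in Step 1. If you differentiate \eqref{eq:cvcho1} in time and test with $g_\phi=\mathcal{N}_{\phi,\phi}(\partial_t\phi-\partial_t\overline{\phi})$, the convective term produces $\int_\Omega \phi\,\partial_t\bm{u}\cdot\nabla g_\phi\,\mathrm{d}x$. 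This term carries no factor of the viscosity. That is exactly why Proposition \ref{cvchossr} and Appendix \ref{appendix-2} must assume $\partial_t\bm{u}\in L^2(I;\bm{L}^{6/5}(\Omega))$ or $L^{4/3}(I;\bm{L}^{1}(\Omega))$; see the $\partial_t\bm{u}$ terms in \eqref{D-k-e} and the function $h_{\bm{u}}$. For the velocity $\bm{u}_w$ of the weak solution you only know $\bm{u}_w\in L^\infty(0,\infty;\bm{L}^2_\sigma(\Omega))\cap L^2(0,\infty;\bm{H}^1_{0,\sigma}(\Omega))$. Time regularity of $\bm{u}_w$ is what your Step 2 is meant to produce, and Step 2 needs Step 1 first, so the argument is circular. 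Your bound is also not uniform in the smooth approximating velocities, so it does not survive the density argument either. Ladyzhenskaya's inequality does nothing for this term.

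\textbf{The missing idea.} Following \cite[Theorem 2.4]{AGG2024}, test the \emph{undifferentiated} equations \eqref{eq:cvcho1} and \eqref{eq:cvcho3} with $\partial_t\mu_\phi$ and $\partial_t\mu_\psi$. Using $\mathrm{div}\,\bm{u}=0$ and the structure of \eqref{eq:cvcho2}, the convective contribution $(\bm{u}\cdot\nabla\phi,\partial_t\mu_\phi)$ becomes the time derivative of $\alpha\int_\Omega \bm{u}\cdot\nabla\phi\,\partial_t\phi\,\mathrm{d}x$ plus two kinds of terms:
\begin{itemize}
\item terms involving only $\bm{u}$, $\nabla\bm{u}$, $\nabla\partial_t\phi$ and $F_\phi'(\phi)$;
\item the single term $\alpha\int_\Omega \phi\,\partial_t\bm{u}\cdot\nabla\partial_t\phi\,\mathrm{d}x$.
\end{itemize}
So $\partial_t\bm{u}$ appears only multiplied by $\alpha$ and drops out as $\alpha\to0$. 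The resulting bounds \eqref{es-kk-1}--\eqref{es-kk-2} depend on $\bm{u}$ only through $\|\bm{u}\|_{L^2(0,\infty;V)}$, which is what Proposition \ref{cvcho_2d_global_sse}(2) requires.

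The viscosity then causes the opposite difficulty. The mobility term $\int_\Omega m_\phi'(\phi)\partial_t\phi|\nabla\mu_\phi|^2\,\mathrm{d}x$ needs $\|\mu_\phi-\overline{\mu_\phi}\|_W$, and its elliptic bound contains $\alpha\|\nabla\partial_t\phi\|$. This leaves the factor $\tfrac12-\alpha(\|\bm{u}\|_V^3+\|\nabla\mu_\phi\|^3+\|\nabla\mu_\psi\|^3)$ in front of the dissipation. The paper closes this by taking $\alpha\le\alpha_0$ and running a continuation argument. With this replacement for your time-differentiated estimate, the rest of your plan goes through as in the paper.
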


\section{Local Well-posedness in 3D}
\label{3d_strong_proof}
\setcounter{equation}{0}

This section is devoted to the proof of Theorem \ref{nschsse3d}. We establish the existence of local-in-time strong solutions in three dimensions via an appropriate semi-Galerkin approximation scheme and compactness arguments. The result naturally extends to the two-dimensional case.

\subsection{The semi-Galerkin approximation}
\label{loc-3D-1}
Let $\{ \bm{w}_j \}_{j=1}^{\infty}$ and $\{ \lambda_j \}_{j=1}^{\infty}$ denote the eigenfunctions and corresponding eigenvalues of the Stokes operator $\mathbf{A} =  \mathbb{P}_{\sigma} (-\Delta)$ subject to the non-slip boundary condition.
For any $m \in \mathbb{N}_+$, we define the finite-dimensional subspace
$\mathbf{V}_m \define \mathrm{span} \{ \bm{w}_1,...,\bm{w}_m \} \subset \bm{L}^{2}_\sigma(\Omega)$, endowed with the standard $\bm{L}^{2}(\Omega)$-norm.
The orthogonal projection on $\mathbf{V}_m$ with respect to the inner product in $\bm{L}^{2}_\sigma(\Omega)$ is denoted by $\mathbb{P}_m$. Since $\Omega$ is of class $C^3$, the regularity theory of the Stokes operator yields that $\bm{w}_j\in \bm{H}_{0,\sigma}^3(\Omega)$ for all $j\in \mathbb{N}_+$.

Let $T>0$. For any $m,k\in \mathbb{N}_+$ and $\alpha\in (0,1)$,  we consider the following approximate problem for \eqref{eq:nsch1}--\eqref{eq:nschi}:
\begin{numcases}{}
    \big( \rho( \phi^m ) \partial_{t} \bm{u}^m, \bm{\eta} \big)
    + \big( \rho( \phi^m ) ( \bm{u}^m \cdot \nabla ) \bm{u}^m , \bm{\eta} \big)
    + \big(  \nu( \phi^m ) D \bm{u}^m , \nabla \bm{\eta} \big)
    \nonumber\\
    \qquad - \gamma \big(  m_\phi(\phi^m)( \nabla \mu_\phi^m \cdot \nabla ) \bm{u}^m + \sigma_1 (  \overline{\phi^m} - c ) \bm{u}^m, \bm{\eta} \big)
    \nonumber\\
    \quad = \big( \mu_\phi^m \nabla \phi^m + \mu_\psi^m \nabla \psi^m, \bm{\eta} \big), \label{eq:semi_galerkin_nsch1}
    &$\forall\, \bm{\eta} \in \mathbf{V}_m$, $t \in [0,T]$,
    \\
    \partial_{t} \phi^m + \bm{u}^m \cdot \nabla \phi^m + \sigma_1 \big( \overline{\phi^m} -c \big) = \mathrm{div}\, \big(m_\phi (\phi^m) \nabla \mu_\phi^m\big), \label{eq:semi_galerkin_nsch2} & a.e. in $Q_T$,
    \\
    \mu_\phi^m = \alpha \partial_{t} \phi^m -\Delta \phi^m + \sigma_2 \mathcal{N} \big( \phi^m - \overline{\phi^m} \big) + F_\phi'(\phi^m) + \partial_\phi G(\phi^m,\psi^m), \label{eq:semi_galerkin_nsch3}  & a.e. in $Q_T$,
    \\
    \partial_{t} \psi^m + \bm{u}^m \cdot \nabla \psi^m = \mathrm{div}\, \big(m_\psi (\psi^m) \nabla \mu_\psi^m\big) \label{eq:semi_galerkin_nsch4}, & a.e. in $Q_T$,
    \\
    \mu_\psi = \alpha \partial_{t} \psi^m -\beta \Delta \psi^m + F_\psi'(\psi^m) + \partial_\psi G(\phi^m,\psi^m), \label{eq:semi_galerkin_nsch5} & a.e. in $Q_T$,
\end{numcases}
with $\gamma=(\rho_1 - \rho_2)/2$, subject to the following boundary and initial conditions:
\begin{numcases}{}
    \bm{u}^m =\mathbf{0}, \ \ \partial_{\mathbf{n}} \phi^m = \partial_{\mathbf{n}} \mu_\phi^m = \partial_{\mathbf{n}} \psi^m = \partial_{\mathbf{n}} \mu_\psi^m = 0, \label{eq:semi_galerkin_nschb} & a.e. on $S_T$, \\
    \bm{u}^m|_{t=0} =  \mathbb{P}_m \bm{u}_0, \ \phi^m|_{t=0} = \phi_{0,k},\ \psi^m|_{t=0} = \psi_{0,k}, \label{eq:semi_galerkin_nschi} & a.e. in $\Omega$.
\end{numcases}
The semi-Galerkin scheme \eqref{eq:semi_galerkin_nsch1}--\eqref{eq:semi_galerkin_nschi} is inspired by \cite{G2022} for the AGG model. More precisely, we apply the Galerkin approximation only to the Navier--Stokes part \eqref{eq:nsch1}--\eqref{eq:nsch2} and introduce a viscous regularization to the convective Cahn--Hilliard subsystem \eqref{eq:nsch3}--\eqref{eq:nsch6}, where $\alpha\in (0,1)$. For the initial datum $(\bm{u}_0,\phi_0,\psi_0)$ prescribed in Theorem \ref{nschsse3d}, we perform a finite-dimensional projection of the initial velocity field and approximate the initial phase-field variables with their regularization $( \phi_{0,k},\psi_{0,k} )$ as in \cite{GMT2019,G2021}. For $k\in \mathbb{N}_+$, we define
\begin{align*}
    & \mu_{\phi,0}^k =  h_k \circ \mu_{\phi,0},
    \quad \mu_{\psi,0}^k =  h_k \circ \mu_{\psi,0},
\end{align*}
where $\mu_{\phi,0} = -\Delta \phi_0 + F_\phi' ( \phi_0 )$, $\mu_{\psi,0} = -\beta \Delta \psi_0 + F_\psi' ( \psi_0 )$, and $h^k$ is a global Lipschitz function given by
\begin{equation*}
    h^k( z )=\left\{
    \begin{aligned}
        & -k, && z < -k, \\
        & z,  && z \in [-k,k], \\
        & k, && z >k.
    \end{aligned}
    \right.
\end{equation*}
Let $\phi_{0,k},\psi_{0,k}$ be two functions that solve the following elliptic problems subject to homogeneous Neumann boundary conditions, respectively,
\begin{numcases}{}
    -\Delta \phi_{0,k} + F_\phi' ( \phi_{0,k} ) = \mu_{\phi,0}^k, & in $\Omega$, \nonumber\\
    \partial_{\mathbf{n}} \phi_{0,k} = 0, & on $\partial \Omega$,\nonumber
\end{numcases}
\begin{numcases}{}
    -\beta \Delta \psi_{0,k} + F_\psi' ( \psi_{0,k} ) = \mu_{\psi,0}^k, & in $\Omega$, \nonumber\\
    \partial_{\mathbf{n}} \psi_{0,k} = 0, & on $\partial \Omega$.\nonumber
\end{numcases}
Some basic properties of the approximate initial data $(\phi_{0,k},\psi_{0,k})$ are summarized below (cf. \cite{G2021,G2022}):
\begin{lemma} \label{approximate_initial_phase_field}
Suppose that the assumption $(\mathbf{H2})$ holds and $(\phi_0,\psi_0)$ is given as in Theorem \ref{nschsse3d}.

(1) For any $k\in \mathbb{N}_+$, we have $\mu_{\phi,0}^k,\, \mu_{\psi,0}^k\in V\cap L^\infty(\Omega)$ such that
$$
\|\mu_{\phi,0}^k\|_V\leq \|\mu_{\phi,0}\|_V,\quad \|\mu_{\psi,0}^k\|_V\leq \|\mu_{\psi,0}\|_V,\quad \forall\, k\in \mathbb{N}_+.
$$
The functions $\phi_{0,k}, \psi_{0,k}\in W^{2,6}(\Omega)\cap W$ are uniquely determined with $F_\phi' ( \phi_{0,k} ),\, F_\psi' ( \psi_{0,k} )\in L^6(\Omega)$. In addition, $\| \phi_{0,k} \|_{W^{2,6}(\Omega)}$ (resp. $\| \psi_{0,k} \|_{W^{2,6}(\Omega)}$) and $\| F_\phi'( \phi_{0,k} ) \|_{L^{6}(\Omega)}$ (resp. $\| F_\psi'( \psi_{0,k} ) \|_{L^{6}(\Omega)}$) are uniformly bounded by $\|\mu_{\phi,0}\|_V$ (resp. $\|\mu_{\psi,0}\|_V$).

(2) As $k \to \infty$, we have $\mu_{\phi,0}^k\to \mu_{\phi,0}$ and $\mu_{\psi,0}^k\to \mu_{\psi,0}$ in $H$, which implies that  $\phi_{0,k} \to \phi_0$ and $\psi_{0,k} \to \psi_0$ in $V$.

(3) For any $k\in \mathbb{N}_+$, $\| F_\phi'( \phi_{0,k} ) \|_{L^{\infty}(\Omega)}\leq k$ and $\| F_\psi'( \psi_{0,k} ) \|_{L^{\infty}(\Omega)}\leq k$, which implies that
$$
\phi_{0,k} \in [-1+\delta_\phi(k),1-\delta_\phi(k)], \quad  \psi_{0,k} \in [\delta_\psi(k),1-\delta_\psi(k)],
\quad \text{in} \ \Omega
$$
for some $\delta_\phi(k)\in (0,1)$, $\delta_\psi(k)\in (0,1/2)$. As a consequence, we have $F_\phi' ( \phi_{0,k} ),\, F_\psi' ( \psi_{0,k} )\in V$, which implies  $\phi_{0,k},\,\psi_{0,k} \in H^3(\Omega)$.
\end{lemma}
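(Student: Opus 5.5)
The plan is to treat the two elliptic problems separately; they decouple and have the same structure. I describe the argument for $\phi_{0,k}$; the one for $\psi_{0,k}$ is identical after the shift $\psi\mapsto \psi-\tfrac12$ and the factor $\beta$.

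\emph{Part (1).} Since $h_k$ is $1$-Lipschitz with $h_k(0)=0$, the chain rule for Lipschitz truncations gives $\mu^k_{\phi,0}\in V\cap L^\infty(\Omega)$. It also gives $|\mu^k_{\phi,0}|\le|\mu_{\phi,0}|$ and $|\nabla\mu^k_{\phi,0}|\le|\nabla\mu_{\phi,0}|$ a.e., hence the $V$-bound.

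Existence and uniqueness of $\phi_{0,k}$ come from minimizing the strictly convex, coercive functional $\int_\Omega \big(\tfrac12|\nabla\phi|^2+F_\phi(\phi)-\mu^k_{\phi,0}\phi\big)\,\mathrm{d}x$ over $V$. Uniqueness uses the monotonicity of $F_\phi'$: test the difference of two equations with the difference of solutions. To identify the minimizer as a solution with $F_\phi'(\phi_{0,k})\in L^2$, I use the standard Yosida/monotone-operator theory for $-\Delta_N+\partial F_\phi$ (as in \cite{G2021,GMT2019}).

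For the $W^{2,6}$ regularity, test the equation formally with $|F_\phi'(\phi)|^{4}F_\phi'(\phi)$. The gradient term gives $\int F_\phi''(\phi)\,|F'_\phi(\phi)|^4|\nabla\phi|^2\ge0$, so Hölder yields $\|F'_\phi(\phi_{0,k})\|_{L^6}\le\|\mu^k_{\phi,0}\|_{L^6}\le C\|\mu_{\phi,0}\|_V$ by the Sobolev embedding in 3D. Rigorously, this is done on the Yosida approximation and one then passes to the limit. Elliptic $L^6$ regularity then bounds $\|\phi_{0,k}\|_{W^{2,6}}$, using also $|\phi_{0,k}|\le1$.

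\emph{Part (2).} Since $|\mu_{\phi,0}^k-\mu_{\phi,0}|\le|\mu_{\phi,0}|\mathbf 1_{\{|\mu_{\phi,0}|>k\}}$, dominated convergence gives $\mu^k_{\phi,0}\to\mu_{\phi,0}$ in $H$. Next, subtract the equation for $\phi_0$, which holds by definition of $\mu_{\phi,0}$, and test with $\phi_{0,k}-\phi_0$. Monotonicity of $F'_\phi$ gives
\[
\|\nabla(\phi_{0,k}-\phi_0)\|^2+\theta_\phi\|\phi_{0,k}-\phi_0\|^2\le \|\mu^k_{\phi,0}-\mu_{\phi,0}\|\,\|\phi_{0,k}-\phi_0\|,
\]
so $\phi_{0,k}\to\phi_0$ in $V$.

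\emph{Part (3).} This is the main step: a maximum-principle bound $\|F'_\phi(\phi_{0,k})\|_{L^\infty}\le k$. Test with $(F'_\phi(\phi)-k)_+$, again via a regularization. The gradient term is $\int F''_\phi(\phi)|\nabla\phi|^2\mathbf 1_{\{F'_\phi(\phi)>k\}}\ge0$, and the right-hand side satisfies $\int(\mu^k_{\phi,0}-k)(F'_\phi(\phi)-k)_+\le 0$. Hence $(F'_\phi(\phi)-k)_+=0$, and symmetrically for $-k$. Equivalently, one can take $L^p$ bounds with $p\to\infty$ from the Part (1) estimate with exponent $p$, using $\|\mu^k_{\phi,0}\|_{L^\infty}\le k$.

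Because $F_\phi'$ blows up at $\pm1$ and is increasing, the bound $|F'_\phi(\phi_{0,k})|\le k$ forces $\phi_{0,k}\in[-1+\delta_\phi(k),1-\delta_\phi(k)]$, with $\delta_\phi(k)$ defined by $F_\phi'(\pm(1-\delta_\phi))=\pm k$. On this compact subinterval $F_\phi'$ is $C^1$, so $F'_\phi(\phi_{0,k})\in V$ with $\nabla F'_\phi(\phi_{0,k})=F''_\phi(\phi_{0,k})\nabla\phi_{0,k}$. Then $-\Delta\phi_{0,k}=\mu^k_{\phi,0}-F'_\phi(\phi_{0,k})\in V$, and $H^3$ elliptic regularity for the Neumann problem (boundary of class $C^3$) gives $\phi_{0,k}\in H^3(\Omega)$.

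The only delicate point is justifying the nonlinear test functions. I would do this on the Yosida regularization $F'_{\phi,\lambda}$, where all computations are legitimate, and then pass to the limit $\lambda\to0$ using the uniform bounds.
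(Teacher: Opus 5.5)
Your proposal is correct and follows essentially the standard argument that the paper does not reproduce but defers to \cite{G2021,G2022}, which rely on the elliptic result \cite[Theorem A.1]{GMT2019}: Yosida regularization, testing with powers of $F'_\phi$ to get $\|F'_\phi(\phi_{0,k})\|_{L^p}\le\|\mu^k_{\phi,0}\|_{L^p}$ and letting $p\to\infty$ for the bound by $k$, and strong monotonicity of $F'_\phi$ for the convergence in $V$. The only cosmetic points are these: $F'_\phi$ need not be odd, so $\delta_\phi(k)$ should be the smaller of $1-(F'_\phi)^{-1}(k)$ and $1+(F'_\phi)^{-1}(-k)$; and in Part (3) you can test directly with $(\phi_{0,k}-(F'_\phi)^{-1}(k))_+\in V$, which avoids any regularization.
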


\subsection{Solvability of the approximate problem}
We solve the approximate problem \eqref{eq:semi_galerkin_nsch1}--\eqref{eq:semi_galerkin_nschi}
by a fixed-point argument applied to the velocity field $\bm{u}^m$. This necessitates a preliminary well-posedness analysis of a viscous Cahn--Hilliard subsystem with convection. Consider the following initial-boundary value problem:
\begin{numcases}{}
    \partial_{t} \phi + \bm{u} \cdot \nabla \phi + \sigma_1 \big( \overline{\phi} -c \big) = \mathrm{div}\, \big(m_\phi (\phi) \nabla \mu_\phi\big),
    \label{eq:cvcho1}\\
    \mu_\phi = \alpha \partial_{t} \phi -\Delta \phi + \sigma_2 \mathcal{N} \big( \phi - \overline{\phi} \big) + F_\phi'(\phi) + \partial_\phi G(\phi,\psi), \label{eq:cvcho2}\\
    \partial_{t} \psi + \bm{u} \cdot \nabla \psi = \mathrm{div}\, \big(m_\psi (\psi) \nabla \mu_\psi\big),
    \label{eq:cvcho3}\\
    \mu_\psi = \alpha \partial_{t} \psi -\beta \Delta \psi + F_\psi'(\psi) + \partial_\psi G(\phi,\psi), \label{eq:cvcho4}
\end{numcases}
in $Q_T$ with $T\in(0,\infty]$, subject to the boundary and initial conditions:
\begin{numcases}{}
    \partial_{\mathbf{n}} \phi = \partial_{\mathbf{n}} \mu_\phi = \partial_{\mathbf{n}} \psi = \partial_{\mathbf{n}} \mu_\psi = 0, \label{eq:cvchob} & on $S_T$,\\
    \phi|_{t=0} = \phi_0(x), \ \psi|_{t=0} = \psi_0(x), \label{eq:cvchoi} & in $\Omega$.
\end{numcases}
Here, $\alpha \in (0,1)$ and $\bm{u}$ is a given solenoidal velocity field defined in $Q_T$.
A key distinction from \cite{G2022} is that we now study a coupled Cahn--Hilliard system with nondegenerate concentration-dependent mobilities and a nonlocal interaction term of Oono's type, which admits only local strong solutions in three dimensions.
This renders the corresponding analysis considerably more subtle.

\begin{definition} \label{cvchossd} \rm
Let $\alpha\in (0,1)$, $T \in (0,\infty]$. Set $I=[0,\infty)$ if $T= \infty$ or $I = [0,T]$ if $T<\infty$.
Suppose that Assumptions $(\mathbf{H1})$ and $(\mathbf{H2})$ are satisfied and $\bm{u}\in L^{2} \big( I;\bm{L}^{2}_\sigma(\Omega) \big)$.
For any given initial data $\phi_0, \psi_0 \in V \cap L^\infty(\Omega)$ with $(\phi_0,\psi_0) \in [-1,1]\times[0,1]$ almost everywhere in $\Omega$ and $\overline{\phi_0} \in (-1,1)$, $\overline{\psi_0} \in (0,1)$, we call $(\phi,\psi,\mu_\phi,\mu_\psi)$ a weak solution to problem \eqref{eq:cvcho1}--\eqref{eq:cvchoi} on $I$, if
\begin{align*}
    & \phi,\,\psi \in BC ( I;V ) \cap L^{2}_{\mathrm{uloc}} ( I;W ), \quad \partial_{t} \phi,\,\partial_{t} \psi \in L^{2} ( I;H ),
    \\
    & \phi,\,\psi \in L^{\infty}( Q_T ) \ \ \text{such that } -1<\phi<1,\ \ 0<\psi<1 \quad \text{a.e. in } Q_T,
    \\
    & \mu_\phi,\,\mu_\psi \in L^{2}_{\mathrm{uloc}} ( I;V ) \cap L^{\frac{4}{3}}_{\mathrm{uloc}} ( I;W^{2,\frac{4}{3}}(\Omega) ), \quad \nabla \mu_\phi,\,\nabla \mu_\psi \in L^{2} ( I;H ),
    \\
    & F_\phi'( \phi ),\,F_\psi'( \psi ) \in L^{2}_{\mathrm{uloc}} ( I;H ),
\end{align*}
and $(\phi,\psi,\mu_\phi,\mu_\psi)$ satisfies equations \eqref{eq:cvcho1}--\eqref{eq:cvcho4} almost everywhere in $Q_T$, the boundary conditions \eqref{eq:cvchob} almost everywhere on $S_T$ and the initial conditions \eqref{eq:cvchoi} almost everywhere in $\Omega$.
\end{definition}

The following proposition establishes the well-posedness of problem \eqref{eq:cvcho1}--\eqref{eq:cvchoi} and the regularity properties of its solutions.

\begin{proposition} \label{cvchossr}
Let $\Omega \subset \mathbb{R}^{d} \ (d \in \{ 2,3 \})$ be a bounded domain of class $C^3$.
Suppose that $\alpha\in (0,1)$, $T\in (0,\infty]$, Assumptions $(\mathbf{H1})$--$(\mathbf{H2})$ are satisfied and $\bm{u}\in L^{\infty} ( I;\bm{L}^{3}_\sigma(\Omega) )$, $\partial_t \bm{u} \in L^2 ( I;\bm{L}^{\frac{6}{5}}(\Omega) )$.
For any initial data $\phi_0,\psi_0 \in H^3_N(\Omega)$ satisfying
\[
-1+\delta_{0,\phi} \le \phi_0(x)\leq 1-\delta_{0,\phi},
\qquad
\delta_{0,\psi} \le \psi_0(x)\leq 1-\delta_{0,\psi},
\quad \forall \, x\in\Omega,
\]
for some constants $\delta_{0,\phi} \in (0,1)$ and
$\delta_{0,\psi} \in (0,1/2)$, there is a positive time $T_r \in I$ such that problem \eqref{eq:cvcho1}--\eqref{eq:cvchoi} admits a unique strong solution on $[0,T_r]$ that satisfies the properties in Definition \ref{cvchossd} and
\begin{align*}
        & \phi,\psi \in L^{\infty} ( 0,T_r; H_N^{3}(\Omega)), \quad  \partial_{t} \phi, \partial_{t} \psi \in L^{\infty} ( 0,T_r;V ) \cap L^{2}_{\mathrm{uloc}} ( 0,T_r;W ) \cap H^{1}_{\mathrm{uloc}} ( 0,T_r;H ),
        \\
        & \mu_\phi, \mu_\psi \in L^\infty ( 0,T_r;W ), \quad  \partial_{t} \mu_\phi,\partial_{t} \mu_\psi \in L^{2}_{\mathrm{uloc}} ( 0,T_r;V ).
\end{align*}
Moreover, it holds
\[
(  \phi(x,t),\psi(x,t)  ) \in [-1+\delta_\phi,1-\delta_\phi] \times [\delta_\psi,1-\delta_\psi],
\quad \forall\, (x,t) \in \Omega \times [0,T_r],
\]
for some constants $\delta_\phi\in (0,1)$ and $\delta_\psi\in (0,1/2)$.
When $d=2$, we simply have $T_r=T$.
\end{proposition}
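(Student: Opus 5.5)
The plan is a Faedo--Galerkin/regularization scheme for the viscous subsystem \eqref{eq:cvcho1}--\eqref{eq:cvchoi}, followed by higher-order a priori estimates and a separate uniqueness argument. First, the singular potentials are replaced by globally Lipschitz-derivative approximations $F_{\phi,\epsilon},F_{\psi,\epsilon}$ that preserve convexity ($F''_{\cdot,\epsilon}\ge \theta$). The resulting system is solved by a Galerkin scheme in the Neumann eigenfunctions. Since $\alpha>0$, equations \eqref{eq:cvcho2}, \eqref{eq:cvcho4} together with \eqref{eq:cvcho1}, \eqref{eq:cvcho3} form an ODE system for the coefficients, which is locally solvable.

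The basic energy estimate comes from testing \eqref{eq:cvcho1} by $\mu_\phi$ and \eqref{eq:cvcho2} by $\partial_t\phi$ (and likewise for $\psi$). This yields bounds on $\alpha\|\partial_t\phi\|^2$, $\|\nabla\mu_\phi\|^2$ and $\|\phi\|_{L^\infty V}$. The convective term $(\bm{u}\cdot\nabla\phi,\mu_\phi)$ is controlled using $\bm{u}\in L^\infty L^3$, $\|\mu_\phi\|_{L^6}$, and the mean of $\mu_\phi$, which is handled in the standard way through $F'$ and $L^1$ bounds. The Oono term and $G\in BUC^2$ are lower order.

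Higher-order estimates come from differentiating in time. Testing the time-differentiated \eqref{eq:cvcho1} by $\mathcal{N}_{\phi,\phi}(\partial_t\phi-\overline{\partial_t\phi})$ is awkward, so instead $\partial_t\phi$ is used as the test function. Combining \eqref{eq:cvcho1} with $\partial_t$ of \eqref{eq:cvcho2} gives
\[
\frac{1}{2}\frac{\mathrm{d}}{\mathrm{d}t}\big(\|\partial_t\phi\|^2 \big)+\ldots,
\]
and the more natural choice is to test \eqref{eq:cvcho1} by $\partial_t\mu_\phi$. This produces $\frac{\mathrm d}{\mathrm dt}\frac12\int m_\phi(\phi)|\nabla\mu_\phi|^2$ together with
\[
(\partial_t\phi,\partial_t\mu_\phi)=\alpha\|\partial_t^2\phi\|\ldots
\]
Concretely, I would test \eqref{eq:cvcho1} by $\partial_t\mu_\phi$ and substitute $\partial_t\mu_\phi=\alpha\partial_{tt}\phi-\Delta\partial_t\phi+F''\partial_t\phi+\ldots$. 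This gives control of $\frac{\mathrm d}{\mathrm dt}\big(\tfrac12\int m_\phi|\nabla\mu_\phi|^2+\tfrac\alpha2\|\partial_t\phi\|^2\ldots\big)$ plus the dissipation $\|\nabla\partial_t\phi\|^2+\theta_\phi\|\partial_t\phi\|^2$, where convexity makes $(F''\partial_t\phi,\partial_t\phi)\ge0$.

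The troublesome terms are:
\begin{itemize}
\item $\frac12\int m_\phi'(\phi)\partial_t\phi|\nabla\mu_\phi|^2$, which is bounded by $\|\partial_t\phi\|_{L^6}\|\nabla\mu_\phi\|_{L^{12/5}}^2$ using elliptic regularity for $\mathrm{div}(m_\phi\nabla\mu_\phi)=\partial_t\phi+\bm{u}\cdot\nabla\phi+\ldots$;
\item $(\bm{u}\cdot\nabla\phi,\partial_t\mu_\phi)$, which is handled by moving the time derivative onto the convective term. This is where $\partial_t\bm{u}\in L^2L^{6/5}$ is used.
\end{itemize}
These estimates close into a superlinear differential inequality $y'\le C(1+y)^{q}$. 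That inequality gives a local time $T_r$ in 3D. In 2D, the refined Ladyzhenskaya/Agmon bounds should make $q$ effectively linear, so Gronwall gives $T_r=T$.

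The key step, and the reason for the viscous term, is strict separation. From \eqref{eq:cvcho2},
\[
F_{\phi}'(\phi)=\mu_\phi-\alpha\partial_t\phi+\Delta\phi-\ldots .
\]
With $\mu_\phi\in L^\infty W$, $\partial_t\phi\in L^\infty V$ and $\phi\in L^\infty H^3$, and since $\alpha\partial_t\phi+F'(\phi)$ is an ODE-type monotone relation pointwise, $F'(\phi)$ is bounded in $L^\infty(Q_{T_r})$. More precisely, I would view $\alpha\partial_t\phi+F'_\epsilon(\phi)=g$ with $g=\mu_\phi+\Delta\phi-\ldots\in L^\infty(Q)$, using $H^2\subset L^\infty$ for $\mu_\phi$ and $\Delta\phi\in H^1$. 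That last step is delicate in 3D: $\Delta\phi\in H^1$ is not in $L^\infty$, so I will take $\phi\in H^3$ only where $\Delta\phi\in W$ is available, or use a comparison/maximum principle for the pointwise ODE. The comparison gives $|F'_\epsilon(\phi)|\le\max(\|F'(\phi_0)\|_\infty,\|g\|_\infty)$ uniformly in $\epsilon$, hence separation with $\delta_\phi,\delta_\psi$ independent of $\epsilon$. After separation, $F'_\epsilon=F'$ along the solution for small $\epsilon$, and passing to the limit gives existence with the stated regularity. The bound $\partial_t\mu\in L^2V$ then follows from differentiating \eqref{eq:cvcho2} and using the $H^1(0,T_r;H)$ bound on $\partial_t\phi$.

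For uniqueness, take two solutions and test the difference of \eqref{eq:cvcho1} by $\mathcal{N}_{\phi,\phi_1}(\phi_1-\phi_2-\overline{\cdots})$, using the weighted operators \eqref{ahi_definition}--\eqref{asi_definition}. The mean difference is handled separately, since it solves an explicit ODE. This yields
\[
\langle\mathrm{div}((m(\phi_1)-m(\phi_2))\nabla\mu_2),\mathcal{N}_{\phi,\phi_1}w\rangle,
\]
which is bounded by $\|w\|_{L^3}\|\nabla\mu_2\|_{L^\infty}\ldots$ using the strong regularity $\mu_2\in L^\infty W$ together with the time-derivative properties of $\mathcal{N}_{\phi,h}$ from Appendix \ref{appendix}. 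Convexity, through the monotonicity of $F'$, absorbs the singular part, and Gronwall concludes.

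I expect the main obstacle to be closing the $\partial_t\mu$-level estimate with the mobility-derivative term $m'(\phi)\partial_t\phi|\nabla\mu|^2$ in 3D.
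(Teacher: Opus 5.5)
Your outline has two genuine gaps, and both sit exactly where the content of this proposition lies.

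\emph{The separation step fails as formulated.} You write $\alpha\partial_t\phi+F'_\epsilon(\phi)=g$ with $g=\mu_\phi+\Delta\phi-\dots$ and compare pointwise in $x$. That requires $\Delta\phi\in L^\infty(Q_{T_r})$, which is not available, not even for $d=2$, since $H^1(\Omega)\not\subset L^\infty(\Omega)$. The argument is also circular. The regularity you invoke on the way, $\phi\in L^\infty(0,T_r;H^3_N(\Omega))$ and $\partial_t\phi\in L^\infty(0,T_r;V)$, can only be obtained \emph{after} separation, because $\nabla F'_\phi(\phi)=F''_\phi(\phi)\nabla\phi$ and $F''_\phi(\phi)\partial_t\phi$ are uncontrolled otherwise.

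The missing idea is to keep $-\Delta\phi$ in the operator. With $\widetilde h=\mu_\phi-\partial_\phi G(\phi,\psi)-\sigma_2\mathcal N(\phi-\overline{\phi})$, the relation $\alpha\partial_t\phi-\Delta\phi+F'_\phi(\phi)=\widetilde h$ is a semilinear parabolic equation with homogeneous Neumann data. The parabolic comparison principle against spatially constant barriers $y_\pm$, solving $\alpha y_\pm'+F'_\phi(y_\pm)=\pm\|\widetilde h(t)\|_{L^\infty(\Omega)}$, then yields separation from $\mu_\phi\in L^\infty(0,T_r;W)\subset L^\infty(Q_{T_r})$ alone. That bound is what an estimate of your first-level type supplies (testing \eqref{eq:cvcho1} by $\partial_t\mu_\phi$, then Lemma~\ref{weightedlaplace}). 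This is how $(\mathbf{R2})$ of Proposition~\ref{cvchosse} is obtained, with $\delta_\phi,\delta_\psi$ depending on $\alpha$.

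\emph{Your hierarchy of estimates stops one level too low.} Testing \eqref{eq:cvcho1} by $\partial_t\mu_\phi$ gives $\sqrt{\alpha}\,\partial_t\phi\in L^\infty(0,T_r;H)$, $\partial_t\phi\in L^2(0,T_r;V)$ and $\nabla\mu_\phi\in L^\infty(0,T_r;H)$. It never gives $\partial_t^2\phi\in L^2(0,T_r;H)$, so the ``$H^1(0,T_r;H)$ bound on $\partial_t\phi$'' you use has no source. Moreover, $\partial_t\mu_\phi\in L^2(0,T_r;V)$ cannot be read off from the time-differentiated \eqref{eq:cvcho2} without $\partial_t\phi\in L^2(0,T_r;H^3(\Omega))$.

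The paper's proof is precisely the next level: differentiate \eqref{eq:cvcho1} and \eqref{eq:cvcho3} in time and test with $\partial_t\mu_\phi$ and $\partial_t\mu_\psi$. This produces
\begin{align*}
&\frac12\frac{\mathrm d}{\mathrm dt}\big(\|\nabla\partial_t\phi\|^2+\beta\|\nabla\partial_t\psi\|^2+\sigma_2\|\partial_t\phi-\partial_t\overline{\phi}\|_*^2\big)+\alpha\|\partial_t^2(\phi-\overline{\phi})\|^2+\alpha\|\partial_t^2\psi\|^2\\
&\quad+\int_\Omega\big(m_\phi(\phi)|\nabla\partial_t\mu_\phi|^2+m_\psi(\psi)|\nabla\partial_t\mu_\psi|^2\big)\,\mathrm dx=\dots
\end{align*}
Three features of this estimate matter:
\begin{itemize}
\item The right-hand side contains $-(F''_\phi(\phi)\partial_t\phi,\partial_t^2(\phi-\overline{\phi}))$, which has no sign and is controlled only through separation. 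This is why separation must come first.
\item The hypothesis $\partial_t\bm{u}\in L^2(I;\bm{L}^{6/5}(\Omega))$ enters here, paired with $\|\partial_t\mu_\phi-\overline{\partial_t\mu_\phi}\|_{L^6(\Omega)}\le C\|\nabla\partial_t\mu_\phi\|$ and with $\|\nabla\phi\|_{L^\infty(\Omega)}$.
\item The hypothesis $\phi_0,\psi_0\in H^3_N(\Omega)$ is what makes $\|\nabla\partial_t\phi(0)\|$ finite.
\end{itemize}
Gronwall then gives $\partial_t\phi\in L^\infty(0,T_r;V)\cap H^1(0,T_r;H)$ and $\nabla\partial_t\mu_\phi\in L^2(0,T_r;H)$. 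Comparison in the differentiated \eqref{eq:cvcho2} gives $\partial_t\phi\in L^2(0,T_r;W)$. Only then does the elliptic estimate for \eqref{eq:cvcho2}, using separation, give $\phi\in L^\infty(0,T_r;H^3_N(\Omega))$.

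A smaller point concerns your Galerkin scheme. There, the weighted elliptic bounds on $\mu_n$ that you use for the term $m_\phi'(\phi)\partial_t\phi|\nabla\mu_\phi|^2$ are not available, since the projection does not commute with $\mathrm{div}\,(m_\phi(\phi_n)\nabla\,\cdot\,)$. The paper avoids this by freezing the mobility, regularizing the potentials and initial data, and closing with a Schauder fixed point.
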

\noindent \textbf{Proof.} The proof of Proposition \ref{cvchossr} relies on the preliminary results established in Proposition \ref{cvchosse} (see the appendix) for problem \eqref{eq:cvcho1}--\eqref{eq:cvchoi}.
Here, under stronger regularity assumptions on the initial data and the prescribed velocity
$\bm{u}$, we obtain improved regularity for the solutions.

Let $( \phi,\psi,\mu_\phi,\mu_\psi )$ be the strong solution to problem \eqref{eq:cvcho1}–\eqref{eq:cvchoi} on $[0,T_r]$ corresponding to the initial data $(\phi_0,\psi_0)$ and possessing the regularity specified in Proposition \ref{cvchosse}. Under the present assumption on $\bm{u}$, we derive additional regularity properties. To rigorously justify the subsequent \textit{a priori} estimates, we first work with a sufficiently smooth velocity field $\bm{u} \in C^1_0 ( I;\bm{H}^{2}_{0,\sigma}(\Omega) )$.
Then, applying standard compactness arguments, we can recover the result for $\bm{u}$ that satisfies $\bm{u} \in L^{\infty} ( I;\bm{L}^{3}_\sigma(\Omega) )$ and $\partial_t \bm{u} \in L^2 (I;\bm{L}^{\frac{6}{5}}_\sigma(\Omega) )$.

Using the time difference argument as in the proof of Proposition \ref{cvchosse}, we can show that
\begin{align*}
    & \partial_{t} \phi, \partial_{t} \psi \in L^{2} ( 0,T_r;W ) \cap L^{\infty} ( 0,T_r;V ) \cap H^1 (  0,T_r;H  ),\\
    & \mu_\phi,\mu_\psi \in H^1 (  0,T_r;V ).
\end{align*}
Hence, we perform only the formal estimates below.
Differentiating equations \eqref{eq:cvcho1}--\eqref{eq:cvcho4} with respect to time, we obtain
\begin{numcases}{}
    \partial_{t} ^2 \big( \phi - \overline{\phi} \big)  + \partial_{t} \bm{u} \cdot \nabla \phi
    + \bm{u} \cdot \nabla \partial_{t} \phi
    = \mathrm{div}\, \big(  m_\phi' ( \phi ) \partial_{t} \phi \nabla \mu_\phi  \big)
    + \mathrm{div}\, \big(  m_\phi ( \phi ) \nabla \partial_{t} \mu_\phi  \big),
    \label{eq:cvcho1tdif} \\
    \partial_{t} ^2 \psi  + \partial_{t} \bm{u} \cdot \nabla \psi + \bm{u} \cdot \nabla \partial_{t} \psi
    = \mathrm{div}\, \big(  m_\psi' ( \psi ) \partial_{t} \psi \nabla \mu_\psi  \big)
    + \mathrm{div}\, \big(  m_\psi ( \psi ) \nabla \partial_{t} \mu_\psi  \big), \label{eq:cvcho3tdif}
\end{numcases}
in $L^{2} ( 0,T_r;V^* )$, and
\begin{numcases}{}
    \partial_{t} \mu_\phi
    = \alpha \partial_{t}  ^2 \phi  -\Delta \partial_{t} \phi
    + \sigma_2 \mathcal{N} \partial_{t}  \big( \phi - \overline{\phi} \big)
    + F_\phi''(\phi) \partial_{t} \phi
    + \partial_\phi ^2 G(\phi,\psi) \partial_{t} \phi
    + \partial_{\phi,\psi} ^2 G(\phi,\psi) \partial_{t} \psi,
    \label{eq:cvcho2tdif} \\
    \partial_{t} \mu_\psi
    = \alpha \partial_{t}  ^2 \psi
    - \beta \Delta \partial_{t} \psi
    + F_\psi''(\psi) \partial_{t} \psi
    + \partial_\psi ^2 G(\phi,\psi) \partial_{t} \psi
    + \partial_{\phi,\psi} ^2 G(\phi,\psi) \partial_{t} \phi,
    \label{eq:cvcho4tdif}
\end{numcases}
almost everywhere in $\Omega\times (0,T_r)$.
Testing equations \eqref{eq:cvcho1tdif} and \eqref{eq:cvcho3tdif} with $\partial_{t} \mu_\phi$ and $\partial_{t} \mu_\psi$, respectively, adding the resultants together, we get
\begin{align*}
    & \frac{1}{2} \frac{\mathrm{d}}{\mathrm{d t}}
    \big(  \|  \nabla \partial_{t} \phi  \|^2 + \beta \|  \nabla \partial_{t} \psi  \|^2 + \sigma_2 \|  \partial_{t} \phi  - \partial_{t} \overline{\phi}  \|^2_*  \big)
    + \alpha \|  \partial_{t} ^2 (\phi  - \overline{ \phi})  \|^2
    + \alpha \|  \partial_{t} ^2 \psi  \|^2
    \nonumber \\
    & \qquad + \int_{\Omega}  \big( m_\phi ( \phi ) \big| \nabla \partial_{t} \mu_\phi \big|^2 + m_\psi ( \psi ) \big| \nabla \partial_{t} \mu_\psi \big|^2 \big) \, \mathrm{d}x
    \nonumber \\
    & \quad = - \int_{\Omega} \big( m_\phi' ( \phi ) \partial_{t} \phi \nabla \mu_\phi \cdot \nabla \partial_{t} \mu_\phi + m_\psi' ( \psi ) \partial_{t} \psi \nabla \mu_\psi \cdot \nabla \partial_{t} \mu_\psi \big) \, \mathrm{d}x
    \nonumber \\
    & \qquad - \int_{\Omega} \big( \partial_{t} \mu_\phi \partial_{t} \bm{u} \cdot \nabla \phi + \partial_{t} \mu_\phi \bm{u} \cdot \nabla \partial_{t} \phi + \partial_{t} \mu_\psi \partial_{t} \bm{u} \cdot \nabla \psi + \partial_{t} \mu_\psi \bm{u} \cdot \nabla \partial_{t} \psi \big) \, \mathrm{d}x \nonumber \\
    & \qquad - \big(  F_\phi''(\phi) \partial_{t} \phi + \partial_\phi ^2 G(\phi,\psi) \partial_{t} \phi + \partial_{\phi,\psi} ^2 G(\phi,\psi) \partial_{t} \psi ,  \partial_{t} ^2 \big( \phi - \overline{\phi} \big)   \big)
    \nonumber \\
    & \qquad - \big(  F_\psi''(\psi) \partial_{t} \psi + \partial_\psi ^2 G(\phi,\psi) \partial_{t} \psi + \partial_{\phi,\psi} ^2 G(\phi,\psi) \partial_{t} \phi ,  \partial_{t} ^2 \psi \big)
    \nonumber \\
    & \quad \le \frac{1}{4} m_{\phi,*} \|  \nabla \partial_{t} \mu_\phi  \|^2
    + \frac{1}{4} m_{\psi,*} \|  \nabla \partial_{t} \mu_\psi  \|^2
    + C \| \nabla \mu_\phi \|^2_{L^{6}(\Omega)} \| \partial_{t} \phi \|^2_{L^{3}(\Omega)}
    + C \| \nabla \mu_\psi \|^2_{L^{6}(\Omega)} \| \partial_{t} \psi \|^2_{L^{3}(\Omega)}
    \nonumber \\
    & \qquad + \|  \partial_{t}  \mu_\phi  - \overline{\partial_{t}  \mu_\phi }  \|_{L^{6}(\Omega)} \|  \partial_{t} \bm{u}  \|_{L^{\frac{6}{5}}(\Omega)} \| \nabla \phi \|_{L^{\infty}(\Omega)}
    + \|  \partial_{t}  \mu_\psi  - \overline{\partial_{t}  \mu_\psi }  \|_{L^{6}(\Omega)} \|  \partial_{t} \bm{u}  \|_{L^{\frac{6}{5}}(\Omega)} \| \nabla \psi \|_{L^{\infty}(\Omega)} \nonumber \\
    & \qquad + \|  \partial_{t}  \mu_\phi  - \overline{\partial_{t}  \mu_\phi }  \|_{L^{6}(\Omega)} \| \bm{u} \|_{L^{3}(\Omega)} \| \nabla \partial_{t} \phi \|
    + \|  \partial_{t}  \mu_\psi  - \overline{\partial_{t}  \mu_\psi }  \|_{L^{6}(\Omega)} \| \bm{u} \|_{L^{3}(\Omega)} \| \nabla \partial_{t} \psi \|
    \nonumber \\
    & \qquad + C_{\delta} \big(  \| \partial_{t} \phi \| + \| \partial_{t} \psi \|  \big) \big(  \| \partial_{t} ^2 \big( \phi - \overline{\phi} \big)  \| + \| \partial_{t} ^2 \psi  \|  \big)
    \nonumber \\
    & \quad \leq  \frac{1}{2} \int_{\Omega}  m_\phi ( \phi ) \big| \nabla \partial_{t} \mu_\phi \big|^2  \, \mathrm{d}x
    + \frac{1}{2} \int_{\Omega}  m_\psi ( \psi ) \big| \nabla \partial_{t} \mu_\psi \big|^2  \, \mathrm{d}x
    + C \| \mu_\phi \|_{W}^2 \| \nabla \partial_{t} \phi \|^2
    + C \| \mu_\phi \|_{W}^2 \big| \partial_{t} \overline{\phi} \big|^2
    \\
    & \qquad  + C \| \mu_\psi \|_{W}^2 \| \nabla \partial_{t} \psi \|^2
    + C \| \partial_{t} \bm{u} \|^2_{L^{\frac{6}{5}}(\Omega)} \big(  \| \nabla \phi \|_{H^{2}(\Omega)}^2 + \| \nabla \psi \|_{H^{2}(\Omega)}^2  \big)
    \\
    & \qquad + C \| \bm{u} \|_{L^{3}(\Omega)}^2 \big( \| \nabla \partial_{t} \phi \|^2 + \| \nabla \partial_{t} \psi \|^2 \big)
    + C_{\delta} \big(  \| \partial_{t} \phi \|^2 + \| \partial_{t} \psi \|^2  \big)
    + \frac{\alpha}{2} \big(  \| \partial_{t} ^2 \big( \phi - \overline{\phi} \big)  \|^2 + \| \partial_{t} ^2 \psi \|^2  \big),
\end{align*}
where the constant $C_\delta>0$ depends on $\delta_\phi$ and $\delta_\psi$.
In addition, by the standard elliptic regularity theory, from equations \eqref{eq:cvcho2} and \eqref{eq:cvcho4}, we infer that
\begin{align*}
    \| \nabla \phi \|_{H^{2}(\Omega)} & \le C(\| \nabla \Delta \phi \| + \|\phi\|_V)
    \\
    & \le C\| \nabla \mu_\phi \| + C \alpha \| \nabla \partial_{t} \phi \| + C_{\delta} \| \phi \|_V + C \| \nabla \psi \|,
    \\
    \beta \| \nabla \psi \|_{H^{2}(\Omega)}
    & \le C \beta (\| \nabla \Delta \psi \| +\|\psi\|_V) \\
    & \le C \| \nabla \mu_\psi \| + C \alpha \| \nabla \partial_{t} \psi \| + C_{\delta} \| \psi \|_V + C \| \nabla \phi \|.
\end{align*}
Combining the above estimates, we get
\begin{align*}
    & \frac{\mathrm{d}}{\mathrm{d t}}  \big(  \|  \nabla \partial_{t} \phi  \|^2 + \beta \|  \nabla \partial_{t} \psi  \|^2 + \sigma_2 \|  \partial_{t} \phi  - \partial_{t} \overline{\phi}  \|^2_*  \big)  + \alpha \big\|  \partial_{t} ^2 \big(\phi  - \overline{ \phi}\big)  \big\|^2
    + \alpha \|  \partial_{t} ^2 \psi  \|^2
    \nonumber \\
    & \qquad + \int_{\Omega} \big( m_\phi ( \phi ) | \nabla \partial_{t} \mu_\phi |^2 + m_\psi ( \psi ) | \nabla \partial_{t} \mu_\psi |^2 \big) \, \mathrm{d}x \nonumber \\
    & \quad \le C H_1 \big(  \|  \nabla \partial_{t} \phi  \|^2 + \beta \|  \nabla \partial_{t} \psi  \|^2  \big) + C H_2,
\end{align*}
with
\begin{align*}
    & H_1 = \| \mu_\phi \|^2_W + \| \mu_\psi \|^2_W + \| \partial_{t} \bm{u} \|^2_{L^{\frac{6}{5}}(\Omega)} + \| \bm{u} \|_{L^{3}(\Omega)}^2, \nonumber \\
    & H_2 = \big[  \| \nabla \mu_\phi \|^2 + \| \nabla \mu_\psi \|^2 + C_{\delta} ( \| \phi \|_V^2 + \| \psi \|_V^2 )  \big] \| \partial_{t} \bm{u} \|^2_{L^{\frac{6}{5}}(\Omega)}
    \nonumber \\
    & \qquad\  + \| \mu_\phi \|^2_W \big| \partial_{t} \overline{\phi} \big|^2 + C_{\delta} \big(  \| \partial_{t} \phi \|^2 + \| \partial_{t} \psi \|^2  \big).
    \nonumber
\end{align*}
Hence, applying Gronwall's lemma on any finite interval (and the uniform Gronwall inequality on $[1,\infty)$ if $T_r=\infty$), we can conclude that the $L^{\infty} ( 0,T_r;V )$ and $H^{1}_{\mathrm{uloc}} ( 0,T_r;H )$-norms of $\partial_{t} \phi$ and $\partial_{t} \psi$, as well as the $H^{1}_{\mathrm{uloc}} ( 0,T_r;V )$-norms of $\mu_\phi$ and $\mu_\psi$, are bounded by the $L^{\infty} ( I;\bm{L}^{3}_\sigma(\Omega) )$-norm of $\bm{u}$ and $L^2 ( I;\bm{L}^{\frac{6}{5}}(\Omega) )$-norm of $\partial_t\bm{u}$.
Consequently, by standard compactness arguments, we obtain the desired result for the velocity field
$\bm{u}$ under the stated assumptions.
The $L^{2}_{\mathrm{uloc}} ( 0,T_r;H^{2}(\Omega) )$-bound of $\partial_{t} \phi$ follows from the comparison of the terms in \eqref{eq:cvcho2tdif} and similarly for $\partial_{t} \psi$ from \eqref{eq:cvcho4tdif}.
Finally, using the strict separation property of $\phi$ (see Proposition \ref{cvchosse}-$(\mathbf{R2})$) together with the established facts that $\mu_\phi \in L^\infty ( 0,T_r;W )$ (see Proposition \ref{cvchosse}-$(\mathbf{R1})$) and $\partial_t \phi\in L^\infty(0,T_r;V)$, we can apply the elliptic estimate to equation \eqref{eq:cvcho2} and obtain $\phi\in L^\infty(0,T_r;H^3_N(\Omega))$. Similarly, the property $\psi\in L^\infty(0,T_r;H^3_N(\Omega))$ follows by invoking \eqref{eq:cvcho4}. This completes the proof.
\qed

\medskip

Next, we prove the local well-posedness of the approximate system \eqref{eq:semi_galerkin_nsch1}--\eqref{eq:semi_galerkin_nsch5} in three dimensions subject to the boundary condition \eqref{eq:semi_galerkin_nschb} and generic regular initial data.
\begin{proposition} \label{local_approximate_fixpoint}
Let $\Omega \subset \mathbb{R}^{3}$ be a bounded domain of class $C^3$.
Suppose that $\alpha\in (0,1)$, $m\in \mathbb{N}_+$, Assumptions $(\mathbf{H0})$--$(\mathbf{H2})$ are satisfied, $\bm{u}_0 \in \mathbf{V}_m$ and $\phi_0,\, \psi_0 \in H_N^{3}(\Omega)$ such that $(  \phi_0,\psi_0 ) \in [-1+\delta,1-\delta] \times [\delta,1-\delta]$ in $\Omega$ for some constant $\delta \in (0,1/2)$. Then there exists some time $T_\alpha>0$ depending on $\alpha$, $m$ such that the approximate problem \eqref{eq:semi_galerkin_nsch1}--\eqref{eq:semi_galerkin_nschb} subject to the initial condition
\begin{equation}
\label{icappr}
( \bm{u}^m,\phi^m,\psi^m )|_{t=0} = ( \bm{u}_0,\phi_{0},\psi_{0} ),
\end{equation}
admits a unique solution $( \bm{u}^m,\phi^m,\psi^m,\mu_\phi^m,\mu_\psi^m )$ on $[0,T_\alpha]$ that satisfies the following regularity properties
\begin{align*}
    & \bm{u}^m \in H^1 ( 0,T_\alpha;\mathbf{V}_m ),\quad  \phi^m,\psi^m \in L^{\infty} ( 0,T_{\alpha};H^{3}_N(\Omega) ),
    \\
    & ( \phi^m,\psi^m ) \in [-1+\widetilde{\delta},1-\widetilde{\delta}] \times [\widetilde{\delta},1-\widetilde{\delta}] \ \mathrm{a.e.} \ \mathrm{in} \ \Omega \times (0,T_{\alpha}),
    \\
    &\partial_{t} \phi^m, \partial_{t} \psi^m \in L^{\infty} ( 0,T_{\alpha};V ) \cap L^{2} ( 0,T_{\alpha};W ) \cap H^1 ( 0,T_{\alpha};H ),
    \\
    & \mu_\phi^m,\mu_\psi^m \in L^{\infty} ( 0,T_{\alpha};W ) \cap H^1 ( 0,T_{\alpha};V ),
\end{align*}
for some $\widetilde{\delta}\in (0,\delta]$ depending on $\alpha$ and $\delta$.
\end{proposition}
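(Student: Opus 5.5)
The approach is a fixed-point argument on the finite-dimensional velocity $\bm{u}^m$, with Proposition \ref{cvchossr} solving the phase-field part. Let $R>0$ and $T\in(0,1]$ be parameters to be chosen. Consider the closed convex set
\[
\mathcal{K}_{T,R}=\big\{\bm{v}\in H^1(0,T;\mathbf{V}_m):\ \bm{v}(0)=\bm{u}_0,\ \|\bm{v}\|_{H^1(0,T;\mathbf{V}_m)}\le R\big\}.
\]
Because all norms on $\mathbf{V}_m$ are equivalent and $\bm{w}_j\in\bm{H}^3_{0,\sigma}(\Omega)$, every $\bm{v}\in\mathcal{K}_{T,R}$ satisfies $\bm{v}\in L^\infty(0,T;\bm{L}^3_\sigma)$ and $\partial_t\bm{v}\in L^2(0,T;\bm{L}^{6/5})$, with bounds depending only on $m$, $R$ and $\|\bm{u}_0\|$.

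\emph{Step 1 (phase-field solve).} Given $\bm{v}\in\mathcal{K}_{T,R}$, apply Proposition \ref{cvchossr} with $\bm{u}=\bm{v}$ and initial data $(\phi_0,\psi_0)$. This gives a unique strong solution $(\phi,\psi,\mu_\phi,\mu_\psi)$ on $[0,T_r]$ with the listed regularity and separation $\widetilde\delta$. I need $T_r$ and $\widetilde\delta$ to be uniform over $\mathcal{K}_{T,R}$. The proof of Proposition \ref{cvchossr} (through Proposition \ref{cvchosse}) suggests this holds: the local time and the separation constant depend only on $\alpha$, $\delta$, $\|\phi_0\|_{H^3}$, $\|\psi_0\|_{H^3}$ and the norms of $\bm{u}$ appearing there. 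For separation in 3D I would use the bound $\mu_\phi\in L^\infty(0,T;W)\subset L^\infty(Q_T)$ together with $\alpha\partial_t\phi\in L^\infty(0,T;V)$. A cleaner route for the latter is to rewrite \eqref{eq:cvcho2} as an ODE in time for $\phi$ pointwise, $\alpha\partial_t\phi+F'_\phi(\phi)=g$, with $g$ bounded. So I fix $T_r=T_r(R)$ and take $T\le T_r$.

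\emph{Step 2 (Galerkin ODE).} Insert $(\phi,\mu_\phi,\psi,\mu_\psi)$ into \eqref{eq:semi_galerkin_nsch1} and solve for $\bm{u}=\sum_j g_j(t)\bm{w}_j$ with $\bm{u}(0)=\bm{u}_0$. Since $\rho(\phi)\ge\rho_*>0$ (using $\phi\in[-1,1]$), the mass matrix $(\rho(\phi)\bm{w}_i,\bm{w}_j)$ is uniformly positive definite. It is continuous in $t$ because $\phi\in C([0,T];H)$. The remaining coefficients lie in $L^2(0,T)$ by the regularity of $\mu_\phi$ in $L^\infty(0,T;W)$. This yields a linear ODE, or I could linearise the convective term as $(\rho(\phi)(\bm{v}\cdot\nabla)\bm{u},\bm{\eta})$ to keep it linear. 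Its unique solution $\bm{u}=\Lambda(\bm{v})$ satisfies $\|\Lambda(\bm{v})\|_{H^1(0,T;\mathbf{V}_m)}\le C(R,m)$. This constant needs more care: I would first use the energy estimate, testing with $\bm{u}$, to get a bound on $\|\bm{u}\|_{L^\infty_t}$ that depends only on the data and small $T$. Then comparison bounds $\|\partial_t\bm{u}\|_{L^2}$ by $C(m)(1+\|\bm{u}_0\|)$ plus terms that are $O(T^{1/2})$. Choosing $R$ large first and then $T$ small gives $\Lambda(\mathcal{K}_{T,R})\subset\mathcal{K}_{T,R}$.

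\emph{Step 3 (fixed point).} I would use Schauder rather than contraction. The map $\Lambda$ is compact because $H^1(0,T;\mathbf{V}_m)$ bounds with the extra estimate on $\partial_t\bm{u}$ give precompactness in $C([0,T];\mathbf{V}_m)$; I would close the scheme in that topology with the set $\mathcal{K}$ taken convex there. Continuity of $\Lambda$ is the \textbf{main obstacle}. It requires continuous dependence of $(\phi,\psi,\mu_\phi,\mu_\psi)$ on $\bm{v}$ for the viscous Cahn--Hilliard system with concentration-dependent mobility. I would handle it as in the uniqueness part of Proposition \ref{cvchosse}: take the difference of two solutions, test with $\mathcal{N}_{\phi,\phi_1}(\phi_1-\phi_2-\overline{\cdots})$ and the analogue for $\psi$ via \eqref{ahi_definition}--\eqref{asi_definition}, and use the convexity of $F_\phi$ and $F_\psi$, the separation and the $L^\infty_tW$ bounds on $\mu$. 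This gives a Gronwall estimate for $\|\phi_1-\phi_2\|_{L^\infty V^*}+\sqrt{\alpha}\|\cdot\|_{L^\infty H}$ in terms of $\|\bm{v}_1-\bm{v}_2\|_{L^2L^3}$. Combined with the uniform higher-order bounds and interpolation, this gives convergence of $\nabla\mu_\phi$ in $L^2(0,T;H)$, which suffices to pass to the limit in the ODE coefficients. Uniqueness of the full approximate problem follows the same way: add the velocity difference, which is finite-dimensional and hence harmless, and apply Gronwall. The stated regularity and the separation $\widetilde\delta(\alpha,\delta)$ are then inherited from Proposition \ref{cvchossr}.
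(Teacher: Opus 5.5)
Your existence argument is essentially the paper's. You apply Schauder on the Galerkin velocity, solve the phase-field part by Proposition \ref{cvchossr} with a uniform existence time (Remark \ref{length_cvcho}), solve the linear ODE \eqref{eq:v_to_u} with the convective term linearised, get the energy bound from the identity $\partial_t\rho(\phi)+\mathrm{div}(\rho(\phi)\bm v)+\mathrm{div}\,\mathbf J+\sigma_1\gamma(\overline{\phi}-c)=0$, and bound $\partial_t\bm u$ by comparison. Your choices of an $H^1(0,T;\mathbf V_m)$-ball (fix $R$, then shrink $T$) and of proving continuity via the weighted-$H^{-1}$ estimate plus compactness are harmless variations.

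One aside is wrong. Rewriting \eqref{eq:cvcho2} pointwise as $\alpha\partial_t\phi+F_\phi'(\phi)=g$ puts $\Delta\phi$ into $g$, and $\Delta\phi$ is not bounded in 3D. Separation needs the parabolic comparison argument of Appendix \ref{appendix-2}, but you can simply take it from Proposition \ref{cvchossr}.

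The genuine gap is uniqueness, where you call the velocity difference ``harmless''. The weighted inverse-Laplacian test controls only $\|\nabla g_\phi\|$ and $\sqrt{\alpha}\|\widetilde\phi\|$ in $L^\infty_t$, and $\|\nabla\widetilde\phi\|$ in $L^2_t$. With $\bm v_i=\bm u_i$, however, the velocity difference equation \eqref{diff-um} is driven by $\widetilde{\mu_\phi}$ through $(\widetilde{\mu_\phi}\nabla\phi_1,\bm\eta)$ and $\gamma(m_\phi(\phi_2)(\nabla\widetilde{\mu_\phi}\cdot\nabla)\bm u_2,\bm\eta)$, and likewise for $\psi$. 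Finite dimensionality lets you move spatial derivatives onto the Galerkin modes, but you are still left with $(\widetilde{\mu_\phi},h)$ for a smooth $h$. Since $\widetilde{\mu_\phi}$ contains $\alpha\partial_t\widetilde\phi$, your estimate gives no bound on this term. Compactness or interpolation against the uniform strong bounds yields at best a non-Lipschitz modulus, which Gronwall cannot turn into uniqueness.

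The paper closes this by testing the phase-field difference equations with $\widetilde{\mu_\phi}$ and $\widetilde{\mu_\psi}$ instead. This is legitimate because both solutions are strictly separated, so $F_\phi''$ and $F_\psi''$ are bounded on the relevant range, and $\mu_{\phi_i},\mu_{\psi_i}\in L^\infty(0,T;W)$. The test produces the dissipation $\alpha\|\partial_t\widetilde\phi\|^2+\int_\Omega m_\phi(\phi_1)|\nabla\widetilde{\mu_\phi}|^2\,\mathrm{d}x$ and its $\psi$-analogue, which absorb exactly these coupling terms. The result is \eqref{continuous_dependence}, which gives both uniqueness and continuity of $\Lambda$ at once.
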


\noindent \textbf{Proof.} For any given finite-dimensional ansatz $\bm{v}^m \in H^1(0,T_0;\mathbf{V}_m)$, where $T_0 \in (0,1]$ is a fixed time to be determined later, we consider the following convective viscous Cahn--Hilliard system:
\begin{numcases}{}
    \partial_{t} \phi^m + \bm{v}^m \cdot \nabla \phi^m + \sigma_1 \big( \overline{\phi^m} -c \big) = \mathrm{div}\, \big(m_\phi (\phi^m) \nabla \mu_\phi^m\big),
    \label{eq:fixedpointch1}\\
    \mu_\phi^m = \alpha \partial_{t} \phi^m -\Delta \phi^m + \sigma_2 \mathcal{N} \big( \phi^m - \overline{\phi^m} \big) + F_\phi'(\phi^m) + \partial_\phi G(\phi^m,\psi^m), \label{eq:fixedpointch2} \\
    \partial_{t} \psi^m + \bm{v}^m \cdot \nabla \psi^m = \mathrm{div}\, \big(m_\psi (\psi^m) \nabla \mu_\psi^m\big),
    \label{eq:fixedpointch3} \\
    \mu_\psi^m = \alpha \partial_{t} \psi^m -\beta \Delta \psi^m + F_\psi'(\psi^m) + \partial_\psi G(\phi^m,\psi^m),
    \label{eq:fixedpointch4}
\end{numcases}
almost everywhere in $\Omega \times (0,T_0)$, subject to the boundary and initial conditions
\begin{numcases}{}
    \partial_{\mathbf{n}} \phi^m = \partial_{\mathbf{n}} \mu_\phi^m = \partial_{\mathbf{n}} \psi^m = \partial_{\mathbf{n}} \mu_\psi^m = 0, \label{eq:fixedpointchb} & a.e. on $\partial \Omega \times (0,T_0)$,
    \\
    ( \phi^m,\psi^m) |_{t=0}= ( \phi_{0},\psi_{0} ), \label{eq:fixedpointchi} & a.e. in $\Omega$.
\end{numcases}
By the definition of $\bm{V}_m$, we have $\bm{v}^m \in H^1(0,T_0;\bm{H}_{0,\sigma}^3(\Omega))$. This  enables us to apply Proposition \ref{cvchossr} to find a time $\widehat{T}_0 \in (0,T_0]$ depending on $\bm{v}^m$, norms of the initial data, $\alpha$ and other parameters of the system \eqref{eq:fixedpointch1}--\eqref{eq:fixedpointch4} such that there exists a unique strong solution $( \phi^m,\psi^m )$ to problem \eqref{eq:fixedpointch1}--\eqref{eq:fixedpointchi} on $[0,\widehat{T}_0]$ satisfying
\begin{align*}
    & \phi^m,\psi^m \in L^{\infty} ( 0,\widehat{T}_0;H^{3}_N(\Omega) ),
    \\
    & ( \phi^m,\psi^m ) \in [-1+\widetilde{\delta},1-\widetilde{\delta}] \times [\widetilde{\delta},1-\widetilde{\delta}] \quad \mathrm{a.e.} \ \mathrm{in} \ \Omega \times (0,\widehat{T}_0),
    \\
    &\partial_{t} \phi^m, \partial_{t} \psi^m \in L^{\infty} ( 0,\widehat{T}_0;V \big) \cap L^{2} \big( 0,\widehat{T}_0;W ) \cap H^1 ( 0,\widehat{T}_0;H ),
    \\
    & \mu_\phi^m,\mu_\psi^m \in L^{\infty} ( 0,\widehat{T}_0;W ) \cap H^1 ( 0,\widehat{T}_0;V ),
\end{align*}
where $\widetilde{\delta} \in (0,\delta]$ depends on $\alpha$ and $\delta$. Moreover, we can derive the following energy estimate by direct calculations (cf. \cite{AGG2024} for the treatment of the convection term and \cite{HW2024} for the term involving mass variation):
\begin{align*}
    & \frac{\mathrm{d}}{\mathrm{d t}}  \widetilde{E}_{\mathrm{free}}( \phi^m,\psi^m )  +  \frac{\alpha}{2} \big(  \| \partial_{t} \phi^m \|^2 + \| \partial_{t} \psi^m \|^2  \big)
    + \frac{1}{2} \int_{\Omega} \big( m_\phi ( \phi^m ) |  \nabla \mu_\phi^m  |^2 + m_\psi ( \psi^m ) |  \nabla \mu_\psi^m  |^2 \big) \, \mathrm{d}x
    \\
    & \quad \le C_1 \sigma_1 e^{-\sigma_1 t} + \frac{2}{m_0} \| \bm{v}^m \|^2.
\end{align*}
Here, $$
m_0=\min\{m_{\phi,*},m_{\psi,*}\},\quad \widetilde{E}_{\mathrm{free}}(\phi, \psi) \define E_{\mathrm{free}}(\phi, \psi) + \| G \|_{C(\mathbb{R}^2)},
$$
and the constant $C_1>0$ depends only on $\sigma_1$, $c$, $\overline{\phi_0}$ and $m_{\phi,*}$.

Given the pair $(\phi^m, \psi^m)$, we denote by $\bm{u}^m \in H^1 ( 0,\widehat{T}_0;\mathbf{V}_m )$ the unique solution to the following linear (ODE) system:
\begin{align}
    & \big(  \rho ( \phi^m ) \partial_{t}  \bm{u}^m  , \bm{\eta} \big)
    + \big(  \rho ( \phi^m ) (  \bm{v}^m \cdot \nabla ) \bm{u}^m , \bm{\eta} \big)
    + \big(  \nu (  \phi^m  ) D \bm{u}^m , \nabla \bm{\eta} \big)
    - \gamma \big(  m_\phi ( \phi^m ) (  \nabla \mu_\phi^m \cdot \nabla  ) \bm{u}^m , \bm{\eta} \big) \nonumber \\
    & \quad = \gamma \sigma_1 \big(  \overline{\phi^m} -c  \big) \big(  \bm{u}^m , \bm{\eta} \big)
    + \big(  \mu_\phi^m \nabla \phi^m + \mu_\psi^m \nabla \psi^m , \bm{\eta} \big),
    \quad  \forall\, \bm{\eta} \in \mathbf{V}_m, \ t \in [0,\widehat{T}_0],
    \label{eq:v_to_u}
\end{align}
subject to the initial condition
\[
\bm{u}^m |_{t=0} = \bm{u}_0(x), \quad \text{in} \ \Omega.
\]
The existence of the finite-dimensional ansatz $\bm{u}^m$ follows from the same argument as in \cite{G2022}. Taking $\bm{\eta} = \bm{u}^m$ in \eqref{eq:v_to_u} and applying integration by parts, we obtain
\begin{align*}
    & \frac{1}{2} \frac{\mathrm{d}}{\mathrm{d t}}   \int_{\Omega}  \rho ( \phi^m)  | \bm{u}^m |^2  \, \mathrm{d}x
    + \int_{\Omega}  \nu ( \phi^m ) |  D \bm{u}^m  |^2  \, \mathrm{d}x
    - \frac{\sigma_1}{2} \gamma \big(  \overline{\phi^m} - c  \big) \| \bm{u}^m \|^2
    \\
    & \qquad - \int_{\Omega}  \big[  \mathrm{div}\, (  \rho( \phi^m ) \bm{v}^m  ) + \partial_{t} \rho( \phi^m ) + \mathrm{div}\, \mathbf{J}^m + \sigma_1 \gamma \big(  \overline{ \phi^m } -c  \big)  \big] \frac{1}{2} | \bm{u}^m |^2  \, \mathrm{d}x
    \\
    & \quad = \big(  \mu_\phi^m \nabla \phi^m + \mu_\psi^m \nabla \psi^m , \bm{u}^m \big),
\end{align*}
where $\mathbf{J}^m = - \gamma m_\phi (  \phi^m  ) \nabla \mu_\phi^m$. It follows from \eqref{eq:fixedpointch1} that
\[
 \partial_{t} \rho( \phi^m )+ \mathrm{div}\, (  \rho( \phi^m ) \bm{v}^m  )  + \mathrm{div}\, \mathbf{J}^m + \sigma_1 \gamma \big(  \overline{ \phi^m } -c  \big) = 0.
\]
With the above identity, we can deduce that
\begin{align*}
    & \frac{1}{2} \frac{\mathrm{d}}{\mathrm{d t}}   \int_{\Omega}  \rho ( \phi^m ) | \bm{u}^m |^2  \, \mathrm{d}x
    + \int_{\Omega}  \nu ( \phi^m ) |  D \bm{u}^m  |^2  \, \mathrm{d}x
    \\
    & \quad = \frac{\sigma_1}{2} \gamma \big(  \overline{\phi^m} - c  \big) \| \bm{u}^m \|^2
    - \big(  \phi^m \nabla \mu_\phi^m + \psi^m \nabla \mu_\psi^m , \bm{u}^m \big)
    \\
    & \quad \le | \gamma | \sigma_1 e^{-\sigma_1 t} \| \bm{u}^m \|^2
    + \big(  \| \nabla \mu_\phi^m \| + \|  \nabla \mu_\psi^m  \| \big ) \| \bm{u}^m \|
    \\
    & \quad \le \frac{| \gamma |}{\rho_*} \sigma_1 e^{-\sigma_1 t} \int_{\Omega}  \rho ( \phi^m ) | \bm{u}^m |^2  \, \mathrm{d}x
    + \frac{1}{2} \int_{\Omega}  \nu ( \phi^m ) |  D \bm{u}^m  |^2  \, \mathrm{d}x
    + \frac{2}{\lambda_1 \nu_*} \big(  \| \nabla \mu_\phi^m \|^2 + \|  \nabla \mu_\psi^m  \|^2  \big),
\end{align*}
where in the last line we have used Poincar\'{e}'s and Korn's inequalities for $\bm{u}^m$. Applying Gronwall's lemma, we obtain
\begin{align*}
    \rho_* \| \bm{u}^m(t) \|^2
    & \le \int_{\Omega}  \rho ( \phi^m(t) ) | \bm{u}^m(t) |^2  \, \mathrm{d}x
    \\
    & \le e^{\frac{2| \gamma |}{\rho_*}} \left(  \int_{\Omega}  \rho( \phi_0 ) | \bm{u}_0 |^2  \, \mathrm{d}x
    + \frac{2}{\lambda_1 \nu_* m_0} \int_{0}^{t}  m_\phi (  \phi^m  ) | \nabla \mu_\phi^m |^2 + m_\psi (  \psi^m  ) | \nabla \mu_\psi^m |^2 \, \mathrm{d}\tau  \right)
    \\
    & \le e^{\frac{2| \gamma |}{\rho_*}} \left(  \rho^* \| \bm{u}_0 \|^2 + \frac{4 C_1 + 4\widetilde{E}_{\mathrm{free}} (  \phi_0,\psi_0  ) }{\lambda_1 \nu_* m_0} + \frac{8}{\lambda_1 \nu_* m_0^2} \int_{0}^{t} \| \bm{v}^m(\tau) \|^2 \,\mathrm{d}\tau  \right).
\end{align*}
This gives
\[
\|  \bm{u}^m(t)  \|^2 \le C_2 + C_3 \int_{0}^{t} \| \bm{v}^m (\tau)\|^2 \, \mathrm{d}\tau,
\quad \forall\, t \in [0,\widehat{T}_0],
\]
where
\[
C_2 = e^{\frac{2| \gamma |}{\rho_*}} \left(  \frac{\rho^*}{\rho_*} \| \bm{u}_0 \|^2 + \frac{4 C_1 + 4\widetilde{E}_{\mathrm{free}} (  \phi_0,\psi_0  )}{\lambda_1 \nu_* m_0 \rho_*}  \right),
\quad C_3 = \frac{8 e^{\frac{2| \gamma |}{\rho_*}} }{\lambda_1 \nu_* m_0^2 \rho_*}.
\]
We assume that $\bm{v}^m$ satisfies
\[
\int_{0}^{t} \| \bm{v}^m (\tau)\|^2 \,\mathrm{d}\tau \le C_2 e^{C_3 t}, \quad \forall\, t \in [0,T_0],
\]
then it holds
\begin{equation*}
    \int_{0}^{t}  \| \bm{u}^m (\tau)\|^2 \, \mathrm{d}\tau \le C_2 t + C_3\int_{0}^{t} C_2   e^{C_3 \tau}\,  \mathrm{d}\tau = C_2(t-1) + C_2 e^{C_3 t}
    \le C_2 e^{C_3 t},\quad \forall\, t \in [0,\widehat{T}_0],
\end{equation*}
where we have used the fact that $\widehat{T}_0\leq 1$. Furthermore, we have
\[
\sup\limits_{t \in [0,\widehat{T}_0]} \| \bm{u}^m(t) \| \le \big(  C_2 + C_2 C_3 e^{C_3}  \big)^\frac{1}{2}=: K_0.
\]
Next, we derive estimates on the $L^{2} ( 0,\widehat{T}_0;\mathbf{V}_m )$-norm of $\partial_{t} \bm{u}^m$. As in \cite{G2022}, taking $\bm{\eta} = \partial_{t} \bm{u}^m$ in \eqref{eq:v_to_u} and applying integration by parts, we obtain
\begin{align*}
    \rho_* \| \partial_{t} \bm{u}^m \|^2 & = \gamma \big(  m_\phi ( \phi^m ) (  \nabla \mu_\phi^m \cdot \nabla  ) \bm{u}^m ,  \partial_{t} \bm{u}^m  \big)
    + \gamma \sigma_1 \big( \overline{\phi^m}-c \big) \big(  \bm{u}^m ,  \partial_{t} \bm{u}^m  \big)
    \\
    & \quad - \big(  \rho ( \phi^m ) (  \bm{v}^m \cdot \nabla  ) \bm{u}^m ,  \partial_{t} \bm{u}^m  \big)
    - \big(  \nu ( \phi^m ) D \bm{u}^m ,  \nabla \partial_{t} \bm{u}^m  \big)
    - \big(  \phi^m \nabla \mu_\phi^m + \psi^m \nabla \mu_\psi^m ,  \partial_{t} \bm{u}^m  \big)
    \\
    & \le | \gamma | m_\phi^* \| \nabla \mu_\phi^m \| \| \nabla \bm{u}^m \|_{L^{\infty}(\Omega)} \|  \partial_{t} \bm{u}^m  \| +  2 | \gamma | \sigma_1 e^{-\sigma_1 t} \| \bm{u}^m \| \| \partial_{t} \bm{u}^m \|  \\
    & \quad + \rho^* \| \bm{v}^m \| \|  \nabla \bm{u}^m  \|_{L^{\infty}(\Omega)} \|  \partial_{t} \bm{u}^m  \| + \nu^* \| D \bm{u}^m \| \|  \nabla \partial_{t} \bm{u}^m  \| + (  \| \nabla \mu_\phi^m \| + \| \nabla \mu_\psi^m \|  ) \| \partial_{t} \bm{u}^m \| \\
    & \le \big(  C_m | \gamma | m_\phi^* \| \nabla \mu_\phi^m \| + 2 | \gamma | \sigma_1 e^{-\sigma_1 t} + C_m \rho^* \| \bm{v}^m \| + C_m^2 \nu^*  \big) \| \bm{u}^m \| \| \partial_{t} \bm{u}^m \| \\
    & \quad + \big(  \| \nabla \mu_\phi^m \| + \| \nabla \mu_\psi^m \|  \big) \| \partial_{t} \bm{u}^m \|,
\end{align*}
where $C_m>0$ is the constant in the inverse Sobolev embedding inequalities for $\bm{u}^m$:
$$
\|\bm{u}^m\|_{V}\leq C_m \|\bm{u}^m\|,\quad \|\bm{u}^m\|_{H^2(\Omega)}\leq C_m \|\bm{u}^m\|,
\quad \|\bm{u}^m\|_{H^3(\Omega)}\leq C_m\|\bm{u}^m\|.
$$
$C_m$ depends only on the dimension $m$ and the domain $\Omega$.
Integrating with respect to time and using Young's inequality, we find that
\begin{align*}
    \int_{0}^{\widehat{T}_0}  \| \partial_{t} \bm{u}^m(\tau) \|^2 \, \mathrm{d}\tau
    & \le \frac{16 K_0^2}{\rho_*^2m_0}   C_m^2 \gamma^2 (m_\phi^*)^2 \Big(  C_1 +  \widetilde{E}_{\mathrm{free}}( \phi_0,\psi_0 )
    + \frac{2}{m_0} C_2 e^{C_3}   \Big)
    \\
    &\quad + \frac{16 K_0^2}{\rho_*^2} \big[ 4\gamma^2 (\sigma_1)^2 + C_m^2 ( \rho^* )^2 C_2 e^{C_3} +   C_m^4 ( \nu^* )^2 \big]
    \\
    & \quad + \frac{8}{\rho_*^2m_0} \Big(  C_1 +  \widetilde{E}_{\mathrm{free}}( \phi_0,\psi_0 ) + \frac{2}{m_0} C_2 e^{C_3}   \Big)=: K_1^2.
\end{align*}

In view of the above estimates, we take
\[
T_0 \define \min \big\{  1, \ T_1(C_m K_0, C_m K_1  ), \ T_2(C_m K_0, C_m K_1  )  \big\},
\]
where $T_1,\,T_2>0$ can be determined explicitly as in Proposition \ref{cvchosse}-(\textbf{R1}). Then we define
\begin{align*}
    S \define  \Big\{  & \bm{u} \in H^1 ( 0,T_0;\mathbf{V}_m ): \| \bm{u} \|_{L^{\infty} ( 0,T_0;L^{2}(\Omega) )} \le K_0, \ \|  \partial_{t} \bm{u}  \|_{L^{2} ( 0,T_0;L^{2}(\Omega) )} \le K_1 , \\
    &\quad \int_{0}^{t} \| \bm{u}(\tau) \|^2\, \mathrm{d}\tau \le C_2 e^{C_3 t}, \ \forall\, t \in [0,T_0]  \Big\},
\end{align*}
which is a closed and convex subset of $L^{2} ( 0,T_0;\mathbf{V}_m )$.
For any $\bm{v}^m\in S$, Proposition \ref{cvchossr} and Remark \ref{length_cvcho} ensure the existence of a unique strong solution $(\phi^m,\psi^m)$ to problem \eqref{eq:fixedpointch1}--\eqref{eq:fixedpointchi} on $[0,T_0]$, that is, we can take $\widehat{T}_0=T_0$.
Consequently, we determine a unique solution $\bm{u}^m \in S$ to the linear ODE system \eqref{eq:v_to_u}.
This allows us to define the mapping
$$
\Lambda:S \to S\quad \text{such that}\quad \Lambda(\bm{v}^m) = \bm{u}^m.
$$
Observing that $S$ is a compact set in $L^{2} ( 0,T_0;\mathbf{V}_m )$, to show the existence of solutions to the system \eqref{eq:semi_galerkin_nsch1}--\eqref{eq:semi_galerkin_nsch5} subject to \eqref{eq:semi_galerkin_nschb} and \eqref{icappr} by Schauder's fixed-point theorem, it remains to verify the continuity of $\Lambda$.

The continuity of $\Lambda$ and the uniqueness of solutions with the regularity listed in Proposition \ref{local_approximate_fixpoint} can be proved simultaneously. Let $i \in \{ 1,2 \}$.
Given functions $\bm{v}_i\in  H^1 ( 0,T;\mathbf{V}_m )$, $\bm{u}_{0,i} \in \mathbf{V}_m$, $\phi_{0,i},\psi_{0,i} \in H^{3}_N(\Omega)$ with $(  \phi_{0,i},\psi_{0,i}  ) \in [-1+\delta,1-\delta] \times [\delta,1-\delta]$  in $\Omega$ for some $\delta >0$, we suppose that the corresponding solutions $( \bm{u}_i,\phi_i,\psi_i,\mu_{\phi_i},\mu_{\psi_i} )$ exist on the common time interval $[0,T]$ with the following regularity
\begin{align*}
    & \bm{u}_i \in H^1 ( 0,T;\mathbf{V}_m ),\quad  \phi_i,\psi_i \in L^{\infty} ( 0,T;H^{3}(\Omega) ),
    \\
    & ( \phi_i,\psi_i ) \in [-1+\delta',1-\delta'] \times [\delta',1-\delta'] \quad \mathrm{a.e.} \ \mathrm{in} \ Q_T,
    \\
    &\partial_{t} \phi_i, \partial_{t} \psi_i \in L^{\infty} ( 0,T;V ) \cap L^{2} ( 0,T;W ) \cap H^1 ( 0,T;H ),
    \\
    & \mu_{\phi_i},\mu_{\psi_i} \in L^{\infty} ( 0,T;W ) \cap H^1 ( 0,T;V ),
\end{align*}
for some $\delta' \in (0,\delta]$ depending on $\alpha$ and $\delta$, moreover,
\begin{numcases}{}
    \big( \rho( \phi_i ) \partial_{t} \bm{u}_i, \bm{\eta} \big)
    + \big( \rho( \phi_i ) ( \bm{v}_i \cdot \nabla ) \bm{u}_i , \bm{\eta} \big)
    + \big(  \nu( \phi_i ) D \bm{u}_i , \nabla \bm{\eta} \big)
    \nonumber\\
    \quad - \gamma \big(  m_\phi(\phi_i)( \nabla \mu_{\phi_i} \cdot \nabla ) \bm{u}_i + \sigma_1 \big(  \overline{\phi_i} - c  \big) \bm{u}_i , \bm{\eta} \big)
    = \big( \mu_{\phi_i} \nabla \phi_i + \mu_{\psi_i} \nabla \psi_i, \bm{\eta} \big),
    \nonumber & $ \forall\, \bm{\eta} \in \mathbf{V}_m$, a.e. in $(0,T)$,
    \\
    \partial_{t} \phi_i + \bm{v}_i \cdot \nabla \phi_i + \sigma_1 \big( \overline{\phi_i} -c \big)
    = \mathrm{div}\, \big(m_\phi (\phi_i) \nabla \mu_{\phi_i}\big),
    \nonumber & a.e. in $Q_T$,
    \\
    \mu_{\phi_i} = \alpha \partial_{t} \phi_i -\Delta \phi_i + \sigma_2 \mathcal{N} \big( \phi_i - \overline{\phi_i} \big) + F_\phi'(\phi_i) + \partial_\phi G(\phi_i,\psi_i),
    \nonumber & a.e. in $Q_T$,
    \\
    \partial_{t} \psi_i + \bm{v}_i \cdot \nabla \psi_i
    = \mathrm{div}\, \big(m_\psi (\psi_i) \nabla \mu_{\psi_i}\big),
    \nonumber & a.e. in $Q_T$,
    \\
    \mu_{\psi_i} = \alpha \partial_{t} \psi_i -\beta \Delta \psi_i + F_\psi'(\psi_i) + \partial_\psi G(\phi_i,\psi_i),
    \nonumber & a.e. in $Q_T$,
\end{numcases}
and
\begin{numcases}{}
    \bm{u}_i =\mathbf{0}, \ \partial_{\mathbf{n}} \phi_i = \partial_{\mathbf{n}} \mu_{\phi_i} = \partial_{\mathbf{n}} \psi_i = \partial_{\mathbf{n}} \mu_{\psi_i} = 0,
    \nonumber & a.e. on $S_T$,
    \\
    \bm{u}_i (\cdot , 0) = \bm{u}_{0,i}, \ ( \phi_i(\cdot , 0),\psi_i(\cdot , 0) ) = ( \phi_{0,i},\psi_{0,i} ), \nonumber & a.e. in $\Omega$.
\end{numcases}
Set
$$
\big( \widetilde{\bm{u}},\widetilde{\phi},\widetilde{\psi},\widetilde{\mu_\phi},\widetilde{\mu_\psi} \big)
= ( \bm{u}_1,\phi_1,\psi_1,\mu_{\phi_1},\mu_{\psi_1} )
- ( \bm{u}_2,\phi_2,\psi_2,\mu_{\phi_2},\mu_{\psi_2} ),\quad  \widetilde{\bm{v}} = \bm{v}_1 - \bm{v}_2.
$$
We observe that
\begin{align}
    & \big( \rho( \phi_1 ) \partial_{t} \widetilde{\bm{u}}, \bm{\eta} \big)
    + \big(  (  \rho ( \phi_1 ) - \rho ( \phi_2 )  ) \partial_{t} \bm{u}_2 , \bm{\eta} \big) + (  \nu ( \phi_1 ) D \widetilde{\bm{u}} , \nabla \bm{\eta} )
    + \big(  (  \nu ( \phi_1 ) - \nu ( \phi_2 )  ) D \bm{u}_2 , \nabla \bm{\eta} \big)
    \nonumber \\
    & \qquad - \gamma \sigma_1 \big(  \overline{\phi_1} - c  \big) \big(  \widetilde{\bm{u}} , \bm{\eta} \big)
    - \gamma \sigma_1 \overline{\widetilde{\phi}} ( \bm{u}_2, \bm{\eta} )
    + \big(  \rho ( \phi_1 ) (  \bm{v}_1 \cdot \nabla  ) \widetilde{\bm{u}} , \bm{\eta} \big)
    + \big(  \rho ( \phi_1 ) (  \widetilde{\bm{v}} \cdot \nabla  ) \bm{u}_2 , \bm{\eta} \big)
    \nonumber\\
    & \qquad + \big(  (  \rho ( \phi_1 ) - \rho ( \phi_2 )  ) ( \bm{v}_2 \cdot \nabla ) \bm{u}_2 , \bm{\eta} \big)
    - \gamma \big(  m_\phi ( \phi_1 ) (  \nabla \mu_{\phi_1} \cdot \nabla  ) \widetilde{\bm{u}} , \bm{\eta} \big)
    \nonumber \\
    & \qquad - \gamma \big(  ( m_\phi( \phi_1 )-m_\phi( \phi_2 ) ) ( \nabla \mu_{\phi_1} \cdot \nabla ) \bm{u}_2 , \bm{\eta} \big)
    - \gamma \big(  m_\phi ( \phi_2 ) (  \nabla \widetilde{\mu_\phi} \cdot \nabla  ) \bm{u}_2 , \bm{\eta} \big)
    \nonumber \\
    & \quad= \big(  \widetilde{\mu_\phi} \nabla \phi_1 + \widetilde{\mu_\psi} \nabla \psi_1 , \bm{\eta} \big)
    + \big(  \mu_{\phi_2} \nabla \widetilde{\phi} + \mu_{\psi_2} \nabla \widetilde{\psi} , \bm{\eta} \big), \quad
    \forall\, \bm{\eta} \in \mathbf{V}_m,\ \text{a.e. in}\  (0,T).
    \label{diff-um}
\end{align}
Taking $\bm{\eta} = \widetilde{\bm{u}}$ in \eqref{diff-um} and using integration by parts, with the following identity
\[
\big(  \partial_{t} \rho( \phi_1 )  + \sigma_1 \gamma \big( \overline{\phi_1}-c \big) + \bm{v}_1 \cdot \nabla \rho ( \phi_1 ) - \mathrm{div}\, (  \gamma m_\phi ( \phi_1 ) \nabla \mu_{\phi_1}  ) , | \widetilde{\bm{u}} |^2 \big) = 0,
\]
we can deduce that
\begin{align*}
    & \frac{1}{2} \frac{\mathrm{d}}{\mathrm{d t}}    \int_{\Omega}  \rho ( \phi_1 ) | \widetilde{\bm{u}} |^2  \, \mathrm{d}x
    + \int_{\Omega}  \nu ( \phi_1 ) |  D \widetilde{\bm{u}}  |^2  \, \mathrm{d}x
    \\
    & \quad \le | \gamma | \big\| \widetilde{\phi} \big\|_{L^{6}(\Omega)} \| \partial_{t} \bm{u}_2 \|_{L^{3}(\Omega)} \| \widetilde{\bm{u}} \| + C \| \widetilde{\phi} \| \| D \bm{u}_2 \|_{L^{\infty}(\Omega)} \|  \nabla \widetilde{\bm{u}}  \|
    \\
    & \qquad + | \gamma | \sigma_1 \| \widetilde{\bm{u}} \|^2 + | \gamma | \sigma_1 \big| \overline{\widetilde{\phi}} \big| \| \bm{u}_2 \| \| \widetilde{\bm{u}} \| + | \gamma | \big\| \widetilde{\phi} \big\| \| \bm{v}_1 \|_{L^{\infty}(\Omega)} \|  \nabla \bm{u}_2  \|_{L^{\infty}(\Omega)} \|  \widetilde{\bm{u}}  \|
    \\
    & \qquad + \rho^* \| \widetilde{\bm{v}} \| \|  \nabla \bm{u}_2  \|_{L^{\infty}(\Omega)} \| \widetilde{\bm{u}} \| + | \gamma | \| \widetilde{\phi} \|_{L^{6}(\Omega)} \| \bm{v}_2 \|_{L^{\infty}(\Omega)} \| \nabla \bm{u}_2 \|_{L^{3}(\Omega)} \| \widetilde{\bm{u}} \|
    \\
    & \qquad + | \gamma | \|  \nabla \bm{u}_2  \|_{L^{\infty}(\Omega)} \big(  C \big\| \widetilde{\phi} \big\| \| \nabla \mu_{\phi_1} \| \| \widetilde{\bm{u}} \|_{L^{\infty}(\Omega)} + m_\phi^* \|  \nabla \widetilde{\mu_\phi}  \| \| \widetilde{\bm{u}} \|  \big)
    \\
    & \qquad + \| \widetilde{\bm{u}} \| (  \|  \nabla \widetilde{\mu_\phi}  \| + \|  \nabla \widetilde{\mu_\psi}  \|  ) + \| \widetilde{\bm{u}} \|_{L^{\infty}(\Omega)} (  \| \widetilde{\phi} \| \| \nabla \mu_{\phi_2} \| + \| \widetilde{\psi} \| \| \nabla \mu_{\psi_2} \|  )
    \\
    & \quad \le C ( \big| \overline{\widetilde{\phi}} \big| + \big\| \nabla \widetilde{\phi} \big\| ) \big(  C_m  \| \partial_{t} \bm{u}_2 \| + C_m^2 \| \bm{u}_2 \| + C C_m^2 | \gamma | \| \bm{v}_1 \| \|  \bm{u}_2  \|  \big) \| \widetilde{\bm{u}} \|
    \\
    & \qquad + \big(  | \gamma | \sigma_1 \| \bm{u}_2 \| \big| \overline{\widetilde{\phi}} \big| + C_m \rho^* \| \widetilde{\bm{v}} \| \|  \bm{u}_2  \| + C | \gamma | C_m^2 ( \big| \overline{\widetilde{\phi}} \big| + \big\| \nabla \widetilde{\phi} \big\| ) \| \bm{v}_2 \| \| \bm{u}_2 \|  \big) \| \widetilde{\bm{u}} \|
    \\
    & \qquad + | \gamma | \sigma_1 \| \widetilde{\bm{u}} \|^2 + \big(  C | \gamma | C_m^2 \big\| \widetilde{\phi} \big\| \| \nabla \mu_{\phi_1}  \| \| \bm{u}_2 \| + C_m m_\phi^* \| \bm{u}_2 \| \| \nabla \widetilde{\mu_\phi} \|  \big) \| \widetilde{\bm{u}} \|
    \\
    & \qquad + \big(  \|  \nabla \widetilde{\mu_\phi}  \| + \|  \nabla \widetilde{\mu_\psi}  \| + C_m \| \widetilde{\phi} \| \| \nabla \mu_{\phi_2} \| + C_m \| \widetilde{\psi} \| \| \nabla \mu_{\psi_2} \|  \big) \| \widetilde{\bm{u}} \|
    \\
    & \quad \le C (  1 + \| \bm{u}_2 \|^2  ) \int_{\Omega}  \rho ( \phi_1 ) | \widetilde{\bm{u}} |^2  \, \mathrm{d}x
    \\
    & \qquad + C \big(  \| \partial_{t} \bm{u}_2 \|^2 + ( 1 + \| \bm{u}_2 \|^2 ) (  1 + \| \nabla \mu_{\phi_1} \|^2 + \| \nabla \mu_{\phi_2} \|^2 + \| \bm{v}_1 \|^2 + \| \bm{v}_2 \|^2  )  \big) \Big(  \big| \overline{\widetilde{\phi}} \big|^2 + \big\|  \nabla \widetilde{\phi}  \big\|^2  \Big)
    \\
    & \qquad + C \| \nabla \mu_{\psi_2} \|^2 \big\| \nabla \widetilde{\psi} \big\|^2 + C \| \widetilde{\bm{v}} \|^2  + \frac{1}{4} \int_{\Omega} \big( m_\phi ( \phi_1 ) | \nabla \widetilde{\mu_\phi} |^2 + m_\psi ( \psi_1 ) | \nabla \widetilde{\mu_\psi} |^2 \big) \,\mathrm{d}x.
\end{align*}
On the other hand, testing the following equation
\[
\partial_{t}  \widetilde{\phi}  + \widetilde{\bm{v}} \cdot \nabla \phi_1 + \bm{v}_2 \cdot \nabla \widetilde{\phi} + \sigma_1 \overline{\widetilde{\phi}}
= \mathrm{div}\, \big(  m_\phi ( \phi_1 ) \nabla \widetilde{\mu_\phi}  \big) + \mathrm{div}\, \big(  (  m_\phi ( \phi_1 ) - m_\phi ( \phi_2 )  ) \nabla \mu_{\phi_2} \big)
\]
with $\widetilde{\mu_\phi}$, we get
\begin{align*}
    & \frac{1}{2} \frac{\mathrm{d}}{\mathrm{d t}}  \big(  \| \nabla \widetilde{\phi} \|^2 + \sigma_2 \|  \widetilde{\phi} - \overline{\widetilde{\phi}}  \|^2_* \big)
    + \alpha \| \partial_{t} \widetilde{\phi} \|^2 + \int_{\Omega}  m_\phi ( \phi_1 ) | \nabla \widetilde{\mu_\phi} |^2  \, \mathrm{d}x
    \\
    & \quad \le \|  F_\phi''  \|_{C(  [-1+\delta',1-\delta']  )} \| \widetilde{\phi} \| \| \partial_{t} \widetilde{\phi} \| + \|  G  \|_{C^2(  \mathbb{R}^{2}  )} (  \| \widetilde{\phi} \| + \| \widetilde{\psi} \|  ) \| \partial_{t} \widetilde{\phi} \| + \| \widetilde{\bm{v}} \| \| \nabla \widetilde{\mu_\phi} \|
    \\
    & \qquad + C \| \bm{v}_2 \|_{L^{\infty}(\Omega)} \| \nabla \widetilde{\phi} \| \| \nabla \widetilde{\mu_\phi} \| + \sigma_1 \big| \overline{\widetilde{\phi}} \big| \big| \overline{\widetilde{\mu_\phi}} \big| + C \| \widetilde{\phi} \|_{L^{3}(\Omega)} \| \nabla \mu_{\phi_2} \|_{L^{6}(\Omega)} \| \nabla \widetilde{\mu_\phi} \|
    \\
    & \quad \le C \big(  1 + \| \nabla \mu_{\phi_2} \|_{L^{6}(\Omega)}^2 + C_m^2 \| \bm{v}_2 \|^2  \big) \big(  \| \nabla \widetilde{\phi} \|^2 + \big| \overline{\widetilde{\phi}} \big|^2  \big)
    + C \| \nabla \widetilde{\psi} \|^2 \\
    & \qquad + C \| \widetilde{\bm{v}} \|^2 + \frac{1}{4} \int_{\Omega}  m_\phi ( \phi_1 ) | \nabla \widetilde{\mu_\phi} |^2  \, \mathrm{d}x
    + \frac{\alpha}{2} \| \partial_{t} \widetilde{\phi} \|^2.
\end{align*}
Similarly, it holds
\begin{align*}
    & \frac{\beta}{2} \frac{\mathrm{d}}{\mathrm{d t}}   \| \nabla \widetilde{\psi} \|^2 + \alpha \| \partial_{t} \widetilde{\psi} \|^2
    + \int_{\Omega}  m_\psi ( \psi_1 ) | \nabla \widetilde{\mu_\psi} |^2  \, \mathrm{d}x
    \\
    & \quad \le C \big(  1 + \| \nabla \mu_{\psi_2} \|_{L^{6}(\Omega)}^2 + C_m^2 \| \bm{v}_2 \|^2  \big) \| \nabla \widetilde{\psi} \|^2 + C \| \nabla \widetilde{\phi} \|^2
    \\
    & \qquad + C \| \widetilde{\bm{v}} \|^2 + \frac{1}{4} \int_{\Omega}  m_\psi ( \psi_1 ) | \nabla \widetilde{\mu_\psi} |^2  \, \mathrm{d}x
    + \frac{\alpha}{2} \| \partial_{t} \widetilde{\psi} \|^2.
\end{align*}
Combining the above three inequalities, we obtain
\begin{align}
    & \frac{1}{2} \frac{\mathrm{d}}{\mathrm{d t}}  \Big(  \int_{\Omega} \rho ( \phi_1 ) | \widetilde{\bm{u}} |^2 \ \mathrm{d}x + \| \nabla \widetilde{\phi} \|^2 + \sigma_2 \| \widetilde{\phi}-\overline{\widetilde{\phi}} \|^2_* + \beta \| \nabla \widetilde{\psi} \|^2  \Big)
    + \frac{\alpha}{2} \big(  \| \partial_{t} \widetilde{\phi} \|^2 + \| \partial_{t} \widetilde{\psi} \|^2 \big)
    \nonumber \\
    & \qquad + \int_{\Omega}  \Big(  \nu ( \phi_1 ) | D \widetilde{\bm{u}} |^2 + \frac{1}{2} m_\phi ( \phi_1 ) | \nabla \widetilde{\mu_\phi} |^2 + \frac{1}{2} m_\psi ( \psi_1 ) | \nabla \widetilde{\mu_\psi} |^2  \Big)  \, \mathrm{d}x
    \nonumber \\
    & \quad \le C H_1 \Big(  \int_{\Omega} \rho ( \phi_1 ) | \widetilde{\bm{u}} |^2 \ \mathrm{d}x + \| \nabla \widetilde{\phi} \|^2 + \sigma_2 \| \widetilde{\phi}-\overline{\widetilde{\phi}} \|^2_* + \beta \| \nabla \widetilde{\psi} \|^2  \Big) + C H_2 \big| \overline{\widetilde{\phi}} \big|^2 + C \| \widetilde{\bm{v}} \|^2,
    \label{continuous_dependence}
\end{align}
where
\begin{align*}
    H_1 & = \big( 1 + \| \bm{u}_2 \|^2 \big) \big(  1 + \| \nabla \mu_{\phi_1} \|^2 + \| \nabla \mu_{\phi_2} \|^2 + \| \bm{v}_1 \|^2 + \| \bm{v}_2 \|^2  \big)
    \\
    & \quad + \| \partial_{t} \bm{u}_2 \|^2 + \| \nabla \mu_{\phi_2} \|_{L^{6}(\Omega)}^2 + \| \nabla \mu_{\psi_2} \|_{L^{6}(\Omega)}^2,
    \\
    H_2 & = \big( 1 + \| \bm{u}_2 \|^2 \big) \big(  1 + \| \nabla \mu_{\phi_1} \|^2 + \| \nabla \mu_{\phi_2} \|^2 + \| \bm{v}_1 \|^2 + \| \bm{v}_2 \|^2  \big)
    \\
    & \quad + \| \nabla \mu_{\psi_2} \|_{L^{6}(\Omega)}^2 + \| \partial_{t} \bm{u}_2 \|^2.
\end{align*}

Let us first consider the case $( \bm{u}_{0,1},\phi_{0,1},\psi_{0,1} )=( \bm{u}_{0,2},\phi_{0,2},\psi_{0,2} )=( \bm{u}_{0},\phi_{0},\psi_{0} )$, $\bm{v}_1, \bm{v}_2\in S$ and $T=T_0$. This choice of initial data implies that $\overline{\widetilde{\phi}}(t)=0$ in $[0,T_0]$. Then, from the inequality \eqref{continuous_dependence} and Gronwall's lemma, we infer that
\begin{equation*}
    \rho_* \| \widetilde{\bm{u}}(t) \|^2
    \le \int_{\Omega} \rho ( \phi_1 ) | \widetilde{\bm{u}}(t) |^2 \, \mathrm{d}x
    \le C \| \widetilde{\bm{v}} \|^2_{ L^{2} ( 0,T_0;\mathbf{V}_m ) } e^{C\int_{0}^{T_0} H_1(\tau)\, \mathrm{d}\tau},
    \quad \forall\, t\in [0,T_0],
\end{equation*}
which easily implies the continuity of the mapping $\Lambda$ in the topology of $L^{2} ( 0,T_0;\mathbf{V}_m )$. Hence, we can apply Schauder's fixed point theorem to conclude that the mapping $\Lambda$ has a fixed point in $S$. This gives the existence of a solution $(\bm{u}^m,\phi^m,\psi^m,\mu_\phi^m,\mu_\psi^m)$ on $[0,T_\alpha]$ with $T_\alpha=T_0$ to the approximate problem \eqref{eq:semi_galerkin_nsch1}--\eqref{eq:semi_galerkin_nschb} subject to the initial condition \eqref{icappr}.

Next, we investigate the uniqueness by considering the choice
$$
( \bm{u}_{0,1},\phi_{0,1},\psi_{0,1} )=( \bm{u}_{0,2},\phi_{0,2},\psi_{0,2} )=( \bm{u}_{0},\phi_{0},\psi_{0} ),\quad \bm{v}_1=\bm{u}_1, \quad  \bm{v}_2=\bm{u}_2,\quad T=T_0,\quad \delta'=\widetilde{\delta}.
$$
Then, from \eqref{continuous_dependence} and Gronwall's lemma, we can deduce that
\begin{equation*}
    \rho_* \| \widetilde{\bm{u}}(t) \|^2
    + \| \nabla \widetilde{\phi}(t) \|^2
    + \beta \| \nabla \widetilde{\psi}(t) \|^2
    \le C \Big(\widetilde{E}(0) +  \int_{0}^{T} \widetilde{H}_2(\tau) \big| \overline{\widetilde{\phi}}(\tau) \big|^2 \,\mathrm{d}\tau  \Big) e^{C\int_{0}^{T} \widetilde{H}_1 (\tau) \,\mathrm{d}\tau}=0,
\end{equation*}
for all $t\in [0,T_0]$, where
\begin{align*}
    \widetilde{E}(0) & =  \rho^*\| \bm{u}(0) \|^2
    + \| \nabla \widetilde{\phi}(0) \|^2
    + \beta \| \nabla \widetilde{\psi}(0) \|^2
    + \sigma_2 \| \widetilde{\phi}(0) - \overline{\widetilde{\phi}}(0) \|^2_*=0,
    \\
    \widetilde{H}_1 & = \big( 1 + \| \bm{u}_2 \|^2 \big) \big(  1 + \| \nabla \mu_{\phi_1} \|^2 + \| \nabla \mu_{\phi_2} \|^2 + \| \bm{u}_1 \|^2 + \| \bm{u}_2 \|^2  \big)
    \\
    & \quad + \| \partial_{t} \bm{u}_2 \|^2 + \| \nabla \mu_{\phi_2} \|_{L^{6}(\Omega)}^2 + \| \nabla \mu_{\psi_2} \|_{L^{6}(\Omega)}^2, \\
    \widetilde{H}_2 & = \big( 1 + \| \bm{u}_2 \|^2 \big) \big(  1 + \| \nabla \mu_{\phi_1} \|^2 + \| \nabla \mu_{\phi_2} \|^2 + \| \bm{u}_1 \|^2 + \| \bm{u}_2 \|^2  \big)
    \\
    & \quad + \| \nabla \mu_{\psi_2} \|_{L^{6}(\Omega)}^2 + \| \partial_{t} \bm{u}_2 \|^2.
\end{align*}
This yields the uniqueness of the approximate solution $(\bm{u}^m,\phi^m,\psi^m,\mu_\phi^m,\mu_\psi^m)$ on  $[0,T_0]$.
\qed

\subsection{Proof of Theorem \ref{nschsse3d}}

We now consider the $k$-dependent approximate initial data as in \eqref{eq:semi_galerkin_nschi}. The following proposition demonstrates that the corresponding approximate solutions $( \bm{u}^m,\phi^m,\psi^m,\mu_\phi^m,\mu_\psi^m )$  admit a common existing interval independent of $\alpha$, $m$ and $k$,
on which they remain uniformly bounded with respect to these parameters.
\begin{proposition} \label{uniform_approximate_solution}
Let $\Omega \subset \mathbb{R}^{3}$ be a bounded domain of class $C^3$. Suppose that $\alpha\in (0,1)$, $m\in \mathbb{N}_+$ is arbitrary, $k\in \mathbb{N}_+$ is sufficiently large, and Assumptions $(\mathbf{H0})$--$(\mathbf{H2})$ are satisfied. Then, there exists $\widetilde{T}_*>0$, which is independent of $\alpha$, $m$ and $k$, such that problem \eqref{eq:semi_galerkin_nsch1}--\eqref{eq:semi_galerkin_nschi} admits a unique strong solution $( \bm{u}^m,\phi^m,\psi^m,\mu_\phi^m,\mu_\psi^m )$ on $[0,\widetilde{T}_*]$ that satisfies
\begin{align*}
    & \bm{u}^m \in H^1 ( 0,\widetilde{T}_*;\mathbf{V}_m ),\quad  \phi^m,\psi^m \in L^{\infty} ( 0,\widetilde{T}_*;H^{3}_N(\Omega) ),
    \\
    &\partial_{t} \phi^m, \partial_{t} \psi^m \in L^{\infty} ( 0,\widetilde{T}_*;V ) \cap L^{2} ( 0,\widetilde{T}_*;W ) \cap H^1 ( 0,\widetilde{T}_*;H ),
    \\
    & \mu_\phi^m,\mu_\psi^m \in L^{2} ( 0,\widetilde{T}_*;H^{3}(\Omega) ) \cap L^{\infty} ( 0,\widetilde{T}_*;W ) \cap H^1 ( 0,\widetilde{T}_*;V ).
\end{align*}
Moreover, we have the following estimates on $[0,\widetilde{T}_*]$:
\begin{align}
    & \sup\limits_{t \in [0,\widetilde{T}_*]} \| \bm{u}^m (t) \|_{\bm{H}^1_{0,\sigma}(\Omega)} + \sup\limits_{t \in [0,\widetilde{T}_*]} \| \mu_\phi^m (t) \|_{V} + \sup\limits_{t \in [0,\widetilde{T}_*]} \| \mu_\psi^m (t) \|_{V}
    \nonumber \\
    & \quad + \sup\limits_{t \in [0,\widetilde{T}_*]} \sqrt{\alpha} \| \partial_{t} \phi^m(t) \| + \sup\limits_{t \in [0,\widetilde{T}_*]} \sqrt{\alpha} \| \partial_{t} \psi^m(t) \|
    \nonumber \\
    & \quad + \sup\limits_{t \in [0,\widetilde{T}_*]} \| \phi^m(t) \|_{W} + \sup\limits_{t \in [0,\widetilde{T}_*]} \| \psi^m(t) \|_{W}
    \nonumber \\
    & \quad + \sup\limits_{t \in [0,\widetilde{T}_*]} \| F_\phi'( \phi^m(t) ) \| + \sup\limits_{t \in [0,\widetilde{T}_*]} \| F_\psi' ( \psi^m(t) ) \| \le K_2,
    \label{Linfty_bound} \\
    & \int_{0}^{\widetilde{T}_*}  \left(\| \partial_{t} \bm{u}^m(\tau) \|^2 + \| \mathbf{A} \bm{u}^m(\tau) \|^2 + \| \mu^m_\phi(\tau) \|^2_{H^{3}(\Omega)} + \| \mu^m_{\psi}(\tau) \|^2_{H^{3}(\Omega)}\right)  \,\mathrm{d}\tau
    \nonumber \\
    & \quad + \int_{0}^{\widetilde{T}_*} \left( \| \nabla \partial_{t} \phi^m(\tau) \|^2 + \| \nabla \partial_{t} \psi^m(\tau) \|^2 \right) \,\mathrm{d}\tau \le K_3,
    \label{L2_bound}
\end{align}
where $K_2$ and $K_3$ are positive constants that depend on  $\overline{\phi_0}$, $\overline{\psi_0}$, $E_{\mathrm{tot}} ( \bm{u}_0,\phi_0,\psi_0 )$, $\| \bm{u}_0 \|_{\bm{H}^1_{0,\sigma}(\Omega)}$, $\|-\Delta \phi_0 + F_\phi'(\phi_0)\|_V$, $\|-\beta \Delta \psi_0 + F_\psi'(\psi_0)\|_V$, $\widetilde{T}_*$ and the coefficients of the system, but are independent of $m$, $\alpha$ and $k$. Moreover,
\begin{align}
 & ( \phi^m,\psi^m ) \in [-1+\delta( \alpha,k ),1-\delta( \alpha,k )] \times [\delta( \alpha,k ),1-\delta( \alpha,k )] \quad \mathrm{a.e.} \ \mathrm{in} \ \Omega \times [0,\widetilde{T}_*],\label{delta-al}
\end{align}
where $\delta( \alpha,k )\in (0,1/2)$ depends on $\widetilde{T}_*$, $\alpha$, $k$, but is independent of $m$.
\end{proposition}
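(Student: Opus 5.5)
The plan is a continuation argument built on uniform higher-order a priori estimates. By Lemma \ref{approximate_initial_phase_field}(3), for each $k$ the data $(\phi_{0,k},\psi_{0,k})$ lie in $H^3_N(\Omega)$ and are strictly separated, and $\mathbb{P}_m\bm{u}_0\in\mathbf{V}_m$. Proposition \ref{local_approximate_fixpoint} therefore gives a unique local solution on $[0,T_\alpha]$. Since the existence time there depends only on norms of the data that are controlled by the quantities in \eqref{Linfty_bound} (together with the separation constant), the solution can be extended as long as these quantities stay bounded. It thus suffices to derive \eqref{Linfty_bound}--\eqref{L2_bound} on a time interval $[0,\widetilde T_*]$ whose length is independent of $\alpha,m,k$. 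The separation \eqref{delta-al} will come afterwards from the $\alpha$-dependent regularity.

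\textbf{Step 1: basic energy bounds.} Test \eqref{eq:semi_galerkin_nsch1} with $\bm{u}^m$, and \eqref{eq:semi_galerkin_nsch2}, \eqref{eq:semi_galerkin_nsch4} with $\mu^m_\phi,\mu^m_\psi$. Using the mass relation as in the proof of Proposition \ref{local_approximate_fixpoint}, this gives uniform control of
\[
\|\bm{u}^m\|_{L^\infty H},\quad \|\nabla\bm{u}^m\|_{L^2H},\quad \|\phi^m\|_{L^\infty V},\quad \|\psi^m\|_{L^\infty V},\quad \sqrt{\alpha}\,\|\partial_t\phi^m\|_{L^2H},\quad \sqrt{\alpha}\,\|\partial_t\psi^m\|_{L^2H},
\]
and of $\nabla\mu^m$ in $L^2H$. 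The bound involves $E_{\rm tot}$ of the approximate data, which is uniform in $k$ by Lemma \ref{approximate_initial_phase_field}.

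\textbf{Step 2: higher-order differential inequality.} I would introduce the functional
\[
\mathcal{Y}(t)=\tfrac12\int_\Omega\nu(\phi^m)|D\bm{u}^m|^2+\tfrac12\big(\|\nabla\mu_\phi^m\|^2+\|\nabla\mu_\psi^m\|^2\big)\ (\text{suitably weighted})+\tfrac\alpha2\big(\|\partial_t\phi^m\|^2+\|\partial_t\psi^m\|^2\big)
\]
and aim at $\mathcal{Y}'+\mathcal{D}\le C(1+\mathcal{Y})^{q}$ for some $q>1$. The components are as follows.
\begin{itemize}
\item Testing \eqref{eq:semi_galerkin_nsch1} with $\partial_t\bm{u}^m$ and with $\mathbf{A}\bm{u}^m$ controls $\|\partial_t\bm{u}^m\|^2+\|\mathbf{A}\bm{u}^m\|^2$. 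The Stokes estimate with variable viscosity is handled as in \cite{G2022}.
\item For the chemical potentials, the natural route follows the time-differentiated argument in Proposition \ref{cvchossr}. Differentiate \eqref{eq:semi_galerkin_nsch3}, \eqref{eq:semi_galerkin_nsch5} in time and test the Cahn--Hilliard equations with $\partial_t\mu^m$. Equivalently, test $\partial_t\phi^m$ against the time derivative of $\mu_\phi^m$. The convexity of $F$ ($F''\ge\theta>0$) produces a good sign for $(F''(\phi^m)\partial_t\phi^m,\partial_t\phi^m)$. This yields $\frac{d}{dt}\|\nabla\mu^m\|^2$ weighted by $m(\phi^m)$, plus dissipation $\|\nabla\partial_t\phi^m\|^2+\alpha\|\partial_t^2\phi^m\|^2$.
\item The mobility terms $m'(\phi^m)\partial_t\phi^m\nabla\mu^m$ are estimated via $\|\nabla\mu^m\|_{L^6}\le C\|\mu^m\|_W$.
\item $\|\mu^m\|_W$ and $\|\mu^m\|_{H^3}$ are recovered by elliptic regularity for the weighted Laplacian $\mathcal{A}_{\phi,\phi^m}$ (Appendix \ref{appendix}) from $\mathrm{div}(m(\phi^m)\nabla\mu^m)=\partial_t\phi^m+\bm{u}^m\cdot\nabla\phi^m+\dots$. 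This requires $\phi^m\in W^{2,p}$-type control, which comes from the standard singular-potential estimate $\|\phi^m\|_{W^{2,6}}+\|F'(\phi^m)\|_{L^6}\le C(1+\|\mu^m-\alpha\partial_t\phi^m\|_{L^6})$.
\item The convective terms involving $\bm{u}^m$ are bounded using the $H^1$ norm of $\bm{u}^m$ in 3D.
\end{itemize}
The differential inequality is then integrated, and the ODE comparison gives $\widetilde T_*$ depending only on $\mathcal{Y}(0)$. Here $\mathcal{Y}(0)$ is bounded via $\|\mu_{\phi,0}\|_V$, $\|\bm{u}_0\|_{H^1}$ and Lemma \ref{approximate_initial_phase_field}. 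The quantity $\partial_t\phi^m(0)$ is read off from the equation at $t=0$, using $\mu^m(0)=\alpha\partial_t\phi^m(0)+\mu^k_{0}$ and the $H^3$ regularity of the data. It must be checked that $\sqrt\alpha\,\|\partial_t\phi^m(0)\|$ is bounded independently of $\alpha$; this should follow by testing the $t=0$ equation with $\partial_t\phi^m(0)$.

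\textbf{Step 3: separation and the main obstacle.} Once \eqref{Linfty_bound} holds, $\partial_t\phi^m\in L^\infty(0,\widetilde T_*;H)$ with norm $\lesssim\alpha^{-1/2}$, so $\mu^m-\alpha\partial_t\phi^m$ is bounded in $L^\infty(0,\widetilde T_*;H)$. Combined with a parabolic-type maximum argument for the viscous equation, or with bounds on $F'(\phi^m)$ in $L^\infty$ via Moser/De Giorgi iteration applied to $-\Delta\phi+F'(\phi)=f$ with $f\in L^\infty V$ and $\alpha\partial_t$ regularization (as in \cite{G2022}), this gives \eqref{delta-al} with $\delta(\alpha,k)$ independent of $m$.

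I expect the main obstacle to be Step 2: closing the $\mu$-estimate uniformly in $\alpha$. The dangerous terms are $\alpha\partial_t\phi^m$ in the elliptic estimate for $\phi^m$, and the mobility term $m'(\phi^m)\partial_t\phi^m\nabla\mu^m$. I would try first to absorb $\alpha\|\nabla\partial_t\phi^m\|$ using $\alpha<1$ and the dissipation, and to bound the mobility term by
\[
C\|\partial_t\phi^m\|_{L^3}\,\|\mu^m\|_W\,\|\nabla\partial_t\mu^m\|,
\]
interpolating $\|\partial_t\phi^m\|_{L^3}$ between $\|\partial_t\phi^m\|_{V^*}$ (controlled by $\|\nabla\mu^m\|$ and $\|\bm{u}^m\|_{L^3}$) and $\|\nabla\partial_t\phi^m\|$.
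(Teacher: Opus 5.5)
Your architecture matches the paper's: continuation from Proposition \ref{local_approximate_fixpoint}, a first-level functional built from $\tfrac{\alpha}{2}\|\partial_t\phi^m\|^2$, $\tfrac12\int_\Omega m_\phi(\phi^m)|\nabla\mu_\phi^m|^2\,\mathrm{d}x$ and $\tfrac12\int_\Omega\nu(\phi^m)|D\bm u^m|^2\,\mathrm{d}x$, an $H^3$-bound on $\mu^m$ from the gradient of \eqref{eq:semi_galerkin_nsch2}, an ODE blow-up time, and $(\alpha,k)$-dependent separation at the end. However, the key differential inequality does not close as you describe it.

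Testing \eqref{eq:semi_galerkin_nsch2} with $\partial_t\mu_\phi^m$ produces $(\bm u^m\cdot\nabla\phi^m,\partial_t\mu_\phi^m)$ and $\sigma_1(\overline{\phi^m}-c)\int_\Omega\partial_t\mu_\phi^m\,\mathrm{d}x$. Nothing at this level controls $\partial_t\mu^m$: expanding it through the differentiated $\mu$-equation brings in $\alpha\partial_t^2\phi^m$ and $F_\phi''(\phi^m)\partial_t\phi^m$. The dissipation $\alpha\|\partial_t^2\phi^m\|^2$ you list does not arise from this test; the viscous term only gives $\tfrac{\alpha}{2}\tfrac{\mathrm{d}}{\mathrm{d}t}\|\partial_t\phi^m\|^2$. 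So these terms cannot be ``bounded using the $H^1$ norm of $\bm u^m$.''

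The missing idea is to integrate them by parts in time and correct the functional. The paper puts $\int_\Omega\mu_\phi^m\bm u^m\cdot\nabla\phi^m\,\mathrm{d}x=-\int_\Omega\phi^m\bm u^m\cdot\nabla\mu_\phi^m\,\mathrm{d}x$ and $\sigma_1(\overline{\phi^m}-c)\int_\Omega\mu_\phi^m\,\mathrm{d}x$ into $H_m$. What remains is $(\bm u^m\cdot\nabla\partial_t\phi^m+\partial_t\bm u^m\cdot\nabla\phi^m,\mu_\phi^m-\overline{\mu_\phi^m})$. This is absorbed by $\|\nabla\partial_t\phi^m\|^2$ and by the $\rho_*\|\partial_t\bm u^m\|^2$ from the simultaneous $\partial_t\bm u^m$-test. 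That is why the momentum and Cahn--Hilliard estimates must be coupled in one functional, and why the coercivity \eqref{hm_lower_bound} must be proved from your Step 1 bounds.

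Likewise, your bound of the mobility term by $\|\partial_t\phi^m\|_{L^3}\|\mu^m\|_W\|\nabla\partial_t\mu^m\|$ belongs to the next-level estimate of Proposition \ref{cvchossr}, which uses the time-differentiated transport equation. Its constants depend on the separation, hence on $\alpha,k$, and with your $\mathcal{Y}$ no $\|\nabla\partial_t\mu^m\|$ is available. The term that actually appears is
$\tfrac12\int_\Omega m_\phi'(\phi^m)\partial_t\phi^m|\nabla\mu_\phi^m|^2\,\mathrm{d}x\le C\|\partial_t\phi^m\|_{L^6(\Omega)}\|\nabla\mu_\phi^m\|^{7/4}\|\nabla\mu_\phi^m\|_{H^2(\Omega)}^{1/4}$.
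It is closed by feeding back \eqref{mhi_h3}, which itself contains $\|\nabla\partial_t\phi^m\|^2$ and $\|\mathbf{A}\bm u^m\|^2$; hence the paper's weighted combination with small $\omega,\epsilon,\xi$.

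Step 3 uses the wrong input. A bound on $\mu^m-\alpha\partial_t\phi^m$ in $L^\infty(0,\widetilde T_*;H)$, or a right-hand side in $L^\infty(0,\widetilde T_*;V)$, does not give separation in three dimensions under $(\mathbf{H2})$ alone. For the logarithmic potential, take $\phi(x,t)=1-|x|^2-(T-t)$ near an interior point. Then $\alpha\partial_t\phi-\Delta\phi+F_\phi'(\phi)$ lies locally in $L^\infty(0,T;H^1)$ and $\phi$ is separated at $t=0$, yet $\phi=1$ at $(0,T)$. The comparison argument of Proposition \ref{cvchosse}$(\mathbf{R2})$ needs the right-hand side in $L^\infty(0,\widetilde T_*;L^\infty(\Omega))$.

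The paper uses the ingredient you identified, $\|\partial_t\phi^m\|_{L^\infty(0,\widetilde T_*;H)}\le K_2/\sqrt{\alpha}$. It feeds this, together with the uniform $L^\infty(0,\widetilde T_*;W)$-bound on $\phi^m$, into $\mathrm{div}\,(m_\phi(\phi^m)\nabla\mu_\phi^m)=\partial_t\phi^m+\bm u^m\cdot\nabla\phi^m+\sigma_1(\overline{\phi^m}-c)$ and Lemma \ref{weightedlaplace}. This yields $\mu_\phi^m\in L^\infty(0,\widetilde T_*;W)\hookrightarrow L^\infty(Q_{\widetilde T_*})$, with a bound depending on $\alpha$ but uniform in $m,k$. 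Only then does the ODE comparison give $\delta(\alpha,k)$, and the $H^3_N$-bound follows by elliptic theory. This separation and $H^3_N$ control are also what your continuation step needs, since Proposition \ref{local_approximate_fixpoint} restarts only from separated $H^3_N$ data, which \eqref{Linfty_bound} does not control.
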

\noindent \textbf{Proof.}
The initial datum $(\mathbb{P}_m\bm{u}_0,\phi_{0,k},\psi_{0,k})$ is admissible in Proposition \ref{local_approximate_fixpoint}. We denote the corresponding solution constructed in Proposition \ref{local_approximate_fixpoint} by $( \bm{u}^m,\phi^m,\psi^m,\mu_\phi^m,\mu_\psi^m )$ with  $[0,T_m)$ being its maximal existence interval.
By Lemma \ref{approximate_initial_phase_field} and the mass relations for $\overline{\phi^m},\,\overline{\psi^m}$, the argument in \cite{GHW2026} applies and yields a sufficiently large integer  $\widehat{k}$ such that for all $k>\widehat{k}$, $|\overline{\phi^m}(t)|\leq 1-\delta_*$, $|\overline{\psi^m}(t)-1/2|\leq (1/2)-\delta_*$ for all $t\in [0,T_m)$, where $\delta_*\in(0,1/2)$ depends only on $\overline{\phi_0}$, $\overline{\psi_0}$, $c$ and $\sigma_1$, but not on the approximating parameters $m$, $\alpha$ and $k$.
Furthermore, from the following energy identity
\begin{align*}
    & \frac{\mathrm{d}}{\mathrm{d}t} E_{\mathrm{tot}} (\bm{u}^m,\phi^m,\psi^m)
    + \int_{\Omega}  \big(  \nu(\phi^m) | D \bm{u}^m |^2
    + m_\phi(\phi^m) | \nabla \mu_\phi^m |^2
    + m_\psi(\psi^m) | \nabla \mu_\psi^m |^2 \big) \, \mathrm{d}x
    \\
    & \quad + \alpha \big(\|\partial_t \phi^m\|^2+\|\partial_t \psi^m\|^2\big)
    +  \sigma_1 \big( \overline{\phi^m} -c \big) \Big( \int_\Omega    \mu_\phi^m\,\mathrm{d}x - \frac{\gamma}{2}  \| \bm{u}^m \|^2 \Big) = 0,
\end{align*}
we can argue as in \cite{G2022,OGW2026} to conclude
\begin{align}
    & \| \bm{u}^m \|_{L^{\infty} ( 0,T_m;\bm{L}^{2}_\sigma(\Omega) )}
    + \| \bm{u}^m \|_{L^{2} ( 0,T_m;\bm{H}^{1}_{0,\sigma}(\Omega) )} \le C,
    \label{sp_lower_bound} \\
    & \| \phi^m \|_{L^{\infty} ( 0,T_m;V )}
    + \| \psi^m \|_{L^{\infty} ( 0,T_m;V )}
    \nonumber\\
    & \quad + \| \phi^m \|_{L^{2} ( 0,T_m;W )}
    + \| \psi^m \|_{L^{2} ( 0,T_m;W )} \le C, \label{phi_psi_lower_bound} \\
    & \| \mu_\phi^m \|_{L^{2} ( 0,T_m;V )}
    + \| \mu_\psi^m \|_{L^{2} ( 0,T_m;V )} \le C, \label{mu_lower_bound} \\
    & \sqrt{\alpha} \| \partial_{t} \phi^m \|_{L^{2} ( 0,T_m;H )}
    + \sqrt{\alpha} \| \partial_{t} \psi^m \|_{L^{2} ( 0,T_m;H )}
    \nonumber\\
    & \quad + \| \partial_{t} \phi^m \|_{L^{2} ( 0,T_m;V^* )} + \| \partial_{t} \psi^m \|_{L^{2} ( 0,T_m;V^* )} \le C,
    \label{phit_psit_lower_bound}
\end{align}
where the constant $C>0$ depends on $\| \bm{u}_0 \|$, $\| \phi_0 \|_{V}$, $\| \psi_0 \|_{V}$, $\overline{\phi_0}$, $\overline{\psi_0}$, $\Omega$ and the coefficients of the system, but is independent of $m$, $\alpha$ and $k$ (again with $k$ being sufficiently large).

Next, taking $\bm{\eta} = \partial_{t} \bm{u}^m$ in \eqref{eq:semi_galerkin_nsch1}, we obtain
\begin{align*}
     & \frac{1}{2} \frac{\mathrm{d}}{\mathrm{d t}}  \Big(  \int_{\Omega} \nu ( \phi^m ) | D \bm{u}^m |^2  \, \mathrm{d}x
     + \gamma \sigma_1 ( \overline{\phi^m}-c ) \| \bm{u}^m \|^2  \Big)  + \int_{\Omega}  \rho ( \phi^m ) | \partial_{t} \bm{u}^m |^2  \, \mathrm{d}x
     \\
     & \quad = \frac{1}{2} \int_{\Omega}  \nu'( \phi^m ) \partial_{t} \phi^m | D \bm{u}^m |^2  \, \mathrm{d}x - \big( \rho ( \phi^m ) ( \bm{u}^m \cdot \nabla ) \bm{u}^m, \partial_{t} \bm{u}^m \big)
     + \gamma \big( m_\phi ( \phi^m ) ( \nabla \mu_\phi^m \cdot \nabla ) \bm{u}^m, \partial_{t} \bm{u}^m \big)
     \\
     & \qquad + \frac{1}{2} \gamma \sigma_1^2 \big( \overline{\phi^m}-c \big) \| \bm{u}^m \|^2
     + \big( \mu_\phi^m \nabla \phi^m + \mu_\psi^m \nabla \psi^m, \partial_{t} \bm{u}^m \big).
\end{align*}
Similarly, testing \eqref{eq:semi_galerkin_nsch2} and \eqref{eq:semi_galerkin_nsch4} with $\partial_{t} \mu_\phi^m$ and $\partial_{t} \mu_\psi^m$, respectively,
gives
\begin{align*}
    & \frac{\mathrm{d}}{\mathrm{d t}}  \Big(  \int_{\Omega}  \frac{1}{2} m_\phi ( \phi^m ) | \nabla \mu_\phi^m |^2 + \sigma_1 ( \overline{\phi^m}-c ) \mu_\phi^m + \mu_\phi^m \bm{u}^m \cdot \nabla \phi^m  \, \mathrm{d}x  \Big)
    + \big( \partial_{t} \mu_\phi^m, \partial_{t} \phi^m \big)
    \nonumber \\
    & \quad =  \frac{1}{2} \int_{\Omega}  m_\phi' ( \phi^m ) \partial_{t} \phi^m | \nabla \mu_\phi^m |^2  \, \mathrm{d}x
    + \big(  \bm{u}^m \cdot \nabla \partial_{t} \phi^m + \partial_{t} \bm{u}^m \cdot \nabla \phi^m + \sigma_1^2 \big( \overline{\phi^m}-c \big) , \mu_\phi^m \big),
\end{align*}
and
\begin{align*}
    & \frac{\mathrm{d}}{\mathrm{d t}}  \Big(  \int_{\Omega}  \frac{1}{2} m_\psi ( \psi^m ) | \nabla \mu_\psi^m |^2 + \mu_\psi^m \bm{u}^m \cdot \nabla \psi^m  \, \mathrm{d}x  \Big)
    + \big( \partial_{t} \mu_\psi^m, \partial_{t} \psi^m \big)
    \nonumber \\
    & \quad =  \frac{1}{2} \int_{\Omega}  m_\psi' ( \psi^m ) \partial_{t} \psi^m | \nabla \mu_\psi^m |^2  \, \mathrm{d}x
    + \big(  \bm{u}^m \cdot \nabla \partial_{t} \psi^m + \partial_{t} \bm{u}^m \cdot \nabla \psi^m , \mu_\psi^m \big).
\end{align*}
On the other hand, from \eqref{eq:semi_galerkin_nsch3} and \eqref{eq:semi_galerkin_nsch5} we obtain
\begin{align*}
    \big( \partial_{t} \mu_\phi^m, \partial_{t} \phi^m \big)
    & = \frac{\alpha}{2} \frac{\mathrm{d}}{\mathrm{d t}} \| \partial_{t} \phi^m \|^2
    + \| \nabla \partial_{t} \phi^m \|^2
    + \sigma_2 \|  \partial_{t} \phi^m - \overline{\partial_{t} \phi^m}  \|^2_*
    + \int_{\Omega}  F_\phi'' ( \phi^m ) | \partial_{t} \phi^m |^2 \, \mathrm{d}x
    \\
    & \quad + \int_{\Omega} \big( \partial_\phi ^2 G ( \phi^m,\psi^m ) | \partial_{t} \phi^m |^2 + \partial_{\phi,\psi} ^2 G ( \phi^m,\psi^m ) \partial_{t} \phi^m \partial_{t} \psi^m \big) \, \mathrm{d}x,
    \\
    \big( \partial_{t} \mu_\psi^m, \partial_{t} \psi^m \big)
    & = \frac{\alpha}{2} \frac{\mathrm{d}}{\mathrm{d t}} \| \partial_{t} \psi^m \|^2
    + \beta \| \nabla \partial_{t} \psi^m \|^2
    + \int_{\Omega}  F_\psi'' ( \psi^m ) | \partial_{t} \psi^m |^2 \, \mathrm{d}x
    \\
    & \quad + \int_{\Omega} \big( \partial_\psi ^2 G ( \phi^m,\psi^m ) | \partial_{t} \psi^m |^2 + \partial_{\phi,\psi} ^2 G ( \phi^m,\psi^m ) \partial_{t} \phi^m \partial_{t} \psi^m \big) \, \mathrm{d}x.
\end{align*}
Combining the above identities, we can deduce that (cf. \cite{G2022})
\begin{align}
    & \frac{\mathrm{d}}{\mathrm{d t}} H_m
    + \| \nabla \partial_{t} \phi^m \|^2
    + \sigma_2 \|  \partial_{t} \phi^m - \overline{\partial_{t} \phi^m}  \|^2_*
    + \beta \| \nabla \partial_{t} \psi^m \|^2
    + \int_{\Omega}  \rho ( \phi^m ) | \partial_{t} \bm{u}^m |^2  \, \mathrm{d}x
    \nonumber \\
    & \quad \le \int_{\Omega} \big( \partial_\phi ^2 G ( \phi^m,\psi^m ) | \partial_{t} \phi^m |^2 + 2 \partial_{\phi,\psi} ^2 G ( \phi^m,\psi^m ) \partial_{t} \phi^m \partial_{t} \psi^m + \partial_\psi ^2 G ( \phi^m,\psi^m ) | \partial_{t} \psi^m |^2 \big) \, \mathrm{d}x
    \nonumber \\
    & \qquad + \frac{1}{2} \int_{\Omega}  m_\phi' ( \phi^m ) \partial_{t} \phi^m | \nabla \mu_\phi^m |^2  \, \mathrm{d}x
    + \big(  \bm{u}^m \cdot \nabla \partial_{t} \phi^m + \partial_{t} \bm{u}^m \cdot \nabla \phi^m , \mu_\phi^m - \overline{\mu_\phi^m} \big)
    \nonumber \\
    & \qquad + \frac{1}{2} \int_{\Omega}  m_\psi' ( \psi^m ) \partial_{t} \psi^m | \nabla \mu_\psi^m |^2  \, \mathrm{d}x
    + \big(  \bm{u}^m \cdot \nabla \partial_{t} \psi^m + \partial_{t} \bm{u}^m \cdot \nabla \psi^m , \mu_\psi^m - \overline{\mu_\psi^m} \big)
    \nonumber \\
    & \qquad + \sigma_1^2 | \Omega | \big( \overline{\phi^m}-c \big) \overline{\mu_\phi^m}
    + \frac{1}{2} \int_{\Omega}  \nu'( \phi^m ) \partial_{t} \phi^m | D \bm{u}^m |^2  \, \mathrm{d}x
    \notag\\
    &\qquad - \big( \rho ( \phi^m ) ( \bm{u}^m \cdot \nabla ) \bm{u}^m, \partial_{t} \bm{u}^m \big)
    + \gamma \big( m_\phi ( \phi^m ) ( \nabla \mu_\phi^m \cdot \nabla ) \bm{u}^m, \partial_{t} \bm{u}^m \big) \nonumber \\
    & \qquad + \frac{1}{2} \gamma \sigma_1^2 \big( \overline{\phi^m}-c \big) \| \bm{u}^m \|^2
    - \big( \phi^m \nabla \mu_\phi^m + \psi^m \nabla \mu_\psi^m, \partial_{t} \bm{u}^m \big)
    \nonumber \\
    & \quad \le C \big(  \| \partial_{t} \phi^m \|^2 + \| \partial_{t} \psi^m \|^2  \big)
    \nonumber \\
    & \qquad + C \big(  \| \partial_{t} \phi^m \|_{L^{6}(\Omega)} \| \nabla \mu_\phi^m \|_{L^{3}(\Omega)} \| \nabla \mu_\phi^m \| + \| \partial_{t} \psi^m \|_{L^{6}(\Omega)} \| \nabla \mu_\psi^m \|_{L^{3}(\Omega)} \| \nabla \mu_\psi^m \|  \big)
    \nonumber \\
    & \qquad + \| \bm{u}^m \|_{L^{3}(\Omega)} \big(  \| \nabla \partial_{t} \phi^m \| \| \mu_\phi^m - \overline{\mu_\phi^m} \|_{L^{6}(\Omega)} + \| \nabla \partial_{t} \psi^m \| \| \mu_\psi^m - \overline{\mu_\psi^m} \|_{L^{6}(\Omega)}  \big)
    \nonumber \\
    & \qquad + \| \partial_{t} \bm{u}^m \| \big(  \| \nabla \phi^m \|_{L^{6}(\Omega)} \| \mu_\phi^m - \overline{\mu_\phi^m} \|_{L^{3}(\Omega)} + \| \nabla \psi^m \|_{L^{6}(\Omega)} \| \mu_\psi^m - \overline{\mu_\psi^m} \|_{L^{3}(\Omega)}  \big)
    \nonumber \\
    & \qquad + C \big( 1 + \| \mu_\phi^m \| \big) + C \| \partial_{t} \phi^m \|_{L^{6}(\Omega)} \| D \bm{u}^m \|_{L^{3}(\Omega)} \| D \bm{u}^m \|
    \nonumber \\
    & \qquad + \rho^* \| \bm{u}^m \|_{L^{6}(\Omega)} \| \nabla \bm{u}^m \|_{L^{3}(\Omega)} \| \partial_{t} \bm{u}^m \|
    + |\gamma| m_\phi^* \| \nabla \mu_\phi^m \|_{L^{\infty}(\Omega)} \| \nabla \bm{u}^m \| \| \partial_{t} \bm{u}^m \|
    \nonumber \\
    & \qquad + | \gamma | \sigma_1^2 \| \bm{u}^m \|^2
    + \big(  \| \nabla \mu_\phi^m \| + \| \nabla \mu_\psi^m \|  \big) \| \partial_{t} \bm{u}^m \| \nonumber \\
    & \quad \le C \big(  | \overline{\phi^m} |^2 + \| \partial_{t} \phi^m - \partial_{t} \overline{\phi^m} \|_* \| \nabla \partial_{t} \phi^m \| + \| \partial_{t} \psi^m \|_* \| \nabla \partial_{t} \psi^m \|  \big)
    \nonumber \\
    & \qquad + C  \big(  \| \nabla \partial_{t} \phi^m \| + | \partial_{t} \overline{\phi^m} |  \big) \| \nabla \mu_\phi^m \|_{H^2(\Omega)}^{\frac{1}{4}} \| \nabla \mu_\phi^m \|^{\frac{7}{4}}
    + C \| \nabla \partial_{t} \psi^m \| \| \nabla \mu_\psi^m \|_{H^2(\Omega)}^{\frac{1}{4}} \| \nabla \mu_\psi^m \|^{\frac{7}{4}}
    \nonumber \\
    & \qquad + C \| D \bm{u}^m \| \big(  \| \nabla \partial_{t} \phi^m \| \| \nabla \mu_\phi^m \| + \| \nabla \partial_{t} \psi^m \| \| \nabla \mu_\psi^m \|  \big)
    \nonumber \\
    & \qquad + \| \partial_{t} \bm{u}^m \| \big(  \| \phi^m \|_W \| \nabla \mu_\phi^m \| + \|\psi^m \|_W \| \nabla \mu_\psi^m \|  \big)
    + C \big( 1 + \| \mu_\phi^m \| \big)
    \nonumber \\
    & \qquad + C \big(  \| \nabla \partial_{t} \phi^m \| + | \partial_{t} \overline{\phi^m} |  \big) \| \mathbf{A} \bm{u}^m \|^\frac{1}{2} \| D \bm{u}^m \|^{\frac{3}{2}}
    \nonumber \\
    & \qquad + C \| D \bm{u}^m \|^{\frac{3}{2}} \| \mathbf{A} \bm{u}^m \|^\frac{1}{2} \| \partial_{t} \bm{u}^m \|
    + C \| \nabla \mu_\phi^m \|^{\frac{1}{4}} \| \nabla \mu_\phi^m \|^{\frac{3}{4}}_{H^2(\Omega)} \| D \bm{u}^m \| \| \partial_{t} \bm{u}^m \|
    \nonumber \\
    & \qquad + | \gamma | \sigma_1^2 \| \bm{u}^m \|^2
    + \big(  \| \nabla \mu_\phi^m \| + \| \nabla \mu_\psi^m \|  \big) \| \partial_{t} \bm{u}^m \|
    \nonumber \\
    & \quad \le   \xi \big(  \| \nabla \partial_{t} \phi^m \|^2 + \beta \| \nabla \partial_{t} \psi^m \|^2  \big)
    + C_{\xi} \big(  | \overline{\phi^m} |^2 + \| \partial_{t} \phi^m - \partial_{t} \overline{\phi^m} \|_*^2 + \| \partial_{t} \psi^m \|_*^2  \big)
    \nonumber \\
    & \qquad + \xi \big(  \| \nabla \partial_{t} \phi^m \|^2 + \| \nabla \mu_\phi^m \|_{H^{2}(\Omega)}^2 + \beta \| \nabla \partial_{t} \psi^m \|^2 + \| \nabla \mu_\psi^m \|_{H^{2}(\Omega)}^2  \big)
    \nonumber \\
    & \qquad + C_{\xi} \big(  \| \nabla \mu_\phi^m \|^{\frac{14}{3}} + \| \nabla \mu_\psi^m \|^{\frac{14}{3}} +  | \partial_{t} \overline{\phi^m} |^2 \big)
    \nonumber \\
    & \qquad + \xi \big(  \| \nabla \partial_{t} \phi^m \|^2 + \beta \| \nabla \partial_{t} \psi^m \|^2  \big)
    + C_{\xi} \| D \bm{u}^m \|^2 \big(  \| \nabla \mu_\phi^m \|^2 + \| \nabla \mu_\psi^m \|^2  \big)
    \nonumber \\
    & \qquad + \xi \| \partial_{t} \bm{u}^m \|^2
    + C_{\xi} \big(  \| \phi^m \|_W^2 \| \nabla \mu_\phi^m \|^2 + \| \psi^m \|_W^2 \| \nabla \mu_\psi^m \|^2 \big)
    \nonumber \\
    & \qquad + \xi \big(  \| \nabla \partial_{t} \phi^m \|^2 + \| \mathbf{A} \bm{u}^m \|^2 +  \| \partial_{t} \bm{u}^m \|^2  \big)
    + C_{\xi} \big(1+\| D \bm{u}^m \|^6\big)
    \nonumber \\
    & \qquad + \xi \big(  \| \nabla \mu_\phi^m \|^2_{H^{2}(\Omega)} + \| \partial_{t} \bm{u}^m \|^2  \big)
    + C_{\xi} \| \nabla \mu_\phi^m \|^2 \| D \bm{u}^m \|^8
    + | \gamma | \sigma_1^2 \| \bm{u}^m \|^2
    \nonumber \\
    & \qquad + \xi \| \partial_{t} \bm{u}^m \|^2
    + C_{\xi} \big(  \| \nabla \mu_\phi^m \|^2 + \| \nabla \mu_\psi^m \|^2  \big), \label{energy_inequality_original}
\end{align}
where
\begin{align*}
    H_m & \define
    \frac{\alpha}{2} \| \partial_{t} \phi^m \|^2 + \int_{\Omega} \Big( \frac{1}{2} m_\phi ( \phi^m ) | \nabla \mu_\phi^m |^2 + \sigma_1 ( \overline{\phi^m}-c ) \mu_\phi^m - \phi^m \bm{u}^m \cdot \nabla \mu_\phi^m  \Big)\, \mathrm{d}x
    \\
    & \quad + \frac{\alpha}{2} \| \partial_{t} \psi^m \|^2 + \int_{\Omega} \Big( \frac{1}{2} m_\psi ( \psi^m ) | \nabla \mu_\psi^m |^2 - \psi^m \bm{u}^m \cdot \nabla \mu_\psi^m \Big) \, \mathrm{d}x
    \\
    & \quad + \frac{1}{2} \int_{\Omega} \nu ( \phi^m ) | D \bm{u}^m |^2  \, \mathrm{d}x
    + \frac{1}{2} \gamma \sigma_1 ( \overline{\phi^m}-c ) \| \bm{u}^m \|^2.
\end{align*}
From the estimates \eqref{sp_lower_bound}, \eqref{mu_lower_bound} and the $L^\infty$-bounds for $\phi^m$, $\psi^m$, we find that
\begin{align}
    H_m & \ge \frac{\alpha}{2} \| \partial_{t} \phi^m \|^2 + \frac{\alpha}{2} \| \partial_{t} \psi^m \|^2
    + \frac{1}{4} \int_{\Omega}  m_\phi ( \phi^m ) | \nabla \mu_\phi^m |^2  \, \mathrm{d}x
    \nonumber \\
    & \quad + \frac{1}{4} \int_{\Omega}  m_\psi ( \psi^m ) | \nabla \mu_\psi^m |^2  \ \mathrm{d}x
    + \frac{1}{2} \int_{\Omega} \nu ( \phi^m ) | D \bm{u}^m |^2  \, \mathrm{d}x
    \nonumber \\
    & \quad  - C ( 1 + \sigma_1^2 ) - (C+| \gamma | \sigma_1) \| \bm{u}^m \|^2
    \nonumber \\
    & \ge \frac{\alpha}{2} \| \partial_{t} \phi^m \|^2
    + \frac{\alpha}{2} \| \partial_{t} \psi^m \|^2
    + \frac{1}{4} \int_{\Omega}  m_\phi ( \phi^m ) | \nabla \mu_\phi^m |^2  \, \mathrm{d}x
    \nonumber \\
    & \quad + \frac{1}{4} \int_{\Omega}  m_\psi ( \psi^m ) | \nabla \mu_\psi^m |^2  \, \mathrm{d}x
    + \frac{1}{2} \int_{\Omega} \nu ( \phi^m ) | D \bm{u}^m |^2  \, \mathrm{d}x
    - C_4,
    \label{hm_lower_bound} \\
    H_m & \le \frac{\alpha}{2} \| \partial_{t} \phi^m \|^2 + \frac{\alpha}{2} \| \partial_{t} \psi^m \|^2
    +  \int_{\Omega}  m_\phi ( \phi^m ) | \nabla \mu_\phi^m |^2  \, \mathrm{d}x
    \nonumber \\
    & \quad + \int_{\Omega}  m_\psi ( \psi^m ) | \nabla \mu_\psi^m |^2  \, \mathrm{d}x
    + \frac{1}{2} \int_{\Omega} \nu ( \phi^m ) | D \bm{u}^m |^2  \, \mathrm{d}x
    \nonumber \\
    & \quad + C ( 1 + \sigma_1^2 )
    + (C+| \gamma | \sigma_1) \| \bm{u}^m \|^2
    \nonumber \\
    & \le \frac{\alpha}{2} \| \partial_{t} \phi^m \|^2
    + \frac{\alpha}{2} \| \partial_{t} \psi^m \|^2
    + \int_{\Omega}  m_\phi ( \phi^m ) | \nabla \mu_\phi^m |^2  \, \mathrm{d}x
    \nonumber \\
    & \quad + \int_{\Omega}  m_\psi ( \psi^m ) | \nabla \mu_\psi^m |^2  \, \mathrm{d}x
    + \frac{1}{2} \int_{\Omega} \nu ( \phi^m ) | D \bm{u}^m |^2  \, \mathrm{d}x + C_5,
    \label{hm_upper_bound}
\end{align}
where the positive constants $C_4$ and $C_5$ are independent of $m$, $\alpha$ and $k$.
From \eqref{eq:semi_galerkin_nsch2}--\eqref{eq:semi_galerkin_nsch5} and the $L^\infty$-bounds for $\phi^m$, $\psi^m$, we also have
\begin{align*}
    & \|\phi^m \|_W \le C \big(  \| \nabla \mu_\phi^m \| + \alpha \| \partial_{t} \phi^m \| + 1  \big),
    \quad  \| \psi^m \|_W \le C \big(  \| \nabla \mu_\psi^m \| + \alpha \| \partial_{t} \psi^m \| + 1  \big),
    \\
    & \| \partial_{t} \phi^m - \partial_{t} \overline{\phi^m} \|_* \le C \big(  \| \bm{u}^m \| + \| \nabla \mu_\phi^m \|  \big),
    \quad \| \partial_{t} \psi^m \|_* \le C \big(  \| \bm{u}^m \| + \| \nabla \mu_\psi^m \|  \big).
\end{align*}
Hence, it follows from \eqref{energy_inequality_original} that
\begin{align}
    & \frac{\mathrm{d}}{\mathrm{d t}} H_m
    + \| \nabla \partial_{t} \phi^m \|^2
    + \beta \| \nabla \partial_{t} \psi^m \|^2
    + \int_{\Omega}  \rho ( \phi^m ) | \partial_{t} \bm{u}^m |^2  \, \mathrm{d}x
    \nonumber \\
    & \quad \le 8 \xi \big(  \| \nabla \partial_{t} \phi^m \|^2 + \beta \| \nabla \partial_{t} \psi^m \|^2 + \| \partial_{t} \bm{u}^m \|^2 + \| \mathbf{A} \bm{u}^m \|^2 \big)
    \nonumber \\
    & \qquad + 8 \xi \big( \| \nabla \mu_\phi^m \|^2_{H^{2}(\Omega)} + \| \nabla \mu_\psi^m \|^2_{H^{2}(\Omega)}  \big)
    + C_{\xi} (  1 + C_4 + H_m  )^5.
    \label{energy_inequality}
\end{align}

To control the right-hand side of \eqref{energy_inequality}, it remains to estimate $\| \nabla \mu_\phi^m \|_{H^{2}(\Omega)}$, $\| \nabla \mu_\psi^m \|_{H^{2}(\Omega)}$ and $\| \mathbf{A} \bm{u}^m \|$. For $\| \nabla \mu_\phi^m \|_{H^{2}(\Omega)}$, we take gradient on both sides of equation \eqref{eq:semi_galerkin_nsch2} and get
\begin{align*}
    m_\phi ( \phi^m ) \nabla \Delta \mu_\phi^m & = - m_\phi' ( \phi^m ) \Delta \mu_\phi^m \nabla \phi^m - m_\phi' ( \phi^m ) \nabla^2 \mu_\phi^m \nabla \phi^m   - m_\phi' ( \phi^m ) \nabla^2 \phi^m  \nabla \mu_\phi^m \nonumber \\
    & \quad - m_\phi'' ( \phi^m ) (  \nabla \phi^m \cdot \nabla \mu_\phi^m  ) \nabla \phi^m + \nabla \bm{u}^m   \nabla \phi^m + (\nabla^2 \phi^m )  \bm{u}^m + \nabla \partial_{t} \phi^m.
\end{align*}
Applying the inequalities \eqref{gns_inequality} and \eqref{Agmon_inequality} in three dimensions, we can establish the following estimate:
\begin{align*}
    \|  m_\phi ( \phi^m ) \nabla \Delta \mu_\phi^m  \|
    & \le C \| \Delta \mu_\phi^m \|_{L^{3}(\Omega)} \| \nabla \phi^m \|_{L^{6}(\Omega)} + C \| \nabla^2 \mu_\phi^m \|_{L^{3}(\Omega)} \| \nabla \phi^m \|_{L^{6}(\Omega)}
    \\
    & \quad + C \| \nabla^2 \phi^m \| \| \nabla \mu_\phi^m \|_{L^{\infty}(\Omega)} + C \| \nabla \phi^m \|_{L^{6}(\Omega)}^2 \| \nabla \mu_\phi^m \|_{L^{6}(\Omega)}
    \\
    & \quad + \| \nabla \bm{u}^m \|_{L^{3}(\Omega)} \| \nabla \phi^m \|_{L^{6}(\Omega)} + \| \nabla^2 \phi^m \| \| \bm{u}^m \|_{L^{\infty}(\Omega)} + \| \nabla \partial_{t} \phi^m \|
    \\
    & \le C \| \phi^m \|_W \| \nabla \mu_\phi^m \|^{\frac{1}{4}} \| \nabla \mu_\phi^m \|^{\frac{3}{4}}_{H^{2}(\Omega)} + C \|\phi^m \|_W^2 \| \nabla \mu_\phi^m \|^\frac{1}{2} \| \nabla \mu_\phi^m \|_{H^{2}(\Omega)}^\frac{1}{2}
    \\
    & \quad + C \| \phi^m \|_W \| \bm{u}^m \|^\frac{1}{2}_{H^{1}(\Omega)} \| \bm{u}^m \|^\frac{1}{2}_{H^{2}(\Omega)} + \| \nabla \partial_{t} \phi^m \| \\
    & \le \epsilon_1 \| \nabla \mu_\phi^m \|_{H^{2}(\Omega)} + C_{\epsilon_1} \| \phi^m \|_W^4 \| \nabla \mu_\phi^m \|
    \\
    & \quad + \epsilon_2 \| \mathbf{A}  \bm{u}^m \| + C_{\epsilon_2} \| \phi^m \|_W^2 \| D \bm{u}^m \|
    + \| \nabla \partial_{t} \phi^m \|,
\end{align*}
where $\epsilon_1, \epsilon_2>0$ are arbitrary small constants. By the elliptic estimate for the Neumann problem, we have
\begin{align*}
\|\nabla \mu_\phi^m \|_{H^{2}(\Omega)}
&\le
\|\mu_\phi^m-\overline{\mu_\phi^m} \|_{H^{3}(\Omega)}
\leq C\|\Delta (\mu_\phi^m-\overline{\mu_\phi^m})\|_{H^1(\Omega)}
\\
&\le C\big(\|\nabla \Delta  \mu_\phi^m \|+ \|\Delta  \mu_\phi^m \|\big)
\le
C\big(\|\nabla \Delta  \mu_\phi^m \|+ \|\nabla \Delta  \mu_\phi^m \| ^\frac12 \|\nabla \mu_\phi^m \| ^\frac12\big)
\\
&\le C\big(\|\nabla \Delta  \mu_\phi^m \|+ \|\nabla   \mu_\phi^m \|\big)
\\
&\le C_6 \big( \|  m_\phi ( \phi^m ) \nabla \Delta \mu_\phi^m  \| + \| \nabla \mu_\phi^m \| \big),
\end{align*}
where $C_6$ is a positive constant depending on $m_{\phi,*}$ and $\Omega$.
Taking $\epsilon_1=1/(2C_6)$ and denoting $\epsilon=16\epsilon_2^2$, from the above two inequalities we infer that
\begin{equation}\label{mhi_h3}
    \|  \nabla \mu_\phi^m  \|_{H^{2}(\Omega)}^2 \le C_{\epsilon} ( 1+C_4 + H_m )^5 + \epsilon \| \mathbf{A} \bm{u}^m \|^2 + C_7 \| \nabla \partial_{t} \phi^m \|^2.
\end{equation}
Similarly, for $\mu_\psi^m$, we get
\begin{equation} \label{msi_h3}
    \|  \nabla \mu_\psi^m  \|_{H^{2}(\Omega)}^2 \le C_{\epsilon} ( 1+C_4 + H_m )^5 + \epsilon \| \mathbf{A} \bm{u}^m \|^2 + C_8 \| \nabla \partial_{t} \psi^m \|^2.
\end{equation}
Next, applying the argument in \cite{G2022}, we take $\bm{\eta} = \mathbf{A} \bm{u}^m$ in \eqref{eq:semi_galerkin_nsch1} and obtain
\begin{align*}
    - \frac{1}{2} \big( \nu ( \phi^m ) \Delta \bm{u}^m, \mathbf{A} \bm{u}^m \big)
    & = - \big( \rho( \phi^m ) \partial_{t} \bm{u}^m, \mathbf{A} \bm{u}^m \big)
    - \big( \rho( \phi^m ) ( \bm{u}^m \cdot \nabla ) \bm{u}^m , \mathbf{A} \bm{u}^m \big)
    \\
    & \quad + \gamma \big(  m_\phi(\phi^m)( \nabla \mu_\phi^m \cdot \nabla ) \bm{u}^m + \sigma_1 \big(  \overline{\phi^m} - c  \big) \bm{u}^m , \mathbf{A} \bm{u}^m \big)
    \\
    & \quad + \big( \mu_\phi^m \nabla \phi^m + \mu_\psi^m \nabla \psi^m, \mathbf{A} \bm{u}^m \big)
    + \int_{\Omega}  \nu' ( \phi^m ) ( D \bm{u}^m \nabla \phi^m) \cdot \mathbf{A} \bm{u}^m  \, \mathrm{d}x.
\end{align*}
It follows from \cite{G2021} (see also \cite{GMT2019}) that there exists $\pi^m \in C \big( [0,T_m);V \big)$ satisfying
\[
- \Delta \bm{u}^m + \nabla \pi^m = \mathbf{A} \bm{u}^m, \quad  \mathrm{a.e.} \ \mathrm{in} \ Q_{T_m},\quad \text{with}\ \int_\Omega \pi^m\,\mathrm{d}x=0,
\]
and
\[
\| \pi^m \| \le C \| \nabla \bm{u}^m \|^\frac{1}{2} \| \mathbf{A} \bm{u}^m \|^\frac{1}{2}, \quad  \| \pi^m \|_{V} \le C \| \mathbf{A} \bm{u}^m \|,
\]
where the constant $C>0$ depends only on $\Omega$.
Therefore, taking advantage of some estimates in \cite{G2022}, we can deduce that
\begin{align*}
    \frac{1}{2} \big( \nu ( \phi^m ) \mathbf{A} \bm{u}^m, \mathbf{A} \bm{u}^m \big)
    & = - \big( \rho( \phi^m ) \partial_{t} \bm{u}^m, \mathbf{A} \bm{u}^m \big)
    - \big( \rho( \phi^m ) ( \bm{u}^m \cdot \nabla ) \bm{u}^m , \mathbf{A} \bm{u}^m \big)
    \\
    & \quad + \gamma \big(  m_\phi(\phi^m)( \nabla \mu_\phi^m \cdot \nabla ) \bm{u}^m + \sigma_1 \big(  \overline{\phi^m} - c  \big) \bm{u}^m , \mathbf{A} \bm{u}^m \big)
    \\
    & \quad - \big( \phi^m \nabla \mu_\phi^m + \psi^m \nabla \mu_\psi^m, \mathbf{A} \bm{u}^m \big)
    + \frac{1}{2} \big(  \nu' ( \phi^m ) \pi^m \nabla \phi^m , \mathbf{A} \bm{u}^m \big)
    \\
    & \quad + \int_{\Omega}  \nu' ( \phi^m ) ( D  \bm{u}^m\nabla \phi^m)\cdot \mathbf{A} \bm{u}^m  \, \mathrm{d}x
    \\
    & \le \rho^* \| \partial_{t} \bm{u}^m \| \| \mathbf{A} \bm{u}^m \| + \rho^* \| \bm{u}^m \|_{L^{6}(\Omega)} \| \nabla \bm{u}^m \|_{L^{3}(\Omega)} \| \mathbf{A} \bm{u}^m \|
    \\
    & \quad + | \gamma | m_\phi^* \| \nabla \mu_\phi^m \|_{L^{\infty}(\Omega)} \| \nabla \bm{u}^m \| \| \mathbf{A} \bm{u}^m \| + 2 | \gamma | \sigma_1 \| \bm{u}^m \| \| \mathbf{A} \bm{u}^m \|
    \\
    & \quad + \big(  \| \nabla \mu_\phi^m \| + \| \nabla \mu_\psi^m \|  \big) \| \mathbf{A} \bm{u}^m \| + C \| \pi^m \|_{L^{3}(\Omega)} \| \nabla \phi^m \|_{L^{6}(\Omega)} \| \mathbf{A} \bm{u}^m \|
    \\
    & \quad + C \| \nabla \phi^m \|_{L^{6}(\Omega)} \| D \bm{u}^m \|_{L^{3}(\Omega)} \| \mathbf{A} \bm{u}^m \| \\
    & \le \rho^* \| \partial_{t} \bm{u}^m \| \| \mathbf{A} \bm{u}^m \| + C \| D \bm{u}^m \|^{\frac{3}{2}} \| \mathbf{A} \bm{u}^m \|^{\frac{3}{2}}
    \\
    & \quad + C \| \nabla \mu_\phi^m \|^{\frac{1}{4}} \| \nabla \mu_\phi^m \|^{\frac{3}{4}}_{H^{2}(\Omega)} \| D \bm{u}^m \| \| \mathbf{A} \bm{u}^m \| + 2 | \gamma | \sigma_1 \| \bm{u}^m \| \| \mathbf{A} \bm{u}^m \|
    \\
    & \quad + \big(  \| \nabla \mu_\phi^m \| + \| \nabla \mu_\psi^m \|  \big) \| \mathbf{A} \bm{u}^m \| + C \| D \bm{u}^m \|^{\frac{1}{4}} \|\phi^m \|_W \| \mathbf{A} \bm{u}^m \|^{\frac{7}{4}}
    \\
    & \quad + C \| \phi^m \|_W \| D \bm{u}^m \|^\frac{1}{2} \| \mathbf{A} \bm{u}^m \|^{\frac{3}{2}}
    \\
    & \le \frac{\nu_*}{4} \| \mathbf{A} \bm{u}^m \|^2
    + \epsilon \| \nabla \mu_\phi^m \|_{H^{2}(\Omega)}^2
    + C \| \partial_{t} \bm{u}^m \|^2
    + C \big(\| D \bm{u}^m \|^6 +\| D \bm{u}^m \|^2\big)
    \\
    & \quad   + C_{\epsilon} \| \nabla \mu_\phi^m \|^2 \| D \bm{u}^m \|^8
    + C \big( \| \nabla \mu_\phi^m \|^2 + \| \nabla \mu_\psi^m \|^2 \big)
    \\
    & \quad  + C \| \phi^m \|_W^8 \| D \bm{u}^m \|^2
    + C \| \phi^m \|_W^4 \| D \bm{u}^m \|^2,
\end{align*}
which further implies
\begin{align}
    \big( \nu ( \phi^m ) \mathbf{A} \bm{u}^m, \mathbf{A} \bm{u}^m \big)
    & \le 4 \epsilon \| \nabla \mu_\phi^m \|_{H^{2}(\Omega)}^2 + C_{\epsilon} \| \nabla \mu_\phi^m \|^2 \| D \bm{u}^m \|^8
    + C \| \partial_{t} \bm{u}^m \|^2
    \nonumber \\
    & \quad + C \big(\| D \bm{u}^m \|^6 + \| D\bm{u}^m \|^2\big)
    + C \big( \| \nabla \mu_\phi^m \|^2 + \| \nabla \mu_\psi^m \|^2 \big)
    \nonumber \\
    & \quad + C \| \phi^m \|_W^8 \| D \bm{u}^m \|^2
    + C \| \phi^m \|_W^4 \| D \bm{u}^m \|^2
    \nonumber \\
    & \le 4 \epsilon \| \nabla \mu_\phi^m \|_{H^{2}(\Omega)}^2
    + C_9 \| \partial_{t} \bm{u}^m \|^2 + C_{\epsilon} (1+C_4 + H_m )^5.
    \label{sp_h2}
\end{align}
Then, combining \eqref{energy_inequality}, \eqref{mhi_h3}, \eqref{msi_h3} and \eqref{sp_h2} together, we obtain
\begin{align*}
    & \frac{\mathrm{d}}{\mathrm{d t}} H_m
    + \| \nabla \partial_{t} \phi^m \|^2
    + \beta \| \nabla \partial_{t} \psi^m \|^2
    + \rho_* \| \partial_{t} \bm{u}^m \|^2 \nonumber\\
    &\qquad + \omega \big(  \| \nabla \mu_\phi^m \|_{H^{2}(\Omega)}^2 + \| \nabla \mu_\psi^m \|_{H^{2}(\Omega)}^2 + \nu_* \| \mathbf{A} \bm{u}^m \|^2 \big)
    \nonumber \\
    & \quad \le 8 \xi \big(  \| \nabla \partial_{t} \phi^m \|^2 + \beta \| \nabla \partial_{t} \psi^m \|^2 + \| \partial_{t} \bm{u}^m \|^2 \big)
    \nonumber \\
    & \qquad + 8 \xi \big(  \| \mathbf{A} \bm{u}^m \|^2 + \| \nabla \mu_\phi^m \|^2_{H^{2}(\Omega)} + \| \nabla \mu_\psi^m \|^2_{H^{2}(\Omega)}  \big)
    \nonumber \\
    & \qquad + 2 \omega \epsilon \| \mathbf{A} \bm{u}^m \|^2 + \omega C_{10} \big(  \| \nabla \partial_{t} \phi^m \|^2 + \| \nabla \partial_{t} \psi^m \|^2  \big)
    + 4 \omega \epsilon \| \nabla \mu_\phi^m \|^2_{H^{2}(\Omega)} + \omega C_9 \| \partial_{t} \bm{u}^m \|^2 \\
    & \qquad + (  C_{\xi} + \omega C_{\epsilon}  ) (  1 + C_4 + H_m  )^5.
\end{align*}
Setting
$$
\epsilon = \frac{\nu_*}{8 + 16 \nu_*},
\quad
\omega = \frac{1}{ 4 C_{10} + 4 C_{10}/\beta + 4 C_9/\rho_* },
\quad
\xi = \frac{1}{32 + 32/\rho_* + 32/\omega + 32/ ( \omega \nu_*) },
$$
we infer from the above inequality that
\begin{align*}
    & \frac{\mathrm{d}}{\mathrm{d t}} H_m
    + \frac{1}{2} \big(  \| \nabla \partial_{t} \phi^m \|^2 + \beta \| \nabla \partial_{t} \psi^m \|^2 + \rho_* \| \partial_{t} \bm{u}^m \|^2  \big) \nonumber \\
    & \qquad + \frac{\omega}{2} \big(  \| \nabla \mu_\phi^m \|_{H^{2}(\Omega)}^2 + \| \nabla \mu_\psi^m \|_{H^{2}(\Omega)}^2 + \nu_* \| \mathbf{A} \bm{u}^m \|^2  \big)
    \nonumber \\
    & \quad \le C_{11} ( 1 + C_4 + H_m )^5,
\end{align*}
where $C_{11}>0$ is independent of $m$, $\alpha$ and $k$. Therefore, for
\[
0<\widetilde{T} < \frac{1}{ 4C_{11}^{-1} (  1 + C_4 + H_m(0) )^4 },
\]
it holds
\[
1+C_4 + H_m(t) \le \frac{1 + C_4 + H_m (0)}{\big( 1-4C_{11}t( 1+C_4 + H_m(0) )^4 \big)^{\frac{1}{4}}},
\quad \forall\, t \in [0,\widetilde{T}] \cap [0,T_m).
\]
To derive an estimate of $H_m$ that is independent of $m$, $\alpha$ and $k$, we are left to control $\alpha \| \partial_{t} \phi^m(0) \|^2$, $\alpha \| \partial_{t} \psi^m(0) \|^2$, $\| \nabla \mu_\phi^m(0) \|^2$ and $\| \nabla \mu_\psi^m (0)\|^2$.
Testing \eqref{eq:semi_galerkin_nsch3} with $\partial_{t} \phi^m - \partial_t \overline{\phi^m}$ and using  \eqref{eq:semi_galerkin_nsch2}, we get
\begin{align*}
    & \alpha \| \partial_{t} \phi^m - \partial_t \overline{\phi^m} \|^2 + \int_{\Omega}  m_\phi ( \phi^m ) | \nabla \mu_\phi^m |^2  \, \mathrm{d}x
    \\
    & \quad = \big(  \nabla (  -\Delta \phi^m + \sigma_2 \mathcal{N} (\phi^m - \overline{\phi^m}) + F_\phi' ( \phi^m ) + \partial_\phi G ( \phi^m,\psi^m )  ) , m_\phi ( \phi^m ) \nabla \mu_\phi^m - \phi^m \bm{u}^m \big)
    \\
    & \qquad + ( \nabla \mu_\phi^m, \phi^m \bm{u}^m ).
\end{align*}
By continuity of $\bm{u}^m, \phi^m, \psi^m, \partial_t\phi^m, \mu_\phi^m$, it follows from the above identity that
\begin{align*}
    & \alpha \| \partial_{t} \phi^m(0) - \partial_t \overline{\phi^m}(0) \|^2 + \int_{\Omega}  m_\phi ( \phi_{0,k} ) | \nabla \mu_\phi^m(0) |^2  \, \mathrm{d}x
    \\
    & \quad = \big(  \nabla (  -\Delta \phi_{0,k} + F_\phi' ( \phi_{0,k} ) + \partial_\phi G ( \phi_{0,k},\psi_{0,k} ) + \sigma_2 \mathcal{N} (\phi_{0,k} - \overline{\phi_{0,k}} )   ) , m_\phi ( \phi_{0,k} ) \nabla \mu_\phi^m(0) - \phi_{0,k} \bm{u}^m(0) \big)
    \\
    & \qquad + \big( \nabla \mu_\phi^m(0), \phi_{0,k} \bm{u}^m(0) \big),
\end{align*}
which implies
\begin{align*}
    \alpha \| \partial_{t} \phi^m(0) \|^2 + \| \nabla \mu_\phi^m(0) \|^2 & \le 2 \alpha \sigma_1^2 | \Omega | \big( \overline{\phi_{0,k}} - c \big)^2 + C \|   \nabla (  -\Delta \phi_{0,k} + F_\phi' ( \phi_{0,k} )  )  \|^2 \\
    & \quad + C \big(  \| \nabla \phi_{0,k} \|^2 + \| \nabla \psi_{0,k} \|^2  \big)
    + C \| \bm{u}^m(0) \|^2.
\end{align*}
Similarly, it holds
\begin{align*}
    \alpha \| \partial_{t} \psi^m(0) \|^2 + \| \nabla \mu_\psi^m(0) \|^2
    & \le C \|   \nabla (  -\beta \Delta \psi_{0,k} + F_\psi' ( \psi_{0,k} )  )  \|^2
    \\
    & \quad +  C \big(  \| \nabla \phi_{0,k} \|^2 + \| \nabla \psi_{0,k} \|^2  \big)
    + C \| \bm{u}^m(0) \|^2.
\end{align*}
Thus, we deduce from \eqref{hm_upper_bound} and Lemma \ref{approximate_initial_phase_field} that
\begin{align*}
    H_m(0) & \le C \big(  1 + \| \bm{u}_0 \|^2_{\bm{H}_{0,\sigma}^{1}(\Omega)} + \| \phi_0 \|^2_{V} + \| \psi_0 \|^2_{V}  \big)
    \\
    & \quad + C \big(  \|  -\Delta \phi_{0} + F_\phi' ( \phi_{0} )  \|_V^2 + \|   -\beta \Delta \psi_{0} + F_\psi' ( \psi_{0} ) \|_V^2  \big) + C_5 =: K_4,
\end{align*}
for $k$ sufficiently large.
Setting now $\widetilde{T}_* = \big(  8C_{11} ( 1+ C_4 + K_4 )^4  \big)^{-1}$, we have
\begin{equation} \label{higher_order_boundedness_Linfinity}
    0 \le 1 + C_4 + H_m(t) \le  \left(\frac12\right)^{-\frac14}(1+C_4+K_4), \quad \forall\, t \in [0,\widetilde{T}_*] \cap [0,T_m).
\end{equation}
This allows us to define all approximate solutions $( \bm{u}^m,\phi^m,\psi^m,\mu_\phi^m, \mu_\psi^m )$ on the finite interval $[0,\widetilde{T}_*]$, which is independent of $\alpha$, $m$ and $k$ (with $k$ being sufficiently large). Moreover, we have the following uniform estimate
\begin{align}
    & \int_{0}^{t}  \big( \| \nabla \partial_{t} \phi^m \|^2 + \| \nabla \partial_{t} \psi^m \|^2 + \| \partial_{t} \bm{u}^m \|^2 + \| \nabla \mu_\phi^m \|_{H^{2}(\Omega)}^2 + \| \nabla \mu_\psi^m \|_{H^{2}(\Omega)}^2 +\| \mathbf{A} \bm{u}^m \|^2 \big)
    \, \mathrm{d}\tau
    \nonumber \\
    & \quad \le K_5, \quad \forall\, t \in [0,\widetilde{T}_*].
    \label{higher_order_boundedness_L2}
\end{align}
Hence, from \eqref{eq:semi_galerkin_nsch3}, \eqref{eq:semi_galerkin_nsch5} and \cite{GGW2018} it follows that
\begin{equation*}
\phi^m, \psi^m\in L^{\infty} \big( 0,\widetilde{T}_*;W \big),\quad F'_\phi(\phi^m), F'_\psi(\phi^m)\in L^{\infty} \big( 0,\widetilde{T}_*;H \big),
\end{equation*}
with uniform bounds in $\alpha$, $m$, $k$. Then by  \eqref{eq:semi_galerkin_nsch2}, \eqref{eq:semi_galerkin_nsch4} and Lemma \ref{weightedlaplace}, we further get
\begin{equation*}
\mu_\phi^m, \mu_\psi^m \in L^{\infty} \big( 0,\widetilde{T}_*;W\big),
\end{equation*}
with uniform bounds in $m$, $k$ (but dependent on $\alpha$).
Finally, due to Lemma \ref{approximate_initial_phase_field} and Proposition \ref{cvchossr}, we find some $\delta(\alpha,k)\in (0,1/2)$ depending on $\alpha$, $k$, but independent of $m$, such that the strict separation property \eqref{delta-al} holds.
As a consequence, from \eqref{eq:semi_galerkin_nsch3}, \eqref{eq:semi_galerkin_nsch5} and the elliptic regularity theory, we can conclude that
$\phi^m, \psi^m \in L^{\infty} ( 0,\widetilde{T}_*;H^{3}_N(\Omega) )$.
\qed
\medskip

\noindent \textbf{Completion of the Proof of Theorem \ref{nschsse3d}.}
Based on the uniform estimates established in Proposition \ref{uniform_approximate_solution}, we are able to first pass to the limit as $m\to \infty$ in the semi-Galerkin scheme, then take the vanishing viscosity limit as $\alpha \to 0$, and finally pass to the limit as $k \to \infty$ regarding the approximation of the initial data. The limiting procedure follows standard compactness arguments as in \cite{G2022} and is therefore omitted for brevity.
In the end, we obtain a strong solution $( \bm{u},\phi,\psi,\mu_\phi,\mu_\psi )$ to problem \eqref{eq:nsch1}--\eqref{eq:nschi} on $[0,T_0]$ with $T_0=\widetilde{T}_*$ in the sense of Definition \ref{nschssd}. The pressure term $\pi$ can be recovered via the classical approach for Navier--Stokes equations in \cite{G2011}. This completes the proof of Theorem \ref{nschsse3d}-(1).

The uniqueness of strong solutions can be established under additional assumptions described in Theorem \ref{nschsse3d}-(2). Indeed, the established regularity
    \[
    \phi, \psi \in L^{\infty}(0, T_0; W^{2,6}(\Omega)) \cap H^1(0, T_0; V) \subset C^{\frac{5}{16}}([0, T_0]; C(\overline{\Omega}))
    \]
    ensures, by (H\"{o}lder) continuity, the existence of a short time interval $[0, T_1]\subset [0,T_0]$ on which $\phi$ and $\psi$ inherit the strict separation properties of the initial data (cf. \cite{CGGS2026,G2022}).
    This crucial fact guarantees that the singularities of the potentials do not interfere with the estimate of the difference between strong solutions.
    Then the proof of uniqueness is essentially the same as in \cite[Section 4]{G2022}  by showing the continuous dependence of strong solutions on the initial data. Let $(\bm{u}_i,\phi_i,\psi_i,\mu_{\phi,i},\mu_{\psi,i})$, $i \in \{ 1,2 \}$, be two strong solutions to problem \eqref{eq:nsch1}--\eqref{eq:nschi}. Denote their differences by
     \[
     \big(  \widetilde{\bm{u}}, \widetilde{\phi}, \widetilde{\psi}, \widetilde{\mu_\phi}, \widetilde{\mu_\psi}  \big) \define (\bm{u}_1,\phi_1,\psi_1,\mu_{\phi,1},\mu_{\psi,1}) - (\bm{u}_2,\phi_2,\psi_2,\mu_{\phi,2},\mu_{\psi,2}).
     \]
     Taking the difference in equations satisfied by the two solutions and deriving estimates as in \cite{G2022}, due to the concentration-dependent mobilities, we have the following additional terms
     \begin{align*}
         & \int_\Omega \nabla(m_\phi(\phi_1)) \cdot \nabla \widetilde{\mu_\phi} \Delta^2 \widetilde{\phi} \, \mathrm{d} x,
         \quad  \int_\Omega \nabla(m_\psi(\psi_1)) \cdot \nabla \widetilde{\mu_\psi} \Delta^2 \widetilde{\psi} \, \mathrm{d} x,
         \\
         & \int_\Omega \mathrm{div}\, \big( \big( m_\phi(\phi_1)-m_\phi(\phi_2) \big) \nabla \mu_{\phi,2} \big) \Delta^2 \widetilde{\phi} \, \mathrm{d} x,
         \\
         & \int_\Omega \mathrm{div}\, \big(  \big(  m_\psi(\psi_1)-m_\psi(\psi_2)  \big) \nabla \mu_{\psi,2}  \big) \Delta^2 \widetilde{\psi} \, \mathrm{d} x.
     \end{align*}
     In view of equations \eqref{eq:nsch4} and \eqref{eq:nsch6}, we can rewrite the first term as
     \begin{align*}
         & \int_\Omega \nabla(m_\phi(\phi_1)) \cdot \big(  - \nabla \Delta \widetilde{\phi} + F_\phi''(\phi_1) \nabla \phi_1 - F_\phi''(\phi_2) \nabla \phi_2 + \sigma_2 \nabla \mathcal{N} \big( \widetilde{\phi} - \overline{\widetilde{\phi}} \big)  \big) \Delta^2 \widetilde{\phi} \, \mathrm{d} x
         \\
         & \qquad + \int_\Omega \nabla(m_\phi(\phi_1)) \cdot \big(  \nabla (\partial_\phi G(\phi_1,\psi_1) - \partial_\phi G(\phi_2,\psi_2))  \big) \Delta^2 \widetilde{\phi} \, \mathrm{d} x.
     \end{align*}
     Due to the strict separation property of $\phi_1$ and $\phi_2$, the above expression can be bounded by
     \[
     \epsilon \| \Delta^2 \widetilde{\phi} \|^2
     + C_\epsilon \big(\| \Delta \widetilde{\phi} \|^2 + \| \Delta \widetilde{\psi} \|^2\big)
     + C \big|\overline{\widetilde{\phi}}\big|,
     \]
     where $\epsilon>0$ can be arbitrarily small.
    Analogously, the other three terms can be bounded by
    \[
    \epsilon \big(\| \Delta^2 \widetilde{\phi} \|^2 + \beta \| \Delta^2 \widetilde{\psi} \|^2\big)
    + C_\epsilon \big(\| \Delta \widetilde{\phi} \|^2 + \| \Delta \widetilde{\psi} \|^2\big)
    + C \big|\overline{\widetilde{\phi}}\big|.
    \]
    With the above observations, we can derive the desired continuous dependence on the initial data following the argument in \cite{G2022}. This establishes the uniqueness of strong solutions on $[0,T_1]$, and we omit the remaining details.

The proof of Theorem \ref{nschsse3d} is complete.
\qed

\section{Global Well-posedness in 2D} \label{2d_strong_proof}
\setcounter{equation}{0}

This section is devoted to the proof of Theorem \ref{nschsse2d}, namely, the global well-posedness in two dimensions.
We first study the convective Cahn--Hilliard system \eqref{eq:nsch3}--\eqref{eq:nsch6} with a given solenoidal velocity field.
The improved regularity of its solution allows us to obtain further regularity of the velocity through the momentum equation \eqref{eq:nsch1}.
This argument, combined with the existence of a global weak solution to the original problem (cf. \cite{OGW2026}), gives the existence of a global strong solution.

\subsection{A convective Cahn--Hilliard system}

The following result extends \cite[Theorem 2.4]{AGG2024} on a convective Cahn--Hilliard equation with constant mobility and \cite[Theorem 1.2 (A)--(C)]{CGGG2025} on a Cahn--Hilliard equation with nondegenerate mobility but without convection.
\begin{proposition}\label{cvcho_2d_global_sse}
Let $\Omega \subset \mathbb{R}^{2}$ be a bounded domain of class $C^3$.
Suppose that Assumptions $(\mathbf{H1})$ and $(\mathbf{H2})$ hold.
\begin{itemize}
    \item[(1)] Let $\bm{u} \in L^{2} ( 0,\infty;\bm{L}^{2}_\sigma(\Omega) )$ be a given velocity field. Then for any initial data $\phi_0,\psi_0 \in V \cap L^{\infty}(\Omega)$ with $\phi_0 \in [-1,1]$, $\psi_0 \in [0,1]$ almost everywhere in $\Omega$ and $\overline{\phi_0} \in (-1,1)$, $\overline{\psi_0} \in (0,1)$,
    problem \eqref{eq:cvcho1}--\eqref{eq:cvchoi} with $\alpha=0$ admits a unique global weak solution $( \phi,\psi,\mu_\phi,\mu_\psi )$ such that
    \begin{align*}
        & \phi,\psi \in BC \big( [0,\infty);V \big) \cap L^{4}_{\mathrm{uloc}} \big( [0,\infty);W \big) \cap L^{2}_{\mathrm{uloc}} \big( [0,\infty);W^{2,p}(\Omega) \big),
        \\
        & \phi,\psi \in L^{\infty} ( Q ) \ \mathrm{with} \ \phi(x,t) \in (-1,1) \ \mathrm{and} \ \psi(x,t) \in (0,1) \ \mathrm{a.e.} \ \mathrm{in} \ Q,
        \\
        & \partial_{t} \phi,\partial_{t} \psi \in L^{2} \big( 0,\infty;V^* \big), \quad  F_\phi' ( \phi ), F_\psi' ( \psi ) \in L^{2}_{\mathrm{uloc}} \big( [0,\infty);L^{p}(\Omega) \big),
        \\
        & \mu_\phi,\mu_\psi \in L^{2}_{\mathrm{uloc}} \big( [0,\infty);V \big) \ \mathrm{with} \ \nabla \mu_\phi,\nabla \mu_\psi \in L^{2} \big( 0,\infty;H \big),
    \end{align*}
    for all $p \in [2,\infty)$, and
    \begin{align*}
        & \langle  \partial_{t} \phi + \sigma_1 ( \overline{\phi}-c ) ,\zeta \rangle + (  m_\phi( \phi ) \nabla \mu_\phi - \phi \bm{u} , \nabla \zeta ) = 0, && \forall \,\zeta \in V, \ \mathrm{a.e.} \ \mathrm{on} \ (0,\infty),
        \\
        & \mu_\phi = - \Delta \phi + \sigma_2 \mathcal{N} ( \phi - \overline{\phi} ) + F_\phi' ( \phi ) + \partial_\phi G ( \phi,\psi ), && \mathrm{a.e.} \ \mathrm{in} \ Q,
        \\
        & \langle  \partial_{t} \psi ,\zeta \rangle + (  m_\psi( \psi ) \nabla \mu_\psi - \psi \bm{u} , \nabla \zeta ) = 0, && \forall\, \zeta \in V, \ \mathrm{a.e.} \ \mathrm{on} \ (0,\infty),
        \\
        & \mu_\psi = - \beta \Delta \psi + F_\psi' ( \psi ) + \partial_\psi G ( \phi,\psi ), && \mathrm{a.e.} \ \mathrm{in} \ Q,
    \end{align*}
    and $( \phi,\psi )|_{t=0} = ( \phi_0,\psi_0 )$ almost everywhere in $\Omega$.
    Moreover, the following energy identity holds
    \begin{align*}
        & E_{\mathrm{free}} ( \phi(t),\psi(t) ) + \int_{Q_t}  m_\phi( \phi ) | \nabla \mu_\phi |^2 + m_\psi( \psi ) | \nabla \mu_\psi |^2  \, \mathrm{d}(x,\tau)
        \\
        & \qquad + \int_{0}^{t} \sigma_1 | \Omega | ( \overline{\phi}-c ) \overline{\mu_\phi} \, \mathrm{d}\tau + \int_{\Omega_t}  \bm{u} \cdot (  \mu_\phi \nabla \phi + \mu_\psi \nabla \psi  )  \, \mathrm{d}(x,\tau)
        \\
        & \quad = E_{\mathrm{free}} ( \phi_0,\psi_0 ),
        \quad \forall\,  t \in [0,\infty).
    \end{align*}
\item[(2)]  Assume that $\bm{u}\in L^{2} ( 0,\infty;\bm{H}^{1}_{0,\sigma}(\Omega) )$, $\phi_0,\psi_0 \in W$ with $\phi_0 \in [-1,1]$, $\psi_0 \in [0,1]$ almost everywhere in $\Omega$, $\overline{\phi_0} \in (-1,1)$, $\overline{\psi_0} \in (0,1)$ and $-\Delta \phi_0 + F_\phi'(\phi_0), -\beta \Delta \psi_0 + F_\psi'(\psi_0) \in V$, then problem \eqref{eq:cvcho1}--\eqref{eq:cvchoi} with $\alpha=0$ admits a unique global strong solution $( \phi,\psi,\mu_\phi,\mu_\psi )$ such that
    \begin{align*}
        & \phi,\psi \in L^{\infty} ( 0,\infty;W^{2,p}(\Omega) ), \quad  \partial_{t} \phi, \partial_{t} \psi \in L^{2} ( 0,\infty;V ),
        \\
        & \phi,\psi \in L^{\infty} ( Q ) \ \ \mathrm{with} \ \ \phi(x,t) \in (-1,1) \ \mathrm{and} \ \psi(x,t) \in (0,1) \ \mathrm{a.e.} \ \mathrm{in} \ Q,
        \\
        & \mu_\phi,\mu_\psi \in L^{\infty} ( 0,\infty; V) \ \mathrm{with} \ \nabla \mu_\phi,\nabla \mu_\psi \in L^{2} ( 0,\infty;H^{2}(\Omega) ),
        \\
        & F_\phi'( \phi ), F_\psi'( \psi ) \in L^{\infty} ( 0,\infty;L^{p}(\Omega) ),
    \end{align*}
    for all $p \in [2,\infty)$. The strong solution satisfies the system \eqref{eq:cvcho1}--\eqref{eq:cvcho4} with $\alpha=0$ almost everywhere in $Q$ and the boundary conditions in \eqref{eq:cvchob} almost everywhere on $S$.
In addition, if $\bm{u}\in L^{\infty} ( 0,\infty;\bm{L}^{2}_\sigma(\Omega) )$, then $\partial_{t} \phi, \partial_{t} \psi \in L^{\infty} ( 0,\infty;V^* )$.
Moreover, if the assumption $(\mathbf{H2*})$ holds, $(\phi,\psi)$ satisfies the uniform strict separation property on $[0,\infty)$ and
    \begin{align*}
        & \phi, \psi \in L^{\infty} ( 0,\infty;H^{3}_N(\Omega) ),
        \quad \mu_\phi,\mu_\psi \in BC( [0,\infty);V ),
        \quad
        \partial_{t} \mu_\phi,\partial_{t} \mu_\psi \in L^{2} ( 0,\infty;V^* ).
    \end{align*}
\end{itemize}
\end{proposition}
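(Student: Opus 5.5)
We argue by approximation with the viscous system \eqref{eq:cvcho1}--\eqref{eq:cvchoi}, followed by the limit $\alpha\to0$, and prove uniqueness by a weighted-norm continuous dependence estimate.

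\textbf{Part (1): existence.} Mollify $\bm{u}$ into $\bm{u}_n\in C^1_0([0,\infty);\bm{H}^2_{0,\sigma}(\Omega))$ and regularize $(\phi_0,\psi_0)$ into $H^3_N$ data that are strictly separated, as in Lemma \ref{approximate_initial_phase_field}. In two dimensions Proposition \ref{cvchossr} gives $T_r=\infty$, so we obtain a global strong solution of the viscous problem with parameter $\alpha$. On this solution we derive the following estimates, uniform in $\alpha$ and $n$:
\begin{itemize}
\item the energy identity of Part (1) (plus $\alpha\|\partial_t\phi\|^2$), with the convective term $\int \bm{u}\cdot(\mu_\phi\nabla\phi+\mu_\psi\nabla\psi)=-\int \bm{u}\cdot(\phi\nabla\mu_\phi+\psi\nabla\mu_\psi)$ absorbed by $\frac{C}{m_0}\|\bm{u}\|^2$, and the mass-variation term handled via \eqref{mass-relation};
\item the $L^2_{\mathrm{uloc}}(V)$ bound on $\mu$, obtained from the separation of the means and the standard estimate $\|F'(\phi)\|_{L^1}\le C(1+\|\nabla\mu\|)$;
\item the $L^2_{\mathrm{uloc}}(W^{2,p})$ and $F'\in L^2_{\mathrm{uloc}}(L^p)$ bounds, from the elliptic estimates of \cite{GGW2018} in 2D;
\item the $L^4_{\mathrm{uloc}}(W)$ bound, from $\|\Delta\phi\|\le C\|\nabla\mu\|^{1/2}\|\nabla\phi\|^{1/2}+\dots$ (testing the $\mu$-equation by $-\Delta\phi$ and using monotonicity of $F'$). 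The $\alpha\partial_t\phi$ contribution is controlled by $\sqrt\alpha\|\partial_t\phi\|_{L^2(H)}$.
\end{itemize}
We then pass to the limit by Aubin--Lions and use the maximal monotonicity of $F'$ to identify the limit, recovering the energy identity via the chain rule for the convex functional.

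\textbf{Part (1): uniqueness.} Following \cite{CGGG2025}, we take the difference of two weak solutions and test the $\phi$-difference by $\mathcal{N}_{\phi,\phi_1}(\widetilde\phi-\overline{\widetilde\phi})$, and similarly for $\psi$ with $\mathcal N_{\psi,\psi_1}$. Lemma \ref{weightedlaplace} supplies the time-derivative identity for the weighted norm. The error terms are of three kinds:
\begin{itemize}
\item the mobility mismatch $(m(\phi_1)-m(\phi_2))\nabla\mu_2$, bounded by $\|\widetilde\phi\|_{L^4}\|\nabla\mu_2\|\,\|\nabla\mathcal N\widetilde\phi\|_{L^4}$;
\item the time derivative of the weight, $\partial_t m(\phi_1)$;
\item the convection $\widetilde\phi\,\bm{u}$.
\end{itemize}
These are controlled using $\phi_i\in L^4_tH^2_x$, the 2D Ladyzhenskaya and GN inequalities, and the monotonicity of $F'$, which yields $\|\nabla\widetilde\phi\|^2$ on the left. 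Since the means agree, Gronwall closes the estimate.

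\textbf{Part (2).} This is the main obstacle. We repeat the approximation and test the differentiated viscous equations with $\partial_t\mu$, as in the proof of Proposition \ref{cvchossr}, aiming for $\alpha$-independent bounds on $\|\nabla\mu\|_{L^\infty(H)}$, $\|\nabla\partial_t\phi\|_{L^2(H)}$ and $\|\nabla\mu\|_{L^2(H^2)}$. The difficulties are as follows.
\begin{itemize}
\item The term $\int m'(\phi)\partial_t\phi|\nabla\mu|^2$ needs $\|\nabla\mu\|_{L^4}^2\le C\|\nabla\mu\|\|\nabla\mu\|_{H^1}$, where $\|\nabla\mu\|_{H^1}$ is estimated via the weighted-Laplacian elliptic bound $\|\Delta\mu\|\le C(\|\partial_t\phi\|+\|\bm{u}\cdot\nabla\phi\|+\|\nabla\phi\|_{L^4}\|\nabla\mu\|_{L^4})$.
\item The $L^4_tH^2_x$ bound on $\phi$ now depends on $\alpha\|\nabla\partial_t\phi\|_{L^2}$. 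We plan to choose $\alpha$ small so that this contribution is absorbed into the dissipation $\|\nabla\partial_t\phi\|^2$.
\end{itemize}
The convective terms $\partial_t\bm{u}$ do not appear, because we test the undifferentiated equations instead: $\frac{d}{dt}\frac12\int m|\nabla\mu|^2+(\partial_t\mu,\partial_t\phi)=\dots$, with the convection producing $(\bm{u}\cdot\nabla\phi,\partial_t\mu)$, integrated by parts in time to $\frac{d}{dt}(\bm{u}\cdot\nabla\phi,\mu)$ minus $(\partial_t\bm{u}\cdot\nabla\phi,\mu)$. This still involves $\partial_t\bm{u}$, so we instead estimate $(\bm{u}\cdot\nabla\phi,\partial_t\mu)$ directly: $\partial_t\mu$ is expressed through the $\mu$-equation, and $\|\partial_t\phi\|_V$ is used with $\bm{u}\in L^2H^1$. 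Gronwall with integrable coefficients ($\|\bm{u}\|_{H^1}^2$, $\|\nabla\mu\|^2$) then gives global bounds.

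Finally, \begin{itemize}
\item $W^{2,p}$ and $F'\in L^\infty L^p$ follow from \cite{GGW2018} once $\mu\in L^\infty V$;
\item $\partial_t\phi\in L^\infty V^*$ follows by comparison when $\bm{u}\in L^\infty L^2$;
\item under (H2*), the uniform separation follows from the Trudinger--Moser argument of \cite{GP2025} using $\mu\in L^\infty V$;
\item the remaining properties follow from separation: $H^3_N$ from elliptic regularity, $\partial_t\mu\in L^2V^*$ by the chain rule $\partial_t\mu=-\Delta\partial_t\phi+F''\partial_t\phi+\dots$ with $F''$ bounded, and $\mu\in BC(V)$ by interpolation.
\end{itemize}
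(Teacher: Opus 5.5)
Your overall route is the paper's: viscous approximation, testing the undifferentiated equations by $\partial_t\mu$, Lemma \ref{elliptic_estimate_2d} for $\int m'(\phi)\partial_t\phi|\nabla\mu|^2$, small $\alpha$ to absorb the $\alpha\|\nabla\partial_t\phi\|$ contributions, the duality argument of \cite{CGGG2025} for uniqueness, and the iteration of \cite{GP2025} for separation. One step, however, fails as you describe it.

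You claim $\partial_t\bm{u}$ is avoided by estimating $(\bm{u}\cdot\nabla\phi,\partial_t\mu)$ directly through the $\mu$-equation. In the viscous problem that equation gives $\partial_t\mu=\alpha\partial_t^2\phi-\Delta\partial_t\phi+\dots$, and nothing in your estimate controls $\partial_t^2\phi$. So $\alpha(\bm{u}\cdot\nabla\phi,\partial_t^2\phi)$ must be integrated by parts in time, which is what the paper does. This produces two terms.
\begin{itemize}
\item The term $\frac{\mathrm{d}}{\mathrm{d}t}\,\alpha(\bm{u}\cdot\nabla\phi,\partial_t\phi)$ is put into $\mathcal{E}_{\alpha,k}$. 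Comparing $\mathcal{E}_{\alpha,k}$ with $\|\nabla\mu\|^2+\alpha\|\partial_t\phi\|^2$ then costs $C\alpha\|\bm{u}\|_{L^\infty(\Omega)}^2$.
\item The term $\alpha\int_\Omega\phi\,\partial_t\bm{u}\cdot\nabla\partial_t\phi\,\mathrm{d}x$ puts $C\alpha\|\partial_t\bm{u}\|^2$ into $\mathcal{F}_2$.
\end{itemize}
Neither quantity is controlled for $\bm{u}\in L^2(0,\infty;\bm{H}^1_{0,\sigma}(\Omega))$. They are harmless only because $\bm{u}$ is smooth in the approximation and they carry a factor $\alpha$. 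Hence your bounds are not uniform in $\alpha$ and the velocity regularization simultaneously, as you assert. The correct order is:
\begin{itemize}
\item fix a smooth $\bm{u}$ and take $\alpha_0$ depending on $\|\bm{u}\|_{L^\infty(0,\infty;V)}$, $\|\bm{u}\|_{L^\infty(Q)}$ and $\|\partial_t\bm{u}\|_{L^2(0,\infty;H)}$;
\item let $\alpha\to0$, then $k\to\infty$;
\item only afterwards use that \eqref{es-kk-1}--\eqref{es-kk-2} depend on $\bm{u}$ only through $\|\bm{u}\|_{L^2(0,\infty;V)}$, and reach general $\bm{u}$ by density.
\end{itemize}

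Likewise, ``choose $\alpha$ small'' cannot be an a priori choice followed by a linear Gronwall argument with coefficients $\|\bm{u}\|_{H^1}^2$ and $\|\nabla\mu\|^2$. The coefficient that must stay positive is $\frac12-\alpha\big(\|\bm{u}\|_V^3+\|\nabla\mu_\phi\|^3+\|\nabla\mu_\psi\|^3\big)$, which contains the unknown $\|\nabla\mu(t)\|$. The paper closes this by continuation. On the maximal interval where $\alpha(\cdots)\le 1/3$, Gronwall gives an $\alpha$-independent bound. For $\alpha\le\alpha_0$ this bound forces $\alpha(\cdots)\le 1/4$, so the interval is $[0,\infty)$. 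A Bihari-type inequality would also do.

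There are two smaller inaccuracies.
\begin{itemize}
\item The weighted-norm identity for uniqueness is Lemma \ref{mutiply_diff_formula_general}-(2), not Lemma \ref{weightedlaplace}. Its point is that $\partial_t\phi_1$, which for weak solutions lies only in $L^2(0,T;V^*)$, enters through $\nabla\mathcal{N}\partial_t\phi_1$, paired with $\nabla^2 g_\phi$ and $\nabla\phi_1$. This is why $L^4_tH^2_x$ is needed. There is also an extra term from $\partial_t\overline{\phi_1}\neq0$.
\item Your $L^4_{\mathrm{uloc}}(W)$ bound is not uniform at the viscous level. The $\alpha\partial_t\phi$ contribution would need $\alpha\|\partial_t\phi\|$ in $L^4$ in time, not merely $\sqrt\alpha\|\partial_t\phi\|\in L^2$. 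Derive it a posteriori instead, by testing the $\alpha=0$ limit equation with $-\Delta\phi$.
\end{itemize}
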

\noindent \textbf{Proof.} We divide the proof of Proposition \ref{cvcho_2d_global_sse} into two parts.
\smallskip 

\noindent \textbf{Part 1. Global weak solutions.} The existence of a global weak solution can be established by standard arguments as in \cite{AGG2024,BB1999,CGGG2025}. We note that in \cite{AGG2024}, it was required that the velocity field satisfies $\bm{u}\in L^{2} ( 0,\infty;\bm{H}^{1}_{0,\sigma}(\Omega) )$. This condition was relaxed to $\bm{u} \in L^{2} ( 0,\infty;\bm{L}^{2}_\sigma(\Omega) )$ in \cite{GKS2026} for a bulk-surface Cahn--Hilliard model with convection, which also applies here.

Next, we investigate the uniqueness. This can be proved by showing the continuous dependence of weak solutions on the initial data, as in \cite{CGGG2025}. Suppose that $( \phi_i,\psi_i,\mu_{\phi_i},\mu_{\psi_i} )$, $i \in \{ 1,2 \}$, are two weak solutions corresponding to the initial data $( \phi_{0,i},\psi_{0,i} )$, respectively.
Define the difference
$( \widetilde{\phi},\widetilde{\psi},\widetilde{\mu_\phi},\widetilde{\mu_\psi} )=( \phi_1,\psi_1,\mu_{\phi_1},\mu_{\psi_1} ) - ( \phi_2,\psi_2,\mu_{\phi_2},\mu_{\psi_2} )$.
Then we have
\begin{align}
    & \big\langle  \partial_{t} \widetilde{\phi} + \sigma_1 \overline{\widetilde{\phi}} ,\zeta \big\rangle + \big(  m_\phi( \phi_1 ) \nabla \widetilde{\mu_\phi} + ( m_\phi( \phi_1 )-m_\phi( \phi_2 ) ) \nabla \mu_{\phi_2} - \widetilde{\phi} \bm{u} , \nabla \zeta \big) = 0,
    \label{ccho_difference_1}
    \\
    & \big\langle  \partial_{t} \widetilde{\psi} ,\zeta \big\rangle + \big(  m_\psi( \psi_1 ) \nabla \widetilde{\mu_\psi} + ( m_\psi( \psi_1 )-m_\psi( \psi_2 ) ) \nabla \mu_{\psi_2} - \widetilde{\psi} \bm{u} , \nabla \zeta \big) = 0, \label{ccho_difference_3}
\end{align}
for all $\zeta \in V$ and almost everywhere in $(0,\infty)$, with
\begin{align}
    & \widetilde{\mu_\phi} = - \Delta \widetilde{\phi} + \sigma_2 \mathcal{N} ( \widetilde{\phi} - \overline{\widetilde{\phi}} ) + F_\phi' ( \phi_1 ) - F_\phi' ( \phi_2 ) + \partial_\phi G ( \phi_1,\psi_1 ) - \partial_\phi G ( \phi_2,\psi_2 ), && \mathrm{a.e.} \ \mathrm{in} \ Q, \label{ccho_difference_2}
    \\
    & \widetilde{\mu_\psi} = - \beta \Delta \widetilde{\psi} + F_\psi' ( \psi_1 ) - F_\psi' ( \psi_2 ) + \partial_\psi G ( \phi_1,\psi_1 ) - \partial_\psi G ( \phi_2,\psi_2 ), && \mathrm{a.e.} \ \mathrm{in} \ Q, \label{ccho_difference_4}
\end{align}
and $ ( \widetilde{\phi},\widetilde{\psi} )|_{t=0} = ( \widetilde{\phi}_0, \widetilde{\psi}_0 ) \define  ( \phi_{0,1},\psi_{0,1} ) - ( \phi_{0,2},\psi_{0,2} ) $.

Testing \eqref{ccho_difference_1} and \eqref{ccho_difference_3} with $g_\phi=\mathcal{N}_{\phi, \phi_1} \big( \widetilde{\phi}-\overline{\widetilde{\phi}} \big)$ and $g_\psi=\mathcal{N}_{\psi, \psi_1} \big( \widetilde{\psi} - \overline{\widetilde{\psi}} \big)$, respectively, applying Lemma \ref{mutiply_diff_formula_general}--(2), we obtain
\begin{align*}
    & \frac{1}{2} \frac{\mathrm{d}}{\mathrm{d t}}  \Big(  \Big\|  \sqrt{m_\phi( \phi_1 )} \nabla g_\phi  \Big\|^2 +  \Big\|  \sqrt{m_\psi( \psi_1 )} \nabla g_\psi  \Big\|^2  \Big)  + \big\| \nabla \widetilde{\phi} \big\|^2 + \beta \big\|  \nabla \widetilde{\psi}  \big\|^2 + \sigma_2 \big\|  \widetilde{\phi} - \overline{\widetilde{\phi}}  \big\|^2_*
    = \sum\limits_{i=1}^6 I_i,
\end{align*}
where
\begin{align*}
    & I_1 = \int_{\Omega}  \big(  \big(  \widetilde{\phi}-\overline{\widetilde{\phi}}  \big) \nabla g_\phi + \big(  \widetilde{\psi} - \overline{\widetilde{\psi}}  \big) \nabla g_\psi  \big) \cdot \bm{u}  \, \mathrm{d}x,
    \\
    & I_2 = - \frac{1}{2} \int_{\Omega} \big( m_\phi''( \phi_1 ) \nabla \mathcal{N} (\partial_{t} \phi_1 - \partial_{t} \overline{\phi_1} ) \cdot \nabla \phi_1 | \nabla g_\phi |^2 + m_\psi''( \psi_1 ) \nabla \mathcal{N} \partial_{t} \psi_1 \cdot \nabla \psi_1 | \nabla g_\psi |^2 \big)\, \mathrm{d}x,
    \\
    & I_3 = - \int_{\Omega} \big[ m_\phi'( \phi_1 ) \nabla \mathcal{N} (\partial_{t} \phi_1 -\partial_{t} \overline{\phi_1}) \cdot ( \nabla^2 g_\phi \nabla g_\phi ) + m_\psi'( \psi_1 ) \nabla \mathcal{N} \partial_{t} \psi_1 \cdot ( \nabla^2 g_\psi \nabla g_\psi ) \big] \, \mathrm{d}x,
    \\
    & I_4 = - \int_{\Omega}  \big[ ( m_\phi ( \phi_1 )-m_\phi ( \phi_2 ) ) \nabla \mu_{\phi_2} \cdot \nabla g_\phi + ( m_\psi ( \psi_1 )-m_\psi ( \psi_2 ) ) \nabla \mu_{\psi_2} \cdot \nabla g_\psi \big]  \, \mathrm{d}x,
    \\
    & I_5 = - (  F_\phi'(\phi_1) - F_\phi'(\phi_2) + \partial_\phi G(\phi_1,\psi_1) - \partial_\phi G(\phi_2,\psi_2) ,  \widetilde{\phi} - \overline{\widetilde{\phi}}  )
    \\
    & \qquad - (  F_\psi'(\psi_1) - F_\psi'(\psi_2) + \partial_\psi G(\phi_1,\psi_1) - \partial_\psi G(\phi_2,\psi_2) ,  \widetilde{\psi} - \overline{\widetilde{\psi}}  ), \\
    & I_6 = \frac{1}{2} \partial_t \overline{\phi_1} \int_\Omega m_\phi'(\phi_1) | \nabla g_\phi |^2 \, \mathrm{d} x.
\end{align*}
It easily follows that
\[
| I_6 | \le \sigma_1\| m_\phi \|_{C^1([-1,1])} \| \nabla g_\phi \|^2.
\]
Next, the terms $I_2$, $I_3$ and $I_4$ can be  estimated as in \cite{CGGG2025} with minor modifications:
\begin{align*}
    | I_2| + | I_3 |
    & \le \epsilon \big(  \| \nabla \widetilde{\phi} \|^2 + \beta \| \nabla \widetilde{\psi} \|^2  \big)
    + C_\epsilon \big(  \| \partial_{t} \phi_1 - \partial_{t} \overline{\phi_1}\|_{V^*}^2
    + \| \phi_1 \|_W^4  \big) \| \nabla g_\phi \|^2
    \\
    & \quad + C_\epsilon \big(  \| \partial_{t} \psi_1 \|_{V^*}^2 + \| \psi_1 \|_W^4  \big) \| \nabla g_\psi \|^2,
\end{align*}
and
\begin{align*}
    | I_4 | & \le \epsilon \big(  \| \nabla \widetilde{\phi} \|^2 + \beta \| \nabla \widetilde{\psi} \|^2  \big)
    + C_\epsilon \big(  \| \nabla \mu_{\phi_2} \|^2 + \| \phi_1 \|_W^4  \big) \| \nabla g_\phi \|^2 + C_\epsilon \big( \| \nabla \mu_{\psi_2} \|^2 + \| \psi_1 \|_W^4 \big) \| \nabla g_\psi \|^2
    \\
    & \quad + C_\epsilon \Big(  \big( 1 + \| \nabla \mu_{\phi_2} \|^2 + \| \phi_1 \|_W^4  \big) \big| \overline{\widetilde{\phi}} \big|^2 + \big( 1 + \| \nabla \mu_{\psi_2} \|^2 + \| \psi_1 \|_W^4 \big) \big| \overline{\widetilde{\psi}} \big|^2  \Big),
\end{align*}
where $\epsilon \in (0,1]$ is an arbitrary constant to be determined later. Below we focus on $I_1$ and $I_5$.
For $I_1$, using H\"{o}lder's inequality, we get
\begin{equation*}
    I_1 \le \| \bm{u} \| \big(  \| \nabla g_\phi \|_{L^{4}(\Omega)} \big\|  \widetilde{\phi}-\overline{\widetilde{\phi}}  \big\|_{L^{4}(\Omega)} + \| \nabla g_\psi \|_{L^{4}(\Omega)} \big\|  \widetilde{\psi}-\overline{\widetilde{\psi}}  \big\|_{L^{4}(\Omega)}  \big).
\end{equation*}
Besides, the interpolation inequality and the Gagliardo--Nirenberg inequality imply
\begin{align*}
    & \big\|  \widetilde{\phi}-\overline{\widetilde{\phi}}  \big\|_{L^{4}(\Omega)}  \le C \big\|  \nabla \widetilde{\phi}  \big\|^{\frac{3}{4}} \|  \nabla g_\phi  \|^{\frac{1}{4}},
    \quad
    \| \nabla g_\phi \|_{L^{4}(\Omega)}  \le C \|  \nabla g_\phi  \|^{\frac{1}{2}} \|  \nabla g_\phi  \|^{\frac{1}{2}}_{V}.
\end{align*}
Applying Lemma \ref{elliptic_estimate_2d}, we can deduce that
\begin{align*}
    \| \nabla g_\phi \|_{L^{4}(\Omega)} \big\|  \widetilde{\phi}-\overline{\widetilde{\phi}}  \big\|_{L^{4}(\Omega)} & \le C \| \nabla g_\phi \|^{\frac{3}{4}} \| \nabla g_\phi \|_V^{\frac{1}{2}} \| \nabla \widetilde{\phi} \|^{\frac{3}{4}} \\
    & \le \| \nabla g_\phi \|^{\frac{3}{4}} \| \nabla \widetilde{\phi} \|^{\frac{3}{4}} \big(  \| \nabla \phi_1 \| \| \phi_1 \|_W \| \nabla g_\phi \| + \| \nabla g_\phi \|^\frac{1}{2} \big\| \nabla \widetilde{\phi} \big\|^\frac{1}{2}  \big)^\frac{1}{2} \\
    & \le C \| \phi_1 \|_W^\frac{1}{2} \| \nabla g_\phi \|^{\frac{5}{4}} \big\| \nabla \widetilde{\phi} \big\|^{\frac{3}{4}} + C \| \nabla g_\phi \| \| \nabla \widetilde{\phi} \|.
\end{align*}
Similar estimates hold for $\psi_i$. Therefore, combining the above estimates, we obtain
\begin{align*}
    I_1 & \le C \| \bm{u} \| \| \phi_1 \|_W^\frac{1}{2} \| \nabla g_\phi \|^{\frac{5}{4}} \big\| \nabla \widetilde{\phi} \big\|^{\frac{3}{4}} + C \| \bm{u} \| \| \nabla g_\phi \| \| \nabla \widetilde{\phi} \|
    \\
    & \quad + C \| \bm{u} \| \| \psi_1 \|_W^\frac{1}{2} \| \nabla g_\psi \|^{\frac{5}{4}} \big\| \nabla \widetilde{\psi} \big\|^{\frac{3}{4}} + C \| \bm{u} \| \| \nabla g_\psi \| \| \nabla \widetilde{\psi} \|
    \\
    & \le \epsilon \big(\big\| \nabla \widetilde{\phi} \big\|^2 + \beta \big\| \nabla \widetilde{\psi} \big\|^2\big)
    + C_{\epsilon} \big(  \| \bm{u} \|^2 + \| \phi_1 \|^4_W  \big) \| \nabla g_\phi \|^2
    \\
    & \quad + C_{\epsilon} \big(  \| \bm{u} \|^2 + \| \psi_1 \|^4_W  \big) \| \nabla g_\psi \|^2.
\end{align*}
The estimate of $I_5$ is the same as in the proof of Proposition \ref{cvchosse}, that is,
\begin{align*}
    I_5 & \le C \big(  \big| \overline{\widetilde{\phi}} \big| + \big| \overline{\widetilde{\psi}} \big|  \big) (  1 + \| \nabla \mu_{\phi_1} \| + \| \nabla \mu_{\phi_2} \| + \| \nabla \mu_{\psi_1} \| + \| \nabla \mu_{\psi_2} \|  ) \\
    & \quad + C (  \| \widetilde{\phi} - \overline{\widetilde{\phi}} \|^2 + \| \widetilde{\psi} - \overline{\widetilde{\psi}} \|^2  )  + C \big( \overline{\widetilde{\phi}} \big)^2 + \big( \overline{\widetilde{\psi}} \big)^2
    \\
    & \le C \big(  \big| \overline{\widetilde{\phi}} \big| + \big| \overline{\widetilde{\psi}} \big|  \big) (  1 + \| \nabla \mu_{\phi_1} \| + \| \nabla \mu_{\phi_2} \| + \| \nabla \mu_{\psi_1} \| + \| \nabla \mu_{\psi_2} \|  ) \\
    & \quad + \epsilon (  \| \nabla \widetilde{\phi} \|^2 + \beta \| \nabla \widetilde{\psi} \|^2  ) + C_\epsilon (  \| \nabla g_\phi \|^2 + \| \nabla g_\psi \|^2  )  + C \big( \overline{\widetilde{\phi}} \big)^2 + \big( \overline{\widetilde{\psi}} \big)^2,
\end{align*}
where we have also used the monotonicity of $F_\phi', F_\psi'$.
Taking $\epsilon=1/8$ and using the estimate
\[
\| \partial_{t} f - \partial_{t} \overline{f} \|_{V^*} \le C \big(  \| \bm{u} \| + \| \nabla \mu_f \|  \big), \quad  \forall f \in \{ \phi_1,\phi_2,\psi_1,\psi_2 \},
\]
we can deduce that
\begin{align*}
    & \frac{\mathrm{d}}{\mathrm{d t}}  \Big(  \Big\|  \sqrt{m_\phi( \phi_1 )} \nabla g_\phi  \Big\|^2 +  \Big\|  \sqrt{m_\psi( \psi_1 )} \nabla g_\psi  \Big\|^2  \Big)  + \big\| \nabla \widetilde{\phi} \big\|^2 + \beta \big\|  \nabla \widetilde{\psi}  \big\|^2 + \sigma_2 \big\|  \widetilde{\phi} - \overline{\widetilde{\phi}}  \big\|^2_*
    \\
    & \quad \le h_1(t) \Big(  \Big\|  \sqrt{m_\phi( \phi_1 )} \nabla g_\phi  \Big\|^2 +  \Big\|  \sqrt{m_\psi( \psi_1 )} \nabla g_\psi  \Big\|^2  \Big) + h_2(t) \big(  \big| \overline{\widetilde{\phi}} \big| + \big| \overline{\widetilde{\psi}} \big| + \big( \overline{\widetilde{\phi}} \big)^2 + \big( \overline{\widetilde{\psi}} \big)^2 \big),
\end{align*}
where
\begin{align*}
    & h_1(\cdot) = C \Big(  1 + \| \bm{u} \|^2  + \sum\limits_{f \in \{ \phi_1,\phi_2,\psi_1,\psi_2 \}} \big(\| \nabla \mu_f \|^2 +\|f\|_{W}^4\big)  \Big) \in L^1(0,T), \\
    & h_2(\cdot) = C \Big(  1 + \sum\limits_{f \in \{ \phi_1,\phi_2,\psi_1,\psi_2 \}} \| \nabla \mu_f \|^2  \Big) \in L^1(0,T).
\end{align*}
An application of Gronwall's lemma easily yields
the continuous dependence on the initial data and the uniqueness of weak solutions.
\smallskip

\noindent \textbf{Part 2. Global strong solutions.}
We prove the existence of global strong solutions by constructing approximate solutions with the help of the system \eqref{eq:cvcho1}--\eqref{eq:cvcho4} with $\alpha\in (0,1)$.
For convenience, we first assume that $\bm{u} \in C^\infty_0 (0,\infty;\bm{H}^{2}_{0,\sigma}(\Omega))$. Consider the system \eqref{eq:cvcho1}--\eqref{eq:cvcho4}  subject to the boundary conditions \eqref{eq:cvchob} and the initial conditions:
\[
\phi|_{t=0} = \phi_{0,k}, \quad \psi|_{t=0} = \psi_{0,k},
\]
where $\phi_{0,k}$ and $\psi_{0,k}$ are defined as in Section  \ref{loc-3D-1}.
It follows from Proposition \ref{cvchossr} that there exists a unique global-in-time strong solution $( \phi_{\alpha,k},\psi_{\alpha,k},\mu_{\phi,\alpha,k},\mu_{\psi,\alpha,k})$ on $[0,\infty)$.

Next, we derive uniform estimates independent of $\alpha$ and $k$. Testing \eqref{eq:cvcho1} and \eqref{eq:cvcho3} with $\partial_{t}  \mu_{\phi,\alpha,k} $ and $\partial_{t}  \mu_{\psi,\alpha,k} $, respectively, we obtain
\begin{align*}
    & \frac{1}{2} \frac{\mathrm{d}}{\mathrm{d t}}  \Big(  \big\|  \sqrt{m_\phi( \phi_{\alpha,k} )} \nabla \mu_{\phi,\alpha,k}  \big\|^2 + \big\|  \sqrt{m_\psi( \psi_{\alpha,k} )} \nabla \mu_{\psi,\alpha,k}  \big\|^2 + \alpha \| \partial_{t} \phi_{\alpha,k} \|^2 + \alpha \| \partial_{t} \psi_{\alpha,k} \|^2  \Big)
    \\
    & \qquad + \| \nabla \partial_{t} \phi_{\alpha,k} \|^2 + \beta \| \nabla \partial_{t} \psi_{\alpha,k} \|^2 + \sigma_2 \| \phi_{\alpha,k}-\overline{\phi_{\alpha,k}} \|_{V^*}^2
    \\
    & \qquad + \int_{\Omega} \big( F_\phi''( \phi_{\alpha,k} ) | \partial_{t} \phi_{\alpha,k} |^2 + F_\psi''( \psi_{\alpha,k} ) | \partial_{t} \psi_{\alpha,k} |^2 \big) \, \mathrm{d}x
    \\
    & \quad = \frac{1}{2} \int_{\Omega} \big( m_\phi'( \phi_{\alpha,k} ) \partial_{t} \phi_{\alpha,k} | \nabla \mu_{\phi,\alpha,k} |^2 + m_\psi'( \psi_{\alpha,k} ) \partial_{t} \psi_{\alpha,k} | \nabla \mu_{\psi,\alpha,k} |^2 \big)
    \, \mathrm{d}x
    \\
    & \qquad - \int_{\Omega} \big[ \partial_\phi ^2 G ( \phi_{\alpha,k},\psi_{\alpha,k} ) (\partial_{t} \phi_{\alpha,k})^2 + \partial_\psi ^2 G ( \phi_{\alpha,k},\psi_{\alpha,k} ) (\partial_{t} \psi_{\alpha,k})^2  \big]
    \, \mathrm{d}x
    \\
    & \qquad
    - \int_{\Omega}  2 \partial_{\phi,\psi} ^2 G ( \phi_{\alpha,k},\psi_{\alpha,k} ) \partial_{t} \phi_{\alpha,k} \partial_{t} \psi_{\alpha,k}  \ \mathrm{d}x
    - (  \bm{u} \cdot \nabla \phi_{\alpha,k} , \partial_{t} \mu_{\phi,\alpha,k} ) - (  \bm{u} \cdot \nabla \psi_{\alpha,k} , \partial_{t} \mu_{\psi,\alpha,k} )
    \\
    &\qquad
    - \sigma_1 \AbsNormal{\Omega} ( \overline{\phi_{\alpha,k}} -c ) \partial_{t}  \overline{\mu_{\phi,\alpha,k}}.
\end{align*}
By calculations similar to those in the proof of \cite[Theorem 2.4]{AGG2024}, we get
\begin{align*}
    (  \bm{u} \cdot \nabla \phi_{\alpha,k} , \partial_{t} \mu_{\phi,\alpha,k} )
    & = \frac{\mathrm{d}}{\mathrm{d t}}  \Big(  \alpha \int_{\Omega} \bm{u} \cdot \nabla \phi_{\alpha,k} \partial_{t} \phi_{\alpha,k} \ \mathrm{d}x  \Big)
    + \alpha \int_{\Omega}  \partial_{t} \bm{u} \cdot \nabla \partial_{t} \phi_{\alpha,k} \phi_{\alpha,k}  \, \mathrm{d}x
    \\
    & \qquad + \int_{\Omega}    \nabla \partial_{t} \phi_{\alpha,k} \cdot \nabla  (\bm{u} \cdot \nabla \phi_{\alpha,k} ) \, \mathrm{d}x
    - \sigma_2 \int_\Omega \phi_{\alpha,k} \bm{u} \cdot \nabla \mathcal{N} (\partial_t \phi_{\alpha,k} - \partial_t \overline{\phi_{\alpha,k}}) \, \mathrm{d}x
    \\
    & \qquad
    - \int_{\Omega}  \bm{u} \cdot \nabla \partial_{t} \phi_{\alpha,k} F_\phi'( \phi_{\alpha,k} )  \, \mathrm{d}x
    \\
    & \qquad + \int_{\Omega} \big[ \bm{u} \cdot \nabla \phi_{\alpha,k} (  \partial_\phi ^2 G ( \phi_{\alpha,k},\psi_{\alpha,k} ) \partial_{t} \phi_{\alpha,k} + \partial_{\phi,\psi} ^2 G ( \phi_{\alpha,k},\psi_{\alpha,k} ) \partial_{t} \psi_{\alpha,k}  ) \big] \, \mathrm{d}x.
\end{align*}
Similar results can be obtained for $(  \bm{u} \cdot \nabla \psi_{\alpha,k} , \partial_{t} \mu_{\psi,\alpha,k} )$. Then it holds
\begin{align*}
    & \frac{\mathrm{d}}{\mathrm{d t}} \mathcal{E}_{\alpha,k} + \| \nabla \partial_{t} \phi_{\alpha,k} \|^2 + \beta \| \nabla \partial_{t} \psi_{\alpha,k} \|^2 + \sigma_2 \| \phi_{\alpha,k}-\overline{\phi_{\alpha,k}} \|_{V^*}^2
    \\
    & \qquad + \int_{\Omega} \big( F_\phi''( \phi_{\alpha,k} ) | \partial_{t} \phi_{\alpha,k} |^2 + F_\psi''( \psi_{\alpha,k} ) | \partial_{t} \psi_{\alpha,k} |^2 \big) \, \mathrm{d}x
    \\
    & \quad = I_1 + I_2 - I_3,
\end{align*}
where
\begin{align*}
    & \mathcal{E}_{\alpha,k} = \frac{1}{2} \Big(  \big\|  \sqrt{m_\phi( \phi_{\alpha,k} )} \nabla \mu_{\phi,\alpha,k}  \big\|^2 + \big\|  \sqrt{m_\psi( \psi_{\alpha,k} )} \nabla \mu_{\psi,\alpha,k}  \big\|^2 + \alpha \| \partial_{t} \phi_{\alpha,k} \|^2 + \alpha \| \partial_{t} \psi_{\alpha,k} \|^2  \Big)
    \\
    & \quad \qquad + \sigma_1 \AbsNormal{\Omega} ( \overline{\phi_{\alpha,k}} -c ) \overline{\mu_{\phi,\alpha,k}}
    + \alpha \Big(  \int_{\Omega} \bm{u} \cdot \nabla \phi_{\alpha,k} \partial_{t} \phi_{\alpha,k} \, \mathrm{d}x
    + \int_{\Omega} \bm{u} \cdot \nabla \psi_{\alpha,k} \partial_{t} \psi_{\alpha,k} \, \mathrm{d}x  \Big),
    \\
    & I_1 = \frac{1}{2} \int_{\Omega} \big( m_\phi'( \phi_{\alpha,k} ) \partial_{t} \phi_{\alpha,k} | \nabla \mu_{\phi,\alpha,k} |^2 + m_\psi'( \psi_{\alpha,k} ) \partial_{t} \psi_{\alpha,k} | \nabla \mu_{\psi,\alpha,k} |^2 \big)  \, \mathrm{d}x,
    \\
    & I_2 = \sigma_1 \AbsNormal{\Omega} \partial_{t} \overline{\phi_{\alpha,k}} \overline{\mu_{\phi,\alpha,k}} - \int_{\Omega}  \big[ \partial_\phi ^2 G ( \phi_{\alpha,k},\psi_{\alpha,k} ) (\partial_{t} \phi_{\alpha,k})^2 + \partial_\psi ^2 G ( \phi_{\alpha,k},\psi_{\alpha,k} ) (\partial_{t} \psi_{\alpha,k})^2 \big] \, \mathrm{d}x
    \\
    & \qquad - 2 \int_{\Omega}  \partial_{\phi,\psi} ^2 G ( \phi_{\alpha,k},\psi_{\alpha,k} ) \partial_{t} \phi_{\alpha,k} \partial_{t} \psi_{\alpha,k}  \, \mathrm{d}x,
    \\
    & I_3 = \alpha \int_{\Omega}  \partial_{t} \bm{u} \cdot \nabla \partial_{t} \phi_{\alpha,k} \phi_{\alpha,k}  \ \mathrm{d}x
     + \int_{\Omega}  \nabla \partial_{t} \phi_{\alpha,k} \cdot \nabla (\bm{u} \cdot \nabla \phi_{\alpha,k} ) \, \mathrm{d}x
    \\
    &\qquad - \sigma_2 \int_\Omega \phi_{\alpha,k} \bm{u} \cdot \nabla \mathcal{N} (\partial_t \phi_{\alpha,k} - \partial_t \overline{\phi_{\alpha,k}}) \, \mathrm{d}x
    \\
    & \qquad + \alpha \int_{\Omega}  \partial_{t} \bm{u} \cdot \nabla \partial_{t} \psi_{\alpha,k} \psi_{\alpha,k}  \, \mathrm{d}x
     + \int_{\Omega}   \nabla \partial_{t} \psi_{\alpha,k} \cdot \nabla (\bm{u} \cdot \nabla \psi_{\alpha,k} ) \, \mathrm{d}x
    \\
    & \qquad
    - \int_{\Omega}  \bm{u} \cdot \nabla \partial_{t} \phi_{\alpha,k} F_\phi'( \phi_{\alpha,k} )  \, \mathrm{d}x
    - \int_{\Omega}  \bm{u} \cdot \nabla \partial_{t} \psi_{\alpha,k} F_\psi'( \psi_{\alpha,k} )  \, \mathrm{d}x
    \\
    & \qquad - \int_{\Omega}  \bm{u} \cdot \big(  \partial_\phi G ( \phi_{\alpha,k},\psi_{\alpha,k} ) \nabla \partial_{t} \phi_{\alpha,k} + \partial_\psi G ( \phi_{\alpha,k},\psi_{\alpha,k} ) \nabla \partial_{t} \psi_{\alpha,k} \big )
    \, \mathrm{d}x.
\end{align*}
Arguing as in the proof of \cite[Theorem 2.4]{AGG2024}, we can estimate $I_3$ as follows:
\begin{align*}
     | I_3 | & \le C \big(  \| \nabla \bm{u} \|^2 + \alpha^3 \| \nabla \bm{u} \|^4 + \alpha^7 \| \nabla \bm{u} \|^8  \big)
     \\
    & \quad \times \big(  \|  \nabla \mu_{\phi,\alpha,k}  \|^2 + \|  \nabla \mu_{\psi,\alpha,k}  \|^2 + \alpha \|  \partial_{t} \phi_{\alpha,k}  \|^2 + \alpha \|  \partial_{t} \psi_{\alpha,k}  \|^2  \big)
    \\
    & \quad + C \alpha \| \partial_{t} \bm{u} \|^2 + C \| \nabla \bm{u} \|^2 + C (  \|  \nabla \mu_{\phi,\alpha,k}  \|^2 + \|  \nabla \mu_{\psi,\alpha,k}  \|^2  )
    \\
    & \quad + \epsilon \big(  \|  \nabla \partial_{t} \phi_{\alpha,k}  \|^2 + \beta \|  \nabla \partial_{t} \psi_{\alpha,k}  \|^2  \big),
\end{align*}
where $\epsilon \in (0,1)$ is a small constant to be determined later. Next, we treat the first two terms $I_1, I_2$. Using the estimates
\begin{align*}
    \big|  \sigma_1 \partial_{t} \overline{\phi_{\alpha,k}}(t) \overline{\mu_{\phi,\alpha,k}}(t)  \big|
    & \le 2 C \sigma_1 e^{-\sigma_1 t} (  1 + \| \nabla \mu_{\phi,\alpha,k}(t) \| + \alpha \| \nabla \partial_t \phi_{\alpha,k} \|  )
    \\
    & \le C_\epsilon \sigma_1 \big(  \| \nabla \mu_{\phi,\alpha,k}(t) \|^2 + e^{-\sigma_1 t}  \big) + \epsilon \| \nabla \partial_t \phi_{\alpha,k} \|^2,
    \\
    \| \partial_{t} \phi_{\alpha,k} \|^2
    & \le C (  |  \partial_{t} \overline{\phi}_{\alpha,k}  |^2 + \| \partial_{t} \phi_{\alpha,k} - \partial_{t} \overline{\phi}_{\alpha,k} \|_{V^*} \| \nabla \partial_{t} \phi_{\alpha,k} \|  )
    \\
    & \le \epsilon \| \nabla \partial_{t} \phi_{\alpha,k} \|^2 + C_{\epsilon} (  \sigma_1 e^{-\sigma_1 t} + \|  \nabla \mu_{\phi,\alpha,k}  \|^2 + \| \bm{u} \|^2  ),
    \\
    \| \partial_{t} \psi_{\alpha,k} \|^2
    & \le \epsilon \beta \| \nabla \partial_{t} \psi_{\alpha,k} \|^2 + C_{\epsilon} (  \|  \nabla \mu_{\psi,\alpha,k}  \|^2 + \| \bm{u} \|^2  ),
\end{align*}
we can deduce that
\[
I_2 \le \epsilon \big(  \|  \nabla \partial_{t} \phi_{\alpha,k}  \|^2 + \beta \|  \nabla \partial_{t} \psi_{\alpha,k}  \|^2  \big)
+ C_{\epsilon} \big(  \sigma_1 e^{-\sigma_1 t} + \| \bm{u} \|^2 + \|  \nabla \mu_{\phi,\alpha,k}  \|^2 + \|  \nabla \mu_{\psi,\alpha,k}  \|^2  \big).
\]
Concerning $I_1$, we observe that
\begin{align*}
    & \Big| \int_{\Omega} \big( m_\phi'( \phi_{\alpha,k} ) \partial_{t} \phi_{\alpha,k} | \nabla \mu_{\phi,\alpha,k} |^2 + m_\psi'( \psi_{\alpha,k} ) \partial_{t} \psi_{\alpha,k} | \nabla \mu_{\psi,\alpha,k} |^2 \big) \, \mathrm{d}x \Big|
    \\
    & \quad \le C \| \partial_{t} \phi_{\alpha,k} \| \| \nabla \mu_{\phi,\alpha,k} \|^2_{L^{4}(\Omega)} + C \| \partial_{t} \psi_{\alpha,k} \| \| \nabla \mu_{\psi,\alpha,k} \|^2_{L^{4}(\Omega)}
    \\
    & \quad \le C \big(  \|  \nabla \mathcal{N} \big(  \partial_{t} \phi_{\alpha,k} - \partial_{t} \overline{\phi_{\alpha,k}}  \big)  \|^\frac{1}{2} \| \nabla \partial_{t} \phi_{\alpha,k} \|^\frac{1}{2}
    + \big| \partial_{t} \overline{\phi_{\alpha,k}} \big|  \big) \| \nabla \mu_{\phi,\alpha,k} \|
    \| \mu_{\phi,\alpha,k} - \overline{\mu_{\phi,\alpha,k}} \|_W
    \\
    & \qquad + C\|  \nabla \mathcal{N} \partial_{t} \psi_{\alpha,k}  \|^\frac{1}{2} \| \nabla \mu_{\psi,\alpha,k} \| \| \mu_{\psi,\alpha,k} - \overline{\mu_{\psi,\alpha,k}} \|_W
    \\
    & \quad \le  C (\| \bm{u} \|^\frac{1}{2} + \| \nabla \mu_{\phi,\alpha,k} \|^\frac12) \| \nabla \partial_{t} \phi_{\alpha,k} \|^\frac{1}{2} \| \nabla \mu_{\phi,\alpha,k} \| \| \mu_{\phi,\alpha,k} - \overline{\mu_{\phi,\alpha,k}} \|_W
    \\
    & \qquad + C \sigma_1 e^{-\sigma_1 t} \| \nabla \mu_{\phi,\alpha,k} \| \| \mu_{\phi,\alpha,k} - \overline{\mu_{\phi,\alpha,k}} \|_W
    \\
    & \qquad + C (\| \bm{u} \| + \| \nabla \mu_{\psi,\alpha,k} \|  )^\frac{1}{2}
    \| \nabla \partial_{t} \psi_{\alpha,k} \|^\frac{1}{2} \| \nabla \mu_{\psi,\alpha,k} \|
    \| \mu_{\psi,\alpha,k} - \overline{\mu_{\psi,\alpha,k}} \|_W .
\end{align*}
Using equation \eqref{eq:cvcho2}, the monotonicity of $F_\phi'$, the uniform bound for $\|\phi_{\alpha,k}\|_V$, $\|\psi_{\alpha,k}\|_V$ and  the fact
$$
|\overline{\mu_{\phi, \alpha,k}}|\leq C \big(1+ \|\nabla \mu_{\phi, \alpha,k}\| + \alpha \|\partial_t\phi_{\alpha,k} - \partial_t \overline{\phi_{\alpha,k}}\|\big),
$$
we can apply \cite[Lemmas 7.3, 7.4]{GGW2018}) and the Poincar\'{e}--Wirtinger inequality to obtain
\begin{align*}
\| \Delta \phi_{\alpha,k} \|^2
& \le C \big(  1+\| \nabla \mu_{\phi,\alpha,k} \| + \alpha \| \nabla \partial_t \phi_{\alpha,k} \|  \big),
\\
 \|\phi_{\alpha,k}\|_{W^{2,4}(\Omega)}
 & \leq C(1+\| \mu_{\phi,\alpha,k} \|_V + \alpha \| \partial_t \phi_{\alpha,k} \|_V) \\
 & \leq C(1+\| \nabla \mu_{\phi,\alpha,k} \| + \alpha \| \nabla \partial_t \phi_{\alpha,k} \|).
\end{align*}
Then it follows from \eqref{eq:cvcho2} and Lemma \ref{elliptic_estimate_2d} that
\begin{align*}
    \| \mu_{\phi,\alpha,k} - \overline{\mu_{\phi,\alpha,k}} \|_W
    & = \|  \mathcal{N}_{\phi, \phi_{\alpha,k}} \big(  \partial_{t} \phi_{\alpha,k} - \partial_{t} \overline{\phi_{\alpha,k}} + \bm{u} \cdot \nabla \phi_{\alpha,k}  \big)  \|_W
    \\
    & \le C \| \nabla \phi_{\alpha,k} \| \| \phi_{\alpha,k} \|_W \|  \nabla \mathcal{N}_{\phi, \phi_{\alpha,k}} \big(  \partial_{t} \phi_{\alpha,k} - \partial_{t} \overline{\phi_{\alpha,k}} + \bm{u} \cdot \nabla \phi_{\alpha,k}  \big)  \|
    \\
    & \quad + C \|  \partial_{t} \phi_{\alpha,k} - \partial_{t} \overline{\phi_{\alpha,k}} + \bm{u} \cdot \nabla \phi_{\alpha,k}  \|
    \\
    & \le C   \|  \phi_{\alpha,k}  \|_W \| \nabla \mu_{\phi,\alpha,k} \| + C \| \nabla \mu_{\phi,\alpha,k} \|^\frac{1}{2} \big(  \| \nabla \partial_{t} \phi_{\alpha,k} \| + \|  \nabla ( \bm{u} \cdot \nabla \phi_{\alpha,k} )  \|  \big)^\frac{1}{2}
    \\
    &\le C \| \nabla \mu_{\phi,\alpha,k} \| \big( 1 + \| \nabla \mu_{\phi,\alpha,k} \| + \alpha \| \nabla \partial_{t} \phi_{\alpha,k} \|  \big)^\frac{1}{2}
    \\
    & \quad + C \| \nabla \mu_{\phi,\alpha,k} \|^\frac{1}{2} \big(  \| \nabla \partial_{t} \phi_{\alpha,k} \|^\frac{1}{2} + \| \bm{u} \|^\frac{1}{2}_V  \|\phi_{\alpha,k} \|_{W^{2,4}(\Omega)}^\frac{1}{2} \big)
    \\
    & \le C \| \nabla \mu_{\phi,\alpha,k} \|^{\frac{3}{2}} + C \sqrt{\alpha} \big(  \| \nabla \mu_{\phi,\alpha,k} \| + \| \nabla \mu_{\phi,\alpha,k} \|^\frac{1}{2} \| \bm{u} \|^\frac{1}{2}_V  \big) \| \nabla \partial_{t}  \phi_{\alpha,k}  \|^\frac{1}{2}
    \\
    & \quad + C \| \nabla \mu_{\phi,\alpha,k} \|^\frac{1}{2} \| \nabla \partial_{t}  \phi_{\alpha,k}  \|^\frac{1}{2} + C \| \bm{u} \|^\frac{1}{2}_V\| \nabla \mu_{\phi,\alpha,k} \|^\frac{1}{2}
    \\
    &\quad + C(1 + \| \bm{u} \|^\frac{1}{2}_V )\| \nabla \mu_{\phi,\alpha,k} \|.
\end{align*}
In a similar manner, we obtain
\begin{align*}
    \| \mu_{\psi,\alpha,k} - \overline{\mu_{\psi,\alpha,k}} \|_W
    & \le C \| \nabla \mu_{\psi,\alpha,k} \|^{\frac{3}{2}} + C \sqrt{\alpha} \big(  \| \nabla \mu_{\psi,\alpha,k} \| + \| \nabla \mu_{\psi,\alpha,k} \|^\frac{1}{2} \| \bm{u} \|^\frac{1}{2}_V  \big) \| \nabla \partial_{t}  \psi_{\alpha,k}  \|^\frac{1}{2}
    \\
    & \quad + C \| \nabla \mu_{\psi,\alpha,k} \|^\frac{1}{2} \| \nabla \partial_{t}  \psi_{\alpha,k}  \|^\frac{1}{2} + C \| \bm{u} \|^\frac{1}{2}_V \| \nabla \mu_{\psi,\alpha,k} \|^\frac{1}{2} \\
    &\quad +C(1 + \| \bm{u} \|^\frac{1}{2}_V ) \| \nabla \mu_{\psi,\alpha,k} \|.
\end{align*}
These estimates imply that
\begin{align*}
    & \Big| \int_{\Omega} \big( m_\phi'( \phi_{\alpha,k} ) \partial_{t} \phi_{\alpha,k} | \nabla \mu_{\phi,\alpha,k} |^2 + m_\psi'( \psi_{\alpha,k} ) \partial_{t} \psi_{\alpha,k} | \nabla \mu_{\psi,\alpha,k} |^2 \big)
    \, \mathrm{d}x \Big|
    \\
    & \quad \le C (1 + \| \bm{u} \|^\frac{1}{2}_V) \| \nabla \mu_{\phi,\alpha,k} \|^{\frac{5}{2}} \| \nabla \partial_{t} \phi_{\alpha,k} \|^\frac{1}{2} + C \| \bm{u} \|^\frac{1}{2}_V  \| \nabla \mu_{\phi,\alpha,k} \|^{\frac{3}{2}} \| \nabla \partial_{t} \phi_{\alpha,k} \|
    \\
    & \qquad + C \| \bm{u} \|_V \| \nabla \mu_{\phi,\alpha,k} \|^{\frac{3}{2}} \| \nabla \partial_{t} \phi_{\alpha,k} \|^\frac{1}{2}
    + C (1 + \| \bm{u} \|_V ) \| \nabla \mu_{\phi,\alpha,k} \|^2 \| \nabla \partial_{t} \phi_{\alpha,k} \|^\frac{1}{2}
    \\
    & \qquad + C\| \nabla \mu_{\phi,\alpha,k} \|^3 \| \nabla \partial_{t} \phi_{\alpha,k} \|^\frac{1}{2} + C\| \nabla \mu_{\phi,\alpha,k} \|^2 \| \nabla \partial_{t} \phi_{\alpha,k} \|
    \\
    & \qquad + C \| \bm{u} \|_V^\frac{1}{2} \| \nabla \mu_{\phi,\alpha,k} \|^2 \| \nabla \partial_{t} \phi_{\alpha,k} \|^\frac{1}{2} + C \sqrt{\alpha} \big(  \| \bm{u} \|^{\frac{5}{2}}_V + \|  \nabla \mu_{\phi,\alpha,k}  \|^{\frac{5}{2}}  \big) \|  \nabla \partial_{t} \phi_{\alpha,k}  \|
    \\
    & \qquad + C \sigma_1 e^{-\sigma_1 t} + C\| \bm{u} \|_V^2  + C (\| \bm{u} \|_V^2 + \| \nabla \mu_{\phi,\alpha,k} \|^2 )\| \nabla \mu_{\phi,\alpha,k} \|^2
    \\
    & \qquad +C (1 + \| \bm{u} \|^\frac{1}{2}_V ) \| \nabla \mu_{\psi,\alpha,k} \|^{\frac{5}{2}} \| \nabla \partial_{t} \psi_{\alpha,k} \|^\frac{1}{2} + C \| \bm{u} \|^\frac{1}{2}_V \| \nabla \mu_{\psi,\alpha,k} \|^{\frac{3}{2}} \| \nabla \partial_{t} \psi_{\alpha,k} \|
    \\
    & \qquad + C \| \bm{u} \|_V \| \nabla \mu_{\psi,\alpha,k} \|^{\frac{3}{2}} \| \nabla \partial_{t} \psi_{\alpha,k} \|^\frac{1}{2}
    + C(1 + \| \bm{u} \|_V ) \| \nabla \mu_{\psi,\alpha,k} \|^2 \| \nabla \partial_{t} \psi_{\alpha,k} \|^\frac{1}{2}
    \\
    & \qquad + C\| \nabla \mu_{\psi,\alpha,k} \|^3 \| \nabla \partial_{t} \psi_{\alpha,k} \|^\frac{1}{2} + C\| \nabla \mu_{\psi,\alpha,k} \|^2 \| \nabla \partial_{t} \psi_{\alpha,k} \|
    \\
    & \qquad + C \| \bm{u} \|_V^\frac{1}{2} \| \nabla \mu_{\psi,\alpha,k} \|^2 \| \nabla \partial_{t} \psi_{\alpha,k} \|^\frac{1}{2} + C \sqrt{\alpha} \big(  \| \bm{u} \|^{\frac{5}{2}}_V + \|  \nabla \mu_{\psi,\alpha,k}  \|^{\frac{5}{2}}  \big) \|  \nabla \partial_{t} \psi_{\alpha,k}  \|
    \\
    & \quad \le C_{\epsilon} (  1 + \| \bm{u} \|^2_V + \| \nabla \mu_{\phi,\alpha,k} \|^2 + \| \nabla \mu_{\psi,\alpha,k} \|^2  ) (  \| \nabla \mu_{\phi,\alpha,k} \|^2 + \| \nabla \mu_{\psi,\alpha,k} \|^2  )
    \\
    & \qquad + (  \epsilon + \alpha (  \| \bm{u} \|^3_V + \| \nabla \mu_{\phi,\alpha,k} \|^3 + \| \nabla \mu_{\psi,\alpha,k} \|^3  )  ) (  \|  \nabla \partial_{t} \phi_{\alpha,k}  \|^2 + \beta \|  \nabla \partial_{t} \psi_{\alpha,k}  \|^2  )
    \\
    & \qquad + C (  \| \bm{u} \|^2_V + \sigma_1 e^{-\sigma_1 t}  ).
\end{align*}
Since
\begin{align*}
    & \alpha \Big| \int_{\Omega} \bm{u} \cdot \nabla \phi_{\alpha,k} \partial_{t} \phi_{\alpha,k} \ \mathrm{d}x + \int_{\Omega} \bm{u} \cdot \nabla \psi_{\alpha,k} \partial_{t} \psi_{\alpha,k} \ \mathrm{d}x \Big|
    \\
    & \quad \le C\alpha  \| \bm{u} \|_{L^\infty(\Omega)} (  \| \nabla \phi_{\alpha,k} \| \| \partial_{t} \phi_{\alpha,k} \| + \| \nabla \psi_{\alpha,k} \| \| \partial_{t} \psi_{\alpha,k} \|  )
    \\
    & \quad \le \frac{\alpha}{8} ( \| \partial_{t} \phi_{\alpha,k} \|^2 + \| \partial_{t} \psi_{\alpha,k} \|^2 ) + C \alpha \| \bm{u} \|_{L^\infty(\Omega)}^2,
    \\
    & \big| \sigma_1 ( \overline{\phi}_{\alpha,k}-c ) \overline{\mu_{\phi,\alpha,k} }\big|  \le C \sigma_1 e^{-\sigma_1 t} (1 + \| \nabla \mu_{\phi,\alpha,k} \| + \alpha \| \partial_t \phi_{\alpha,k}- \partial_t \overline{\phi_{\alpha,k}} \|)
    \\
    & \quad \le \frac{1}{4} \int_{\Omega}  m_\phi ( \phi_{\alpha,k} ) | \nabla \mu_{\phi,\alpha,k} |^2  \, \mathrm{d}x + \frac{\alpha}{8} \| \partial_t \phi_{\alpha,k} \|^2
    + C \sigma_1 e^{-\sigma_1 t},
\end{align*}
we have
\begin{align*}
    & \| \nabla \mu_{\phi,\alpha,k} \|^2 + \| \nabla \mu_{\psi,\alpha,k} \|^2 + \alpha \big(  \| \partial_{t} \phi_{\alpha,k} \|^2 + \| \partial_{t} \psi_{\alpha,k} \|^2  \big)
    \\
    & \quad \le C_0 \sigma_1 e^{-\sigma_1 t} + \alpha C_1 \| \bm{u} \|_{L^\infty(\Omega)}^2 + C_2 \mathcal{E}_{\alpha,k},
    \\
    & \mathcal{E}_{\alpha,k} \le C_3\big(  \| \nabla \mu_{\phi,\alpha,k} \|^2 + \| \nabla \mu_{\psi,\alpha,k} \|^2 + \alpha  \| \partial_{t} \phi_{\alpha,k} \|^2 + \alpha\| \partial_{t} \psi_{\alpha,k} \|^2  \big)
    \\
    &\qquad\ \  + C_0 \sigma_1 e^{-\sigma_1 t} + \alpha C_1 \| \bm{u} \|^2_{L^\infty(\Omega)},
\end{align*}
where $C_0,C_1,C_2,C_3 >0$ may depend on $\Omega$, $\widetilde{E}_{\mathrm{free}}( \phi_0,\psi_0 )$, $\overline{\phi_0}$, $\overline{\psi_0}$, $m_\phi$, $m_\psi$ and the parameters of system \eqref{eq:cvcho1}--\eqref{eq:cvcho4} but are independent of $\alpha,k$.
Combining the above estimates and taking $\epsilon = 1/6$, we obtain
\begin{align}
    & \frac{\mathrm{d}}{\mathrm{d t}}  \mathcal{E}_{\alpha,k}  + \Big[\frac{1}{2} - \alpha \big(  \| \bm{u} \|^3_V + \| \nabla \mu_{\phi,\alpha,k} \|^3 + \| \nabla \mu_{\psi,\alpha,k} \|^3  \big)  \Big]
    \big(  \| \nabla \partial_{t} \phi_{\alpha,k} \|^2 + \beta \| \nabla \partial_{t} \psi_{\alpha,k} \|^2  \big)
    \nonumber \\
    & \quad \le \mathcal{F}_1 \mathcal{E}_{\alpha,k} + \mathcal{F}_2, \label{approximate_high_order_estimate}
\end{align}
where
\begin{align*}
    & \mathcal{F}_1 = C \big(  \| \bm{u} \|^2_V + \alpha^3 \| \bm{u} \|^4_V + \alpha^7 \| \bm{u} \|^8_V + \| \nabla \mu_{\phi,\alpha,k} \|^2 + \| \nabla \mu_{\psi,\alpha,k} \|^2  \big), \\
    & \mathcal{F}_2 = C \big(  \alpha \| \partial_{t} \bm{u} \|^2 + \| \bm{u} \|^2_V + \| \nabla \mu_{\phi,\alpha,k} \|^2 + \| \nabla \mu_{\psi,\alpha,k} \|^2 + \sigma_1 e^{-\sigma_1 t}  \big)
    \\
    & \qquad + \big(  \alpha C_1 \| \bm{u} \|^2_{L^\infty(\Omega)} + C_0 \sigma_1 e^{-\sigma_1 t}  \big) \mathcal{F}_1.
\end{align*}
Since $\bm{u} \in C^{\infty}_0 (0,\infty; \bm{H}^{2}_{0,\sigma}(\Omega))$, it holds $\mathcal{F}_1, \ \mathcal{F}_2 \in L^1(0,\infty)$.
Arguing as in the proof of \cite[Theorem 2.4]{AGG2024}, we find that
\begin{align*}
    & \alpha \| \partial_{t} \phi_{\alpha,k} (0) \|^2 + \|  \nabla \mu_{\phi,\alpha,k}(0)  \|^2 + \alpha \| \partial_{t} \psi_{\alpha,k} (0) \|^2 + \|  \nabla \mu_{\psi,\alpha,k}(0)  \|^2 \\
    & \quad \le C_4 ( 1+ \| \nabla (-\Delta \phi_0+F_\phi'(\phi_0)) \|^2 + \| \nabla  (-\beta \Delta \psi_0+F_\psi'(\psi_0))\|^2  ),
\end{align*}
where $C_4$ is independent of $\alpha, k$.
By the basic energy estimate, we have
\[
\|  \nabla \mu_{\phi,\alpha,k}  \|_{L^{2} ( 0,\infty;H )} + \|  \nabla \mu_{\psi,\alpha,k}  \|_{L^{2} ( 0,\infty;H )} \le C \Big(  1 + \sqrt{\widetilde{E}_{\mathrm{free}} ( \phi_0,\psi_0 )} + \| \bm{u} \|_{L^{2} ( 0,\infty;\bm{L}^{2}_\sigma(\Omega) )}  \Big),
\]
where $C>0$ is independent of $\alpha, k$.
Consequently, it holds
\begin{align*}
    F_1(\alpha)& =\int_{0}^{\infty} \mathcal{F}_1 (\tau)\,\mathrm{d}\tau
    \\
    & \le C \big(  1 + \| \bm{u} \|_{L^{2} ( 0,\infty;V )}^2 + \widetilde{E}_{\mathrm{free}} ( \phi_0,\psi_0 ) + \alpha^3 \| \bm{u} \|_{L^{4} ( 0,\infty;V )}^4 + \alpha^7 \| \bm{u} \|_{L^{8} ( 0,\infty;V )}^8  \big),
    \\
    F_2 (\alpha) &= \int_{0}^{\infty} \mathcal{F}_2(\tau)\, \mathrm{d}\tau
    \\
    & \le C \big(  1 + \| \bm{u} \|_{L^{2} ( 0,\infty;V )}^2 + \widetilde{E}_{\mathrm{free}} ( \phi_0,\psi_0 ) + \alpha \| \partial_{t} \bm{u} \|^2_{L^{2} ( 0,\infty;H )}  \big) \nonumber
    \\
    & \quad + C \big( 1 + \alpha \| \bm{u} \|^2_{L^\infty( Q )}   \big) \big(  1 + \| \bm{u} \|_{L^{2} ( 0,\infty;V )}^2 + \widetilde{E}_{\mathrm{free}} ( \phi_0,\psi_0 ) + \alpha^3 \| \bm{u} \|_{L^{4} ( 0,\infty;V )}^4 + \alpha^7 \| \bm{u} \|_{L^{8} ( 0,\infty;V )}^8  \big),
\end{align*}
where $C>0$ is independent of $\alpha$.
Therefore, there exists $\alpha_0 \in (0,1)$ such that for all $\alpha \in (0,\alpha_0]$, it holds
\begin{align*}
    & \alpha \big[  \| \bm{u} \|^3_{L^{\infty} ( 0,\infty;V )} + (  C_0 \sigma_1 + \alpha C_1 \| \bm{u} \|^2_{L^\infty(Q)} + C_2 ( \mathcal{E}_{\alpha,k}(0) + F_2(\alpha) ) e^{F_1(\alpha)}  )^{\frac{3}{2}}  \big]
    \\
    & \quad \le C \alpha \big( 1 + \| \bm{u} \|^3_{L^{\infty} ( 0,\infty;V )} + \| \bm{u} \|^3_{L^{\infty}(Q)}\big)
    \\
    & \qquad + C \alpha \big[ ( 1 + \| \nabla (-\Delta \phi_0+F_\phi'(\phi_0)) \|^2 + \| \nabla (-\beta\Delta \psi_0+F_\psi'(\psi_0)) \|^2 + \| \bm{u} \|^2_{L^{\infty}(Q)} + F_2(\alpha) ) e^{F_1(\alpha)}  \big]^{\frac{3}{2}}
    \\
    & \quad \le \frac{1}{4}.
\end{align*}

From now on, we consider $\alpha \in (0,\alpha_0]$.
Observing that
\[
\| \nabla \mu_{\phi,\alpha,k}(0) \|^3 + \| \nabla \mu_{\psi,\alpha,k}(0) \|^3 \le ( C_0 \sigma_1 + C_2 \mathcal{E}_{\alpha,k}(0) )^{\frac{3}{2}},
\]
we find
\begin{align*}
    \alpha \big(  \| \bm{u}(0) \|^3_V + \| \nabla \mu_{\phi,\alpha,k}(0) \|^3 + \| \nabla \mu_{\psi,\alpha,k}(0) \|^3  \big)  &
    = \alpha (  \| \nabla \mu_{\phi,\alpha,k}(0) \|^3 + \| \nabla \mu_{\psi,\alpha,k}(0) \|^3  )  \le \frac{1}{4}
\end{align*}
for $\alpha \in (0,\alpha_0]$.
Define
\[T_{\alpha}=
\sup \Big\{  T \ge 0: \alpha \big(  \| \bm{u}(t) \|^3_V + \| \nabla \mu_{\phi,\alpha,k}(t) \|^3 + \| \nabla \mu_{\psi,\alpha,k}(t) \|^3  \big) \le \frac{1}{3}, \ \ \forall\, t \in [0,T]  \Big\}.
\]
Then the continuity of $\bm{u}$, $\mu_{\phi,\alpha,k}$ and $\mu_{\psi,\alpha,k}$ gives $T_{\alpha} > 0$.
We claim that $T_{\alpha} = \infty$. If this is the case, then \eqref{approximate_high_order_estimate} implies that
\begin{align}
\frac{\mathrm{d}}{\mathrm{d t}}  \mathcal{E}_{\alpha,k} (t) + \frac{1}{6} \big(  \| \nabla \partial_{t} \phi_{\alpha,k}(t) \|^2 + \beta \| \nabla \partial_{t} \psi_{\alpha,k}(t) \|^2  \big) \le \mathcal{F}_1(t) \mathcal{E}_{\alpha,k}(t) + \mathcal{F}_2(t),
\quad \forall\, t>0.
\label{high-E-2D}
\end{align}

Assume that by contradiction $T_{\alpha} \in (0,\infty)$, then \eqref{high-E-2D} holds
for all $t \in (0,T_{\alpha})$.
An application of Gronwall's lemma gives
\begin{align*}
    \mathcal{E}_{\alpha,k}(t) & \le \Big(  \mathcal{E}_{\alpha,k}(0) + \int_{0}^{t} \mathcal{F}_2(\tau) \mathrm{d}\tau  \Big) e^{\int_{0}^{t} \mathcal{F}_1(\tau)\, \mathrm{d}\tau}  \le ( \mathcal{E}_{\alpha,k}(0) + F_2(\alpha) ) e^{F_1(\alpha)},
    \quad \forall\, t \in [0,T_{\alpha}].
\end{align*}
This implies that
\begin{align*}
    & \alpha \big( \| \bm{u}(t) \|^3_V + \| \nabla \mu_{\phi,\alpha,k}(t) \|^3 + \| \nabla \mu_{\psi,\alpha,k}(t) \|^3 \big)
    \\
    & \quad \le \alpha \big[  \| \bm{u} \|^3_{L^{\infty} ( 0,\infty;V )} + (  C_0 \sigma_1 + \alpha C_1 \| \bm{u} \|^2_{L^{\infty}(Q)} + C_2 ( \mathcal{E}_{\alpha,k}(0) + F_2(\alpha) ) e^{F_1(\alpha)}  )^{\frac{3}{2}}  \big]
    \\
    & \quad \le \frac{1}{4},
    \quad \forall\, t \in [0,T_{\alpha}].
\end{align*}
By continuity again, there exists some $\epsilon >0$ such that
\[
\alpha \big(  \| \bm{u}(t) \|^3_V + \| \nabla \mu_{\phi,\alpha,k}(t) \|^3 + \| \nabla \mu_{\psi,\alpha,k}(t) \|^3  \big)
\le \frac{1}{3},
\quad \forall\, t \in [0,T_{\alpha}+\epsilon],
\]
which contradicts the definition of $T_{\alpha}$.
Hence, $T_{\alpha}=\infty$ and we can conclude
\[
\mathcal{E}_{\alpha,k}(t) \le ( \mathcal{E}_{\alpha,k}(0) + F_2(\alpha) ) e^{F_1(\alpha)},
\quad \forall\, t \in [0,\infty).
\]
Consequently, it follows that
\begin{align}
    & \| \nabla \mu_{\phi,\alpha,k}(t) \|^2 + \| \nabla \mu_{\psi,\alpha,k}(t) \|^2 + \alpha (  \| \partial_{t} \phi_{\alpha,k}(t) \|^2 + \| \partial_{t} \psi_{\alpha,k}(t) \|^2  ) \nonumber
    \\
    & \quad \le C_0 \sigma_1 + \alpha C_1 \| \bm{u} \|^2_{L^{\infty}(Q)} + C_2 ( \mathcal{E}_{\alpha,k}(0) + F_2(\alpha) ) e^{F_1(\alpha)} \nonumber
    \\
    & \quad \le C_0 \sigma_1 + \alpha_0 C_1 \| \bm{u} \|^2_{L^{\infty}(Q)}
    +C \big(  \| \nabla (-\Delta \phi_0 + F'_\phi(\phi_0)) \|^2 + \| \nabla (-\beta \Delta \phi_0 + F'_\psi(\psi_0)) \|^2\big)e^{F_1(\alpha_0)}
    \nonumber
    \\
    & \qquad + C (  C_0 \sigma_1 + F_2(\alpha_0)  ) e^{F_1(\alpha_0)} \nonumber
    \\
    & \quad =:  K_0,
    \label{estimate_K0}
\end{align}
as well as
\begin{align}
    & \| \nabla \partial_{t} \phi_{\alpha,k} \|^2_{L^{2} ( 0,\infty;H )} + \beta \| \nabla \partial_{t} \psi_{\alpha,k} \|^2_{L^{2} ( 0,\infty;H )} \nonumber
    \\
    & \quad \le 6 \big[  F_2(\alpha) + \mathcal{E}_{\alpha,k}(0)  + \frac{\alpha C_1}{C_2} \| \bm{u} \|^2_{L^\infty(\Omega)} + \frac{C_0 \sigma_1}{C_2} + F_1(\alpha) ( \mathcal{E}_{\alpha,k}(0) + F_2(\alpha) ) e^{F_1(\alpha)}  \big] \nonumber
    \\
    & \quad \le C \big(  F_2(\alpha_0) + \| \nabla (-\Delta \phi_0+F_\phi'(\phi_0)) \|^2 + \| \nabla (-\beta\Delta \psi_0+F_\psi'(\psi_0)) \|^2 + C_0 \sigma_1 \big) \nonumber
    \\
    & \qquad + C(1 + F_1(\alpha_0)) \big(  \| \nabla (-\Delta \phi_0+F_\phi'(\phi_0)) \|^2 + \| \nabla (-\Delta \phi_0+F_\phi'(\phi_0)) \|^2 \big) e^{F_1(\alpha_0)} \nonumber
    \\
    & \qquad + C(1 + F_1(\alpha_0)) (  C_0 \sigma_1 + \alpha_0 C_1 \| \bm{u} \|^2_{L^{\infty}(Q)} + F_2(\alpha_0)  ) e^{F_1(\alpha_0)} \nonumber
    \\
    & \quad =:  K_1.
    \label{estimate_K1}
\end{align}
Combined with \eqref{eq:cvcho2} and \eqref{eq:cvcho4}, the above two estimates imply
\[
\| \phi_{\alpha,k}(t) \|^2_W + \| \psi_{\alpha,k}(t) \|^2_W \le K_2,
\]
and
\begin{align*}
    & \| \phi_{\alpha,k} \|^2_{L^{2}_{\mathrm{uloc}} ( [0,\infty);W^{2,p}(\Omega) )} + \| \psi_{\alpha,k} \|^2_{L^{2}_{\mathrm{uloc}} ( [0,\infty);W^{2,p}(\Omega) )} \\
    & \qquad + \| F_\phi' ( \phi_{\alpha,k} ) \|^2_{L^{2}_{\mathrm{uloc}} ( [0,\infty);L^{p}(\Omega) )} + \| F_\psi' ( \psi_{\alpha,k} ) \|^2_{L^{2}_{\mathrm{uloc}} ( [0,\infty);L^{p}(\Omega) )}  \le K_{3,p},
\end{align*}
for all $p \in [2,\infty)$. The positive constants $K_0$, $K_1$, $K_2$ and $K_{3,p}$ are independent of $\alpha \in (0,\alpha_0]$.

Let us choose a sequence of $\{ \alpha_n \}_{n=1}^\infty$ with $\alpha_n \to 0$, such that there exist some functions
\begin{align*}
    & \phi_k,\psi_k \in L^{\infty} \big( 0,\infty;W \big) \cap L^{2}_{\mathrm{uloc}} \big( [0,\infty);W^{2,p}(\Omega) \big) \cap H^{1}_{\mathrm{uloc}} \big( [0,\infty);V \big), \\
    & \mu_{\phi,k}, \mu_{\psi,k} \in L^{\infty} \big( 0,\infty;V \big),
\end{align*}
as the corresponding weak (weak-*) limits in the sense that
\begin{align*}
    & \phi_{\alpha_n,k} \rightharpoonup \phi_k, \ \psi_{\alpha_n,k} \rightharpoonup \psi_k \ \mathrm{in} \ L^{2} \big( 0,T;W^{2,p}(\Omega) \big) \cap H^{1} \big( 0,T;V \big),
    \\
    & \phi_{\alpha_n,k} \mathop{\rightharpoonup}\limits^* \phi_k, \ \psi_{\alpha_n,k} \mathop{\rightharpoonup}\limits^* \psi_k \ \mathrm{in} \ L^{\infty} \big( 0,\infty;W \big),
    \\
    & \mu_{\phi,\alpha_n,k} \mathop{\rightharpoonup}\limits^* \mu_{\phi,k}, \ \mu_{\psi,\alpha_n,k} \mathop{\rightharpoonup}\limits^* \mu_{\psi,k} \ \mathrm{in} \ L^{\infty} \big( 0,\infty;V \big),
\end{align*}
for all $T>0$.
Applying the Aubin--Lions--Simon lemma (cf. \cite{Si1987}), we also obtain
\[
\phi_{\alpha_n,k} \to \phi_k, \ \ \psi_{\alpha_n,k} \to \psi_k \quad  \mathrm{in} \ BC\big( [0,T];W^{1,q}(\Omega) \big) ,
\]
for all $T>0$ and $q \in [2,\infty)$.
By a standard argument, we can show that $( \phi_k,\psi_k,\mu_{\phi,k},\mu_{\psi,k} )$ satisfies the system \eqref{eq:cvcho1}--\eqref{eq:cvcho4} with $\alpha=0$ almost everywhere in $Q$, \eqref{eq:cvchob} almost everywhere on $S$ and the initial condition
\[
\phi_k|_{t=0} = \phi_{0,k}(x), \quad  \psi_k|_{t=0} = \psi_{0,k}(x)
\quad \text{in}\ \Omega.
\]
Passing to the limit in \eqref{estimate_K0} and \eqref{estimate_K1} gives
\begin{align}
    & \| \nabla \mu_{\phi,k}(t) \|^2 + \| \nabla \mu_{\psi,k}(t) \|^2
    \notag \\
    & \quad \le C_0 \sigma_1 + C \big(  \| \nabla (-\Delta \phi_0+ F_\phi'(\phi_0)) \|^2 + \| \nabla (-\beta \Delta \psi_0+ F_\psi'(\psi_0)) \|^2 + C_0 \sigma_1 + F_2(0)  \big) e^{F_1(0)},
    \label{es-kk-1}
\end{align}
for all $t \in [0,\infty)$, and
\begin{align}
    & \| \nabla \partial_{t} \phi_k \|^2_{L^{2} ( 0,\infty;H )} + \beta \| \nabla \partial_{t} \psi_k \|^2_{L^{2} ( 0,\infty;H )}
    \notag \\
    & \quad \le C \big(  F_2(0) + \| \nabla (-\Delta \phi_0+ F_\phi'(\phi_0)) \|^2 + \| \nabla (-\beta \Delta \psi_0+ F_\psi'(\psi_0)) \|^2 + C_0 \sigma_1  \big) \nonumber
    \notag \\
    & \qquad + C (1 + F_1(0)) (  \| \nabla (-\Delta \phi_0+ F_\phi'(\phi_0)) \|^2 + \| \nabla (-\beta \Delta \psi_0+ F_\psi'(\psi_0)) \|^2 + C_0 \sigma_1 + F_2(0)  ) e^{F_1(0)}.
     \label{es-kk-2}
\end{align}

Following the arguments in \cite{AGG2024}, we can pass to the limit as $k \to \infty$ and use the density argument to obtain the desired strong solution $( \phi,\psi,\mu_\phi,\mu_\psi )$ to problem \eqref{eq:cvcho1}--\eqref{eq:cvcho4} with any given $\bm{u} \in L^{2} ( 0,\infty;\bm{H}_{0,\sigma}^{1}(\Omega) )$ as the solenoidal drift velocity field, satisfying the boundary and initial conditions:
\begin{numcases}{}
    \partial_{\mathbf{n}} \phi = \partial_{\mathbf{n}} \mu_\phi = \partial_{\mathbf{n}} \psi = \partial_{\mathbf{n}} \mu_\psi = 0, \nonumber & a.e. on $S$,
    \\
    \phi|_{t=0} = \phi_0(x), \nonumber \ \ \psi|_{t=0} = \psi_0(x), & a.e. in $\Omega$.
\end{numcases}
In addition, $( \phi,\psi,\mu_\phi,\mu_\psi )$ satisfy the same estimates as in \eqref{es-kk-1} and \eqref{es-kk-2}.
With these estimates and Lemma \ref{weightedlaplace}, we can use equations \eqref{eq:cvcho1}--\eqref{eq:cvcho4} with $\alpha=0$ to obtain
\begin{align*}
    & \phi,\psi \in L^{\infty} ( 0,\infty;W^{2,p}(\Omega) ),
    \quad
    F_\phi'( \phi ),F_\psi' ( \psi ) \in L^{\infty} ( 0,\infty;L^{p}(\Omega) ),
    \quad
    \nabla \mu_\phi, \nabla \mu_\psi \in L^{2} ( 0,\infty;H^{2}(\Omega) ).
\end{align*}
Furthermore, when $\bm{u} \in L^{\infty} \big( 0,\infty;\bm{L}^{2}_\sigma(\Omega) \big)$, by comparison in \eqref{eq:cvcho1} and \eqref{eq:cvcho3}, we get
\[
\| \partial_{t} \phi(t) - \partial_{t} \overline{\phi}(t) \|_{V^*} \le \| \bm{u}(t) \| + m_\phi^* \| \nabla \mu_\phi(t) \|, \quad  \| \partial_{t} \psi(t) \|_{V^*} \le \| \bm{u}(t) \| + m_\psi^* \| \nabla \mu_\psi(t) \|,
\]
for almost all $t >0$, which implies $ \partial_{t} \phi,\partial_{t} \psi \in L^{\infty} ( 0,\infty;V^* )$.

Finally, if the assumption $(\mathbf{H2*})$ holds, the same iteration argument in \cite{GP2025,O2025} can be applied to the equations \eqref{eq:cvcho2} and \eqref{eq:cvcho4} (with $\alpha=0$) to obtain two constants $\delta_\phi \in (0,1)$ and $\delta_\psi \in (0,1/2)$ depending on $\| \mu_\phi \|_{L^{\infty} ( 0,\infty;V )}$ and $\| \mu_\psi \|_{L^{\infty} ( 0,\infty;V )}$, respectively, such that
\[
\phi \in [-1+\delta_\phi,1-\delta_\phi], \quad  \psi \in [\delta_\psi,1-\delta_\psi] \quad  \mathrm{in} \ Q.
\]
It follows from \eqref{eq:cvcho2} that
\begin{align*}
    & -\nabla \Delta \phi = \nabla \mu_\phi - F_\phi''( \phi ) \nabla \phi - \partial_\phi ^2G ( \phi,\psi ) \nabla \phi - \partial_{\phi,\psi} ^2G ( \phi,\psi ) \nabla \psi - \sigma_2 \nabla \mathcal{N} \big( \phi - \overline{\phi} \big) \in L^{\infty} \big( 0,\infty;H \big),
    \\
    & \partial_{t} \mu_\phi = - \Delta \partial_{t} \phi + F_\phi'' ( \phi ) \partial_{t} \phi + \partial_\phi ^2G ( \phi,\psi ) \partial_{t} \phi + \partial_{\phi,\psi} ^2G ( \phi,\psi ) \partial_{t} \psi + \sigma_2 \mathcal{N} \big(  \partial_{t} \phi - \partial_{t} \overline{\phi}  \big) \in L^{2} \big( 0,\infty;V^* \big).
\end{align*}
Therefore, for all $T>0$, the interpolation between $L^{2} ( 0,T;H^{3}(\Omega) )$ and $ H^1( 0,T;V^* )$ gives $\mu_\phi \in BC( [0,T];V )$.
Similarly, we have $\partial_{t} \mu_\psi \in L^{2} ( 0,\infty;V^* )$, $\mu_\psi \in BC( [0,\infty);H )$. Finally, the strict separation property of $\phi, \psi$ and the elliptic estimate yields $\phi, \psi \in L^{\infty} ( 0,\infty;H^{3}(\Omega) )$.

The proof of Proposition \ref{cvcho_2d_global_sse} is now complete.
\qed

\subsection{Proof of Theorem \ref{nschsse2d}}

We are now in a position to prove Theorem \ref{nschsse2d}.

To begin with, we recall that for the initial data given in Theorem \ref{nschsse2d},
it has been shown in \cite[Theorem 2.1]{OGW2026} that problem \eqref{eq:nsch1}--\eqref{eq:nschi} admits a global-in-time (finite energy) weak solution $( \bm{u}_w,\phi,\psi,\mu_\phi,\mu_\psi )$ such that
\begin{align*}
    & \bm{u}_w \in BC_w \big( [0,\infty);\bm{L}^{2}_\sigma(\Omega) \big)\cap L^{2} \big( 0,\infty;\bm{H}^{1}_{0,\sigma}(\Omega) \big),\\
    & \phi,\psi \in BC_w \big( [0,\infty);V \big) \cap L^{2}_{\mathrm{uloc}} \big( [0,\infty);W \big), \\
    & \phi, \psi \in L^{\infty} (Q)\ \ \text{with}\ \ \phi(x,t) \in (-1,1) \ \mathrm{and} \ \psi(x,t) \in (0,1) \ \mathrm{a.e.} \ \mathrm{in} \ Q,\\
    & \mu_\phi,\mu_\psi \in L^{2}_{\mathrm{uloc}} \big( [0,\infty);V \big),\quad F_\phi'(\phi),F_\psi'(\psi) \in L^{2}_{\mathrm{uloc}} \big( [0,\infty);H \big)
\end{align*}
satisfying
\begin{align}
        & - \int_Q \big( \rho \bm{u}_w \cdot \partial_{t} \bm{\theta}
        + \rho (\bm{u}_w \otimes \bm{u}_w) : \nabla \bm{\theta}\big)\, \mathrm{d}(x,t)
        + \int_Q  \nu(\phi) D \bm{u}_w : D \bm{\theta}  \, \mathrm{d}(x,t)  \nonumber
        \\
        & \quad   + \int_Q  \gamma   m_\phi ( \phi ) ( \bm{u}_w \otimes \nabla \mu_\phi) : \nabla \bm{\theta} \, \mathrm{d}(x,t)
        = \int_Q ( \mu_\phi \nabla \phi + \mu_\psi \nabla \psi)\cdot \bm{\theta} \, \mathrm{d}(x,t),
        \label{eq:testns_origin}
\end{align}
for all $\bm{\theta} \in C^{\infty}_0(Q)^2$ with $\mathrm{div}\, \bm{\theta} = 0$,
\begin{align*}
   & \int_Q \big[- \phi \partial_{t} \zeta
   + ( \bm{u} \cdot \nabla \phi) \zeta \big]
   \, \mathrm{d}(x,t)
   + \int_{0}^{\infty} | \Omega | \sigma_1 \big( \overline{\phi} -c \big) \overline{\zeta} \,  \mathrm{d}t
   = - \int_Q  m_\phi(\phi)\nabla \mu_\phi\cdot  \nabla \zeta \, \mathrm{d}(x,t),
   \\
    & \int_Q \big[- \psi \partial_{t} \zeta + ( \bm{u} \cdot \nabla \psi) \zeta \big] \, \mathrm{d}(x,t)
    = - \int_Q m_\psi(\psi)\nabla \mu_\psi \cdot \nabla \zeta \, \mathrm{d}(x,t),
\end{align*}
for all $\zeta \in C_0^{\infty}\big(0,\infty;C^1\big(\overline{\Omega}\big)\big)$,
and that \eqref{eq:nsch4}, \eqref{eq:nsch6} hold almost everywhere in $Q$.
In addition, the following energy inequality holds
\begin{align*}
    & E_{\mathrm{tot}} \big( \bm{u}_w(t),\phi(t),\psi(t) \big) + \int_{Q_{(s,t)}} \big( \nu (\phi) |D \bm{u}_w|^2+ m_\phi(\phi)| \nabla \mu_\phi |^2 +m_\psi(\psi) | \nabla \mu_\psi |^2 \big) \, \mathrm{d}(x,\tau)
    \\
    & \quad + \sigma_1 \int_{Q_{( s,t )}}  \big(  \overline{\phi} - c  \big) \Big(  \mu_\phi - \frac12 \gamma | \bm{u}_w |^2  \Big)  \, \mathrm{d}( x,\tau )
    \le E_{\mathrm{tot}} \big( \bm{u}_w(s),\phi(s),\psi(s) \big),
\end{align*}
for almost all $s \in [0,\infty)$ (including $s=0$) and all $t \in [s,\infty)$.

On the other hand, it readily follows from Proposition \ref{cvcho_2d_global_sse} that $( \phi,\psi,\mu_\phi,\mu_\psi )$ is actually the unique global strong solution to problem \eqref{eq:cvcho1}--\eqref{eq:cvchoi} (with $\alpha=0$) subject to the ``given'' solenoidal velocity field $\bm{u}_w$.

Thus, the remaining task is to improve the regularity of the weak solution $\bm{u}_w$. We achieve this goal by studying a linearized version of the momentum equation \eqref{eq:nsch1}, in which we treat both the external force term and the convective velocity field as given functions. More precisely, let us consider
\begin{align}
        & - \int_Q \big( \rho \bm{v} \cdot \partial_{t} \bm{\theta}
        + \rho (\bm{v}\otimes \bm{u}_w) : \nabla \bm{\theta}\big)\, \mathrm{d}(x,t)
        + \int_Q  \nu(\phi) D \bm{v} : D \bm{\theta}  \, \mathrm{d}(x,t)  \nonumber
        \\
        & \quad   + \int_Q  \gamma   m_\phi ( \phi ) ( \bm{v} \otimes \nabla \mu_\phi) : \nabla \bm{\theta} \, \mathrm{d}(x,t)
        = \int_Q ( \mu_\phi \nabla \phi + \mu_\psi \nabla \psi)\cdot \bm{\theta} \, \mathrm{d}(x,t)
        \label{eq:testns_linearized}
\end{align}
for all $\bm{\theta} \in C^{\infty}_0(Q)^d$ such that $\mathrm{div}\, \bm{\theta} = 0$ and $\bm{v}|_{t=0}=\bm{u}_0$. Under the regularity properties of $( \bm{u}_w,\phi,\psi,\mu_\phi,\mu_\psi )$, especially the enhanced regularity guaranteed by Proposition \ref{cvcho_2d_global_sse}, the above linear problem admits a global weak solution $\bm{v}$ such that
\begin{align*}
& \bm{v} \in BC_w ( [0,\infty);\bm{L}^{2}_\sigma(\Omega) )\cap L^{2} ( 0,\infty;\bm{H}^{1}_{0,\sigma}(\Omega)) \hookrightarrow L^4(0,\infty;\bm{L}^4(\Omega)),
\\
& \partial_{t}  \mathbb{P}_{\sigma} ( \rho \bm{v} )  \in L^{2} \big( 0,\infty;\bm{H}^{1}_{0,\sigma}(\Omega)^* \big).
\end{align*}
The proof of existence follows from a Faedo--Galerkin approximation scheme. Next, if $\bm{v}_1$ and $\bm{v}_2$ are two solutions to the above problem with corresponding regularity and initial data $\bm{u}_{0,1}$ and $\bm{u}_{0,2}$ respectively, then $\widetilde{\bm{v}} = \bm{v}_1  -\bm{v}_2$ satisfies
\begin{align*}
    & \int_Q \big[\partial_{t}  (  \rho \widetilde{\bm{v} } )  \bm{\theta}  -  \rho (\widetilde{\bm{v}} \otimes \bm{u}_w): \nabla \bm{\theta}\big]\,\mathrm{d}(x,t) \\
    &\quad
    + \int_Q \Big[ \nu(\phi) D \widetilde{\bm{v}} : D \bm{\theta} +\gamma m_\phi ( \phi ) ( \widetilde{\bm{v}} \otimes \nabla \mu_\phi ):\nabla \bm{\theta} \Big]
    \,\mathrm{d}(x,t) = 0,
\end{align*}
for all $\bm{\theta} \in C^{\infty}_0(Q)^d$ such that $\mathrm{div}\, \bm{\theta} = 0$. By density argument, the above identity holds for all $\bm{\theta} \in L^{2} \big( 0,\infty;\bm{H}^{1}_{0,\sigma}(\Omega) \big)$, in particular, taking $\bm{\theta}=\widetilde{\bm{v}}$, we obtain
\begin{align*}
& \frac{1}{2} \int_{\Omega} \rho( \phi(t) ) |  \widetilde{\bm{v}}(t)  |^2 \ \mathrm{d}x
+ \int_0^t\int_\Omega   \nu( \phi ) | D \widetilde{\bm{v}} |^2  \ \mathrm{d}(x,\tau)
\\
&\quad =
\frac{1}{2} \int_{\Omega} \rho( \phi(0) ) |  \widetilde{\bm{v}}(0)  |^2 \ \mathrm{d}x +  \frac{1}{4} (\rho_1-\rho_2) \sigma_1 \int_0^t  ( \overline{\phi}(\tau)-c ) \| \widetilde{\bm{v}}(\tau) \|^2\,\mathrm{d}\tau,\quad \forall\, t>0,
\end{align*}
where we have used equation \eqref{eq:cvcho1}.
An application of Gronwall's lemma yields the continuous dependence of $\bm{v}$ on the initial data, which implies that the weak solution $\bm{v}$ is unique. In view of \eqref{eq:testns_origin}, we immediately have $\bm{v}=\bm{u}_w$.

Hence, to prove the existence of the global-in-time strong solutions to problem \eqref{eq:nsch1}--\eqref{eq:nschi}, it suffices to show that
\[
\bm{v} \in BC\big( [0,\infty);\bm{H}^{1}_{0,\sigma}(\Omega) \big) \cap L^{2} \big( 0,\infty;\bm{H}^{2}(\Omega) \big) \cap H^{1} \big( 0,\infty;\bm{L}^{2}_\sigma(\Omega) \big).
\]
Below we derive only the \textit{a priori} estimates for simplicity. Using equation \eqref{eq:cvcho1} with $\bm{u}=\bm{u}_w$, we can (formally) rewrite \eqref{eq:testns_linearized} in the following strong form
\begin{align}
    &  \rho( \phi ) \partial_{t}\bm{v} +  \rho ( \phi ) ( \bm{u}_w \cdot \nabla ) \bm{v}
    - \gamma  m_\phi ( \phi ) (  \nabla \mu_\phi \cdot \nabla ) \bm{v}
    - \mathrm{div}( 2\nu(\phi) D \bm{v}) + \nabla P
    \nonumber  \\
    & \quad = \gamma \sigma_1 \big( \overline{\phi}-c \big) \bm{v}  +   \mu_\phi \nabla \phi +  \mu_\psi \nabla \psi.
    \label{eq:testns_linearized_non_conservative}
\end{align}
Testing \eqref{eq:testns_linearized_non_conservative} by $\bm{\theta}=\partial_{t} \bm{v}$ and $\bm{\theta}= \mathbf{A} \bm{v}$, respectively, we obtain
\begin{align*}
    & \frac{1}{2} \frac{\mathrm{d}}{\mathrm{d t}}  \Big(  \int_{\Omega} \nu ( \phi ) | D \bm{v} |^2  \, \mathrm{d}x + \gamma \sigma_1 ( \overline{\phi}-c ) \| \bm{v} \|^2  \Big)
    + \int_{\Omega}  \rho ( \phi ) | \partial_{t} \bm{v} |^2  \, \mathrm{d}x
    \\
    & \quad = \frac{1}{2} \int_{\Omega}  \nu'( \phi ) \partial_{t} \phi | D \bm{v} |^2  \, \mathrm{d}x - \big( \rho ( \phi ) ( \bm{u}_w \cdot \nabla ) \bm{v}, \partial_{t} \bm{v} \big) +  \gamma \big( m_\phi ( \phi ) ( \nabla \mu_\phi \cdot \nabla ) \bm{v}, \partial_{t} \bm{v} \big)
    \\
    & \qquad +  \frac12 \gamma \sigma_1^2 ( \overline{\phi}-c ) \| \bm{v} \|^2
    + \big( \mu_\phi \nabla \phi + \mu_\psi \nabla \psi, \partial_{t} \bm{v} \big)
    \\
    & \quad \le C \| \partial_{t} \phi \| \| D \bm{v} \| \| \mathbf{A} \bm{v} \| + C (  \| \bm{u}_w \|_{L^{4}(\Omega)} + \| \nabla \mu_\phi \|_{L^{4}(\Omega)}  ) \| \nabla \bm{v} \|^\frac{1}{2} \| \mathbf{A} \bm{v} \|^\frac{1}{2} \| \partial_{t} \bm{v} \|
    \\
    & \qquad + C \sigma_1 e^{-\sigma_1 t} \| \bm{v} \|^2 + \| \mu_\phi \nabla \phi + \mu_\psi \nabla \psi \| \| \partial_{t} \bm{v} \|,
\end{align*}
and
\begin{align*}
    \frac{1}{2} ( \nu ( \phi ) \mathbf{A} \bm{v}, \mathbf{A} \bm{v} )
    &  = - ( \rho( \phi ) \partial_{t} \bm{v}, \mathbf{A} \bm{v} ) - ( \rho( \phi ) ( \bm{u}_w \cdot \nabla ) \bm{v} , \mathbf{A} \bm{v} )
    \\
    & \quad + \gamma \big(  m_\phi(\phi)( \nabla \mu_\phi \cdot \nabla ) \bm{v} + \sigma_1 \big(  \overline{\phi} - c  \big) \bm{v} , \mathbf{A} \bm{v} \big)
    \\
    & \quad - ( \phi \nabla \mu_\phi + \psi \nabla \mu_\psi, \mathbf{A} \bm{v} ) + \frac{1}{2} (  \nu' ( \phi ) \pi \nabla \phi , \mathbf{A} \bm{v} )
     + \int_{\Omega}  \nu' ( \phi ) ( \nabla \phi \cdot D ) \bm{v} \cdot \mathbf{A} \bm{v}  \, \mathrm{d}x
    \\
    & \le \rho^* \| \mathbf{A} \bm{v} \| \| \partial_{t} \bm{v} \| + C \big(  \| \bm{u}_w \|_{L^{4}(\Omega)} + \| \nabla \mu_\phi \|_{L^{4}(\Omega)}  \big) \| \nabla \bm{v} \|^\frac{1}{2} \| \mathbf{A} \bm{v} \|^{\frac{3}{2}}
    \\
    & \quad + C \| \bm{v} \| \| \mathbf{A} \bm{v} \| + C (  \| \pi \| + \| D \bm{v} \|  ) \| \nabla \phi \|_{L^{\infty}(\Omega)} \| \mathbf{A} \bm{v} \|
    \\
    & \quad + \| \mu_\phi \nabla \phi + \mu_\psi \nabla \psi \| \| \mathbf{A} \bm{v} \|
    \\
    & \le \frac{1}{4} \nu_* \| \mathbf{A} \bm{v} \|^2 + 4 (\rho^*)^2 \| \partial_{t} \bm{v} \|^2 + C (  \| \bm{u}_w \|_{L^{4}(\Omega)}^4 + \| \nabla \mu_\phi \|_{L^{4}(\Omega)}^4  ) \| D \bm{v} \|^2 \\
    & \quad + C \| \bm{v} \|^2 + C \| \mu_\phi \nabla \phi + \mu_\psi \nabla \psi \|^2,
\end{align*}
where $\pi \in V_{(0)}$ satisfies $\nabla \pi = \mathbf{A} \bm{v} + \Delta \bm{v}$. Combining the above estimates, we can deduce that
\begin{align*}
    & \frac{1}{2} \frac{\mathrm{d}}{\mathrm{d t}}  \Big(  \int_{\Omega} \nu ( \phi ) | D \bm{v} |^2  \, \mathrm{d}x + \gamma \sigma_1 ( \overline{\phi}-c ) \| \bm{v} \|^2  \Big)  + \frac{1}{2} \rho_* \| \partial_{t} \bm{v} \|^2 + \frac{\nu_* \rho_*}{32 (\rho^*)^2} \| \mathbf{A} \bm{v} \|^2
    \\
    & \quad \le H_1 \Big(  \int_{\Omega} \nu ( \phi ) | D \bm{v} |^2  \, \mathrm{d}x + \gamma \sigma_1 ( \overline{\phi}-c ) \| \bm{v} \|^2  \Big) + H_2,
\end{align*}
with
\begin{align*}
    & H_1 = C \big(  \| \partial_{t} \phi \|^2 + \| \bm{u}_w \|_{L^{4}(\Omega)}^4 + \| \nabla \mu_\phi \|_{L^{4}(\Omega)}^4  \big),
    \\
    & H_2 = C \big(  \sigma_1 e^{-\sigma_1 t} \| \bm{v} \|^2 + \| \mu_\phi \nabla \phi + \mu_\psi \nabla \psi \|^2  \big) + C \| \bm{v} \|^2 H_1.
\end{align*}
Applying Gronwall's lemma, we obtain the desired higher-order estimates that yield the existence of a global strong solution $\bm{v}$ to \eqref{eq:testns_linearized}. By uniqueness, we find $\bm{v}=\bm{u}_w$.

In summary, the global weak solution $( \bm{u}_w,\phi,\psi,\mu_\phi,\mu_\phi )$ is a global strong solution to problem \eqref{eq:nsch1}--\eqref{eq:nschi} in the sense of Definition \ref{nschssd}. The additional regularity and the weak-strong uniqueness can be proved arguing essentially as in \cite{AGG2024,G2021}; therefore, the details are omitted here.

This completes the proof of Theorem \ref{nschsse2d}.
\qed
\appendix
\section{Useful Tools} \label{appendix}
\setcounter{equation}{0}

To handle nondegenerate concentration-dependent mobilities, we report some results for the two operators $\mathcal{N}_{\phi, h}$ and $\mathcal{N}_{\psi, h}$.

\begin{lemma} \label{weightedlaplace}
Assume that $\Omega\subset \mathbb{R}^3$ is a bounded domain of class $C^3$, $m_\phi$ satisfies the conditions in $(\mathbf{H1})$.
We consider the operator $\mathcal{N}_{\phi, h}$ defined in \eqref{ahi_definition}. If $h\in W^{1,6}(\Omega)$, the restriction of $\mathcal{N}_{\phi, h}$ to $L^p_{(0)}(\Omega)$ with $p \in [6/5,6]$ is continuous from $L^p_{(0)}(\Omega)$ to $W^{2,p}(\Omega)\cap V_{(0)}$. If $h \in H^{2}(\Omega)$, the restriction of $\mathcal{N}_{\phi, h}$ to $V_{(0)}$ is continuous from $V_{(0)}$ to $H^{3}_N(\Omega)\cap V_{(0)}$. Moreover, we have the following estimates:
    \begin{align*}
        & \|  \nabla \mathcal{N}_{\phi, h}( f )  \| \le \frac{C}{m_{\phi,*}} \| f \|_{V^*_{(0)}}, \quad \forall f \in V^*_{(0)}, \\
        & \text{if}\ p \in [6/5,3/2], \ \|  \Delta \mathcal{N}_{\phi, h}( f )  \|_{L^{p}(\Omega)} \le \frac{C}{m_{\phi,*}} \left(  1 + \frac{ \|  \nabla m_\phi ( h )  \|_{L^{6}(\Omega)} }{ m_{\phi,*} }  \right) \| f \|_{L^{p}(\Omega)}, \ \forall f \in L^p_{(0)}(\Omega),
        \\
        & \text{if}\ p \in (3/2,2 ],  \ \|  \Delta \mathcal{N}_{\phi, h}( f )  \|_{L^{p}(\Omega)} \le \frac{C}{m_{\phi,*}} \left(  1 + \frac{ \|  \nabla m_\phi ( h )  \|_{L^{6}(\Omega)}^2 }{ m_{\phi,*}^2 }  \right) \| f \|_{L^{p}(\Omega)}, \ \forall f \in L^p_{(0)}(\Omega),
        \\
        & \text{if}\ p \in (2,3], \ \|  \Delta \mathcal{N}_{\phi, h}( f )  \|_{L^{p}(\Omega)} \le \frac{C}{m_{\phi,*}} \left(  1 + \frac{ \|  \nabla m_\phi ( h )  \|_{L^{6}(\Omega)}^3 }{ m_{\phi,*}^3 }  \right) \| f \|_{L^{p}(\Omega)}, \ \forall f \in L^p_{(0)}(\Omega),
        \\
        & \text{if}\ p \in (3,6), \ \|  \Delta \mathcal{N}_{\phi, h}( f )  \|_{L^{p}(\Omega)} \le \frac{C}{m_{\phi,*}} \left(  1 + \frac{ \|  \nabla m_\phi ( h )  \|_{L^{6}(\Omega)}^4 }{ m_{\phi,*}^4 }  \right) \| f \|_{L^{p}(\Omega)}, \ \forall f \in L^p_{(0)}(\Omega),
        \\
        & \|  \Delta \mathcal{N}_{\phi, h}( f )  \|_{L^{6}(\Omega)} \le \frac{C}{m_{\phi,*}} \left(  1 + \frac{ \|  \nabla m_\phi ( h )  \|_{L^{6}(\Omega)}^5 }{ m_{\phi,*}^5 }  \right) \| f \|_{L^{6}(\Omega)},
        \ \forall f \in L^6_{(0)}(\Omega),
        \\
        & \|  \Delta \mathcal{N}_{\phi, h}( f )  \|_{V } \le \frac{C}{m_{\phi,*}} \left(  1 + \frac{ \|  \nabla m_\phi ( h )  \|_{L^{6}(\Omega)}^4 \Big(  \frac{ \|  \nabla^2 m_\phi ( h )  \| }{ m_{\phi,*} } + 1  \Big) }{ m_{\phi,*}^4 }  \right) \| \nabla f \|,\ \forall f \in V_{(0)},
    \end{align*}
    where the constant $C>0$ depends only on $\Omega$. Similar regularity properties and estimates hold for the operator $\mathcal{N}_{\psi, h}$ defined in \eqref{asi_definition}.
\end{lemma}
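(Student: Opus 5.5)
We write $g = \mathcal{N}_{\phi,h}(f)$, i.e. the solution in $V_{(0)}$ of $\int_\Omega m_\phi(h)\nabla g\cdot\nabla \zeta\,\mathrm{d}x = \langle f,\zeta\rangle$ for all $\zeta\in V$. We begin with the existence claims and the $V^*$ bound, and then derive the $L^p$ and $V$ estimates from a nondivergence form of the equation combined with bootstrapping.

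\emph{Existence and the $V^*$ bound.} Since $m_\phi(h)\ge m_{\phi,*}$ and $m_\phi(h)\le m_\phi^*$, the bilinear form is coercive on $V_{(0)}$ by Poincar\'e--Wirtinger. Lax--Milgram then yields $g$. Testing with $\zeta=g$ gives
\[
m_{\phi,*}\|\nabla g\|^2\le \|f\|_{V^*_{(0)}}\|g\|_V\le C\|f\|_{V^*_{(0)}}\|\nabla g\|,
\]
which is the first estimate.

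\emph{Nondivergence form.} For $h\in W^{1,6}$ we have $m_\phi(h)\in W^{1,6}$, since $m_\phi\in BUC^2$. Formally,
\[
-\Delta g = \frac{1}{m_\phi(h)}\big(f+\nabla m_\phi(h)\cdot\nabla g\big),\qquad \partial_{\mathbf n}g=0 .
\]
To make this rigorous, we mollify $h$ (equivalently, apply difference quotients) so that $g$ is smooth, derive the a priori bounds below, and pass to the limit. Neumann $L^p$ regularity for the Laplacian gives
\[
\|\Delta g\|_{L^p}\le \frac{C}{m_{\phi,*}}\Big(\|f\|_{L^p}+\|\nabla m_\phi(h)\|_{L^6}\|\nabla g\|_{L^{r}}\Big),\qquad \frac1r=\frac1p-\frac16 .
\]

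\emph{Bootstrap in $p$.} We start from $\|\nabla g\|\le C m_{\phi,*}^{-1}\|f\|_{V^*}\le C m_{\phi,*}^{-1}\|f\|_{L^p}$, valid for $p\ge 6/5$ by duality.
\begin{itemize}
\item For $p\le 3/2$ we have $r\le 2$, so $\|\nabla g\|_{L^r}\le C\|\nabla g\|$ and one factor of $\|\nabla m_\phi(h)\|_{L^6}/m_{\phi,*}$ appears.
\item For larger $p$, the required $L^r$ norm of $\nabla g$ is controlled, through Sobolev embedding $W^{1,q}\hookrightarrow L^{q^*}$ applied to $\nabla g$ together with $\|\nabla^2 g\|_{L^q}\le C(\|\Delta g\|_{L^q}+\|\nabla g\|)$, by the bound already obtained at the previous exponent level. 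Each step adds one power of $\|\nabla m_\phi(h)\|_{L^6}/m_{\phi,*}$.
\item For $p\in(3/2,2]$ we use level $3/2$, giving $\nabla g\in L^3$ and a squared factor. For $p\in(2,3]$ we use level $2$, giving $\nabla g\in L^6$ and a cubed factor. For $p\in(3,6)$ we use level $3$, giving $\nabla g\in L^q$ for all $q<\infty$ and a fourth power. For $p=6$ the term $\nabla m\cdot\nabla g$ is only in $L^6$ if $\nabla g\in L^\infty$, which needs $W^{2,q}$ with $q>3$; this produces a fifth power.
\end{itemize}
When an exponent is not an exact embedding endpoint we absorb it by H\"older's inequality on the bounded domain, and the constants are polynomial as stated. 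Continuity into $W^{2,p}$ follows from linearity and these bounds.

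\emph{The $H^3$ estimate.} For $h\in H^2$ and $f\in V_{(0)}$ we differentiate the nondivergence form:
\[
\nabla\Delta g = \nabla\Big(\frac{f}{m}\Big)+\nabla\Big(\frac{\nabla m\cdot\nabla g}{m}\Big).
\]
The second term contains $\nabla^2 m\,\nabla g$, which we bound in $L^2$ by $\|\nabla^2 m\|\,\|\nabla g\|_{L^\infty}$. It also contains $\nabla m\,\nabla^2 g$ and $|\nabla m|^2\nabla g/m$, which we bound using $\nabla m\in L^6$ and the $W^{2,3}$ or $W^{2,6}$ bounds from the previous step, with $\|f\|_{L^6}\le C\|\nabla f\|$. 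Since $m_\phi\in BUC^2$, we have $\|\nabla^2 m_\phi(h)\|\le C(\|\nabla^2 h\|+\|\nabla h\|_{L^4}^2)$. Collecting powers gives the stated form. Neumann $H^3$ regularity then places $g$ in $H^3_N$.

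The main obstacle is the exponent bookkeeping in the bootstrap, especially the endpoint $p=6$, where $\nabla g\in L^\infty$ is required. There we plan to go through $W^{2,q}$ for some $q\in(3,6)$ and use Morrey's embedding. The argument for $\mathcal{N}_{\psi,h}$ is identical.
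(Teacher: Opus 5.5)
Your route is the paper's own: Lax--Milgram plus testing with $g$ for the $V^*_{(0)}$ bound, the nondivergence form $-\Delta g=(f+\nabla m_\phi(h)\cdot\nabla g)/m_\phi(h)$, a bootstrap through the levels $p=3/2,\,2,\,3$ and some $q>3$ that gains one power of $x:=\|\nabla m_\phi(h)\|_{L^6(\Omega)}/m_{\phi,*}$ per step, and differentiation of the nondivergence form for the $H^3$ bound. Your mollification of $h$ supplies a justification that the paper leaves implicit.

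One step does not hold as you claim, and the paper's proof has the same slip: ``collecting powers gives the stated form'' in the $V$-estimate. The term $\nabla^2 m_\phi(h)\nabla g/m_\phi(h)$ is bounded by $\frac{C}{m_{\phi,*}}\frac{\|\nabla^2 m_\phi(h)\|}{m_{\phi,*}}(1+x^4)\|\nabla f\|$. Its part $\frac{\|\nabla^2 m_\phi(h)\|}{m_{\phi,*}}$, which is not multiplied by $x^4$, is not dominated by the stated right-hand side.

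Sharper bookkeeping cannot repair this, because the stated inequality is false. Take $m_\phi(s)=1+s$ for $|s|\le 1/2$ (extended so that $(\mathbf{H1})$ holds), $h_k=k^{-3/2}\sin(kx_1)$, and a fixed $f\in V_{(0)}$ with $\partial_{x_1}\mathcal{N}f\not\equiv 0$. Then $x\sim k^{-1/2}$ and $\|\nabla^2 m_\phi(h_k)\|\sim k^{1/2}$, so the stated right-hand side stays bounded. On the other hand, $g_k=\mathcal{N}_{\phi,h_k}f\to\mathcal{N}f$ in $W^{2,4}(\Omega)$. In $\nabla\Delta g_k$ the term $-\nabla^2 m_\phi(h_k)\nabla g_k/m_\phi(h_k)$ has $L^2$-norm $\gtrsim k^{1/2}\|\partial_{x_1}\mathcal{N}f\|$, while all remaining terms stay bounded, so $\|\nabla\Delta g_k\|\to\infty$.

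What your argument (and the paper's) actually proves is
\[
\|\Delta \mathcal{N}_{\phi,h}f\|_V\le \frac{C}{m_{\phi,*}}\Big(1+\frac{\|\nabla^2 m_\phi(h)\|}{m_{\phi,*}}\Big)\Big(1+\frac{\|\nabla m_\phi(h)\|_{L^6(\Omega)}^4}{m_{\phi,*}^4}\Big)\|\nabla f\|.
\]
This is exactly the form the paper later uses, in Step 2 of the proof of Proposition~\ref{cvchosse}, so nothing downstream is affected. You should state and prove this version of the last estimate.
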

\noindent \textbf{Proof. } By definition, for any $f \in V^*_{(0)}$, $g = \mathcal{N}_{\phi, h}f\in V_{(0)}$ is the unique weak solution to the elliptic boundary value problem:
\begin{numcases}{}
    -\mathrm{div}\, \big(  m_\phi ( h ) \nabla g  \big) = f, \quad \text{in} \ \Omega,
    \label{elliptic1} \\
    \partial_{\mathbf{n}} g = 0, \qquad\qquad \qquad \ \ \,  \text{on}\  \partial \Omega.\nonumber
\end{numcases}
Testing \eqref{elliptic1} by $g$ and using the Poincar\'{e}--Wirtinger inequality, we easily obtain
\[
m_{\phi,*} \|  \nabla g  \|^2 \le  \int_\Omega m_{\phi}(h)|\nabla g|^2\,\mathrm{d} x\leq C \| f \|_* \| \nabla g \|,
\]
which yields the first estimate. As in \cite{ADG2013ndm}, we rewrite \eqref{elliptic1} in the following form:
\begin{align}
    -\Delta g =  \frac{ \nabla \big( m_\phi( h ) \big) \cdot \nabla g + f}{ m_\phi( h ) }, \quad \mathrm{in} \ \Omega. \label{elliptic2}
\end{align}
Using the Sobolev embedding $V \hookrightarrow L^{6}(\Omega)$ in three dimensions, for any $f \in L^p_{(0)}(\Omega)$, $ p \in [6/5,6] $, we have $f \in V_{(0)}^*$ and $\| f \|_* \le C \| f \|_{L^{p}(\Omega)}$. This implies
\[
\|\nabla g\|=\|  \nabla \mathcal{N}_{\phi, h}f  \| \le \frac{C}{m_{\phi,*}} \| f \|_{L^{p}(\Omega)}.
\]
Substituting the above estimate into \eqref{elliptic2}, we get
\begin{align*}
    \| \Delta g \|_{L^{p}(\Omega)} & \le \frac{1}{m_{\phi,*}} \big(  \| \nabla m_\phi(h) \|_{L^{6}(\Omega)} \|  \nabla g  \| + \| f \|_{L^{p}(\Omega)}  \big) \\
    & \le \frac{C}{m_{\phi,*}} \Big(  \frac{\| \nabla m_\phi(h) \|_{L^{6}(\Omega)}}{m_{\phi,*}} + 1 \Big) \| f \|_{L^{p}(\Omega)}, \quad
    \forall\, p \in \Big[\frac{6}{5},\frac{3}{2}\Big].
\end{align*}
If $f \in L^{p}_{(0)}(\Omega)$ for $p \in(3/2,2]$, from the above estimate, the elliptic estimate, the Wirtinger--Poincar\'{e} inequality, and the Sobolev embedding theorem, we find
\begin{align*}
    \|  \nabla g  \|_{L^{3}(\Omega)}
    & \le C \| g \|_{W^{2,\frac{3}{2}}(\Omega)}
    \le C \|  \Delta g  \|_{L^{\frac{3}{2}}(\Omega)} \\
    & \le \frac{C}{m_{\phi,*}} \Big(  \frac{\| \nabla m_\phi(h) \|_{L^{6}(\Omega)}}{m_{\phi,*}} + 1 \Big) \| f \|_{L^{p}(\Omega)}, \quad  \forall\, p \in \Big(\frac{3}{2},2\Big].
\end{align*}
Consequently, from \eqref{elliptic2} we infer that
\begin{align}
    \| \Delta g \|_{L^{p}(\Omega)}
    & \le \frac{1}{m_{\phi,*}} \big(  \| \nabla m_\phi(h) \|_{L^{6}(\Omega)} \|  \nabla g  \|_{L^{3}(\Omega)} + \| f \|_{L^{p}(\Omega)}  \big)
    \nonumber \\
    & \le \frac{C}{m_{\phi,*}} \Big[  \Big( \frac{\| \nabla m_\phi(h) \|_{L^{6}(\Omega)}}{m_{\phi,*}} \Big)^2 + 1 \Big] \| f \|_{L^{p}(\Omega)}, \quad  \forall\, p \in \Big(\frac{3}{2},2\Big].\notag
\end{align}
Repeating the above procedure, we can obtain the required estimates for $p\in (2,3]$ and $(3,6)$ and $p=6$.

Next, if $h \in H^{2}(\Omega)$ and $f \in V_{(0)}$, we take gradient on both sides of \eqref{elliptic2} to get
\begin{equation*}
    -\nabla \Delta g = \frac{\nabla^2 \big(  m_\phi ( h )  \big)\nabla g }{m_\phi( h )} - \frac{ \nabla \big( m_\phi( h ) \big) \cdot \nabla g}{ m_\phi( h )^2 } \nabla \big( m_\phi( h ) \big) + \frac{ \nabla^2 g  \nabla \big( m_\phi( h ) \big)}{ m_\phi( h ) } + \frac{ \nabla f}{ m_\phi( h ) } - \frac{ f \nabla \big(  m_\phi ( h )  \big) }{ m_\phi( h )^2 }.
\end{equation*}
Then, from the previous derived estimates, the elliptic estimate and the Sobolev embedding theorem, we can deduce that
\begin{align*}
    \|  \nabla \Delta g  \| & \le \frac{ \|  \nabla^2 m_\phi ( h )  \| \| \nabla g \|_{L^{\infty}(\Omega)}}{m_{\phi,*}} + \Big(  \frac{ \| \nabla \big( m_\phi( h ) \big)  \|_{L^{6}(\Omega)}}{m_{\phi,*}}  \Big)^2 \| \nabla g \|_{L^{6}(\Omega)}
    \\
    & \quad + \frac{\|  \nabla^2 g  \|_{L^{3}(\Omega)} \| \nabla \big( m_\phi( h ) \big)  \|_{L^{6}(\Omega)}}{m_{\phi,*}}  + \frac{\|  \nabla f  \|}{m_{\phi,*}}
    + \frac{ \| f \|_{L^{3}(\Omega)} \| \nabla \big( m_\phi( h ) \big)  \|_{L^{6}(\Omega)} }{m_{\phi,*}^2}
    \\
    & \le C\frac{\|  \nabla^2 m_\phi ( h )  \| \|  g \|_{W^{2,4}(\Omega)} }{m_{\phi,*}}
    + C\Big(  \frac{ \| \nabla \big( m_\phi( h ) \big)  \|_{L^{6}(\Omega)}}{m_{\phi,*}}  \Big)^2 \|  g \|_{W}
    \\
    & \quad + C\frac{\|  g  \|_{W^{2,3}(\Omega)} \| \nabla \big( m_\phi( h ) \big)  \|_{L^{6}(\Omega)}}{m_{\phi,*}}
    + \frac{\|  \nabla f  \|}{m_{\phi,*}}
    + C\frac{ \|\nabla f \| \| \nabla \big( m_\phi( h ) \big)  \|_{L^{6}(\Omega)} }{m_{\phi,*}^2}
    \\
    & \le \frac{C}{m_{\phi,*}} \left(  1 + \frac{ \|  \nabla m_\phi ( h )  \|_{L^{6}(\Omega)}^4 \Big(  \frac{ \|  \nabla^2 m_\phi ( h )  \| }{ m_{\phi,*} } + 1  \Big) }{ m_{\phi,*}^4 }  \right) \| \nabla f \|.
\end{align*}
Combining this with the known estimate on $\|\nabla g\|$, we arrive at the conclusion.
\qed
\smallskip

When $d=2$, the following estimates for $\mathcal{N}_{\phi, h}$ and $\mathcal{N}_{\psi, h}$ have been established in  \cite{CGGG2025}:
\begin{lemma} \label{elliptic_estimate_2d}
Assume that $\Omega\subset \mathbb{R}^2$ is a bounded domain of class $C^3$, $m_\phi$ satisfies the conditions in $(\mathbf{H1})$.
If $h \in H^{2}(\Omega)$, then we have
\begin{itemize}
    \item there exists a constant $C>0$ such that
    \[
\|  \mathcal{N}_{\phi, h} f  \|_{H^{2}(\Omega)}
\le C \big(  \| \nabla h \| \| h \|_{H^{2}(\Omega)} \| \nabla \mathcal{N}_{\phi, h} f \| + \| f \|  \big),\ \ \forall\, f \in L^2_{(0)}(\Omega);
\]
    \item there exists a constant $C>0$ such that
    \[
\|  \mathcal{N}_{\phi, h} f  \|_{W^{2,4}(\Omega)}
\le C \big(  \| \nabla h \|^{\frac{1}{4}} \| h \|_{H^{2}(\Omega)}^{\frac{3}{4}} \| \nabla \mathcal{N}_{\phi, h} f \|^{\frac{1}{4}} \| \mathcal{N}_{\phi, h} f \|_{H^{2}(\Omega)}^{\frac{3}{4}} + \| f \|_{L^{4}(\Omega)}  \big),\ \ \forall\, f \in L^4_{(0)}(\Omega);
\]
    \item  there exists a constant $C>0$ such that
   \begin{align*}
        \|  \mathcal{N}_{\phi,h} f  \|_{H^3(\Omega)}
        & \le C \Big(  \Big\|  \frac{m_\phi'(h)}{m_\phi(h)} \nabla h \cdot \nabla \mathcal{N}_{\phi,h} f  \Big\|_V + \Big\| \frac{f}{m_\phi(h)} \Big\|_V  \Big) ,\ \ \forall\, f \in V_{(0)}.
    \end{align*}
\end{itemize}
Similar results hold for the operator $\mathcal{N}_{\psi, h}$ under the corresponding assumptions on $m_\psi$ and $h$.
\end{lemma}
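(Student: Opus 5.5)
The lemma is attributed to \cite{CGGG2025}, but I would reprove it directly from the nondivergence form. Let $g=\mathcal{N}_{\phi,h}f$, so that, as in \eqref{elliptic2},
\[
-\Delta g = \frac{m_\phi'(h)\nabla h\cdot \nabla g + f}{m_\phi(h)} \quad \text{in } \Omega, \qquad \partial_{\mathbf{n}} g=0 \ \text{on } \partial\Omega .
\]
All three bullets follow from Neumann elliptic regularity once the lower-order term $m_\phi'(h)\nabla h\cdot\nabla g/m_\phi(h)$ is controlled in the appropriate norm. Because $g$ has zero mean, the Poincar\'e--Wirtinger inequality lets us bound $\|g\|_{H^2(\Omega)}$ by $C(\|\Delta g\|+\|\nabla g\|)$, with the analogous statements in $W^{2,4}$ and $H^3$. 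The constants may depend on $m_{\phi,*}$ and $\|m_\phi\|_{C^1}$.

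\textbf{$H^2$ bound.} It suffices to estimate $\|\nabla h\cdot\nabla g\|\le \|\nabla h\|_{L^4(\Omega)}\|\nabla g\|_{L^4(\Omega)}$. In two dimensions I apply Ladyzhenskaya's (GN) inequality to both factors:
\[
\|\nabla h\|_{L^4(\Omega)}\le C\|\nabla h\|^{1/2}\|h\|_{H^2(\Omega)}^{1/2}, \qquad \|\nabla g\|_{L^4(\Omega)}\le C\|\nabla g\|^{1/2}\|g\|_{H^2(\Omega)}^{1/2}.
\]
Then $\|g\|_{H^2(\Omega)}\le C\|\nabla h\|^{1/2}\|h\|_{H^2(\Omega)}^{1/2}\|\nabla g\|^{1/2}\|g\|_{H^2(\Omega)}^{1/2}+C\|f\|+C\|\nabla g\|$. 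Young's inequality absorbs the $\|g\|_{H^2}$ factor and leaves $C\|\nabla h\|\|h\|_{H^2(\Omega)}\|\nabla g\|+C\|f\|$. The leftover $\|\nabla g\|$ is handled by $\|\nabla g\|\le C\|f\|_*\le C\|f\|$, which comes from the energy estimate in Lemma \ref{weightedlaplace}.

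\textbf{$W^{2,4}$ bound.} The $L^4$ Neumann estimate gives $\|g\|_{W^{2,4}}\le C(\|\nabla h\cdot\nabla g\|_{L^4}+\|f\|_{L^4}+\|\nabla g\|)$. I split by H\"older as $\|\nabla h\|_{L^{8}}\|\nabla g\|_{L^8}$ and use the two-dimensional GN inequality
\[
\|\nabla w\|_{L^8(\Omega)}\le C\|\nabla w\|^{1/4}\|w\|_{H^2(\Omega)}^{3/4}
\]
for $w=h$ and $w=g$. This yields exactly $\|\nabla h\|^{1/4}\|h\|_{H^2}^{3/4}\|\nabla g\|^{1/4}\|g\|_{H^2}^{3/4}$. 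The lower-order remainders of GN and the $\|\nabla g\|$ term are absorbed using $\|g\|_{H^2}$ together with the $H^2$ bound just proved.

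\textbf{$H^3$ bound.} I apply $H^3$ regularity of the Neumann Laplacian, which is valid for a $C^3$ domain, to the right-hand side. This gives $\|g\|_{H^3}\le C(\|\Delta g\|_V+\|\nabla g\|)$, and $\Delta g$ is exactly the sum of the two $V$-norm quantities in the statement. The term $\|\nabla g\|$ is dominated by the $f/m_\phi(h)$ term through the energy estimate and Poincar\'e's inequality.

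Two technical points need care. The first is making sure that the compatibility condition and the Neumann condition allow the $L^p$ estimates for the Laplacian; this holds because $\int_\Omega \Delta g=0$ automatically. The second, and the main obstacle, is tracking the exact powers in the $W^{2,4}$ bound so that the $H^2$ norms appear with exponent $3/4$ and not higher. If the direct H\"older split fails to give these powers, I would try the alternative split $\|\nabla h\|_{L^4}\|\nabla g\|_{L^\infty}$ combined with Agmon's inequality \eqref{Agmon_inequality}. The same arguments apply verbatim to $\mathcal{N}_{\psi,h}$.
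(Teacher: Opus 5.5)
The paper gives no proof of this lemma; it quotes it from \cite{CGGG2025}. Your self-contained argument is correct. It follows the same scheme the paper uses in three dimensions for Lemma \ref{weightedlaplace}: pass to the nondivergence form \eqref{elliptic2}, then apply Neumann regularity. In two dimensions the interpolation inequalities do the work. For the $H^2$ bound you use Ladyzhenskaya plus Young absorption. For the $W^{2,4}$ bound you use the split $\|\nabla h\|_{L^8(\Omega)}\|\nabla g\|_{L^8(\Omega)}$ together with $\|\nabla w\|_{L^8(\Omega)}\le C\|\nabla w\|^{1/4}\|w\|_{H^2(\Omega)}^{3/4}$. This gives exactly the stated exponents, so your fallback via Agmon's inequality is not needed.

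You should add one qualitative step. Absorbing $C\|\nabla h\|^{1/2}\|h\|_{H^2(\Omega)}^{1/2}\|\nabla g\|^{1/2}\|g\|_{H^2(\Omega)}^{1/2}$ into the left-hand side requires knowing beforehand that $\|g\|_{H^2(\Omega)}<\infty$. A priori, however, $g=\mathcal{N}_{\phi,h}f$ lies only in $V$, and at that level \eqref{elliptic2} is only formal.

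A short bootstrap fills this in:
\begin{itemize}
\item Test the weak form of $-\mathrm{div}\,(m_\phi(h)\nabla g)=f$ with $\zeta/m_\phi(h)$, $\zeta\in V$. This is admissible because $\nabla h\in L^q(\Omega)$ for every $q<\infty$.
\item This shows $g$ is a weak Neumann solution of $-\Delta g=F$, where $F=(f+m_\phi'(h)\nabla h\cdot\nabla g)/m_\phi(h)$ has zero mean and $F\in L^r(\Omega)$ for all $r<2$.
\item Hence $g\in W^{2,r}(\Omega)$, so $\nabla g\in L^s(\Omega)$ for all $s<\infty$.
\item Then $F\in L^2(\Omega)$, and $g\in H^2(\Omega)$.
\end{itemize}
Alternatively, regularize $h$ and pass to the limit in the uniform estimate.

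Two minor remarks. In the $W^{2,4}$ bound, the leftover $\|\nabla g\|$ is bounded directly by $C\|f\|\le C\|f\|_{L^4(\Omega)}$; you do not need the $H^2$ estimate for it. In the $H^3$ bound you also use $\|f\|\le m_\phi^*\|f/m_\phi(h)\|$. All constants therefore depend only on $\Omega$, $m_{\phi,*}$, $m_\phi^*$ and $\|m_\phi'\|_{L^\infty(\mathbb{R})}$.
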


Finally, we recall some useful chain rules (see \cite{BB1999,CGGG2025}):
\begin{lemma}
\label{mutiply_diff_formula_general}
Let $T>0$. Assume that $m_\phi$ satisfies the conditions in $(\mathbf{H1})$.

(1) Let $d=2,3$. If $h \in L^{\infty} ( 0,T;H^{2}(\Omega) ) \cap H^1 (  0,T;H  )$, then we have
    \begin{equation*}
        \big(  \mathcal{N}_{\phi, h} \partial_{t} (f-\overline{f}), f\big)
        = \frac{\mathrm{d}}{\mathrm{d}t} \frac12 \Big\|  \sqrt{m_\phi( h )} \nabla \mathcal{N}_{\phi, h} (f-\overline{f})  \Big\|^2
        + \frac12 \int_{\Omega}  m_\phi'( h ) \partial_{t} h \big| \nabla \mathcal{N}_{\phi, h} (f-\overline{f}) \big|^2  \, \mathrm{d}x,
    \end{equation*}
    for any $f \in L^{\infty} ( 0,T;V ) \cap H^1 ( 0,T;V^* )$ and almost everywhere in $(0,T)$.

(2) Let $d=2$. If $h \in L^{\infty} ( 0,T;V ) \cap L^{4} ( 0,T;H^2(\Omega))\cap H^{1} ( 0,T;V^* )$, then we have
\begin{align*}
    & \big( \mathcal{N}_{\phi, h} \partial_{t} (  f - \overline{f}), f\big)
    \\
    &\quad  = \frac{\mathrm{d}}{\mathrm{d t}}  \frac{1}{2} \big(  \mathcal{N}_{\phi, h} ( f-\overline{f} ) ,  f \big)
    + \frac{1}{2} \int_{\Omega} ( \nabla \mathcal{N} \partial_{t} h ) \cdot m_\phi'' ( h ) \nabla h |  \nabla \mathcal{N}_{\phi, h} ( f-\overline{f} )  |^2  \, \mathrm{d}x
    \\
    & \qquad + \int_{\Omega} ( \nabla \mathcal{N} \partial_{t} h ) \cdot m_\phi' ( h ) \big(  \nabla^2 \mathcal{N}_{\phi, h} ( f-\overline{f} ) \nabla \mathcal{N}_{\phi, h} ( f-\overline{f} )  \big)  \, \mathrm{d}x,
\end{align*}
for any $f \in L^{\infty} ( 0,T;V ) \cap L^{4} ( 0,T;W ) \cap H^{1} (0,T;V^* )$
and almost everywhere in $(0,T)$.

Similar results hold for the operator $\mathcal{N}_{\psi, h}$ under the corresponding assumptions on $m_\psi$ and $h$.
\end{lemma}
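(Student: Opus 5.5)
Both chain rules will be proved from the defining identity of the weighted Neumann problem, so only the time dependence of the weight has to be tracked. Set $g=\mathcal{N}_{\phi,h}(f-\overline f)$, so that for a.e. $t$
\[
\int_\Omega m_\phi(h)\nabla g\cdot\nabla \zeta\,\mathrm{d}x=\langle f-\overline f,\zeta\rangle,\qquad \forall\,\zeta\in V .
\]
The plan is to work first with smooth-in-time approximations of $f$ and $h$, for example mollification in time. For these, $g$ is differentiable in time with values in $V_{(0)}$. Differentiating the identity above gives
\[
\int_\Omega m_\phi(h)\nabla\partial_t g\cdot\nabla\zeta\,\mathrm{d}x+\int_\Omega m_\phi'(h)\partial_t h\,\nabla g\cdot\nabla\zeta\,\mathrm{d}x=\langle\partial_t(f-\overline f),\zeta\rangle .
\]
Since $\mathcal{N}_{\phi,h}$ is symmetric with respect to the duality, $(\mathcal{N}_{\phi,h}\partial_t(f-\overline f),f)=\langle\partial_t(f-\overline f),g\rangle$. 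Taking $\zeta=g$ therefore yields
\[
\langle\partial_t(f-\overline f),g\rangle=\tfrac12\tfrac{\mathrm d}{\mathrm dt}\|\sqrt{m_\phi(h)}\nabla g\|^2+\tfrac12\int_\Omega m_\phi'(h)\partial_t h|\nabla g|^2\,\mathrm{d}x .
\]
This is the computation $\frac{\mathrm d}{\mathrm dt}\int m|\nabla g|^2=\int m'\partial_th|\nabla g|^2+2\int m\nabla g\cdot\nabla\partial_tg$ combined with the differentiated identity, and it is exactly part (1).

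\textbf{Passage to the limit in (1).} I will pass to the limit in the time-integrated form of the identity, i.e. with test functions in $C_c^\infty(0,T)$. This needs three ingredients:
\begin{itemize}
\item the uniform bound $\|\nabla g\|\le C\|f\|_{V^*}$ from Lemma~\ref{weightedlaplace};
\item the bound $\nabla g\in L^\infty(0,T;W^{1,3})\subset L^\infty(0,T;L^\infty)$ in $d\le3$, which follows from $h\in L^\infty H^2$ and $f\in L^\infty V$ through the $H^3$ estimate of Lemma~\ref{weightedlaplace}; this makes $\int m'\partial_th|\nabla g|^2$ well defined for $\partial_th\in L^2H$;
\item continuity of $h\mapsto\mathcal N_{\phi,h}$ under the strong convergence $h_\epsilon\to h$ in $L^2H$. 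This holds because $m_\phi$ is Lipschitz and the difference satisfies $\mathcal{A}_{h_\epsilon}(g_\epsilon-g)=-\mathrm{div}((m(h_\epsilon)-m(h))\nabla g)$.
\end{itemize}

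\textbf{Part (2), $d=2$.} Here $\partial_th$ lies only in $V^*$, so the term $\int m'\partial_th|\nabla g|^2$ must be reinterpreted. Write
\[
\partial_t h=\partial_t\overline h-\Delta\mathcal N(\partial_t h-\partial_t\overline h)
\]
and integrate by parts in the pairing $\langle\partial_t h,m'(h)|\nabla g|^2\rangle$. The gradient then falls on $m_\phi'(h)|\nabla g|^2$ and produces
\[
m''(h)\nabla h\,|\nabla g|^2+2m'(h)\nabla^2g\nabla g .
\]
Paired with $\nabla\mathcal N\partial_th$, these are precisely the two integrals in (2). The same manipulation shows that the mean part $\partial_t\overline h$ contributes $\tfrac12\partial_t\overline h\int m'|\nabla g|^2$. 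The stated formula omits this term, so it should be understood either with mean-free $\partial_t h$ or with $\mathcal N$ acting on $\partial_t h-\partial_t\overline h$ (matching the term $I_6$ used in Part~1). Finally, the left-hand energy equals $\frac12(\mathcal N_{\phi,h}(f-\overline f),f)=\frac12\|\sqrt{m}\nabla g\|^2$.

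The main obstacle is justifying integrability in (2) so that the approximation limit passes. I need $\nabla\mathcal N\partial_t h\in L^2(0,T;L^2)$ paired with $\nabla h|\nabla g|^2$ and $\nabla^2g\nabla g$ in $L^2(0,T;L^2)$. By Lemma~\ref{elliptic_estimate_2d}, $\|g\|_{H^2}\lesssim\|h\|_{H^2}\|\nabla g\|+\|f\|$ and $\|g\|_{W^{2,4}}$ is controlled similarly. Combined with Ladyzhenskaya/GN inequalities and $h,f\in L^4H^2$, $L^\infty V$, this gives $\nabla^2g\nabla g\in L^2L^2$ via $\|\nabla^2 g\|_{L^4}\|\nabla g\|_{L^4}\in L^2(0,T)$, and $\nabla h|\nabla g|^2\in L^2L^2$ via $L^\infty$-in-time $L^6$ bounds. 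With these bounds, strong convergence of the mollified quantities in the relevant spaces closes the argument.
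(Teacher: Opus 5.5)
Your argument is correct, and it is the standard one behind these chain rules; the paper does not prove the lemma but only recalls it from its references. You regularize in time, differentiate the weak form of the weighted Neumann problem and test with $g$, then for $d=2$ shift the derivative onto $m_\phi'(h)|\nabla g|^2$ through $\nabla\mathcal{N}(\partial_t h-\overline{\partial_t h})$ and pass to the limit using Lemma~\ref{elliptic_estimate_2d} together with $h\in L^4(0,T;H^2(\Omega))$. Your remark that (2) as written drops the mean contribution $\tfrac12\,\overline{\partial_t h}\int_\Omega m_\phi'(h)|\nabla g|^2\,\mathrm{d}x$ is also right; this is the extra term $I_6$ that appears when the paper applies the lemma with $h=\phi_1$.

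Two justifications need a minor repair. First, for $d=3$ the space $W^{1,3}(\Omega)$ does not embed into $L^\infty(\Omega)$, so obtain $\nabla g\in L^\infty(0,T;H^2(\Omega))\subset L^\infty(Q_T)$ directly from the $H^3$ bound of Lemma~\ref{weightedlaplace}. Second, $\nabla h\,|\nabla g|^2\in L^2(0,T;L^2(\Omega))$ cannot come from $L^\infty$-in-time $L^6$ bounds, since you only have $h\in L^\infty(0,T;V)$. It follows instead from $\|\nabla h\|_{L^6(\Omega)}\|\nabla g\|_{L^6(\Omega)}^2\le C(1+\|h\|_{H^2(\Omega)})^2\in L^2(0,T)$, by the two-dimensional Gagliardo--Nirenberg inequality and Lemma~\ref{elliptic_estimate_2d}.
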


\section{Analysis of a Convective Viscous Cahn--Hilliard System with Nondegenerate Mobilities} \label{appendix-2}
\setcounter{equation}{0}

We study the well-posedness of the convective viscous Cahn--Hilliard system \eqref{eq:cvcho1}--\eqref{eq:cvchoi}. The subsequent results extend the previous work \cite{G2022} on a single convective viscous Cahn--Hilliard equation with constant mobility to a coupled convective viscous Cahn--Hilliard system with nondegenerate mobilities.

\begin{proposition} \label{cvchosse}
Let $\Omega \subset \mathbb{R}^{d}$ $(d \in \{ 2,3 \})$ be a bounded domain of class $C^3$. Moreover, suppose that $\alpha\in (0,1)$, Assumptions $(\mathbf{H1})$--$(\mathbf{H2})$ are satisfied, and $\bm{u}\in L^{2} \big( I;\bm{L}^{2}_\sigma(\Omega) \big)$.

(1) For any initial data $\phi_0,\psi_0 \in V \cap L^{\infty}(\Omega)$ with $(\phi_0,\psi_0) \in [-1,1]\times[0,1]$ almost everywhere in $\Omega$ and $( \overline{\phi_0},\overline{\psi_0} ) \in (-1,1) \times (0,1)$, problem \eqref{eq:cvcho1}--\eqref{eq:cvchoi} admits a weak solution $(\phi, \psi)$ on $I$ in the sense of Definition \ref{cvchossd}.

(2) When $d=2$, the weak solution is unique.
While for $d=3$, if there exists a strong solution $( \widetilde{\phi},\widetilde{\psi} )$ to problem \eqref{eq:cvcho1}--\eqref{eq:cvchoi} on $[0,T_0] \subset I$ originating from the same initial data $( \phi_0,\psi_0 )$ with the additional regularity
$\widetilde{\phi},\widetilde{\psi} \in L^{\infty} \big( 0,T_0;W \big)$, then $(\phi, \psi)=( \widetilde{\phi},\widetilde{\psi} )$ almost everywhere in $[0,T_0]$.

In addition, we have the following regularity results:
\begin{itemize}
    \item [$(\mathbf{R1})$] If $\bm{u} \in L^{\infty} ( I;\bm{L}^{3}_\sigma(\Omega) )$, $\partial_t\bm{u}\in L^{\frac{4}{3}}(  I;\bm{L}^{1}_\sigma(\Omega) )$ and
    $\phi_0,\psi_0 \in W$ with
    $-\Delta \phi_0 + F_\phi'( \phi_0 )$, $-\beta\Delta \psi_0 + F_\psi'( \psi_0 ) \in H$,
    then problem \eqref{eq:cvcho1}--\eqref{eq:cvchoi} admits a unique strong solution on $[0,T_r]\subset I$ such that
    \begin{align}
        & \partial_{t} \phi,\partial_{t} \psi \in L^{\infty} ( 0,T_r;H ) \cap L^{2} ( 0,T_r;V ), \label{concentration_dt_r1}\\
        & \phi,\psi \in L^{\infty} ( 0,T_r;W ) \cap L^{2} ( 0,T_r;W^{2,6}(\Omega) ),
        \label{concentration_r1} \\
        & \mu_\phi,\mu_\psi \in L^{\infty} ( 0,T_r;W ). \label{potential_r1}
    \end{align}
    When $d=2$, the strong solution exists on the whole interval $I$ such that
    \begin{align*}
        & \partial_{t} \phi,\partial_{t} \psi \in L^{\infty} ( I;H ) \cap L^{2}_{\mathrm{uloc}} ( I;V ),
        \\
        & \phi,\psi \in L^{\infty} ( I;W ) \cap L^{2}_{\mathrm{uloc}} ( I;W^{2,p}(\Omega) ), \quad  \forall\, p \in [2,\infty),
        \\
        & \mu_\phi,\mu_\psi \in L^{\infty} ( I;W ).
    \end{align*}
    When $d=3$, if $\partial_t \bm{u} \in L^p\big(  I;\bm{L}^{3}_\sigma(\Omega)  \big)$ for some
    $p > 4/3$, then there exist positive times $T_1, T_2 \in I$
    such that
    \begin{align*}
        T_r \ge \min \{ 1, T_1,T_2  \},
    \end{align*}
   where $T_1$ and $T_2$ are determined by \eqref{T_2}--\eqref{T_1} below.
    \item [$(\mathbf{R2})$] Let the assumption of  $(\mathbf{R1})$ hold. Suppose that for some $\delta_{0,\phi} \in (0,1)$ and $\delta_{0,\psi} \in (0,1/2)$, it holds
    \[
    ( \phi_0(x),\psi_0(x) ) \in [-1+\delta_{0,\phi},1-\delta_{0,\phi}] \times [\delta_{0,\psi},1-\delta_{0,\psi}], \quad \forall x \in \Omega.
    \]
    Then there exist $\delta_\phi \in (0,1)$, $\delta_\psi\in (0,1/2)$ depending on $\alpha$ such that
    \[
(  \phi(x,t),\psi(x,t)  ) \in [-1+\delta_\phi,1-\delta_\phi] \times [\delta_\psi,1-\delta_\psi],\quad \forall\, (x,t) \in \Omega \times [0,T_r],
\]
and $\phi, \psi \in L^{2}_{\mathrm{uloc}} \big( 0,T_r;H^{3}(\Omega) \big)$.
\end{itemize}
\end{proposition}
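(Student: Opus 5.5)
The plan is to treat Proposition \ref{cvchosse} in four stages: a regularized weak existence theory, a uniqueness and weak-strong argument built on the weighted operators $\mathcal{N}_{\phi,h}$, $\mathcal{N}_{\psi,h}$, higher-order estimates for $(\mathbf{R1})$, and a separation argument for $(\mathbf{R2})$.

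\textbf{Weak existence.} For part (1) I would regularize the singular potentials in the standard way. Replace $F_\phi,F_\psi$ by $C^2$ Moreau--Yosida (or quadratic-extension) approximations $F_{\phi,\varepsilon},F_{\psi,\varepsilon}$ that keep the convexity bound $F''\ge\theta$. Take a smooth approximation $\bm u_\varepsilon$ of the velocity field, and solve the regularized viscous system by a Faedo--Galerkin scheme in the eigenbasis of $-\Delta_N$.

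The basic energy estimate comes from testing \eqref{eq:cvcho1} by $\mu_\phi$ and \eqref{eq:cvcho3} by $\mu_\psi$, and \eqref{eq:cvcho2} by $\partial_t\phi$ and \eqref{eq:cvcho4} by $\partial_t\psi$. This yields control of
\begin{itemize}
\item $E_{\mathrm{free}}$,
\item $\alpha\|\partial_t\phi\|_{L^2(H)}^2$ and $\alpha\|\partial_t\psi\|_{L^2(H)}^2$,
\item $\int m_\phi|\nabla\mu_\phi|^2$ and $\int m_\psi|\nabla\mu_\psi|^2$.
\end{itemize}
The convective term is bounded by $\|\bm u\|\,\|\nabla\mu\|$ because $|\phi|,|\psi|$ are bounded. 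The mean terms are controlled through the explicit ODE for $\overline\phi$ in \eqref{mass-relation}.

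Next I would obtain the $L^2$ bound on $\mu$ and on $F'(\phi)$ by the usual argument that tests with $\phi-\overline\phi$, using $\overline\phi\in(-1,1)$ uniformly. The $W$ and $W^{2,4/3}$ regularity should follow by comparison, using Lemma \ref{weightedlaplace} with $h=\phi\in W^{1,6}$. Passing to the limit uses Aubin--Lions compactness together with the monotonicity of $F'$.

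\textbf{Uniqueness and weak-strong uniqueness.} For part (2) I would mimic Part 1 of the proof of Proposition \ref{cvcho_2d_global_sse}, now with the viscous term included. Test the difference equations by $\mathcal{N}_{\phi,\phi_1}(\widetilde\phi-\overline{\widetilde\phi})$ and $\mathcal{N}_{\psi,\psi_1}(\widetilde\psi-\overline{\widetilde\psi})$. The chain rule in Lemma \ref{mutiply_diff_formula_general} produces $\frac{d}{dt}\|\sqrt{m(\phi_1)}\nabla g\|^2$. The viscous term contributes $\alpha\frac{d}{dt}\|\widetilde\phi\|^2$, which is helpful because it adds an $L^2$-coercive quantity.

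The terms with $m'$ and $m''$ require $\phi_1\in L^4H^2$ in 2D. In 3D they require $\phi_1\in L^\infty W$, which is exactly why the weak-strong statement takes the strong solution as $\phi_1$. With the extra $\alpha\|\widetilde\phi\|^2$ these terms should be bounded by $\epsilon\|\nabla\widetilde\phi\|^2$ plus Gronwall-integrable factors. The remaining steps are monotonicity of $F'$ and Gronwall.

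\textbf{Regularity $(\mathbf{R1})$ — the main obstacle.} For $(\mathbf{R1})$ I would differentiate in time, formally, as in the proof of Proposition \ref{cvchossr}, or test \eqref{eq:cvcho1} and \eqref{eq:cvcho3} by $\partial_t\mu_\phi$ and $\partial_t\mu_\psi$ as in Proposition \ref{uniform_approximate_solution}. In 3D this yields a differential inequality of the form $\frac{d}{dt}Y\le C(1+Y)^q+g(t)$, where $Y=\alpha\|\partial_t\phi\|^2+\|\nabla\mu_\phi\|^2+\dots$ and $g$ is built from $\partial_t\bm u$. Solving it gives the explicit times $T_1$ and $T_2$, so that $T_r\ge\min\{1,T_1,T_2\}$. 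Rigorous justification would use difference quotients in time.

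The hard step is bounding the mobility term $\int m'(\phi)\partial_t\phi|\nabla\mu|^2$ without any separation. I would rely on the viscous term: $\alpha\partial_t\phi\in L^\infty H$, and $\mu\in W$ is obtained by comparison via Lemma \ref{weightedlaplace}, which should close the estimate. In 2D I expect the Ladyzhenskaya and Agmon inequalities to give a linear Gronwall inequality, hence global existence.

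\textbf{Separation $(\mathbf{R2})$.} For $(\mathbf{R2})$ the idea is to use the viscous structure. Rewrite \eqref{eq:cvcho2} as $\alpha\partial_t\phi-\Delta\phi+F'(\phi)=\mu_\phi-\sigma_2\mathcal N(\cdot)-\partial_\phi G$. The right-hand side lies in $L^\infty(Q_{T_r})$, since $\mu_\phi\in L^\infty W\subset L^\infty$. A parabolic comparison (maximum principle) argument applied to this heat-type equation with monotone $F'$ then bounds $|F'(\phi)|$ by $\max\{\|F'(\phi_0)\|_\infty, \|\mathrm{RHS}\|_\infty\}$. This gives separation constants that depend on $\alpha$ through the $L^\infty W$ bound of $\mu$. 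Finally, the $H^3$ regularity follows from the elliptic estimate, since $F'(\phi)$ then lies in $L^2V$.
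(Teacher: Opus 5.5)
Your plan for $(\mathbf{R1})$ has a genuine gap, and $(\mathbf{R1})$ is the core of the proposition.

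The estimates you want to reuse were derived under assumptions that are not available here. The proof of Proposition \ref{cvchossr} starts from the solution produced by Proposition \ref{cvchosse}. It bounds $F_\phi''(\phi)\partial_t\phi$ through the separation constant $C_\delta$, uses $\nabla\phi\in H^2$, and assumes $\partial_t\bm{u}\in L^2(I;\bm{L}^{6/5})$. Proposition \ref{uniform_approximate_solution} uses Galerkin velocities with $\mathbf{A}\bm{u}^m,\partial_t\bm{u}^m\in L^2(\bm{L}^2)$, and separated approximate solutions coming from Proposition \ref{cvchossr}.

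More intrinsically, in three dimensions testing \eqref{eq:cvcho1} by $\partial_t\mu_\phi$ cannot be closed under the hypotheses of $(\mathbf{R1})$, namely $\bm{u}\in L^\infty(I;\bm{L}^3_\sigma)$ and $\partial_t\bm{u}\in L^{4/3}(I;\bm{L}^1_\sigma)$. Your dissipation contains only $\|\nabla\partial_t\phi\|^2$ (plus the harmless $\int F_\phi''(\phi)|\partial_t\phi|^2\ge 0$). So $(\bm{u}\cdot\nabla\phi,\partial_t\mu_\phi)$ must be integrated by parts in time, which produces $(\partial_t\bm{u}\cdot\nabla\phi,\mu_\phi-\overline{\mu_\phi})=-(\phi\,\partial_t\bm{u},\nabla\mu_\phi)$. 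With $\partial_t\bm{u}$ only in $L^1$ in space, you then need one of two things.
\begin{itemize}
\item Either $\nabla\mu_\phi\in L^\infty$. Equation \eqref{eq:cvcho1}, read as an elliptic problem for $\mu_\phi$, cannot give this when $\bm{u}$ is only in $\bm{L}^3$: it would require $\bm{u}\cdot\nabla\phi\in L^p$ with $p>3$, or control of $\nabla\bm{u}$.
\item Or $\nabla\phi\in L^\infty$, i.e.\ $\phi\in W^{2,p}$ with $p>3$. This costs $\alpha\|\partial_t\phi\|_{L^p}$, hence a power of $\|\nabla\partial_t\phi\|$ strictly larger than $1/2$. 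Young's inequality then requires $\|\partial_t\bm{u}\|_{L^1}\in L^q_t$ for some $q>4/3$.
\end{itemize}
So the real obstruction is the convective term, not the mobility term $\int m_\phi'(\phi)\partial_t\phi|\nabla\mu_\phi|^2$. Also, the time difference quotients you propose for rigor are exactly what the paper says breaks down here (Remark \ref{reason_modification_regularity_cvcho}). Since $\overline{\phi}$ is not conserved, they generate the nonvanishing term $\partial^+_{t,h}\overline{F_\phi'(\phi)}\,\partial^+_{t,h}\overline{\phi}$.

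The missing idea is the paper's test function. Differentiate \eqref{eq:cvcho1} and \eqref{eq:cvcho3} in time and test with $g_\phi=\mathcal{N}_{\phi,\widetilde{\phi}}(\partial_t\phi-\partial_t\overline{\phi})$ and $g_\psi=\mathcal{N}_{\psi,\widetilde{\psi}}\partial_t\psi$. Then $\partial_t\bm{u}$ is paired with $\phi\nabla g_\phi$, and $g_\phi$ has the regularity of a weighted inverse Laplacian of $\partial_t\phi\in V$, with no $\bm{u}$ involved. Agmon's inequality and the $H^3$ bound of Lemma \ref{weightedlaplace} give $\|\partial_t\bm{u}\|_{L^1}\|\nabla g_\phi\|_{L^\infty}\lesssim\|\partial_t\bm{u}\|_{L^1}\|\partial_t\phi-\partial_t\overline{\phi}\|^{1/2}\|\nabla\partial_t\phi\|^{1/2}$, with constants depending on $\|\widetilde{\phi}\|_W$. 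Young's inequality then yields exactly $\|\partial_t\bm{u}\|_{L^1}^{4/3}$. For $\partial_t\bm{u}\in L^p(I;\bm{L}^3)$ with $p>4/3$, it also yields the factor $t^{1-4/(3p)}$ in \eqref{T_2}--\eqref{T_1}.

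That $H^3$ bound needs $\nabla^2 m_\phi(h)\in L^2$, which is unknown for the unknown $\phi$. This is why the paper proceeds as follows:
\begin{itemize}
\item it freezes the mobilities at given $(\widetilde{\phi},\widetilde{\psi})\in L^\infty(0,T;W)\cap W^{1,\infty}(0,T;H)$;
\item it regularizes the potentials ($F^\epsilon$) and the data ($\phi_{0,k},\psi_{0,k}$);
\item it proves separation at the $k$-level to justify the computations, then lets $\epsilon\to 0$ and $k\to\infty$;
\item it closes with a Schauder fixed point on $S^\phi_{T,a,b}\times S^\psi_{T,a,b}$.
\end{itemize}
The times $T_1,T_2$ are the smallness conditions that make this set invariant, not blow-up times of an ODE for $\|\nabla\mu_\phi\|^2+\alpha\|\partial_t\phi\|^2$. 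The mean-value term is handled by writing $\partial_t\overline{F_\phi'(\phi)}\,\partial_t\overline{\phi}=\partial_t\big(\overline{F_\phi'(\phi)}\,\partial_t\overline{\phi}\big)-\overline{F_\phi'(\phi)}\,\partial_t^2\overline{\phi}$ and bounding $|\overline{F_\phi'(\phi)}|$ by $\|\nabla\mu_\phi\|$ and $\alpha\|\partial_t\phi\|$.

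There are also two smaller issues.
\begin{itemize}
\item In your weak-existence step, the bound $\|\phi\bm{u}\|\le\|\bm{u}\|$ is not available for solutions with a regularized potential. With $\bm{u}\in L^2(I;\bm{L}^2_\sigma)$, you must first solve with smooth $\bm{u}$ and the singular potential, and only then pass to the limit in $\bm{u}$.
\item In 2D, viscous weak solutions are only known to lie in $L^2_{\mathrm{uloc}}(I;W)$, not in $L^4(H^2)$. The uniqueness estimate should therefore use $\partial_t\phi_1\in L^2(I;H)$ together with Lemma \ref{elliptic_estimate_2d}, as in the paper's $\widetilde{H}_1$.
\end{itemize}
Your constant-barrier maximum-principle argument for $(\mathbf{R2})$ is correct, and slightly simpler than the paper's ODE comparison.
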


\begin{remark} \label{reason_modification_regularity_cvcho} \rm
In contrast to \cite{G2022}, we prove $(\mathbf{R1})$ using an additional fixed-point argument, time-difference estimates for an approximate problem with regularized potential functions, and choosing time derivatives instead of time-difference quotients as test functions. These modifications are mainly due to concentration-dependent mobilities and the lack of mass conservation resulting from the source term $\sigma_1 (\overline{\phi} - c)$. Both features render the arguments in \cite{G2022} no longer directly applicable.

The application of
Lemma \ref{weightedlaplace} requires that the weight functions $m_\phi(\phi)$ and $m_\psi(\psi)$ belong to $L^\infty(0,T;H^2(\Omega))$. Since this regularity is not available \textit{a priori} for the solution in Definition \ref{cvchossd}, we freeze the coefficients by introducing smooth input functions $(\widetilde\phi, \widetilde\psi)$, which naturally leads to a fixed-point argument. To close the estimates, it is necessary to control the $L^\infty(0,T;W)$-norms of the output functions $(\phi,\psi)$. By equations \eqref{eq:cvcho2} and \eqref{eq:cvcho4}, this further requires estimates for $\partial_t\phi$ and $\partial_t\psi$ in $L^\infty(0,T;H)$. However, these estimates cannot be derived directly from the time-difference argument as in \cite{G2022}. The obstruction comes from terms such as $\partial^+_{t,h} (\overline{F'_\phi(\phi)}) \partial^+_{t,h} \overline{\phi}$, which simply vanishes if $\overline{\phi}$ is conserved. On the other hand, we observe that its formal limit as $h \to 0$, i.e., $\partial_t (\overline{F'_\phi(\phi)}) \partial_t \overline{\phi}$, can be rewritten as
\[
    \partial_t (\overline{F'_\phi(\phi)}) \partial_t \overline{\phi} = \partial_t ( \overline{F'_\phi(\phi)} \partial_t \overline{\phi} ) - \overline{F'_\phi(\phi)} \partial_t^2 \overline{\phi}.
\]
This motivates us to test the time-differentiated equations using time derivatives rather than testing the time-difference equations with difference quotients. In addition, testing the time-differentiated equations by $\mathcal N_{\phi,\phi}(\partial_t\phi-\partial_t\overline{\phi})$ requires additional regularity of both $\phi$ and $\mu_\phi$ (and similarly for $\psi, \mu_\psi$), which is not available \textit{a priori}. We therefore construct more regular approximate solutions and then transfer the regularity to the solution through a combination of uniqueness and compactness arguments.

Due to the highly nonlinear structure induced by concentration-dependent mobilities, we expect only the existence of local-in-time strong solutions in three dimensions, cf. \cite{CGGG2025,CGGS2026} on the singular Cahn--Hilliard equation. Moreover, we anticipate uniqueness for weak solutions only in two dimensions, whereas in three dimensions we obtain a partial result of weak-strong uniqueness.
\end{remark}

\noindent \textbf{Proof of Proposition \ref{cvchosse}.}
The existence of a global weak solution can be proved by standard arguments and is therefore omitted here (cf. \cite{DG2022,O2025} for a similar coupled Cahn--Hilliard system with constant mobilities and without convection). Compared to the \textit{a priori} estimates in \cite{G2022}, below we only mention the estimates for $\mu_\phi,\mu_\psi$ in $ L^{\frac{4}{3}}_{\mathrm{uloc}} \big( I;W^{2,\frac{4}{3}}(\Omega) \big)$ under the assumption $\bm{u}\in L^{2} \big( I;\bm{L}^{2}_\sigma(\Omega) \big)$.
Define $f_\phi = \partial_{t} \phi + \bm{u} \cdot \nabla \phi + \sigma_1 \big( \overline{\phi} - c \big)$. Noticing that
\[
\| \nabla \phi \|_{L^{4}_{\mathrm{uloc}} ( I;L^{4}(\Omega) )} \le C \| \phi \|_{L^{\infty}(Q_T)}^{\frac{1}{2}} \|  \phi  \|_{L^{2}_{\mathrm{uloc}} ( I;W )}^{\frac{1}{2}},
\]
by H\"{o}lder's inequality, we can obtain $f_\phi \in L^{\frac{4}{3}}_{\mathrm{uloc}} \big(  I;L^{\frac{4}{3}}(\Omega)  \big)$.
Besides, from $m_\phi \in BUC^1(\mathbb{R})$, we infer that $\nabla \big( m_\phi ( \phi ) \big) \in L^{4}_{\mathrm{uloc}} \big( I;L^{4}(\Omega) \big)$.
Since $\mu_\phi\in L^2_{\mathrm{uloc}}(I;V)$ can be viewed as a weak solution to the following elliptic boundary value problem:
\begin{numcases}{}
    \mathrm{div}\, \big(  m_\phi ( \phi ) \nabla g  \big) = f_\phi, & in $\Omega$, \nonumber \\
    \partial_{\mathbf{n}} g = 0, & on $\partial \Omega$, \nonumber
\end{numcases}
as in the proof of Lemma \ref{weightedlaplace}, we can conclude
$$
\Delta \mu_\phi \in L^{\frac{4}{3}}_{\mathrm{uloc}} \big( I;L^{\frac{4}{3}}(\Omega) \big).
$$
Similarly, we get $\Delta \mu_\psi \in L^{\frac{4}{3}}_{\mathrm{uloc}} \big( I;L^{\frac{4}{3}}(\Omega) \big)$. Then, by the standard elliptic regularity theory, we obtain $\mu_\phi, \mu_\psi \in L^{\frac{4}{3}}_{\mathrm{uloc}} \big( I;W^{2,\frac{4}{3}}(\Omega) \big)$.

\medskip
\noindent \textbf{Uniqueness.}
Let $( \phi_i,\psi_i )$, $i\in \{ 1,2 \}$,  be two weak solutions to problem \eqref{eq:cvcho1}--\eqref{eq:cvchoi} on certain time interval $[0,T_0] \subset I$ with the initial data $( \phi_{0,i},\psi_{0,i} )$ and the velocity fields $\bm{u}_i$. Define the differences $\widetilde{\phi} = \phi_1 - \phi_2$, $\widetilde{\psi} = \psi_1 - \psi_2$ and $\widetilde{\bm{u}} = \bm{u}_1 - \bm{u}_2$.
Testing the equation satisfied by  $\widetilde{\phi}$ (resp. $\widetilde{\psi}$) with $g_\phi=\mathcal{N}_{\phi, \phi_1} \big(  \widetilde{\phi} - \overline{\widetilde{\phi}}  \big)$ (resp. $g_\psi=\mathcal{N}_{\psi, \psi_1} \big(  \widetilde{\psi} - \overline{\widetilde{\psi}}  \big)$) and using integration by parts, we get
\begin{align*}
    & \big( \partial_{t} \widetilde{\phi} - \overline{\partial_{t} \widetilde{\phi}}, g_\phi \big)
    - \int_{\Omega}  \big(  \widetilde{\phi} - \overline{\widetilde{\phi}}  \big) \nabla g_\phi \cdot \bm{u}_2  \, \mathrm{d}x
    - \int_{\Omega}  \phi_1 \nabla g_\phi \cdot \widetilde{\bm{u}}  \, \mathrm{d}x
    + \big(  \mu_{\phi_1} - \mu_{\phi_2} ,  \widetilde{\phi} - \overline{\widetilde{\phi}}  \big)
    \\
    & \quad = - \int_{\Omega}  \big( m_\phi ( \phi_1 )-m_\phi ( \phi_2 ) \big) \nabla \mu_{\phi_2} \cdot \nabla g_\phi  \, \mathrm{d}x,
    \\
    & \big( \partial_{t} \widetilde{\psi} - \overline{\partial_{t} \widetilde{\psi}} , g_\psi \big) - \int_{\Omega}  \big(  \widetilde{\psi} - \overline{\widetilde{\psi}}  \big) \nabla g_\psi \cdot \bm{u}_2  \, \mathrm{d}x
    - \int_{\Omega}  \psi_1 \nabla g_\psi \cdot \widetilde{\bm{u}}  \ \mathrm{d}x + \big(  \mu_{\psi_1} - \mu_{\psi_2} ,  \widetilde{\psi} - \overline{\widetilde{\psi}}  \big)
    \\
    & \quad = - \int_{\Omega}  \big( m_\psi ( \psi_1 )-m_\psi ( \psi_2 ) \big) \nabla \mu_{\psi_2} \cdot \nabla g_\psi  \, \mathrm{d}x.
\end{align*}
Adding the two identities together, under the additional assumption
$\phi_1,\psi_1 \in L^{\infty} \big( 0,T_0;W \big)$, we can  apply the chain rule in Lemma \ref{mutiply_diff_formula_general}-(1) and obtain
\begin{align*}
    & \frac{1}{2} \frac{\mathrm{d}}{\mathrm{d t}}  \Big(  \Big\|  \sqrt{m_\phi( \phi_1 )} \nabla g_\phi  \Big\|^2 +  \Big\|  \sqrt{m_\psi( \psi_1 )} \nabla g_\psi  \Big\|^2 + \alpha \big\|  \widetilde{\phi} - \overline{\widetilde{\phi}}  \big\|^2 + \alpha \big\|  \widetilde{\psi} - \overline{\widetilde{\psi}}  \big\|^2  \Big)
    \\
    & \qquad + \big\| \nabla \widetilde{\phi} \big\|^2 + \beta \big\|  \nabla \widetilde{\psi}  \big\|^2 + \sigma_2 \big\|  \widetilde{\phi} - \overline{\widetilde{\phi}}  \big\|^2_*
    \\
    & \quad = \int_{\Omega}   \big[  \big(  \widetilde{\phi} - \overline{\widetilde{\phi}}  \big)\nabla g_\phi + \big(  \widetilde{\psi} - \overline{\widetilde{\psi}}  \big)\nabla g_\psi  \big] \cdot \bm{u}_2 \, \mathrm{d}x
    + \int_{\Omega}  \big(  \phi_1 \nabla g_\phi + \psi_1 \nabla g_\psi  \big) \cdot \widetilde{\bm{u}}  \, \mathrm{d}x
    \\
    & \qquad - \frac{1}{2} \int_{\Omega} \Big( m_\phi'( \phi_1 ) \partial_{t} \phi_1 | \nabla g_\phi |^2 + m_\psi'( \psi_1 ) \partial_{t} \psi_1 | \nabla g_\psi |^2  \Big) \, \mathrm{d}x
    \\
    & \qquad - \int_{\Omega} \Big[ \big( m_\phi ( \phi_1 )-m_\phi ( \phi_2 ) \big) \nabla \mu_{\phi_2} \cdot \nabla g_\phi + \big( m_\psi ( \psi_1 )-m_\psi ( \psi_2 ) \big) \nabla \mu_{\psi_2} \cdot \nabla g_\psi \Big] \, \mathrm{d}x \\
    & \qquad - (  F_\phi'(\phi_1) - F_\phi'(\phi_2) + \partial_\phi G(\phi_1,\psi_1) - \partial_\phi G(\phi_2,\psi_2) ,  \widetilde{\phi} - \overline{\widetilde{\phi}}  )
    \\
    & \qquad - (  F_\psi'(\psi_1) - F_\psi'(\psi_2) + \partial_\psi G(\phi_1,\psi_1) - \partial_\psi G(\phi_2,\psi_2) ,  \widetilde{\psi} - \overline{\widetilde{\psi}}  ).
\end{align*}
The right-hand side of the above inequality can be estimated using Lemma \ref{weightedlaplace} and the convexity of $F_\phi$, $F_\psi$:
\begin{align*}
    & \frac{1}{2} \frac{\mathrm{d}}{\mathrm{d t}}  \Big(  \Big\|  \sqrt{m_\phi( \phi_1 )} \nabla g_\phi  \Big\|^2 +  \Big\|  \sqrt{m_\psi( \psi_1 )} \nabla g_\psi  \Big\|^2 + \alpha \big\|  \widetilde{\phi} - \overline{\widetilde{\phi}}  \big\|^2 + \alpha \big\|  \widetilde{\psi} - \overline{\widetilde{\psi}}  \big\|^2  \Big)
    \\
    & \qquad + \big\| \nabla \widetilde{\phi} \big\|^2 + \beta \big\|  \nabla \widetilde{\psi}  \big\|^2 + \sigma_2 \Big\|  \widetilde{\phi} - \overline{\widetilde{\phi}}  \Big\|^2_*
    \\
    & \quad \le \| \bm{u}_2 \| \Big(  \big\|  \widetilde{\phi} - \overline{\widetilde{\phi}}  \big\|_{L^{6}(\Omega)} \|  \nabla g_\phi  \|_{L^{3}(\Omega)} + \big\|  \widetilde{\psi} - \overline{\widetilde{\psi}}  \big\|_{L^{6}(\Omega)} \|  \nabla g_\psi  \|_{L^{3}(\Omega)}  \Big)
    \\
    & \qquad + \|\widetilde{\bm{u}} \| ( \|\phi_1\|_{L^\infty(\Omega)} \|  \nabla g_\phi  \| +\|\psi_1\|_{L^\infty(\Omega)} \|  \nabla g_\psi  \|  )  + C \| \partial_{t} \phi_1 \| \| \nabla g_\phi \|^2_{L^{4}(\Omega)}
    \\
    & \qquad  + C \| \partial_{t} \psi_1 \| \| \nabla g_\psi \|^2_{L^{4}(\Omega)}
    + \| m_\phi ( \phi_1 )-m_\phi ( \phi_2 ) \|_{L^{6}(\Omega)} \| \nabla \mu_{\phi_2} \| \| \nabla g_\phi \|_{L^{3}(\Omega)}
    \\
     &\qquad + \| m_\psi ( \psi_1 )-m_\psi ( \psi_2 ) \|_{L^{6}(\Omega)} \| \nabla \mu_{\psi_2} \| \| \nabla g_\psi \|_{L^{3}(\Omega)}
     + \big| \overline{\widetilde{\phi}} \big| \AbsNormal{\Omega} \big(  \big|  \overline{F_\phi'(\phi_1)}  \big| + \big|  \overline{F_\phi'(\phi_2)}  \big|  \big)
    \\
    & \qquad  + \big| \overline{\widetilde{\psi}} \big| \AbsNormal{\Omega} \big(  \big|  \overline{F_\psi'(\psi_1)}  \big| + \big|  \overline{F_\psi'(\psi_2)}  \big|  \big)
     + C \big( \big\| \widetilde{\phi} \big\| + \big\| \widetilde{\psi} \big\| \big) \big( \big\|  \widetilde{\phi} - \overline{\widetilde{\phi}}  \big\| + \big\|  \widetilde{\psi} - \overline{\widetilde{\psi}}  \big\| \big)
    \\
    & \quad \le C \| \bm{u}_2 \|^2 \Big[  \big( 1+\| \nabla m_\phi ( \phi_1 ) \|_{L^{6}(\Omega)}  \big)^2 \big\| \widetilde{\phi} - \overline{\widetilde{\phi}} \big\|^2 + \big( 1+\| \nabla m_\psi ( \psi_1 ) \|_{L^{6}(\Omega)}  \big)^2 \big\| \widetilde{\psi} - \overline{\widetilde{\psi}} \big\|^2  \Big]
    \\
    & \qquad + \frac{1}{2} \big\|  \nabla \widetilde{\phi}  \big\|^2  + \frac{\beta}{2} \big\| \nabla \widetilde{\psi} \big\|^2  + \| \widetilde{\bm{u}} \|^2 + \frac{1}{2} \big(  \|  \nabla g_\phi  \|^2 + \|  \nabla g_\psi  \|^2  \big)
    \\
    & \qquad + C \Big[  \|  \partial_{t} \phi_1  \| \big( 1+\| \nabla m_\phi ( \phi_1 ) \|_{L^{6}(\Omega)}^2  \big)^2 \big\| \widetilde{\phi} - \overline{\widetilde{\phi}} \big\|^2 + \|  \partial_{t} \psi_1  \| \big( 1+\| \nabla m_\psi ( \psi_1 ) \|_{L^{6}(\Omega)}^2  \big)^2 \big\| \widetilde{\psi} - \overline{\widetilde{\psi}} \big\|^2  \Big]
    \\
    & \qquad + C \Big[  \|  \nabla \mu_{\phi_2}  \|^2 \big( 1+\| \nabla m_\phi ( \phi_1 ) \|_{L^{6}(\Omega)}  \big)^2 \big\| \widetilde{\phi} - \overline{\widetilde{\phi}} \big\|^2 + \|  \nabla \mu_{\psi_2}  \|^2 \big( 1+\| \nabla m_\psi ( \psi_1 ) \|_{L^{6}(\Omega)}  \big)^2 \big\| \widetilde{\psi} - \overline{\widetilde{\psi}} \big\|^2  \Big]
    \\
    & \qquad + C \big( \big| \overline{\widetilde{\phi}} \big| + \big| \overline{\widetilde{\psi}} \big| \big) \big(  1 + \|  \nabla \mu_{\phi_1}  \| + \|  \nabla \mu_{\phi_2}  \|
    + \|  \nabla \mu_{\psi_1}  \| + \|  \nabla \mu_{\psi_2}  \| \big)
    \\
    &\qquad + C \Big[ \big\|  \widetilde{\phi} - \overline{\widetilde{\phi}}  \big\|^2 + \big\|  \widetilde{\psi} - \overline{\widetilde{\psi}}  \big\|^2 +\big( \overline{\widetilde{\phi}} \big)^2 + \big( \overline{\widetilde{\psi}} \big)^2 \Big].
\end{align*}
Consequently, we obtain
\begin{align}
    & \frac{1}{2} \frac{\mathrm{d}}{\mathrm{d t}}  \Big(  \Big\|  \sqrt{m_\phi( \phi_1 )} \nabla g_\phi  \Big\|^2 +  \Big\|  \sqrt{m_\psi( \psi_1 )} \nabla g_\psi  \Big\|^2 + \alpha \big\|  \widetilde{\phi} - \overline{\widetilde{\phi}}  \big\|^2 + \alpha \big\|  \widetilde{\psi} - \overline{\widetilde{\psi}}  \big\|^2  \Big)  \nonumber \\
    & \qquad + \frac{1}{2} \big\| \nabla \widetilde{\phi} \big\|^2 + \frac{\beta}{2} \big\|  \nabla \widetilde{\psi}  \big\|^2 + \sigma_2 \Big\|  \widetilde{\phi} - \overline{\widetilde{\phi}}  \Big\|^2_* \nonumber \\
    & \quad \le C H_1 \Big(  \Big\|  \sqrt{m_\phi( \phi_1 )} \nabla g_\phi  \Big\|^2 +  \Big\|  \sqrt{m_\psi( \psi_1 )} \nabla g_\psi  \Big\|^2 + \alpha \big\|  \widetilde{\phi} - \overline{\widetilde{\phi}}  \big\|^2 + \alpha \big\|  \widetilde{\psi} - \overline{\widetilde{\psi}}  \big\|^2  \Big) + C H_2,
    \label{vCH-dfii-1}
\end{align}
where
\begin{align*}
    H_1 & = \big( 1+\| \nabla m_\phi ( \phi_1 ) \|_{L^{6}(\Omega)}  \big)^2 \big(  \| \bm{u}_2 \|^2 + \|  \nabla \mu_{\phi_2}  \|^2  \big) + \big( 1+\| \nabla m_\psi ( \psi_1 ) \|_{L^{6}(\Omega)}  \big)^2 \big(  \| \bm{u}_2 \|^2 + \|  \nabla \mu_{\psi_2}  \|^2  \big) \\
    & \quad + \|  \partial_{t} \phi_1  \| \big( 1+\| \nabla m_\phi ( \phi_1 ) \|_{L^{6}(\Omega)}^2  \big)^2 + \|  \partial_{t} \psi_1  \| \big( 1+\| \nabla m_\psi ( \psi_1 ) \|_{L^{6}(\Omega)}^2  \big)^2 +1,
    \\
    H_2 &= \big( \big| \overline{\widetilde{\phi}} \big| + \big| \overline{\widetilde{\psi}} \big| \big) \big(  1 + \|  \nabla \mu_{\phi_1}  \| + \|  \nabla \mu_{\phi_2}  \| + \|  \nabla \mu_{\psi_1}  \| + \|  \nabla \mu_{\psi_2}  \|   \big) + \| \widetilde{\bm{u}} \|^2.
\end{align*}
Therefore, using Gronwall's lemma and the mass relations
$$
\overline{\widetilde{\phi}}(t) = e^{-\sigma_1 t}\big(  \overline{\phi_{0,1}} - \overline{\phi_{0,2}}  \big), \quad  \overline{\widetilde{\psi}}(t) =  \overline{\psi_{0,1}} - \overline{\psi_{0,2}},
$$
we infer that
\begin{align}
    &\Big\|  \sqrt{m_\phi( \phi_1 )} \nabla g_\phi  \Big\|^2 +  \Big\|  \sqrt{m_\psi( \psi_1 )} \nabla g_\psi  \Big\|^2 + \alpha \big\|  \widetilde{\phi} - \overline{\widetilde{\phi}}  \big\|^2 + \alpha \big\|  \widetilde{\psi} - \overline{\widetilde{\psi}}  \big\|^2 \le C R_0 e^{\int_{0}^{T_0} H_1(\tau)\, \mathrm{d}\tau},
    \label{vCH-dfii-3}
\end{align}
where
\begin{align*}
    & R_0 = \Big\| \sqrt{m_\phi( \phi_{0,1} )} \nabla \mathcal{N}_{\phi, \phi_{0,1}} \big( \widetilde{\phi}(0) - \overline{\widetilde{\phi}}(0) \big)  \Big\|^2 +  \Big\| \sqrt{m_\phi( \phi_{0,2} )}  \nabla \mathcal{N}_{\psi, \psi_{0,1}} \big( \widetilde{\psi}(0) - \overline{\widetilde{\psi}}(0) \big)  \Big\|^2
    \\
    & \qquad + \alpha \big\|  \widetilde{\phi}(0) - \overline{\widetilde{\phi}}(0)  \big\|^2
    + \alpha \big\|  \widetilde{\psi}(0) - \overline{\widetilde{\psi}}(0)  \big\|^2
    + \| \widetilde{\bm{u}} \|_{L^{2} ( 0,T_0;H )}^2
    \\
    & \qquad + \big( \big|\overline{\widetilde{\phi}}(0)\big| +  \big|\overline{\widetilde{\psi}}(0)\big| \big)  T_0  \big( 1+\widetilde{E}_{\mathrm{free}} ( \phi_{0,1},\psi_{0,1} )  + \| \bm{u}_1 \|_{L^{2} ( 0,T_0;H)}^2  \big)
    \\
    & \qquad + \big( \big|\overline{\widetilde{\phi}}(0)\big| +  \big|\overline{\widetilde{\psi}}(0)\big| \big)  T_0 \big(1+ \widetilde{E}_{\mathrm{free}} ( \phi_{0,2},\psi_{0,2} ) + \| \bm{u}_2 \|_{L^{2} ( 0,T_0;H )}^2  \big).
\end{align*}
When $d=2$, the inequality \eqref{vCH-dfii-1} can be refined as (for details, see the proof of Lemma \ref{cvcho_2d_global_sse}, see also \cite{CGGG2025} for the case $\alpha=0$)
\begin{align}
    & \frac{1}{2} \frac{\mathrm{d}}{\mathrm{d t}}  \Big(  \Big\|  \sqrt{m_\phi( \phi_1 )} \nabla g_\phi  \Big\|^2 +  \Big\|  \sqrt{m_\psi( \psi_1 )} \nabla g_\psi  \Big\|^2 + \alpha \big\|  \widetilde{\phi} - \overline{\widetilde{\phi}}  \big\|^2 + \alpha \big\|  \widetilde{\psi} - \overline{\widetilde{\psi}}  \big\|^2  \Big)
    \nonumber \\
    & \qquad + \frac12 \big\| \nabla \widetilde{\phi} \big\|^2 + \frac{\beta}{2}  \big\|  \nabla \widetilde{\psi}  \big\|^2
    + \sigma_2 \Big\|  \widetilde{\phi} - \overline{\widetilde{\phi}}  \Big\|^2_*
    \nonumber \\
    & \quad \le C \widetilde{H}_1 \Big(  \Big\|  \sqrt{m_\phi( \phi_1 )} \nabla g_\phi  \Big\|^2 +  \Big\|  \sqrt{m_\psi( \psi_1 )} \nabla g_\psi  \Big\|^2 + \alpha \big\|  \widetilde{\phi} - \overline{\widetilde{\phi}}  \big\|^2 + \alpha \big\|  \widetilde{\psi} - \overline{\widetilde{\psi}}  \big\|^2  \Big) + C \widetilde{H}_2,
      \label{vCH-dfii-2}
\end{align}
where
\begin{align*}
    \widetilde{H}_1 & = 1 + \| \bm{u}_2 \|^2 + \| \nabla \phi_1 \|_V^2 + \| \nabla \psi_1 \|_V^2 + \| \partial_{t} \phi_1 \|^2 + \| \partial_{t} \psi_1 \|^2 + \| \nabla \mu_{\phi_2} \|^2 + \| \nabla \mu_{\psi_2} \|^2 ,\\
    \widetilde{H}_2 & = \big( \big| \overline{\widetilde{\phi}} \big| + \big| \overline{\widetilde{\psi}} \big| \big) \big(  1 + \|  \nabla \mu_{\phi_1}  \| + \|  \nabla \mu_{\phi_2}  \|
    +\|  \nabla \mu_{\psi_1}  \| + \|  \nabla \mu_{\psi_2}  \| \big) + \|\widetilde{\bm{u}} \|^2.
\end{align*}
Compared with the three-dimensional case, regularity of weak solutions in two dimensions is sufficient to derive the differential inequality \eqref{vCH-dfii-2}.

Hence, we can deduce from \eqref{vCH-dfii-3} the result of weak-strong uniqueness in three dimensions, and analogously from \eqref{vCH-dfii-2} the uniqueness of weak solutions when $d=2$.
\medskip

\noindent \textbf{Regularity 1.} The above argument immediately yields the uniqueness of strong solutions that possess the additional regularity specified in $(\mathbf{R1})$.
Thus, it remains to prove the existence. The proof consists of several steps.
\smallskip
\\
\textbf{Step 1. A regularized auxiliary problem.}
Consider
\begin{numcases}{}
    \partial_{t} \phi  + \bm{u} \cdot \nabla \phi  + \sigma_1 \big( \overline{\phi } -c \big) = \mathrm{div}\, \big(m_\phi \big( \widetilde{\phi} \big) \nabla \mu_\phi \big),
    \label{eq:apcvcho1schauder} \\
    \mu_\phi  = \alpha \partial_{t} \phi  -\Delta \phi  + \sigma_2 \mathcal{N} \big( \phi  - \overline{\phi } \big) + \big( F_\phi^{\epsilon} \big)'(\phi ) + \partial_\phi G(\phi ,\psi ), \label{eq:apcvcho2schauder} \\
    \partial_{t} \psi  + \bm{u} \cdot \nabla \psi  = \mathrm{div}\, \big(m_\psi \big( \widetilde{\psi} \big) \nabla \mu_\psi \big),
    \label{eq:apcvcho3schauder} \\
    \mu_\psi  = \alpha \partial_{t} \psi  -\beta \Delta \psi  + \big( F_\psi^{\epsilon} \big)'(\psi ) + \partial_\psi G(\phi ,\psi ), \label{eq:apcvcho4schauder}
\end{numcases}
in $Q_T$, subject to the boundary and initial conditions:
\begin{numcases}{}
    \partial_{\mathbf{n}} \phi  = \partial_{\mathbf{n}} \mu_\phi  = \partial_{\mathbf{n}} \psi  = \partial_{\mathbf{n}} \mu_\psi  = 0,  \label{eq:apcvchobschauder} & on $S_T$,
     \\
    \phi |_{t=0} = \phi_{0,k}(x), \ \ \psi |_{t=0} = \psi_{0,k}(x), \label{eq:apcvchoischauder} & in $\Omega$.
\end{numcases}
Here, $\widetilde{\phi}$ and $\widetilde{\psi}$ are given functions that satisfy
\[
\widetilde{\phi},\widetilde{\psi} \in L^{\infty} ( I;W ) \cap W^{1,\infty}(I;H ), \quad  ( \widetilde{\phi},\widetilde{\psi} )|_{t=0} = ( \phi_{0},\psi_{0} ),
\]
and the regularized initial data $\phi_{0,k}$, $\psi_{0,k}$ are defined as in Section \ref{loc-3D-1}.
With the same notation $\mu_{\phi,0} = -\Delta \phi_0 + F_\phi' ( \phi_0 )$, $\mu_{\psi,0} = -\beta \Delta \psi_0 + F_\psi' ( \psi_0 )$ and $\mu_{\phi,0}^k =  h_k \circ \mu_{\phi,0}$, $\mu_{\psi,0}^k =  h_k \circ \mu_{\psi,0}$, we find that
$\mu_{\phi,0}^k,\, \mu_{\psi,0}^k\in  L^\infty(\Omega)$ satisfy
$$
\|\mu_{\phi,0}^k\|\leq \|\mu_{\phi,0}\|,\quad \|\mu_{\psi,0}^k\|\leq \|\mu_{\psi,0}\|,\quad \forall\, k\in \mathbb{N}_+.
$$
Furthermore, the function $\phi_{0,k}\in W$ (resp. $ \psi_{0,k}\in W$) is uniquely determined with $F_\phi' ( \phi_{0,k} )\in H$ (resp. $F_\psi' ( \psi_{0,k})\in H$), both uniformly bounded by $\|\mu_{\phi,0}\|$ (resp. $\|\mu_{\psi,0}\|$), cf. \cite{GMT2019}. The conclusions in Lemma \ref{approximate_initial_phase_field}-(2), (3) (i.e., the strict separation property) also apply. The approximate singular potentials are defined as in \cite{GMT2019,M2019}:
\begin{equation}\notag
    ( F_\phi^{\epsilon} )''(s)
    = \left\{
        \begin{aligned}
            & F_\phi''\big( -1+\epsilon \big), &&\ s \le -1+\epsilon, \\
            & F_\phi''(s), &&\ s \in \big(  -1+\epsilon,1-\epsilon \big),\\
            & F_\phi''\big( 1-\epsilon \big), &&\ s \ge 1-\epsilon,
        \end{aligned}
        \right.
\quad
    ( F_\psi^{\epsilon} )''(s)
    = \left\{
        \begin{aligned}
            & F_\psi''\big( \epsilon \big), &&\ s \le \epsilon, \\
            & F_\psi''(s), &&\ s \in \big( \epsilon, 1-\epsilon \big),\\
            & F_\psi''\big( 1-\epsilon \big), &&\ s \ge 1-\epsilon,
        \end{aligned}
        \right.
\end{equation}
where $\epsilon \in (0,1/2]$, with
\begin{equation*}
    F_\phi^{\epsilon}(0) = \big( F_\phi^{\epsilon} \big)'(0)=F_\psi^{\epsilon}\Big( \frac{1}{2} \Big) = \big( F_\psi^{\epsilon} \big)'\Big( \frac{1}{2} \Big)=0.
\end{equation*}
For the given velocity field $\bm{u}$, we first consider $\bm{u} \in C^{\infty}_0 (I; \bm{C}^{\infty}_{0,\sigma} ( \Omega ))$ and then recover the general case $\bm{u} \in L^{\infty} ( I;\bm{L}^{3}_\sigma(\Omega) ) \cap W^{1,\frac{4}{3}} (  I;\bm{L}^{1}_\sigma(\Omega) )$ by the density argument (cf. \cite{AGG2024}).

A standard argument like in \cite{GMT2019} yields that problem \eqref{eq:apcvcho1schauder}--\eqref{eq:apcvchoischauder} admits a unique weak solution $( \phi^{k,\epsilon},\psi^{k,\epsilon} )$ on $I$ such that
\begin{align*}
    & \phi^{k,\epsilon},\psi^{k,\epsilon} \in BC_{w} ( I;V ) \cap L^{2}_{\mathrm{uloc}} (I;W )\cap
    H^1( I;H ),
    \\
    & \mu_\phi^{k,\epsilon},\mu_\psi^{k,\epsilon} \in L^{2}_{\mathrm{uloc}} ( I;W ),
    \quad  \big( F_\phi^{\epsilon} \big)'\big( \phi^{k,\epsilon} \big), \big( F_\psi^{\epsilon} \big)'\big( \psi^{k,\epsilon} \big) \in L^{2}_{\mathrm{uloc}} ( I ;H )
\end{align*}
with uniform estimates with respect to $\epsilon, k$ in the corresponding spaces.
In particular, the $L^2_tH^2_x$-estimate of $\mu_\phi^{k,\epsilon}$ follows from Lemma \ref{weightedlaplace} and equations \eqref{eq:apcvcho1schauder}--\eqref{eq:apcvcho2schauder}:
\begin{align}
    \| \nabla \mu_\phi^{k,\epsilon} \|
    &\le C \big(  \big\|  \partial_{t} \phi^{k,\epsilon} - \partial_{t}  \overline{\phi^{k,\epsilon}}   \big\| + \| \bm{u} \|_{L^{3}(\Omega)} \big\| \nabla \phi^{k,\epsilon} \big\|  \big),
    \label{muh1estimate} \\
    \| \nabla \phi^{k,\epsilon} \|_{L^{6}(\Omega)}
    & \le C \big\|\nabla \phi^{k,\epsilon}  \big\|_{H^1(\Omega)}
     \le C \big( 1 + \big\|  \partial_{t} \phi^{k,\epsilon} - \partial_{t}  \overline{\phi^{k,\epsilon}}   \big\| + \big\| \nabla \mu_\phi^{k,\epsilon} \big\|  \big) \nonumber \\
    & \le C \big( 1 + \big\|  \partial_{t} \phi^{k,\epsilon} - \partial_{t}  \overline{\phi^{k,\epsilon}}   \big\| + \| \bm{u} \|_{L^{3}(\Omega)} \big),
    \label{phih2estimate} \\
    \|  \Delta \mu_\phi^{k,\epsilon} \|
    & \le C \big( 1+\big\| \nabla m_\phi ( \widetilde{\phi} ) \big\|^2_{L^{6}(\Omega)}  \big) \big(  \big\|  \partial_{t} \phi^{k,\epsilon} - \partial_{t}  \overline{\phi^{k,\epsilon}}   \big\| + \| \bm{u} \|_{L^{3}(\Omega)} \big\| \nabla \phi^{k,\epsilon} \big\|_{L^{6}(\Omega)}  \big) \nonumber \\
    & \le C \big( 1+\big\| \nabla m_\phi ( \widetilde{\phi} ) \big\|^2_{L^{6}(\Omega)}  \big) \big(  1 + \| \bm{u} \|_{L^{3}(\Omega)}  \big) \big\|  \partial_{t} \phi^{k,\epsilon} - \partial_{t}  \overline{\phi^{k,\epsilon}}   \big\| \nonumber \\
    & \quad + C \big( 1+\big\| \nabla m_\phi ( \widetilde{\phi} ) \big\|^2_{L^{6}(\Omega)}  \big) \| \bm{u} \|_{L^{3}(\Omega)} ( 1 + \| \bm{u} \|_{L^{3}(\Omega)} ) . \label{muh2estimate}
\end{align}
Similar estimates hold for $\mu_\psi^{k,\epsilon}$.

Exploiting a time-difference argument similar to that in \cite{G2022}, we can obtain higher-order Sobolev estimates of $( \phi^{k,\epsilon},\psi^{k,\epsilon},\mu_\phi^{k,\epsilon},\mu_\psi^{k,\epsilon} )$.
Since
$\partial^+_{t,h} \phi^{k,\epsilon} \to \partial_{t} \phi^{k,\epsilon}$ and $\partial^+_{t,h} \psi^{k,\epsilon} \to \partial_{t} \psi^{k,\epsilon}$ in $L^{2} (0,T;H )$,
we infer that
\begin{align*}
    & \|  \partial_{t} \phi^{k,\epsilon}  \|_{L^{\infty} ( 0,T;H )} + \|  \partial_{t} \phi^{k,\epsilon}  \|_{L^{2} ( 0,T;V )}
    + \|  \partial_{t} \psi^{k,\epsilon}  \|_{L^{\infty} ( 0,T;H )} + \|  \partial_{t} \psi^{k,\epsilon}  \|_{L^{2} (0,T;V )} \\
    & \quad \le C \big(  \alpha, T, k, \epsilon, \| \phi_{0,k} \|_{W}, \| \psi_{0,k} \|_{W}, \| \bm{u} \|_{L^\infty(I;L^3(\Omega) )}, \| \partial_t \bm{u} \|_{L^2( I;H )}  \big).
\end{align*}
From equation \eqref{eq:apcvcho1schauder} and Lemma \ref{weightedlaplace}, we have
\[
\| \nabla \mu_\phi^{k,\epsilon} \| \le C \big(  \big\|  \partial_{t} \phi^{k,\epsilon} - \overline{\partial_{t} \phi^{k,\epsilon}}  \big\| + \| \bm{u} \|_{L^{3}(\Omega)} \big\|  \phi^{k,\epsilon} - \overline{\phi^{k,\epsilon}}  \big\|_{L^{6}(\Omega)}\big),
\]
which implies
\[
\|  \mu_\phi^{k,\epsilon}  \|_{L^{\infty} (0,T;V )} \le C \big(  \alpha, T, k, \epsilon, \| \phi_{0,k} \|_{W}, \| \psi_{0,k} \|_{W}, \| \bm{u} \|_{L^\infty(I;L^3(\Omega) )}, \| \partial_t \bm{u} \|_{L^2( I;H )}  \big).
\]
Therefore, it follows from \eqref{eq:apcvcho2schauder} and \cite[Theorem A.1]{GMT2019} that
\begin{align*}
& \|  \phi^{k,\epsilon}  \|_{L^{\infty} (0,T;W )}
 + \big\|\big( F_\phi^{\epsilon} \big)'\big( \phi^{k,\epsilon} \big)\big\|_{L^\infty(0,T;L^2(\Omega))}
\\
&\quad \le C \big(  \alpha, T, k, \epsilon, \| \phi_{0,k} \|_{W}, \| \psi_{0,k} \|_{W}, \| \bm{u} \|_{L^\infty(I;L^3(\Omega) )}, \| \partial_t \bm{u} \|_{L^2( I;H )}    \big).
\end{align*}
Using equation \eqref{eq:apcvcho1schauder} again, we obtain
\begin{align*}
    \| \Delta \mu_\phi^{k,\epsilon} \| \le C \big(  1 + \|  \nabla m_\phi (  \widetilde{\phi}  )  \|_{L^6(\Omega)}^2  \big) \big(  \big\|  \partial_{t} \phi^{k,\epsilon} - \overline{\partial_{t} \phi^{k,\epsilon}}  \big\| + \| \bm{u} \|_{L^{3}(\Omega)} \| \nabla \phi^{k,\epsilon}  \|_{L^{6}(\Omega)}  \big),
\end{align*}
which implies
\[
\|  \mu_\phi^{k,\epsilon}  \|_{L^{\infty} (0,T;W )} \le C \big(  \alpha, T, k, \epsilon, \| \phi_{0,k} \|_{W}, \| \psi_{0,k} \|_{W}, \| \bm{u} \|_{L^\infty(I;L^3(\Omega) )}, \| \partial_t \bm{u} \|_{L^2( I;H )}   \big).
\]
Similar arguments apply to $\psi^{k,\epsilon}$ and $\mu_\psi^{k,\epsilon}$. In summary, we have shown
\begin{align*}
    & \phi^{k,\epsilon}, \psi^{k,\epsilon} \in L^{\infty} ( 0,T;W ) \cap H^1 ( 0,T;V ) \cap W^{1,\infty} ( 0,T;H ), \\
    & \mu_\phi^{k,\epsilon},\mu_\psi^{k,\epsilon} \in L^{\infty} ( 0,T;W ),
\end{align*}
for all $T>0$, although the associated estimates may depend on $\epsilon,k$.

Next, we show that $ \mu_\phi^{k,\epsilon}, \mu_\psi^{k,\epsilon} \in H^1 ( 0,T;H )$. In view of equations \eqref{eq:apcvcho1schauder}--\eqref{eq:apcvcho2schauder}, we write
\begin{align*}
    \mu_\phi^{k,\epsilon} = \big(  I + \alpha \mathcal{A}_{\phi, \widetilde{\phi}}  \big)^{-1} \Big(  &- \alpha \sigma_1 \big(  \overline{\phi^{k,\epsilon}} -c  \big) - \alpha \bm{u} \cdot \nabla \phi^{k,\epsilon} - \Delta \phi^{k,\epsilon} \\
    &+ \sigma_2 \mathcal{N} \big(  \phi^{k,\epsilon} - \overline{\phi^{k,\epsilon} }  \big)+ \big(  F_\phi^{\epsilon}  \big)' ( \phi^{k,\epsilon} ) + \partial_\phi G (  \phi^{k,\epsilon},\psi^{k,\epsilon}  )  \Big).
\end{align*}
From $\phi^{k,\epsilon}, \psi^{k,\epsilon} \in L^\infty(0,T;W)\cap H^1(0,T;V)$ and $\bm{u} \in C^\infty_0 (I;C^\infty_{0,\sigma}(\Omega))$, we find that
\begin{align*}
    f_\phi & \define - \alpha \sigma_1 \big(  \overline{\phi^{k,\epsilon}} -c  \big) - \alpha \bm{u} \cdot \nabla \phi^{k,\epsilon} - \Delta \phi^{k,\epsilon} + \sigma_2 \mathcal{N} \big(  \phi^{k,\epsilon} - \overline{\phi^{k,\epsilon} }  \big)
    \\
    & \quad\ + \big(  F_\phi^{\epsilon}  \big)' ( \phi^{k,\epsilon} ) + \partial_\phi G (  \phi^{k,\epsilon},\psi^{k,\epsilon}  ) \in H^1 (0,T;V^*) \cap L^\infty(0,T;H).
\end{align*}
For any given $h \in C^\infty( \mathbb{R} ; C^\infty(\overline{\Omega}))$, it holds
\[
\frac{\mathrm{d}}{\mathrm{d} t} \mathcal{A}_{\phi,h} (f) = - \mathrm{div}\, (m_\phi'(h) \partial_t h \nabla f), \quad \forall f \in V.
\]
Thus, the map $\mathcal{A}_{\phi,h(\cdot)}:\mathbb{R} \to \mathcal{L}(V;V^*) \cap \mathcal{L}(W;H)$, $t \to \mathcal{A}_{\phi,h(t)}$, belongs to $C^1(\mathbb{R}; \mathcal{L}(V;V^*) \cap \mathcal{L}(W;H))$.
Therefore, by the implicit function theorem (see, e.g., \cite[Theorem 7.13-2]{C2013}), we have
\begin{align*}
    \frac{\mathrm{d}}{\mathrm{d} t} \big( (I + \alpha \mathcal{A}_{\phi,h})^{-1} \widetilde{f}_1 \big) =
    -\alpha (I + \alpha \mathcal{A}_{\phi,h})^{-1} \frac{\mathrm{d}}{\mathrm{d} t} \mathcal{A}_{\phi,h} (I + \alpha \mathcal{A}_{\phi,h})^{-1} \widetilde{f}_1
    + (I + \alpha \mathcal{A}_{\phi,h} )^{-1} \partial_t \widetilde{f}_1,
\end{align*}
for any $\widetilde{f}_1 \in C^\infty( \mathbb{R} ; C^\infty(\overline{\Omega}))$.
Taking a convergent sequence $\{ h_i \}_{i=1}^\infty$ such that
\begin{align*}
    & h_i \stackrel{*}{\rightharpoonup} \widetilde{\phi} \ \text{in $L^\infty(0,T;W)$}, \quad \partial_t h_i \stackrel{*}{\rightharpoonup} \partial_t \widetilde{\phi} \ \text{in $L^\infty(0,T;H)$}, \quad  h_i \to \widetilde{\phi} \ \text{in $H^1(0,T;H)$}.
\end{align*}
The Aubin--Lions--Simon lemma further implies
\[
h_i \to \widetilde{\phi} \ \text{in $C([0,T];W^{1,4}(\Omega))$}.
\]
Then we can verify that
\[
(I + \mathcal{A}_{\phi,h_i})^{-1} \to (I + \alpha\mathcal{A}_{\phi,\widetilde{\phi}})^{-1} \ \text{in $C([0,T];\mathcal{L}(V^*;V) \cap \mathcal{L}(H;W) )$}.
\]
Hence, for any $\widetilde{f}_2\in L^2(0,T; H)$, it follows that
\begin{align*}
    & \Big((I + \alpha\mathcal{A}_{\phi,h_i})^{-1} \frac{\mathrm{d}}{\mathrm{d} t} \mathcal{A}_{\phi,h} (I + \alpha\mathcal{A}_{\phi,h_i})^{-1} \widetilde{f}_1,\widetilde{f}_2\Big) \\
    & \quad = \int_\Omega m_\phi'(h_i) \partial_t h_i \nabla (I + \alpha\mathcal{A}_{\phi,h_i})^{-1} \widetilde{f}_1 \cdot \nabla (I + \alpha\mathcal{A}_{\phi,h_i})^{-1} \widetilde{f}_2 \, \mathrm{d} x \\
    & \quad \to \int_\Omega m_\phi'(\widetilde{\phi}) \partial_t \widetilde{\phi} \nabla (I + \alpha\mathcal{A}_{\phi,\widetilde{\phi}})^{-1} \widetilde{f}_1 \cdot \nabla (I + \alpha\mathcal{A}_{\phi,\widetilde{\phi}})^{-1} \widetilde{f}_2 \, \mathrm{d} x,
\end{align*}
which means
\[
(I + \alpha\mathcal{A}_{\phi,h_i})^{-1} \frac{\mathrm{d}}{\mathrm{d} t} \mathcal{A}_{\phi,h_i} (I + \alpha\mathcal{A}_{\phi,h_i})^{-1} \widetilde{f}_1 \rightharpoonup (I + \alpha\mathcal{A}_{\phi,\widetilde{\phi}})^{-1} \frac{\mathrm{d}}{\mathrm{d} t} \mathcal{A}_{\phi,\widetilde{\phi}} (I + \alpha\mathcal{A}_{\phi,\widetilde{\phi}})^{-1} \widetilde{f}_1
\]
in $L^2(0,T;H)$. This combined with $(I + \alpha \mathcal{A}_{\phi,h_i})^{-1} \partial_t \widetilde{f}_1 \to (I + \alpha \mathcal{A}_{\phi,\widetilde{\phi}})^{-1} \partial_t \widetilde{f}_1$ in $L^2(0,T;V)$ yields that
\[
(I +\alpha  \mathcal{A}_{\phi,h_i})^{-1} \widetilde{f}_1 \rightharpoonup (I + \alpha \mathcal{A}_{\phi,\widetilde{\phi}})^{-1} \widetilde{f}_1\quad \text{in $H^1(0,T;H)$}.
\]
Moreover, from
\begin{align*}
    & \Big|  \int_\Omega m_\phi'(\widetilde{\phi}) \partial_t \widetilde{\phi} \nabla (I + \alpha \mathcal{A}_{\phi,\widetilde{\phi}})^{-1} \widetilde{f}_1 \cdot \nabla (I + \alpha \mathcal{A}_{\phi,\widetilde{\phi}})^{-1} \widetilde{f}_2 \, \mathrm{d} x  \Big| \\
    & \quad \le \| m_\phi \|_{C^1(\mathbb{R})} \|  \partial_t \widetilde{\phi}  \|_{L^\infty(0,T;H)} \| \nabla (I + \alpha \mathcal{A}_{\phi,\widetilde{\phi}})^{-1} \widetilde{f}_1 \|_{L^2(0,T;L^6(\Omega))} \| \nabla (I + \alpha  \mathcal{A}_{\phi,\widetilde{\phi}})^{-1} \widetilde{f}_2 \|_{L^2(0,T;L^3(\Omega))} \\
    & \quad \le C_{m_\phi} \|  \partial_t \widetilde{\phi}  \|_{L^\infty(0,T;H)} (1 + \NormNormal{\nabla \widetilde{\phi}}_{L^\infty(0,T;L^6(\Omega))})^3 \| \widetilde{f}_1 \|_{L^2(0,T;H)} \| \widetilde{f}_2 \|_{L^2(0,T;H)},
\end{align*}
we can deduce that
\[
\Big\|  \frac{\mathrm{d}}{\mathrm{d} t} \big( (I + \alpha \mathcal{A}_{\phi,\widetilde{\phi}})^{-1} \widetilde{f}_1 \big)  \Big\|_{L^2(0,T;H)} \le C_{m_\phi} \| \partial_t \widetilde{\phi} \|_{L^\infty(0,T;H)} (1 + \NormNormal{\widetilde{\phi}}_{L^\infty(0,T;W)})^3 \| \widetilde{f}_1 \|_{L^2(0,T;H)\cap H^1(0,T;V^*)}.
\]
Taking a sequence $\{ \widetilde{f}_i \}_{i=1}^\infty$ such that
\begin{align*}
    & \widetilde{f}_i \stackrel{*}{\rightharpoonup} f_\phi \ \text{in $L^\infty(0,T;H)$}, \quad  \widetilde{f}_i \to f_\phi \ \text{in}\  H^1(0,T;V^*)\cap L^2(0,T;L^2(\Omega)),
\end{align*}
we can repeat the above approximate procedure to obtain
\[
(I + \alpha \mathcal{A}_{\phi,\widetilde{\phi}})^{-1} \widetilde{f}_i \rightharpoonup (I + \alpha \mathcal{A}_{\phi,\widetilde{\phi}})^{-1} f_\phi \quad  \text{in $H^1(0,T;H)$}.
\]
This yields $ \mu_\phi^{k,\epsilon} \in H^1( 0,T;H)$. Similarly, we have $ \mu_\psi^{k,\epsilon} \in H^1( 0,T;H)$.

Differentiating \eqref{eq:apcvcho2schauder} with respect to time gives
\begin{align*}
    \alpha \partial_{t} ^2\phi^{k,\epsilon} & = \partial_{t} \mu_\phi^{k,\epsilon}  + \Delta \partial_{t} \phi^{k,\epsilon} - \big(  F_\phi^{\epsilon}  \big)'' ( \phi^{k,\epsilon} )\partial_{t} \phi^{k,\epsilon} - \partial^2_{\phi,\psi} G ( \phi^{k,\epsilon},\psi^{k,\epsilon} ) \partial_{t} \psi^{k,\epsilon} \\
    & \quad - \partial_\phi ^2 G ( \phi^{k,\epsilon},\psi^{k,\epsilon} ) \partial_{t} \phi^{k,\epsilon} - \sigma_2 \mathcal{N} \big(  \partial_{t} \phi^{k,\epsilon} - \partial_{t} \overline{\phi^{k,\epsilon}} \big).
\end{align*}
Since all terms on the right-hand side are well defined, we find $\partial_{t} ^2\phi^{k,\epsilon} \in L^{2} ( 0,T;V^* )$. Similarly, it holds $\partial_{t} ^2\psi^{k,\epsilon} \in L^{2} ( 0,T;V^* )$.

Next, differentiating \eqref{eq:apcvcho1schauder} and \eqref{eq:apcvcho3schauder} with respect to time and testing the resultants with $g_\phi = \mathcal{N}_{\phi, \widetilde{\phi}} \big(  \partial_{t} \phi^{k,\epsilon} - \partial_{t}  \overline{\phi^{k,\epsilon}}   \big)$ and $g_\psi = \mathcal{N}_{\psi, \widetilde{\psi}} \big(  \partial_{t} \psi^{k,\epsilon}   \big)$, respectively,  we obtain
\begin{align}
    & \frac{1}{2} \frac{\mathrm{d}}{\mathrm{d t}}  \Big(  \Big\|  \sqrt{m_\phi( \widetilde{\phi} )} \nabla g_\phi  \Big\|^2 +  \Big\|  \sqrt{m_\psi( \widetilde{\psi} )} \nabla g_\psi  \Big\|^2 + \alpha \big\|  \partial_{t} \phi^{k,\epsilon}  - \partial_{t} \overline{\phi^{k,\epsilon}}   \big\|^2 + \alpha \big\|  \partial_{t} \psi^{k,\epsilon} \big\|^2  \Big)  \notag
    \\
    & \qquad + \big\|  \nabla \partial_{t}  \phi^{k,\epsilon}  \big\|^2 + \beta \big\|  \nabla \partial_{t} \psi^{k,\epsilon}   \big\|^2 + \sigma_2 \big\|  \partial_{t} \phi^{k,\epsilon}  - \partial_{t} \overline{\phi^{k,\epsilon}}   \big\|^2_* \notag
    \\
    & \quad = \int_{\Omega}  \bm{u} \cdot \big(  \big(  \partial_{t} \phi^{k,\epsilon}  - \partial_{t} \overline{\phi^{k,\epsilon}}   \big) \nabla g_\phi +  \partial_{t} \psi^{k,\epsilon} \nabla g_\psi  \big)  \, \mathrm{d}x
    - \int_{\Omega}   \partial_{t} \bm{u}   \cdot \big(  g_\phi \nabla \phi^{k,\epsilon} + g_\psi \nabla \psi^{k,\epsilon}  \big)  \, \mathrm{d}x \notag
    \\
    & \qquad - \frac{1}{2} \int_{\Omega} \big( m_\phi'( \widetilde{\phi} ) \partial_{t} \widetilde{\phi} | \nabla g_\phi |^2 + m_\psi'( \widetilde{\psi} ) \partial_{t} \widetilde{\psi} | \nabla g_\psi |^2 \big) \, \mathrm{d}x \notag
    \\
    & \qquad - \int_{\Omega} \big( m_\phi'( \widetilde{\phi} ) \partial_{t} \widetilde{\phi} \nabla \mu_\phi^{k,\epsilon} \cdot \nabla g_\phi + m_\psi'( \widetilde{\psi} ) \partial_{t} \widetilde{\psi}  \nabla \mu_\psi^{k,\epsilon} \cdot \nabla g_\psi \big) \, \mathrm{d}x \notag
    \\
    & \qquad - \big(  \partial_{t}  \big(  F_\phi^{\epsilon} \big)'\big( \phi^{k,\epsilon} \big) + \partial_{t}  \partial_\phi G(\phi^{k,\epsilon},\psi^{k,\epsilon}) ,  \partial_{t}  \phi^{k,\epsilon} - \partial_{t}  \overline{\phi^{k,\epsilon}}  \big) \notag
    \\
    & \qquad - \big(  \partial_{t}  \big(  F_\psi^{\epsilon} \big)'\big( \psi^{k,\epsilon} \big) + \partial_{t}  \partial_\psi G(\phi^{k,\epsilon},\psi^{k,\epsilon}) ,  \partial_{t}  \psi^{k,\epsilon}  \big) \notag
    \\
    & \quad \le \| \bm{u} \|_{L^{3}(\Omega)} \big(  \big\|  \partial_{t}  \phi^{k,\epsilon} - \partial_{t}  \overline{\phi^{k,\epsilon}}  \big\| \|  \nabla g_\phi  \|_{L^{6}(\Omega)} + \big\|  \partial_{t}  \psi^{k,\epsilon}  \big\| \|  \nabla g_\psi  \|_{L^{6}(\Omega)}  \big) \notag
    \\
    & \qquad + \|  \partial_{t}  \bm{u}  \| \big(  \| g_\phi \|_{L^{\infty}(\Omega)} \| \nabla \phi^{k,\epsilon} \| + \| g_\psi \|_{L^{\infty}(\Omega)} \| \nabla \psi^{k,\epsilon} \|  \big) \notag
    \\
    & \qquad + C \|  \partial_{t} \widetilde{\phi}  \| \|  \nabla g_\phi  \|^2_{L^{4}(\Omega)} + C \|  \partial_{t} \widetilde{\psi}  \| \|  \nabla g_\psi  \|^2_{L^{4}(\Omega)} \notag
    \\
    & \qquad + C \| \partial_{t}  \widetilde{\phi} \| \| \nabla \mu_\phi^{k,\epsilon} \|_{L^{6}(\Omega)} \|  \nabla g_\phi  \|_{L^{3}(\Omega)} + C \| \partial_{t}  \widetilde{\psi} \| \| \nabla \mu_\psi^{k,\epsilon} \|_{L^{6}(\Omega)} \|  \nabla g_\psi  \|_{L^{3}(\Omega)} \notag
    \\
    & \qquad + \AbsNormal{\Omega} \partial_{t}  \overline{\big(  F_\phi^{\epsilon} \big)'\big( \phi^{k,\epsilon} \big)}  \partial_{t}  \overline{ \phi^{k,\epsilon} }
    + C \big(  \| \partial_{t}  \phi^{k,\epsilon} \| + \| \partial_{t}  \psi^{k,\epsilon} \|  \big) \Big(  \big\| \partial_{t}  \phi^{k,\epsilon} - \overline{\partial_{t}  \phi^{k,\epsilon}}  \big\| + \big\| \partial_{t}  \psi^{k,\epsilon} \big\|  \Big) \notag
    \\
    & \quad \le C H_1^{k,\epsilon} \Big(  \big\|  \partial_{t}  \phi^{k,\epsilon} - \partial_{t}  \overline{\phi^{k,\epsilon}}  \big\|^2 + \big\|  \partial_{t}  \psi^{k,\epsilon}  \big\|^2  \Big)
    + C H_2^{k,\epsilon}
    \notag \\
    & \qquad + \AbsNormal{\Omega} \partial_{t}  \Big( \overline{\big(  F_\phi^{\epsilon} \big)'\big( \phi^{k,\epsilon} \big)} \partial_{t}  \overline{ \phi^{k,\epsilon}}  \Big)
    - \AbsNormal{\Omega} \overline{\big(  F_\phi^{\epsilon} \big)'\big( \phi^{k,\epsilon} \big)} \partial_{t} ^2 \overline{ \phi^{k,\epsilon} },
    \label{D-k-e}
    \end{align}
where
\begin{align*}
     H_1^{k,\epsilon} &= 1+ (1+\|  \bm{u}  \|_{L^{3}(\Omega)}) \Big[  \big(  1 + \big\| \nabla m_\phi ( \widetilde{\phi} ) \big\|_{L^{6}(\Omega)}  \big)^2 + \big(  1 + \big\| \nabla m_\psi ( \widetilde{\psi} ) \big\|_{L^{6}(\Omega)}  \big)^2  \Big]
    \nonumber \\
    & \quad + \| \partial_t \widetilde{\phi} \| \big(  1 + \big\| \nabla m_\phi ( \widetilde{\phi} ) \big\|_{L^{6}(\Omega)}  \big)^3 \big(  1 + \|  \bm{u}  \|_{L^{3}(\Omega)}  \big)
    \nonumber \\
    & \quad + \| \partial_t \widetilde{\psi} \| \big(  1 + \big\| \nabla m_\psi ( \widetilde{\psi} ) \big\|_{L^{6}(\Omega)}  \big)^3  \big(  1 + \|  \bm{u}  \|_{L^{3}(\Omega)}  \big),
    \nonumber \\
    H_2^{k,\epsilon}
    & = \|  \partial_{t} \bm{u}  \|^2 +  \big(  1 + \big\| \nabla m_\phi ( \widetilde{\phi} ) \big\|_{L^{6}(\Omega)}  \big)^3 \| \partial_t \widetilde{\phi}  \| ( 1 + \| \bm{u}  \|_{L^{3}(\Omega)})^3
    \nonumber \\
    & \quad + \big(  1 + \big\| \nabla m_\psi ( \widetilde{\psi} ) \big\|_{L^{6}(\Omega)}  \big)^3 \|  \partial_t \widetilde{\psi}  \| (1 + \|  \bm{u}  \|_{L^{3}(\Omega)})^3 + \Big( \partial_t \overline{\phi^{k,\epsilon}} \Big)^2.
    \nonumber
\end{align*}
In particular, the constant $C>0$ in \eqref{D-k-e} is independent of the approximating parameter $\epsilon$.
Recall that there exists a constant $C_m>0$ depending on $m = \max \{ | \overline{\phi_{0,k}} |, | c | \}$ and $F_\phi$ but independent of $\epsilon$ (cf. \cite[Proposition 4.5]{M2019}) such that
\begin{align*}
    \Big| \overline{\big(  F_\phi^{\epsilon} \big)'\big( \phi^{k,\epsilon} \big)} \Big| & \le C_m \int_\Omega \big(  F_\phi^{\epsilon} \big)'\big( \phi^{k,\epsilon} \big) \big(  \phi^{k,\epsilon} - \overline{\phi^{k,\epsilon}}  \big) \, \mathrm{d} x + C_m \\
    & \le C_m \big(  \mu_\phi^{k,\epsilon} - \overline{\mu_\phi^{k,\epsilon}} + \alpha \partial_{t} \phi^{k,\epsilon} - \alpha \overline{\partial_{t} \phi^{k,\epsilon}} - \partial_\phi G (\phi^{k,\epsilon},\psi^{k,\epsilon}) ,  \phi^{k,\epsilon} - \overline{\phi^{k,\epsilon}}  \big) + C_m \\
    & \le C_m'\big(  1 + \|  \nabla \mu_\phi^{k,\epsilon}  \| + \alpha \big\|  \partial_{t} \phi^{k,\epsilon} - \overline{\partial_{t} \phi^{k,\epsilon}}  \big\|  \big).
\end{align*}
Besides, equation \eqref{eq:cvcho1} implies
\[
\| \nabla \mu_\phi^{k,\epsilon} \| \le C \big(  \big\|  \partial_{t} \phi^{k,\epsilon} - \overline{\partial_{t} \phi^{k,\epsilon}}  \big\| + \| \bm{u} \|_{L^{3}(\Omega)} \big\|  \phi^{k,\epsilon} - \overline{\phi^{k,\epsilon}}  \big\|_{L^{6}(\Omega)}  \big).
\]
Then by Young's inequality, we get
$$
\Big| \overline{\big(  F_\phi^{\epsilon} \big)'\big( \phi^{k,\epsilon} \big)} \Big| \le \frac{\alpha}{ 8 \AbsNormal{\Omega} ( 1+\sigma_1 ) } \big\|  \partial_{t} \phi^{k,\epsilon} - \overline{\partial_{t} \phi^{k,\epsilon}}  \big\|^2 + C_{\alpha} \big(  1 + \| \bm{u} \|_{L^{3}(\Omega)}  \big).
$$
Set
\begin{align*}
    E^{k,\epsilon} & = \Big\|  \sqrt{m_\phi( \widetilde{\phi} )} \nabla g_\phi  \Big\|^2 +  \Big\|  \sqrt{m_\psi( \widetilde{\psi} )} \nabla g_\psi  \Big\|^2 + \alpha \big\|  \partial_{t} \phi^{k,\epsilon}  - \partial_{t} \overline{\phi^{k,\epsilon}}   \big\|^2 + \alpha \big\|  \partial_{t} \psi^{k,\epsilon}   \big\|^2 \\
    & \quad - 2 \AbsNormal{\Omega} \overline{\big(  F_\phi^{\epsilon} \big)'\big( \phi^{k,\epsilon} \big)} \partial_{t}  \overline{ \phi^{k,\epsilon} }  + 4 \sigma_1 C_{\alpha} \big(  1 + \| \bm{u} \|_{L^{\infty} ( I;L^{3}(\Omega) )}  \big).
\end{align*}
Using the fact $\partial_{t}  \overline{\phi^{k,\epsilon}} (t) = - \sigma_1 \big(  \overline{\phi_{0,k}} - c  \big) e^{-\sigma_1 t}$, we observe that
\[
    E^{k,\epsilon} \ge \frac{\alpha}{2} \big\|  \partial_{t} \phi^{k,\epsilon}  - \partial_{t} \overline{\phi^{k,\epsilon}}   \big\|^2 + \alpha \big\|  \partial_{t} \psi^{k,\epsilon}   \big\|^2
\]
and
\begin{align*}
    \Big| \overline{\big(  F_\phi^{\epsilon} \big)'\big( \phi^{k,\epsilon}(t) \big)} \partial_{t} ^2 \overline{ \phi^{k,\epsilon} } (t) \Big| & \le \frac{\sigma_1^2 \alpha}{4\AbsNormal{\Omega}(1+\sigma_1)} \big\|  \partial_{t} \phi^{k,\epsilon}(t) - \overline{\partial_{t} \phi^{k,\epsilon}} (t)  \big\|^2 + 2 C_\alpha \big(  1 + \| \bm{u} \|_{L^{3}(\Omega)}  \big) \sigma_1 e^{- \sigma_1 t}
    \\
    & \le \frac{\sigma_1}{2\AbsNormal{\Omega}} E^{k,\epsilon}(t) + 2 C_\alpha \big(  1 + \| \bm{u} \|_{L^{3}(\Omega)}  \big) \sigma_1 e^{- \sigma_1 t}.
\end{align*}
Therefore, from \eqref{D-k-e} we infer that
\begin{align}
     & \frac{1}{2} \frac{\mathrm{d}}{\mathrm{d t}} E^{k,\epsilon} + \frac{1}{2} \big\|  \nabla \partial_{t}  \phi^{k,\epsilon}  \big\|^2 + \frac{\beta}{2} \big\|  \nabla \partial_{t} \psi^{k,\epsilon}   \big\|^2
     \notag \\
     & \quad \le C H_1^{k,\epsilon} E^{k,\epsilon} + C H_2^{k,\epsilon} + 2 C_{\alpha} \big(  1 + \| \bm{u} \|_{L^{\infty} ( I;L^{3}(\Omega) )} \big) \sigma_1 e^{-\sigma_1 t},
     \label{E-k-e}
\end{align}
where the positive constants $C,C_\alpha$ are independent of $\epsilon$. From equations \eqref{eq:apcvcho1schauder}--\eqref{eq:apcvcho2schauder}, we infer that
\begin{align*}
    \partial_{t} \phi^{k,\epsilon} & = -\bm{u} \cdot \nabla \phi^{k,\epsilon} - \sigma_1 \big( \overline{\phi^{k,\epsilon}}-c \big) \\
    & \quad - \frac{ I - \big( I + \alpha \mathcal{A}_{\phi, \widetilde{\phi}} \big)^{-1} }{\alpha}  \Big( - \alpha \sigma_1 \big(  \overline{\phi^{k,\epsilon}} -c  \big) - \alpha \bm{u} \cdot \nabla \phi^{k,\epsilon} - \Delta \phi^{k,\epsilon} \\
    & \qquad \qquad \qquad \qquad \qquad \qquad + \sigma_2 \mathcal{N} \big(  \phi^{k,\epsilon} - \overline{\phi^{k,\epsilon} }  \big)+ \big(  F_\phi^{\epsilon}  \big)' ( \phi^{k,\epsilon} ) + \partial_\phi G (  \phi^{k,\epsilon},\psi^{k,\epsilon}  )  \Big),
\end{align*}
which enables us to define $\partial_{t} \phi^{k,\epsilon}(0)$ in $H$ by continuity. The same argument works for $\partial_{t} \psi^{k,\epsilon}(0)$. From \eqref{E-k-e} and Gronwall's lemma, we deduce that
\begin{align}
\big\|  \partial_{t} \phi^{k,\epsilon}  (t) \big\|^2 + \big\|  \partial_{t} \psi^{k,\epsilon}  (t) \big\|^2 + \int_0^t \big( \big\|  \nabla \partial_{t}  \phi^{k,\epsilon} (\tau) \big\|^2 +  \big\|  \nabla \partial_{t} \psi^{k,\epsilon} (\tau)  \big\|^2 \big)\,\mathrm{d} \tau\leq C,
\quad \forall\, t\in [0,T],
\notag
\end{align}
for all $T>0$, where $C>0$ is independent of $\epsilon$.
\smallskip
\\
\textbf{Step 2. The limit auxiliary system as $\epsilon\to 0$.}
With the aforementioned uniform estimates, we can pass to the limit as $\epsilon \to 0$ to obtain a strong solution $( \phi^k,\psi^k  )$ to the following system:
\begin{numcases}{}
    \partial_{t} \phi  + \bm{u} \cdot \nabla \phi  + \sigma_1 \big( \overline{\phi } -c \big) = \mathrm{div}\, \big(m_\phi \big( \widetilde{\phi} \big) \nabla \mu_\phi \big)
    \label{eq:cvcho1schauder}, \\
    \mu_\phi  = \alpha \partial_{t} \phi  -\Delta \phi  + \sigma_2 \mathcal{N} \big( \phi  - \overline{\phi } \big) + F_\phi'(\phi ) + \partial_\phi G(\phi ,\psi ) \label{eq:cvcho2schauder},\\
    \partial_{t} \psi  + \bm{u} \cdot \nabla \psi  = \mathrm{div}\, \big(m_\psi \big( \widetilde{\psi} \big) \nabla \mu_\psi \big)
    \label{eq:cvcho3schauder},\\
    \mu_\psi  = \alpha \partial_{t} \psi  -\beta \Delta \psi  + F_\psi'(\psi ) + \partial_\psi G(\phi ,\psi ) \label{eq:cvcho4schauder},
\end{numcases}
in $Q_T$, subject to the boundary and initial conditions:
\begin{numcases}{}
    \partial_{\mathbf{n}} \phi  = \partial_{\mathbf{n}} \mu_\phi  = \partial_{\mathbf{n}} \psi  = \partial_{\mathbf{n}} \mu_\psi  = 0, \label{eq:cvchobschauder} & on $S_T$,\\
    \phi |_{t=0} = \phi_{0,k}(x), \ \psi |_{t=0} = \psi_{0,k}(x), \label{eq:cvchoischauder} & in $\Omega$.
\end{numcases}
The uniqueness easily follows since the solution $( \phi^k,\psi^k  )$ is sufficiently regular.

To justify subsequent calculations, we prove the strict separation property of $(\phi^k,\psi^k)$. Arguing as in \cite{MZ2004} (see also \cite{G2022,O2025}), let us  rewrite equation \eqref{eq:cvcho2schauder} in the following form:
\[
\alpha \partial_{t} \phi^k - \Delta \phi^k + F_\phi' ( \phi^k ) = \widetilde{h},
\]
where $\widetilde{h} \define  \mu_\phi^k - \partial_\phi G (  \phi^k,\psi^k  ) - \sigma_2 \mathcal{N} (\phi^k - \overline{\phi^k})$.
Since $\mu_\phi^k \in L^{\infty} \big( 0,T;W \big)$ for any $T>0$, by the Sobolev embedding theorem, we have $\widetilde{h} \in L^{\infty} \big( 0,T;L^{\infty}(\Omega) \big)$.
Define $h_{\pm}(t) \define  \pm \| \widetilde{h}(t) \|_{L^{\infty}(\Omega)}$ and consider the following ordinary differential equations:
\[
\alpha y_{\pm}' + F_\phi' ( y_{\pm} ) =  h_{\pm}, \quad  \ \ y_{\pm}|_{t=0} \define  \pm \|  \phi_{0,k} \|_{L^{\infty}(\Omega)}.
\]
Then, a comparison principle for second-order parabolic equations gives
\[
y_-(t) \le \phi^k(x,t) \le y_+(t), \quad \forall\, t \in [0,T], \  x \in \Omega.
\]
Since $y_{\pm} \in (-1,1)$, applying \cite[Proposition A.3]{MZ2004}, we obtain a constant
\[
\delta = \delta (  \|  \phi_{0,k} \|_{L^{\infty}(\Omega)}, \| \widetilde{h}(t) \|_{L^{\infty} ( 0,T;L^{\infty}(\Omega) )}  ) \in (0,1)
\]
such that
\[
| y_-(t) |,\ | y_+(t) | \le 1 - \delta , \quad  \forall\, t \in [0,T].
\]
Hence, $\phi^k$ is strictly separated from $\pm1$ on  $[0,T]$. An analogous result holds for $\psi^k$.

By an argument similar to that for $( \phi^{k,\epsilon},\psi^{k,\epsilon},\mu_\phi^{k,\epsilon},\mu_\psi^{k,\epsilon} )$, we can show that $\partial_{t} \mu_\phi^k, \partial_{t} \mu_\psi^k \in L^{2} \big( 0,T; H \big)$ and $\partial_{t}^2 \phi^k,\partial_{t}^2 \phi^k \in L^{2} \big( 0,T;V^* \big)$.
Setting $g_\phi = \mathcal{N}_{\phi, \widetilde{\phi}} \big(  \partial_{t} \phi^k - \partial_{t}  \overline{\phi^k}   \big)$ and $g_\psi = \mathcal{N}_{\psi, \widetilde{\psi}}  \partial_{t} \psi^k$, it follows from Agmon's inequality, Lemma \ref{weightedlaplace} and Young's inequality that
\begin{align*}
    & \left|\int_{\Omega}  g_\phi \big( \partial_{t} \bm{u}  \big) \cdot \nabla \phi^k  \, \mathrm{d}x\right|
    = \left|\int_{\Omega}  \phi^k \big( \partial_{t} \bm{u}  \big) \cdot \nabla g_\phi  \, \mathrm{d}x\right| \\
    & \quad \le \|\phi^k\|_{L^\infty(\Omega)} \| \partial_{t} \bm{u} \|_{L^{1}(\Omega)} \| \nabla g_\phi \|_{L^\infty(\Omega)} \\
    & \quad \le C \| \partial_{t} \bm{u} \|_{L^{1}(\Omega)} \| \nabla g_\phi \|_{V}^{\frac{1}{2}} \| \nabla g_\phi \|_{H^{2}(\Omega)}^{\frac{1}{2}} \\
    & \quad \le C \| \partial_{t} \bm{u} \|_{L^{1}(\Omega)} \big(  1 + \|  \nabla m_\phi ( \widetilde{\phi} )  \|_{L^{6}(\Omega)}  \big)^3 \big(  1 + \|  \nabla^2 m_\phi ( \widetilde{\phi} )  \|  \big)^{\frac{1}{2}} \big\|  \partial_{t} \phi^k - \partial_{t} \overline{\phi^k}  \big\|^{\frac{1}{2}} \big\|  \nabla \partial_{t} \phi^k \big\|^{\frac{1}{2}} \\
    & \quad \le \frac{1}{2} \big\|  \nabla \partial_{t} \phi^k \big\|^2 + C \| \partial_{t} \bm{u} \|_{L^{1}(\Omega)}^{\frac{4}{3}} \big(  1 + \|  \nabla m_\phi ( \widetilde{\phi} )  \|_{L^{6}(\Omega)}  \big)^4 \big(  1 + \|  \nabla^2 m_\phi ( \widetilde{\phi} )  \|  \big)^{\frac{2}{3}} \big(  1+ \big\|  \partial_{t} \phi^k - \partial_{t} \overline{\phi^k}  \big\|^2  \big).
\end{align*}
The term $\int_{\Omega}  g_\psi  (\partial_{t} \bm{u})  \cdot \nabla \psi^k  \, \mathrm{d}x$ can be treated similarly. Keeping the above estimates in mind, in analogy to \eqref{E-k-e} for $( \phi^{k,\epsilon}, \psi^{k,\epsilon}, \mu_\phi^{k,\epsilon},\mu_\psi^{k,\epsilon} )$, we can deduce that
\begin{equation}\label{approximate_solution_higher_order_sobolev_estimate_differential}
    \frac{\mathrm{d}}{\mathrm{d t}} E^k + \frac{1}{2} \big\|  \nabla \partial_{t}  \phi^k  \big\|^2 + \frac{\beta}{2} \big\|  \nabla \partial_{t} \psi^k   \big\|^2 \le C_{*} H_1^k E^k + C_{*} H_2^k,
\end{equation}
where
\begin{align*}
    E^k & \define  \Big\|  \sqrt{m_\phi( \widetilde{\phi} )} \nabla g_\phi  \Big\|^2 +  \Big\|  \sqrt{m_\psi( \widetilde{\psi} )} \nabla g_\psi  \Big\|^2 + \alpha \big\|  \partial_{t} \phi^k  - \partial_{t} \overline{\phi^k}   \big\|^2 + \alpha \big\|  \partial_{t} \psi^k  - \partial_{t} \overline{\psi^k}   \big\|^2
    \\
    & \quad - 2 \AbsNormal{\Omega} \overline{ F_\phi'\big( \phi^k \big)} \partial_{t}  \overline{ \phi^k }  + 4 \sigma_1 C_{\alpha} \big(  1 + \| \bm{u} \|_{L^{\infty} ( I;L^{3}(\Omega) )  } \big),
    \\
    H_1^k & \define  1 + \| \bm{u} \|_{L^{3}(\Omega)} \big(  \big(  1 + \|  \nabla m_\phi ( \widetilde{\phi} )  \|_{L^{6}(\Omega)}  \big)^2 + \big(  1 + \|  \nabla m_\psi ( \widetilde{\psi} )  \|_{L^{6}(\Omega)}  \big)^2  \big)
    \\
    & \quad + \| \partial_{t} \bm{u} \|_{L^{1}(\Omega)}^{\frac{4}{3}} \big(  1 + \|  \nabla m_\phi ( \widetilde{\phi} )  \|_{L^{6}(\Omega)}  \big)^4 \big(  1 + \|  \nabla^2 m_\phi ( \widetilde{\phi} )  \|  \big)^{\frac{2}{3}}
    \\
    & \quad + \| \partial_{t} \bm{u} \|_{L^{1}(\Omega)}^{\frac{4}{3}} \big(  1 + \|  \nabla m_\psi ( \widetilde{\psi} )  \|_{L^{6}(\Omega)}  \big)^4 \big(  1 + \|  \nabla^2 m_\psi ( \widetilde{\psi} )  \|  \big)^{\frac{2}{3}}
    \\
    & \quad + \| \partial_{t} \widetilde{\phi} \| \big(  1 + \|  \nabla m_\phi ( \widetilde{\phi} )  \|_{L^{6}(\Omega)}  \big)^4 + \| \partial_{t} \widetilde{\psi} \| \big(  1 + \|  \nabla m_\psi ( \widetilde{\psi} )  \|_{L^{6}(\Omega)}  \big)^4
    \\
    & \quad + \| \partial_{t} \widetilde{\phi} \| \big(  1+ \| \bm{u} \|_{L^{3}(\Omega)}  \big) \big(  1 + \|  \nabla m_\phi ( \widetilde{\phi} )  \|_{L^{6}(\Omega)}  \big)^3
    \\
    & \quad + \| \partial_{t} \widetilde{\psi} \| \big(  1+ \| \bm{u} \|_{L^{3}(\Omega)}  \big) \big(  1 + \|  \nabla m_\psi ( \widetilde{\psi} )  \|_{L^{6}(\Omega)}  \big)^3,
    \\
    H_2^k & \define  \| \partial_{t} \bm{u} \|_{L^{1}(\Omega)}^{\frac{4}{3}} \big(  1 + \|  \nabla m_\phi ( \widetilde{\phi} )  \|_{L^{6}(\Omega)}  \big)^4 \big(  1 + \|  \nabla^2 m_\phi ( \widetilde{\phi} )  \|  \big)^{\frac{2}{3}}
    \\
    & \quad + \| \partial_{t} \bm{u} \|_{L^{1}(\Omega)}^{\frac{4}{3}} \big(  1 + \|  \nabla m_\psi ( \widetilde{\psi} )  \|_{L^{6}(\Omega)}  \big)^4 \big(  1 + \|  \nabla^2 m_\psi ( \widetilde{\psi} )  \|  \big)^{\frac{2}{3}}
    \\
    & \quad + \| \partial_{t} \widetilde{\phi} \| (1 + \| \bm{u} \|_{L^{3}(\Omega)})^3 \big(  1 + \|  \nabla m_\phi ( \widetilde{\phi} )  \|_{L^{6}(\Omega)}  \big)^3
    \\
    & \quad + \| \partial_{t} \widetilde{\psi} \| (1 + \| \bm{u} \|_{L^{3}(\Omega)})^3 \big(  1 + \|  \nabla m_\psi ( \widetilde{\psi} )  \|_{L^{6}(\Omega)}  \big)^3
    \\
    & \quad + \big(  1 + \| \bm{u} \|_{L^{3}(\Omega)} \big) \sigma_1 e^{-\sigma_1 t} + \big| \overline{\phi^k} \big|^2.
\end{align*}
Here, the positive constant $C_{*}$ depends on $\widetilde{E}_{\text{free}}(\phi_0,\psi_0)$, $\alpha$, $\overline{\phi_0}$, $\overline{\psi_0}$, $c$, and other parameters of the system \eqref{eq:cvcho1}--\eqref{eq:cvcho4} but is independent of $k$.
Moreover, it holds
\begin{align*}
    E^k & \le \frac{1}{m_{\phi,*}} \| \partial_{t} \phi^k  - \partial_{t} \overline{\phi^k} \|_*^2 + \frac{1}{m_{\psi,*}} \| \partial_{t} \psi^k \|_*^2 + \frac{3}{2} \alpha \| \partial_{t} \phi^k  - \partial_{t} \overline{\phi^k} \|^2 + \frac{3}{2} \alpha \| \partial_{t} \psi^k \|^2 \\
    & \quad + 8 \sigma_1 C_\alpha (1 + \| \bm{u} \|_{L^\infty(I;L^3(\Omega))}), \\
    E^k & \ge \frac{m_{\phi,*}}{(m_\phi^*)^2} \| \partial_{t} \phi^k  - \partial_{t} \overline{\phi^k} \|_*^2 + \frac{m_{\psi,*}}{(m_\psi^*)^2} \| \partial_{t} \psi^k \|_*^2 + \frac{1}{2} \alpha \| \partial_{t} \phi^k  - \partial_{t} \overline{\phi^k} \|^2 + \frac{1}{2} \alpha \| \partial_{t} \psi^k \|^2.
\end{align*}
From
\begin{align*}
    \mu_\phi^{k}(0) = \big(  I + \alpha \mathcal{A}_{\phi, \phi_0}  \big)^{-1} \Big(  &- \alpha \sigma_1 \big(  \overline{\phi^{k}} -c  \big) - \alpha \bm{u}(0) \cdot \nabla \phi_{0,k} - \Delta \phi_{0,k} \\
    &+ \sigma_2 \mathcal{N} \big(  \phi_{0,k} - \overline{\phi_{0,k} }  \big)+   F_\phi' ( \phi_{0,k} ) + \partial_\phi G (  \phi_{0,k},\psi_{0,k}  )  \Big)
\end{align*}
we infer that
\begin{align*}
    \| \mu_\phi^k(0)  \| & \le C \big(  \| \bm{u}(0) \cdot \nabla \phi_{0,k} \|_* + \|  - \Delta \phi_{0,k} + F_\phi'(\phi_{0,k})  \| + 1  \big) \\
    & \le C \big(  1 + \|  - \Delta \phi_0 + F_\phi'(\phi_0)  \| + \| \bm{u}(0) \|  \big).
\end{align*}
A similar estimate holds for $\| \mu_\psi^k(0)  \|$. Consequently,
\begin{align*}
    E^k(0) & \le C \frac{1 + \alpha}{ \alpha } \big(  1 + \| -\Delta \phi_{0,k}+F'_\phi(\phi_{0,k}) \| + \| -\beta \Delta \psi_{0,k} + F'_\psi(\psi_{0,k}) \| + \| \mu_\phi^k(0) \| + \|  \mu_\psi^k(0) \|  \big)^2 \\
    & \quad + 8 \sigma_1 C_\alpha (1 + \| \bm{u} \|_{L^\infty(I;L^3(\Omega))}) \\
    & \le C_\alpha' \big( 1 + \| \bm{u}(0) \|^2 + \|  - \Delta \phi_{0} + F_\phi'(\phi_{0})  \|^2 + \|  - \beta \Delta \psi_{0} + F_\phi'(\phi_{0})  \|^2  \big) \\
    & \quad + 8 \sigma_1 C_\alpha (1 + \| \bm{u} \|_{L^\infty(I;L^3(\Omega))}) \define E_0.
\end{align*}
Here, the positive constants $C_\alpha$ and $C_\alpha'$ depend on $\overline{\phi_0}$, $\overline{\psi_0}$, $\alpha$, and coefficients of the system \eqref{eq:cvcho1}--\eqref{eq:cvcho4}, but they are independent of $k$ (for $k$ sufficiently large).

Set
\[
h_{\bm{u}}(t) \define  \int_{0}^{t}  \| \partial_{t} \bm{u} (\tau)\|_{L^{1}(\Omega)}^{\frac{4}{3}} \, \mathrm{d}\tau, \quad t \in I, \quad u_{L_3} = \| \bm{u} \|_{L^{\infty} (I;L^{3}(\Omega) )}.
\]
In addition, let $a, b$ be two positive constants such that
\begin{align*}
    & \| \widetilde{\phi} \|_{L^\infty(I;W)} \le a, \quad \| \widetilde{\psi} \|_{L^\infty(I;W)} \le a, \quad  \| \partial_t \widetilde{\phi} \|_{L^\infty(I;H)} \le b, \quad \| \partial_t \widetilde{\psi} \|_{L^\infty(I;H)} \le b.
\end{align*}
Then it follows from \eqref{approximate_solution_higher_order_sobolev_estimate_differential} and Gronwall's lemma that
\begin{align}
    & \frac{\alpha}{2}\big\|  \partial_{t} \phi^k(t)  - \partial_{t} \overline{\phi^k}(t)   \big\|^2 + \frac{\alpha}{2} \big\| \partial_{t} \psi^k(t) \big\|^2  \le E^k(t)
    \nonumber \\
    & \quad \le E_0 e^{C_{*} \int_{0}^{t} H_1^k(\tau)\, \mathrm{d}\tau}
    \notag \\
    &\qquad + C_{H_2}C_{*} \big[ ( 1+a )^{\frac{14}{3}} h_{\bm{u}}(t) + b (1+u_{L_3})^3 ( 1+a )^3 t + ( 1+u_{L_3} ) ( 1 - e^{-\sigma_1 t} ) \big] e^{C_{*} \int_{0}^{t} H_1^k(\tau)\, \mathrm{d}\tau}
    , \label{estimatephipsi}
\end{align}
for all $t \in [0,T]$, where
\[
\int_{0}^{t} H_1^k (\tau)\,\mathrm{d}\tau \le C_{H_1} ( 1+a )^{\frac{14}{3}} h_{\bm{u}}(t) + \big[ 1 + C_{H_1} u_{L_3} ( 1+a )^2 + C_{H_1} b ( 1+a )^4 + C_{H_1} b ( 1+u_{L_3} ) ( 1+a )^3 \big]t,
\]
for all $t \in [0,T]$. We note that $C_{H_1}, C_{H_2}$ are two positive constants that only depend on $m_\phi$, $m_\psi$.
Thanks to \eqref{phih2estimate}--\eqref{muh2estimate}, we can conclude that the $L^{\infty} \big( I;W \big)$ norm of $\phi^k,\ \psi^k,\ \mu_\phi^k,\ \mu_\psi^k$ and the $L^{\infty} \big( 0,T;H \big)$ norm as well as the $L^{2} \big( I;V \big)$ norm of $\partial_{t} \phi^k,\ \partial_{t} \psi^k$ are uniformly bounded by $a$, $b$, $T$, $\widetilde{E}_{\mathrm{free}}( \phi_{0},\psi_{0} )$, $\| -\Delta \phi_{0}+F'_\phi(\phi_{0}) \|$, $\| -\beta \Delta \psi_{0} + F'_\psi(\psi_{0}) \|$, $\| \partial_{t} \bm{u} \|_{L^{\frac{4}{3}} (I;L^{1}(\Omega) )}$ and $\| \bm{u} \|_{L^{\infty} (I;L^{3}(\Omega) )}$.
\smallskip
\\
\textbf{Step 3. The limit auxiliary system as $k\to \infty$.} Based on the above estimates that are uniform with respect to the parameter $k$,  we can extract a convergence subsequence of $( \phi^k, \psi^k, \mu_\phi^k, \mu_\psi^k )$ such that, as $k\to \infty$, it holds
\begin{align*}
    & \phi^k \mathop{\rightharpoonup}\limits^* \phi, \ \psi^k \mathop{\rightharpoonup}\limits^* \psi \ \mathrm{in} \ L^{\infty} \big( 0,T;W \big) , \\
    & \partial_{t}  \phi^k \mathop{\rightharpoonup}\limits^* \partial_{t}  \phi, \ \partial_{t}  \psi^k \mathop{\rightharpoonup}\limits^* \partial_{t}  \psi \ \mathrm{in} \ L^{\infty} \big( 0,T;H \big) , \\
    & \partial_{t}  \phi^k \rightharpoonup \partial_{t}  \phi, \ \partial_{t}  \psi^k \rightharpoonup \partial_{t}  \psi \ \mathrm{in} \ L^{2} \big( 0,T;V \big) , \\
    & \mu_\phi^k \mathop{\rightharpoonup}\limits^* \mu_\phi, \ \mu_\psi^k \mathop{\rightharpoonup}\limits^* \mu_\psi \ \mathrm{in} \ L^{\infty} \big( 0,T;W \big) .
\end{align*}
The limit functions $(\phi,\psi,\mu_\phi,\mu_\psi)$ solve the following system:
\begin{numcases}{}
    \partial_{t} \phi + \bm{u} \cdot \nabla \phi + \sigma_1 \big( \overline{\phi} -c \big) = \mathrm{div}\, (m_\phi (\widetilde{\phi}) \nabla \mu_\phi) \label{eq:spapcvcho1schauder},\\
    \mu_\phi = \alpha \partial_{t} \phi -\Delta \phi + \sigma_2 \mathcal{N} \big( \phi - \overline{\phi} \big) + F_\phi'(\phi) + \partial_\phi G(\phi,\psi) \label{eq:spapcvcho2schauder} ,\\
    \partial_{t} \psi + \bm{u} \cdot \nabla \psi = \mathrm{div}\, (m_\psi (\widetilde{\psi}) \nabla \mu_\psi) \label{eq:spapcvcho3schauder},\\
    \mu_\psi = \alpha \partial_{t} \psi -\beta \Delta \psi + F_\psi'(\psi) + \partial_\psi G(\phi,\psi) \label{eq:spapcvcho4schauder},
\end{numcases}
almost everywhere in $Q_T$, subject to the following boundary and initial conditions:
\begin{numcases}{}
    \partial_{\mathbf{n}} \phi = \partial_{\mathbf{n}} \mu_\phi = \partial_{\mathbf{n}} \psi = \partial_{\mathbf{n}} \mu_\psi = 0 \label{eq:spapcvchobschauder} & a.e. on $S_T$,\\
    \phi|_{t=0} = \phi_0(x), \ \psi|_{t=0} = \psi_0(x) \label{eq:spapcvchoischauder} & in $\Omega$.
\end{numcases}
In addition, the solution $(\phi,\psi,\mu_\phi,\mu_\psi)$ is unique since it is sufficiently regular. By a density argument for the velocity field, we can check that the above solvability result and higher-order estimates also hold in the general case $\bm{u} \in L^{\infty} \big( I;\bm{L}^{3}_\sigma(\Omega) \big) \cap W^{1,\frac{4}{3}} \big(I;\bm{L}^{1}_\sigma(\Omega) \big)$. The details are omitted for brevity.
\smallskip
\\
\textbf{Step 4. The fixed-point argument.}
For any given $T>0$, we introduce
\begin{align*}
    & X_T \define  L^{\infty} \big( 0,T;W \big) \cap W^{1,\infty} \big( 0,T;H \big), \\
    & Y_T \define  C \big(  [0,T] ; C (  \overline{\Omega}  )  \big), \\
    & S_{T,a,b}^{\phi} \define  \Big\{  f \in X_T : \ \| f(t) \|_{W} \le a, \ \| \partial_{t} f \| \le b, \ \mathrm{a.e.} \ t\in [0,T], \ f(0)=\phi_{0}  \Big\}, \\
    & S_{T,a,b}^{\psi} \define  \Big\{  f \in X_T : \ \| f(t) \|_{W} \le a, \ \| \partial_{t} f \| \le b, \ \mathrm{a.e.} \ t\in [0,T], \ f(0)=\psi_{0}  \Big\},
\end{align*}
where $a, b$ are two positive constants that are sufficiently large and compatible with the initial data $(\phi_0, \psi_0)$.
By the Aubin--Lions--Simon lemma, we easily find
$X_T \hookrightarrow\hookrightarrow Y_T$.
For a given pair $( \widetilde{\phi},\widetilde{\psi} ) \in S_{T,a,b}^{\phi} \times S_{T,a,b}^{\psi}$, we denote by $(\phi, \psi)$ the unique solution to problem \eqref{eq:spapcvcho1schauder}--\eqref{eq:spapcvchoischauder} on $[0,T]$.
This allows us to define the map
\begin{align*}
  \Lambda:X_T \times X_T &\to Y_T \times Y_T\\
  \big(  \widetilde{\phi},\widetilde{\psi}  \big) &\mapsto \big( \phi,\psi \big).
\end{align*}
Then, finding the desired solution to problem \eqref{eq:cvcho1}--\eqref{eq:cvchoi} is equivalent to finding a fixed point of the map $\Lambda$, the existence of which is guaranteed by Schauder's fixed‑point theorem (see, e.g., \cite[Theorem 9.12-(a)]{C2013}).

Let $T_{a,b}\in (0,1]$ be a sufficiently small positive constant such that
\begin{align*}
    & C_{H_2}\big[ ( 1+a )^{\frac{14}{3}} h_{\bm{u}}(T_{a,b}) + b (1+u_{L_3})^3 ( 1+a )^3 T_{a,b} + ( 1+u_{L_3} ) ( 1 - e^{-\sigma_1 T_{a,b}} ) \big] \leq 1,
    \\
    & C_{H_1} ( 1+a )^{\frac{14}{3}} h_{\bm{u}}(T_{a,b}) + \big[ 1 + C_{H_1} u_{L_3} ( 1+a )^2 + C_{H_1} b ( 1+a )^4 + C_{H_1} b ( 1+u_{L_3} ) ( 1+a )^3 \big]T_{a,b}\leq 1.
\end{align*}
It follows from \eqref{estimatephipsi} that
\begin{align*}
    & \big\|  \partial_{t} \phi(t)  - \partial_{t} \overline{\phi}(t)   \big\|^2 + \big\|  \partial_{t} \psi(t)  \big\|^2 \le \frac{2}{\alpha} ( E_0+C_{*} ) e^{C_{*}},\quad \forall\, t\in [0,T_{a,b}].
\end{align*}
This estimate combined with an argument similar to that in \cite{G2022,GMT2019} gives
\begin{align*}
    & \| \phi(t) \|_W \le C\big(  1 + \|  \mu_\phi - \overline{\mu_\phi} -\alpha \big( \partial_{t} \phi- \overline{\partial_{t} \phi} \big)- \sigma_2 \mathcal{N} ( \phi - \overline{\phi} ) - \partial_\phi G(\phi,\psi) + \overline{\partial_\phi G(\phi,\psi)}  \|  \big)
    \\
    & \qquad \quad \ \ \le C \big(  1 +  \big\|  \partial_{t} \phi(t)  - \partial_{t} \overline{\phi}(t)   \big\|
    + \| \nabla \mu_\phi \| + \| G \|_{C^2(\mathbb{R}^2)}  \big) \\
    & \qquad \quad \ \ \le C \big( 1 + \big\|  \partial_{t} \phi(t)  - \partial_{t} \overline{\phi}(t)   \big\| + \| \bm{u}(t) \| \big) \\
    & \qquad \quad \ \ \le C_1 \Big(  \sqrt{\frac{2}{\alpha} ( E_0+C_{*} )} e^{\frac{C_{*}}{2}} +1 + u_{L_3} \Big), \\
    & \| \psi(t) \|_W \le C_1 \Big(  \sqrt{\frac{2}{\alpha} ( E_0+C_{*} )} e^{\frac{C_{*}}{2}} + 1+  u_{L_3} \Big),
\end{align*}
for almost all $ t \in [0,T_{a,b}]$, where the constant $C_{*}$ takes the same value as in \eqref{approximate_solution_higher_order_sobolev_estimate_differential}, and $C_1$ depends only on the initial data, $\Omega$ and coefficients of the system.  We choose
\begin{align*}
a_0 &= C_1 \Big(  \sqrt{\frac{2}{\alpha} ( E_0+C_{*} )} e^{\frac{C_{*}}{2}} +  1+u_{L_3} \Big)  + \|\phi_0\|_W + \|\psi_0\|_W,
\\
b_0 &= \sqrt{\frac{2}{\alpha} ( E_0+C_{*} )} e^{\frac{C_{*}}{2}} + \sqrt{2\sigma_1 |\Omega|},
\end{align*}
and then take $T_{a_0,b_0}\in (0,1]$ correspondingly.
It is easily seen that $S^\phi_{T_{a_0,b_0},a_0,b_0}$,  $S^\psi_{T_{a_0,b_0},a_0,b_0}$ are convex and closed sets. Both are compact in $Y_{T_{a_0,b_0}}$. Moreover, $\Lambda$ maps $S^\phi_{T_{a_0,b_0},a_0,b_0} \times S^\psi_{T_{a_0,b_0},a_0,b_0}$ into itself.

It remains to prove that $\Lambda$ is continuous from $S_{T_{a_0,b_0},a_0,b_0}^{\phi} \times S_{T_{a_0,b_0},a_0,b_0}^{\psi}$ to $Y_{T_{a_0,b_0}} \times Y_{T_{a_0,b_0}}$, where $S_{T_{a_0,b_0},a_0,b_0}^{\phi} \times S_{T_{a_0,b_0},a_0,b_0}^{\psi}$ is endowed with the topology inherited from $Y_{T_{a_0,b_0}} \times Y_{T_{a_0,b_0}}$.
Suppose that both pairs, $\Lambda ( \widetilde{\phi}_i,\widetilde{\psi}_i ) = ( \phi_i,\psi_i ) , \ i \in \{ 1,2 \}$,
are defined on $[0,T_{a_0,b_0}]$. As above (see the  uniqueness part), we set
\[
( \Phi,\Psi,M_\phi,M_\psi ) = ( \phi_1,\psi_1,\mu_{\phi_1},\mu_{\psi_1} ) - ( \phi_2,\psi_2,\mu_{\phi_2},\mu_{\psi_2} ),
\]
which satisfies
\begin{numcases}{}
    \partial_{t} \Phi + \bm{u} \cdot \nabla \Phi + \sigma_1 \overline{\Phi} = \mathrm{div}\, \big( m_\phi (\widetilde{\phi}_1) \nabla M_\phi  \big) + \mathrm{div}\, \big(  \big(  m_\phi( \widetilde{\phi}_1 ) - m_\phi( \widetilde{\phi}_2 )  \big) \nabla \mu_{\phi_2}  \big), \label{eq:cvcho1schauderdifference} & \\
    M_\phi = \alpha \partial_{t} \Phi -\Delta \Phi + \sigma_2 \mathcal{N} \big( \Phi - \overline{\Phi} \big) + F_\phi'(\phi_1) - F_\phi'(\phi_2)
    \nonumber \\
    \qquad \quad + \partial_\phi G(\phi_1,\psi_1) - \partial_\phi G(\phi_2,\psi_2) \label{eq:cvcho2schauderdifference},\\
    \partial_{t} \Psi + \bm{u} \cdot \nabla \Psi = \mathrm{div}\, \big( m_\psi (\widetilde{\psi}_1) \nabla  M_\psi \big) + \mathrm{div}\, \big(  \big(  m_\psi( \widetilde{\psi}_1 ) - m_\psi( \widetilde{\psi}_2 )  \big) \nabla \mu_{\psi_2}  \big) \label{eq:cvcho3schauderdifference},\\
    M_\psi = \alpha \partial_{t} \Psi -\beta \Delta \Psi + F_\psi'(\psi_1) - F_\psi'(\psi_2) + \partial_\psi G(\phi_1,\psi_1) - \partial_\psi G(\phi_2,\psi_2) \label{eq:cvcho4schauderdifference},
\end{numcases}
almost everywhere in $\Omega \times (0,T_{a_0,b_0})$, subject to the following boundary and initial conditions:
\begin{numcases}{}
    \partial_{\mathbf{n}} \Phi = \partial_{\mathbf{n}} \Psi = 0, \label{eq:cvchobschauderdifference}& a.e. on $\partial \Omega \times (0,T_{a_0,b_0})$, \\
    \Phi|_{t=0} = 0, \ \Psi|_{t=0} = 0, \label{eq:cvchoischauderdifference} & in $\Omega$.
\end{numcases}
Since $\overline{\Phi}=\overline{\Psi}=0$, we test \eqref{eq:cvcho1schauderdifference} and \eqref{eq:cvcho3schauderdifference} with $g_\phi = \mathcal{N}_{\phi, \widetilde{\phi}_1} \Phi$ and $g_\psi = \mathcal{N}_{\psi, \widetilde{\psi}_1} \Psi$, respectively, using the monotonicity of $F_\phi'$, $F'_\psi$, we have
\begin{align*}
    & \frac{1}{2} \frac{\mathrm{d}}{\mathrm{d t}}  \Big(  \Big\|  \sqrt{m_\phi( \widetilde{\phi}_1 )} \nabla g_\phi  \Big\|^2 +  \Big\|  \sqrt{m_\psi( \widetilde{\psi}_1 )} \nabla g_\psi  \Big\|^2 + \alpha \|  \Phi  \|^2 + \alpha \|  \Psi  \|^2  \Big)  \\
    & \qquad + \big\| \nabla \Phi \big\|^2 + \beta \big\|  \nabla \Psi  \big\|^2 + \sigma_2 \big\|  \Phi - \overline{\Phi}  \big\|^2_* \\
    & \quad \le \| \bm{u} \|_{L^{3}(\Omega)} \Big(  \|  \Phi  \|_{L^{6}(\Omega)} \|  \nabla g_\phi  \| + \|  \Psi  \|_{L^{6}(\Omega)} \|  \nabla g_\psi  \|  \Big)   + C \| \partial_{t} \widetilde{\phi}_1 \| \| \nabla g_\phi \|^2_{L^{4}(\Omega)}
    \\
    & \qquad + C \| \partial_{t} \widetilde{\psi}_1 \| \| \nabla g_\psi \|^2_{L^{4}(\Omega)}  + \| m_\phi ( \widetilde{\phi}_1 )-m_\phi ( \widetilde{\phi}_2 ) \|_{L^{\infty}(\Omega)} \| \nabla \mu_{\phi_2} \| \| \nabla g_\phi \|
    \\
    & \qquad + \| m_\psi ( \widetilde{\psi}_1 )-m_\psi ( \widetilde{\psi}_2 ) \|_{L^{\infty}(\Omega)} \| \nabla \mu_{\psi_2} \| \| \nabla g_\psi \|
    + C \big( \| \Phi \|^2 + \| \Psi \|^2 \big)
    \\
    & \quad \le C \| \bm{u} \|_{L^{3}(\Omega)}^2 \big(  \| \Phi \|^2 + \| \Psi \|^2  \big) + \frac{1}{2} \| \nabla \Phi \|^2 + \frac{\beta}{2} \| \nabla \Psi \|^2 \\
    & \qquad + C \big( 1+b_0 ( 1+a_0 )^4 \big) \big(  \| \Phi \|^2 + \| \Psi \|^2  \big)
     + \| \nabla \mu_{\phi_2} \|^2 \| \Phi \|^2
     + \| \nabla \mu_{\psi_2} \|^2 \| \Psi \|^2
    \\
    & \qquad + C \| \widetilde{\phi}_1 - \widetilde{\phi}_2 \|^2_{C( \overline{\Omega} )} + C \| \widetilde{\psi}_1 - \widetilde{\psi}_2 \|^2_{C( \overline{\Omega} )}.
\end{align*}
Hence, we find that $( \Phi,\Psi ) \to 0$ in $C\big( [0,T_{a_0,b_0}];H \big)$ as $( \widetilde{\phi}_1,\widetilde{\psi}_1 ) \to ( \widetilde{\phi}_2,\widetilde{\psi}_2 )$ in $Y_{T_{a_0,b_0}}$.
Thanks to the complex interpolation
$H^{\frac{15}{8}}(\Omega) = \big[H,H^{2}(\Omega)\big]_{\frac{15}{16}}$
and the embedding
$H^{\frac{15}{8}}(\Omega) \hookrightarrow C \big( \overline{\Omega} \big)$,
it follows from $(\phi_1,\psi_1), (\phi_2,\psi_2)\in S^{\phi}_{a_0,b_0,T_{a_0,b_0}} \times S^{\psi}_{a_0,b_0,T_{a_0,b_0}}$ that
$( \Phi,\Psi ) \to 0$ in $Y_{T_{a_0,b_0}}$.
As a consequence, we can apply Schauder's fixed-point theorem to establish the existence of a local-in-time strong solution to problem \eqref{eq:cvcho1}--\eqref{eq:cvchoi} with the regularity listed in (\textbf{R1}).

In addition, if
\[
u_p = \| \partial_{t} \bm{u} \|_{L^{p} ( 0,1;L^{3}(\Omega) )} <\infty,\quad
\text{for some} \ p >\frac{4}{3},
\]
then it holds
\[
h_{\bm{u}}(t) \le | \Omega |^{\frac{8}{9}} u_p^{\frac{4}{3}} t^{1-\frac{4}{3p}}, \quad \forall\, t\in [0,1].
\]
Consequently, we can take $T_{a_0,b_0}= \min\{ 1, T_1, T_2 \} $, where $T_1=T_1(u_{L_3},u_p)\,,T_2=T_2(u_{L_3},u_p)>0$ are defined by
\begin{align}
    & C_{H_2}\big[ ( 1+a_0 )^{\frac{14}{3}} | \Omega |^{\frac{8}{9}} u_p^{\frac{4}{3}} T_2^{1-\frac{4}{3p}} + b_0 (1+u_{L_3})^3 ( 1+a_0 )^3 T_{2} + ( 1+u_{L_3} )  \sigma_1 T_{2}  \big]= 1, \label{T_2}
    \\
    & C_{H_1} ( 1+a_0 )^{\frac{14}{3}} | \Omega |^{\frac{8}{9}} u_p^{\frac{4}{3}} T_1^{1-\frac{4}{3p}} \notag \\
    &\quad + \big[ 1 + C_{H_1} u_{L_3} ( 1+a_0 )^2 + C_{H_1} b_0 ( 1+a_0 )^4 + C_{H_1} b_0 ( 1+u_{L_3} ) ( 1+a_0 )^3 \big]T_{1}=1.
    \label{T_1}
\end{align}
\begin{remark} \label{length_cvcho} \rm
   The above argument implies that if $\partial_t \bm{u} \in L^p(I;\bm{L}^3_\sigma(\Omega))$ for some $p >4/3$, then there is a positive lower bound for the existence time $T_r=T_{a_0,b_0}$ depending on $\| \bm{u} \|_{W^{1,p}(I;L^3(\Omega))}$, but not on the specific choice of $\bm{u}$.
\end{remark}
In the two-dimensional case, for $g_\phi = \mathcal{N}_{\phi, \phi} ( \partial_{t} \phi - \partial_{t} \overline{\phi} )$ and $g_\psi = \mathcal{N}_{\psi, \psi} \partial_{t} \psi$, we refine the higher-order estimate as follows:
\begin{align}
    & \frac{1}{2} \frac{\mathrm{d}}{\mathrm{d t}}  \Big(  \Big\|  \sqrt{m_\phi( \phi )} \nabla g_\phi  \Big\|^2 +  \Big\|  \sqrt{m_\psi( \psi )} \nabla g_\psi  \Big\|^2 + \alpha \big\|  \partial_{t} \phi  - \partial_{t} \overline{\phi}  \big\|^2 + \alpha \big\|  \partial_{t} \psi  \big\|^2 -2 \AbsNormal{\Omega} F_\phi'( \phi ) \partial_{t} \overline{\phi}   \Big)
    \nonumber \\
    & \qquad + \big\|  \nabla \partial_{t}  \phi  \big\|^2 + \beta \big\|  \nabla \partial_{t} \psi  \big\|^2 + \sigma_2 \big\|  \partial_{t} \phi - \partial_{t} \overline{\phi}   \big\|^2_*
    \nonumber \\
    & \quad = \int_{\Omega}  \bm{u} \cdot \big(  \big(  \partial_{t} \phi - \partial_{t} \overline{\phi}   \big) \nabla g_\phi + \partial_{t} \psi \nabla g_\phi  \big)  \ \mathrm{d}x - \int_{\Omega}  \partial_{t} \bm{u} \cdot \big(  \phi \nabla g_\phi + \psi \nabla g_\psi  \big)  \, \mathrm{d}x
    \nonumber \\
    & \qquad - \frac{1}{2} \int_{\Omega} \big( m_\phi'( \phi ) \partial_{t} \phi | \nabla g_\phi |^2 + m_\psi'( \psi ) \partial_{t} \psi | \nabla g_\psi |^2 \big) \, \mathrm{d}x
    \nonumber \\
    & \qquad - \int_{\Omega} \big( m_\phi'( \phi ) \partial_{t} \phi \nabla \mu_\phi \cdot \nabla g_\phi + m_\psi'( \psi ) \partial_{t} \psi  \nabla \mu_\psi \cdot \nabla g_\psi \big) \, \mathrm{d}x \nonumber \\
    & \qquad - \big(  \partial_{t}  F_\phi'\big( \phi \big) + \partial_{t}  \partial_\phi G(\phi,\psi) ,  \partial_{t}  \phi \big)
     - \big(  \partial_{t}  F_\psi'\big( \psi \big) + \partial_{t}  \partial_\psi G(\phi,\psi) ,  \partial_{t}  \psi  \big) \nonumber \\
    & \qquad + \AbsNormal{\Omega} \overline{\partial_{t}  \partial_\phi G(\phi,\psi)} \partial_{t}  \overline{\phi} - \AbsNormal{\Omega} \overline{F_\psi'\big( \psi \big)} \partial_{t}^2 \overline{\phi} \nonumber \\
    & \quad \le \| \bm{u} \|_{L^{3}(\Omega)} \big(  \| \partial_{t} \phi - \partial_{t} \overline{\phi} \|_{L^{6}(\Omega)} \| \nabla g_\phi \| + \| \partial_{t} \psi \|_{L^{6}(\Omega)} \| \nabla g_\psi \|  \big) \nonumber \\
    & \qquad + \| \partial_{t} \bm{u} \|_{L^{1}(\Omega)} (  \|  \nabla g_\phi  \|_{L^{\infty}(\Omega)} + \| \nabla g_\psi \|_{L^{\infty}(\Omega)}  ) \nonumber \\
    & \qquad + C \big(  \| \partial_{t} \phi \| \| \nabla g_\phi \|^2_{L^{4}(\Omega)} + \| \partial_{t} \psi \| \| \nabla g_\psi \|^2_{L^{4}(\Omega)}  \big) \nonumber \\
    & \qquad + C \big(  \| \partial_{t} \phi \| \| \nabla \mu_\phi \|_{L^{4}(\Omega)} \| \nabla g_\phi \|_{L^{4}(\Omega)} + \| \partial_{t} \psi \| \| \nabla \mu_\psi \|_{L^{4}(\Omega)} \| \nabla g_\psi \|_{L^{4}(\Omega)}  \big) \nonumber \\
    & \qquad + C (  \| \partial_{t} \phi - \partial_{t} \overline{\phi} \|^2 + \| \partial_{t} \psi \|^2 + \sigma_1 e^{-\sigma_1 t}  )  + \sigma_1^2 \AbsNormal{\Omega} e^{-\sigma_1 t} \| F_\phi'( \phi ) \|_{L^{1}(\Omega)}. \label{higer_order_estimate_2d}
\end{align}
Using Lemma \ref{elliptic_estimate_2d}, we can obtain the following estimates:
\begin{align*}
    & \| \bm{u} \|_{L^{3}(\Omega)} \|  \partial_{t} \phi - \partial_{t} \overline{\phi}  \|_{L^{6}(\Omega)} \| \nabla g_\phi \|
    \\
    & \quad \le C \| \bm{u} \|_{L^{3}(\Omega)} \| \nabla g_\phi \| \| \partial_{t} \phi - \partial_{t} \overline{\phi} \|^{\frac{1}{3}} \| \partial_{t} \phi - \partial_{t} \overline{\phi} \|_V^{\frac{2}{3}}
    \\
    & \quad \le \epsilon \| \nabla \partial_{t} \phi \|^2 + C_\epsilon \| \bm{u} \|_{L^{3}(\Omega)}^{\frac{3}{2}} \| \nabla g_\phi \|^{\frac{3}{2}} \| \partial_{t} \phi - \partial_{t} \overline{\phi} \|^{\frac{1}{2}},
    \\
    & \| \partial_{t} \bm{u} \|_{L^{1}(\Omega)} \| \nabla g_\phi \|_{L^{\infty}(\Omega)}
     \le C \| \partial_{t} \bm{u} \|_{\bm{L}^{1}(\Omega)} \| \nabla g_\phi \|^{\frac{1}{2}} \| \nabla g_\phi \|^{\frac{1}{2}}_{H^{2}(\Omega)} \\
    & \quad \le C \| \partial_{t} \bm{u} \|_{L^{1}(\Omega)} \| \nabla g_\phi \|^{\frac{1}{2}} \Big(  \| \nabla \phi \|^{\frac{1}{3}} \| \nabla \phi \|^{\frac{2}{3}}_{W^{1,4}(\Omega)} \| \nabla g_\phi \| + \| \nabla \phi \|^{\frac{2}{3}} \| \nabla \phi \|^{\frac{4}{3}}_{W^{1,4}(\Omega)} \| \nabla g_\phi \|
    \\
    & \qquad + \| \nabla \phi \|_{W^{1,4}(\Omega)} \| \nabla g_\phi \|_{L^{4}(\Omega)} + \| \nabla \phi \|^{\frac{1}{3}} \| \nabla \phi \|^{\frac{2}{3}}_{W^{1,4}(\Omega)} \| \nabla g_\phi \|_V
    \\
    & \qquad + (1 + \| \nabla \phi \|^{\frac{1}{3}} \| \nabla \phi \|^{\frac{2}{3}}_{W^{1,4}(\Omega)}) \| \partial_{t} \phi - \partial_{t} \overline{\phi} \| + \| \nabla \partial_{t} \phi \|  \Big)^{\frac{1}{2}}
    \\
    & \quad \le C_\epsilon (  1 + \|  \partial_{t} \bm{u}  \|_{L^1(\Omega)}^{\frac{4}{3}}  ) (  1 + \| \partial_{t} \phi - \partial_{t} \overline{\phi} \|^2  ) + \epsilon \| \nabla \partial_{t} \phi \|^2,
    \\
    & \| \partial_{t} \phi \| \| \nabla g_\phi \|_{L^{4}(\Omega)}^2
     \le C \| \partial_{t} \phi \| \| \nabla g_\phi \| \| \nabla g_\phi \|_V  \\
     &\quad \le C \| \nabla g_\phi \| (  \| \partial_{t} \phi - \partial_{t} \overline{\phi} \|^2 + \sigma_1 e^{-\sigma_1 t}  ) + C \| \nabla g_\phi \|^2  (  1 + \alpha \|  \partial_{t} \phi  \|^2  ),
    \\
    & \| \partial_{t} \phi \| \| \nabla \mu_\phi \|_{L^{4}(\Omega)} \| \nabla g_\phi \|_{L^{4}(\Omega)}
    \\
    & \quad \le C \| \partial_{t} \phi \| \| \nabla \mu_\phi \|^\frac{1}{2} \| \nabla \mu_\phi \|_V^\frac{1}{2} \| \nabla g_\phi \|^\frac{1}{2} \| \nabla g_\phi \|_V^\frac{1}{2}
    \\
    & \quad \le C ( 1 + \| \partial_{t} \phi - \partial_{t} \overline{\phi} \|  ) ( \| \bm{u} \| + \| \nabla g_\phi \|  )^\frac{1}{2} (  1 + \| \partial_{t} \phi - \partial_{t} \overline{\phi} \| + \| \partial_{t} \phi - \partial_{t} \overline{\phi} \| \| \nabla g_\phi \|  )^\frac{1}{2} \| \nabla g_\phi \|^\frac{1}{2}
    \\
    & \qquad (  ( 1 + \| \partial_{t} \phi - \partial_{t} \overline{\phi} \| ) \| \nabla g_\phi \| + \| \partial_{t} \phi - \partial_{t} \overline{\phi} \|  )^\frac{1}{2} \\
    & \quad \le C ( 1 + \| \partial_{t} \phi - \partial_{t} \overline{\phi} \|^4 ),
\end{align*}
where $\epsilon \in (0,1)$ is an arbitrary constant.
Similar estimates hold for terms involving $\psi$.
Therefore, applying Gronwall's lemma to inequality \eqref{higer_order_estimate_2d} on $I$ if it is finite, or applying Gronwall's lemma on $[0,1]$ and the uniform Gronwall lemma on $[1,\infty)$ if $I=[0,\infty)$,
we can establish uniform estimates for strong solutions in the two-dimensional case. This enables us to obtain a global strong solution with the regularity properties listed in (\textbf{R1}).
\medskip

\noindent \textbf{Regularity 2.} As in the proof of (\textbf{R1}), we rewrite equation \eqref{eq:cvcho2} in the following form:
\[
\alpha \partial_{t} \phi -\Delta \phi + F_\phi (  \phi  ) = \mu_\phi - \partial_\phi G ( \phi,\psi ).
\]
Then, repeating the arguments therein, we can prove the strict separation property for $\phi$ on any interval $ [0,T_r]$ whenever it holds $\mu_\phi \in L^{\infty} ( 0,T_r;W )$. An analogous argument holds for $\psi$.
Then, from \eqref{eq:cvcho2}, \eqref{eq:cvcho4} and the elliptic regularity theory, it follows that
$
\phi,\psi \in L^{2}_{\mathrm{uloc}} ( 0,T_r;H^{3}(\Omega) )$.

The proof of Proposition \ref{cvchosse} is complete.
\qed

\section*{Declarations}
\noindent \textbf{Conflict of interest.} The authors have no competing interests to declare that are relevant to the content of this article.
\smallskip

\noindent \textbf{Funding.}
Ouyang was supported by the China Scholarship Council (CSC) Program (Grant No. 202506100096).
Wu was supported by the Natural Science Foundation of Shanghai 25ZR1401023.
\smallskip

\noindent \textbf{Acknowledgments.}
Grasselli is a member of Gruppo Nazionale per l'Ana\-li\-si Matematica, la Probabilit\`{a} e le loro Applicazioni (GNAMPA), Istituto Nazionale di Alta Matematica (INdAM). His research is part of the activities of ``Dipartimento di Eccellenza 2023--2027'' of Politecnico di Milano.
Wu is a member of the Key Laboratory of Mathematics for Nonlinear Sciences (Fudan University), Ministry of Education of China.
\smallskip

\noindent \textbf{Data availability.}
Data sharing not applicable to this article as no datasets were generated or analyzed during the current study.

\end{document}